\theoremstyle{definition}
\newtheorem{definition}{Definition}[section]
\newtheorem{remark}[definition]{Remark}
\newtheorem{example}[definition]{Example}
\newtheorem{notation}[definition]{Notation}
\newtheorem{algorithm}[definition]{Algorithm}
\theoremstyle{plain}
\newtheorem{lemma}[definition]{Lemma}
\newtheorem{theorem}[definition] {Theorem}
\newtheorem{proposition}[definition] {Proposition}
\newtheorem{main}[definition]{Main Theorem}
\newtheorem{corollary}[definition]{Corollary}
\definecolor{forest}{rgb}{0.0, 0.5, 0.0}
\numberwithin{equation}{section}
\theoremstyle{plain}
\begin{document}
\title{A Magnus extension for locally indicable groups}
\author{Carsten Feldkamp}
\date{\today}
\maketitle
\abstract{A group $G$ possesses the Magnus property if for every two elements $u$, $v \in G$ with the same normal closure, $u$ is conjugate to $v$ or $v^{-1}$. O. Bogopolski and J. Howie proved independently that the fundamental groups of all closed orientable surfaces possess the Magnus property.  The analogous result for closed non-orientable surfaces was proved by O. Bogopolski and K. Sviridov except for one case that was later covered by the author. In this article, we generalize those results, which can be viewed as Magnus extensions for free groups, to a Magnus extension for locally indicable groups and consider the influence of adding a group as a direct factor. For this purpose, we also prove versions of the Freiheitssatz for locally indicable groups and of a result by M. Edjvet adding a group as a direct factor.}

\setcounter{tocdepth}{2}

\tableofcontents

\section{Introduction}

A group $G$ possesses the Magnus property if for every two elements $u$, $v \in G$ with the same normal closure, $u$ is conjugate in $G$ to $v$ or $v^{-1}$.
The Magnus property was named after W. Magnus who proved the so-called Freiheitssatz and the Magnus property for free groups (see \cite{ArtMagnus}). S. Brodskii, J. Howie and H. Short independently generalized this Freiheitssatz to locally indicable groups (see Theorem~\ref{HowFS}). Recall that a group $G$ is \emph{indicable} if there exists an epimorphism from $G$ to $\mathbb{Z}$. A group $G$ is \emph{locally indicable} if every non-trivial, finitely generated subgroup of $G$ is indicable. In particular limit groups (in the sense of Sela, see for example \cite{ArtTheorie}) are indicable and locally indicable. Note that every locally indicable group is torsion-free, but the converse does not hold in general. Using the Freiheitssatz for locally indicable groups, M. Edjvet proved a result including the statement that the free product of two locally indicable groups with the Magnus property possesses the Magnus property (see Theorem~\ref{Edj}, Corollary~\ref{corEdj}).

For a group $G$ we denote the normal closure of an element $r \in G$ in $G$ by $\langle \! \langle r \rangle \! \rangle_{G}$ and write $G / \langle \! \langle r \rangle \! \rangle_{G}$ (or short $G / \langle \! \langle r \rangle \! \rangle$) for the group $G$ modulo the normal closure of $r$ in $G$. The commutator $[a,b]$ of two elements $a$ and $b$ is defined by $[a,b] := a^{-1}b^{-1}ab$. We call a result that describes how to construct (possibly under certain conditions) a group $G$ with the Magnus property out of one or more groups $G_{i}$ ($i \in \mathcal{I}$) with the Magnus property a \emph{Magnus extension} if the original groups $G_{i}$ ($i \in \mathcal{I}$) embed canonically into $G$. In this sense M. Edjvet's result Corollary~\ref{corEdj} is a Magnus extension for locally indicable groups. The main theorem of this article is also a Magnus extension for locally indicable groups:

\begin{main} \label{main}
Let $C$ be a group and $G$ an indicable as well as locally indicable group. Further let $u$ be a non-trivial element in $G$. Then the group
\begin{eqnarray*}
\big( G \underset{u=[a,b]}{\ast} F(a,b) \big) \times C
\end{eqnarray*}
possesses the Magnus property if and only if $G \times C$ possesses the Magnus property. In the case $C=\{1\}$ we can omit the assumption that $G$ is indicable.
\end{main}

In Section~\ref{secapp}, we have a look at two applications of Theorem~\ref{main}. The first application is Corollary~\ref{corappl} which generalizes the results of \cite{ArtSurGr}, \cite{ArtHowSurGr}, \cite{ArtOneRel} and \cite{ArtMyMAP} about the Magnus property of fundamental groups of closed surfaces. It is well known that these groups apart from genus 1 and 2 are one-relator groups with a relation of the form $[a,b]u=1$ where $a$, $b$ are generators and $u$ is a non-trivial element in generators different from $a$, $b$. Corollary~\ref{corappl} allows us to consider arbitrary many relations $[a_{i},b_{i}]u_{i}=1$ ($i \in \mathcal{I}$) where $a_{i}$, $b_{i}$ are generators and the $u_{i}$ are non-trivial words in generators from a common generating set which does not contain any $a_{i}$, $b_{i}$. These groups can also be understood as fundamental groups of particular graph of groups (cf. Figure~\ref{AbbArtAnw}). Note that for this application we use Main Theorem~\ref{main} with trivial $C$.

Another natural way of generalizing the result about the Magnus property of fundamental groups of closed surfaces would be to consider fundamental groups of manifolds with dimension greater or equal three. Our second applications hints in that direction by applying Main Theorem~\ref{main} to fundamental groups of  certain Seifert manifolds using the classification due to H. Seifert. For more results on the Magnus property of fundamental groups of Seifert manifolds see \cite{DA}. 

In Section~\ref{secdir}, we provide auxiliary results which will be used for the proof of Main Theorem~\ref{main} in Section~\ref{secmain}. Nevertheless, at least three of the results of this section can also be considered independently. We give a quick overview on those results: First, we want to mention the following result of Section~\ref{secdir} about direct products which we prove using the language of groups in the sense of model theory.

\begin{theorem} \label{thmdirect}
Let $C$ be an arbitrary group. Then the following statements are equivalent.
\begin{itemize}
\item[$(i)$] The direct product $L \times C$ possesses the Magnus property for some limit group $L$ with the Magnus property. 
\item[$(ii)$] All groups $\big( \bigtimes_{i \in \mathcal{I}} L_{i} \big) \times C$, where  $\mathcal{I} \subset \mathbb{N}$ is an index set and $L_{i}$ are limit groups with the Magnus property possess the Magnus property.
\end{itemize}
\end{theorem}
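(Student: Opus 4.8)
The implication $(ii)\Rightarrow(i)$ is trivial: take $\mathcal I=\{1\}$ and $L_1=\mathbb Z$, a nontrivial limit group with the Magnus property. So the content is $(i)\Rightarrow(ii)$. The plan is to show that, for a nontrivial limit group $L$, whether $L\times C$ has the Magnus property depends only on $C$ --- not on which $L$ we pick --- and that the same reduction works verbatim with $L$ replaced by any countable direct product $H=\bigtimes_{i\in\mathcal I}L_i$ of limit groups. It is \emph{not} enough for $C$ merely to have the Magnus property: if some $c\in C$ satisfies $c\not\sim c^{-1}$ while $\langle\!\langle c\rangle\!\rangle_C/[\langle\!\langle c\rangle\!\rangle_C,C]$ has at most two elements, then $\bigl((a,c),(a,c^{-1})\bigr)$ is a Magnus pair in $L\times C$ for every $1\neq a\in L$. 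The correct intrinsic condition on $C$ is ``$\mathbb Z\times C$ has the Magnus property'', and the theorem then amounts to: $L\times C$ has the Magnus property for one nontrivial limit group $L$ iff for all of them iff for all countable direct products of them.

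The model-theoretic input is used to make the phrase ``limit group'' interchangeable here with ``locally indicable group satisfying the universal theory of a free group'', and to see that the two features actually consumed by the argument pass to direct products. A limit group satisfies the universal sentence ``$\forall x\,\forall w\,(wxw^{-1}x=1\to x=1)$'' (valid in free groups), so no nontrivial element is conjugate to its inverse, and a limit group is locally indicable. The displayed universal sentence is preserved under direct products, and $H=\bigtimes_{i\in\mathcal I}L_i$ is locally indicable because a nontrivial finitely generated subgroup of $H$ has a finitely generated nontrivial image in some $L_i$, which maps onto $\mathbb Z$. Local indicability (via the circle of ideas around Theorem~\ref{HowFS} and \cite{ArtHowOnLocInd}) gives the structural fact I need about normal closures in a direct product: for a locally indicable $K$, an element $1\neq k\in K$, and any group $C$, there is an ``exponent-count'' homomorphism $\phi_K\colon\langle\!\langle k\rangle\!\rangle_K\to\mathbb Z$ with $\phi_K(k^g)=1$ for all $g$ (equivalently $\langle\!\langle k\rangle\!\rangle_K/[\langle\!\langle k\rangle\!\rangle_K,K]\cong\mathbb Z$), and $\langle\!\langle(k,c)\rangle\!\rangle_{K\times C}$ is precisely the fibre product of $\langle\!\langle k\rangle\!\rangle_K$ and $\langle\!\langle c\rangle\!\rangle_C$ over their common cyclic quotient, glued along the images of $k$ and $c$.

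Now for $(i)\Rightarrow(ii)$: put $H=\bigtimes_{i\in\mathcal I}L_i$ and let $u=(g,c)$, $v=(g',c')$ in $H\times C$ have the same normal closure; I want $u$ conjugate to $v$ or to $v^{-1}$. Projecting to $C$ gives $\langle\!\langle c\rangle\!\rangle_C=\langle\!\langle c'\rangle\!\rangle_C$, and the case $g=1$ collapses to ``$C$ has the Magnus property'', which follows from $(i)$ by testing it on elements $(1,\ast)$ of $L\times C$. Assume $g\neq1$. Projecting to each $L_i$ gives $\langle\!\langle a_i\rangle\!\rangle_{L_i}=\langle\!\langle a_i'\rangle\!\rangle_{L_i}$, so $a_i\sim(a_i')^{\pm1}$ (limit groups have the Magnus property, \cite{ArtTouikanLouder}); comparing through the exponent-count homomorphisms forces the sign to be one and the same $\eta\in\{\pm1\}$ for every $i$, and ``no nontrivial element is conjugate to its inverse'' rules out sign ambiguity. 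Conjugating $v$ inside $H$, and replacing $v$ by $v^{-1}$ when $\eta=-1$, I reduce to the case where the $H$-coordinates agree; writing the $C$-coordinates as $c$ and $c''$, it suffices to prove $c\sim c''$ (which then yields $u$ conjugate to $v$ or $v^{-1}$). The fibre-product description, applied on the $C$-coordinate, says the two quotient maps $\langle\!\langle c\rangle\!\rangle_C\to\langle\!\langle c\rangle\!\rangle_C/[\langle\!\langle c\rangle\!\rangle_C,C]$ that $\langle\!\langle(g,c)\rangle\!\rangle$ and $\langle\!\langle(g,c'')\rangle\!\rangle$ use must match; together with $\langle\!\langle c\rangle\!\rangle_C=\langle\!\langle c''\rangle\!\rangle_C$ and the Magnus property of $C$ this forces $c\sim c''$, since the only alternative, $c\sim(c'')^{-1}$ with $c\not\sim c''$, is exactly the configuration that $(i)$ forbids (the same fibre-product analysis converts it into a Magnus pair $\bigl((a,c),(a,c'')\bigr)$ in $L\times C$). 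Hence $H\times C$ has the Magnus property, which is $(ii)$; and since the argument used $L$ and $H$ only through local indicability and the universal fact above, $(i)\iff(ii)$ follows.

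The main obstacle I anticipate is the structural lemma underlying everything: showing that $\langle\!\langle k\rangle\!\rangle_K/[\langle\!\langle k\rangle\!\rangle_K,K]$ is infinite cyclic for $1\neq k$ in a locally indicable $K$, and making the fibre-product description of $\langle\!\langle(k,c)\rangle\!\rangle_{K\times C}$ exact --- together with keeping the bookkeeping honest in the residual ``$c\sim(c'')^{-1}$'' case, which is precisely where the full strength of hypothesis $(i)$ (rather than just ``$C$ has the Magnus property'') has to be used.
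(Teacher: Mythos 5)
Your overall strategy runs parallel to the paper's: reduce the Magnus property of the direct product to a statement about sign sequences in products of conjugates, use that no nontrivial element of a limit group is conjugate to its inverse, and push any residual counterexample back into $\mathbb{Z}\times C$ (resp.\ $L\times C$). But the load-bearing step is wrong as stated. You derive the ``exponent-count'' homomorphism $\phi_K\colon\langle\!\langle k\rangle\!\rangle_K\to\mathbb{Z}$ with $\phi_K(k^g)=1$ (equivalently, $\langle\!\langle k\rangle\!\rangle_K/[\langle\!\langle k\rangle\!\rangle_K,K]\cong\mathbb{Z}$) from local indicability of $K$ alone. That is false: in the Baumslag--Solitar group $K=\langle a,t\mid tat^{-1}=a^2\rangle$, which is locally indicable and in which no nontrivial element is conjugate to its inverse, one has $[a,t^{-1}]=a^{-1}tat^{-1}=a$, so $a\in[\langle\!\langle a\rangle\!\rangle_K,K]$ and the quotient $\langle\!\langle a\rangle\!\rangle_K/[\langle\!\langle a\rangle\!\rangle_K,K]$ is trivial; equivalently, $a=(t^{-1}at)^2$ exhibits $a^{+1}$ as a product of conjugates of $a$ with exponent sum $2$. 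So both of the hypotheses you claim are the only ones consumed by your argument (local indicability plus the universal sentence forbidding $x\sim x^{-1}$) are satisfied here, yet your structural lemma fails. This is exactly the point of the proof where ``limit group'' cannot be replaced by ``locally indicable group''.

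For limit groups the rigidity you need is true, but its source is the universal theory of free (and free abelian) groups, not local indicability: for each $m$ and each sign pattern with $\sum_j\delta_j\neq\varepsilon$, the sentence ``$r^{\varepsilon}=\prod_j g_j^{-1}r^{\delta_j}g_j\Rightarrow r=1$'' is a universal Horn sentence, true in free groups by asphericity of one-relator presentations (Propositions~\ref{propasphone} and~\ref{propasphpairs}), hence true in limit groups and preserved under arbitrary direct products. This is precisely the paper's Lemma~\ref{lemfirstorder}, and supplying it closes your gap; with it, your fibre-product description and your endgame go through in essentially the same way as the paper's proof. Two smaller points: your parenthetical ``limit groups have the Magnus property, \cite{ArtTouikanLouder}'' is backwards --- that reference constructs a limit group \emph{without} the Magnus property, and the Magnus property of the $L_i$ is a hypothesis of the theorem, not a general fact; and your reduction of the residual case to the Magnus property of $\mathbb{Z}\times C$ requires the retraction argument of Lemma~\ref{lemindC} (limit groups are indicable, so $\mathbb{Z}\times C$ is a retract of $L\times C$), which you gesture at but do not actually carry out.
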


Note that in \cite{ArtTouikanLouder} there is an example of a limit group without the Magnus property. The possibility of using the elementary theory of groups to transfer the knowledge of the Magnus property of one group to all elementary equivalent groups was already mentioned in \cite{ArtSurGr}. 

In 1981, J. Howie proved the following theorem which is known as the \emph{Freiheitssatz for locally indicable groups}. This result was also proved independently by S. Brodskii (see \cite{ArtBrod80},\cite[Theorem 1]{ArtBrod84}) and by H. Short (see \cite{PhDShort}).

\begin{theorem} \label{HowFS} \textbf{\emph{\cite[Theorem 4.3 (Freiheitssatz)]{ArtHowOnpairs}}}
Suppose $G=(A \ast B) / N$, where $A$ and $B$ are locally indicable groups, and $N$ is the normal closure in $A \ast B$ of a cyclically reduced word $R$ of length at least $2$. Then the canonical maps $A \rightarrow G$, $B \rightarrow G$ are injective.
\end{theorem}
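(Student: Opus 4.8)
The plan is to run Howie's topological ``tower'' argument; Brodskii's order-theoretic proof gives an independent route, indicated at the end. After a harmless reduction I may assume $R$ is not a proper power: if $R=S^{n}$ with $n\geq 2$, then $S$ is again cyclically reduced of length $\geq 2$ and $\langle\!\langle R\rangle\!\rangle\subseteq\langle\!\langle S\rangle\!\rangle$, so injectivity of $A\to(A\ast B)/\langle\!\langle S\rangle\!\rangle$ immediately yields injectivity of $A\to G$. Next, translate the problem into topology: choose presentation $2$-complexes $X_{A}$, $X_{B}$ with $\pi_{1}X_{A}=A$ and $\pi_{1}X_{B}=B$, set $Y=X_{A}\vee X_{B}$ (so $\pi_{1}Y=A\ast B$), and attach a single $2$-cell $e$ along the loop reading $R$ to obtain a complex $X$ with $\pi_{1}X=G$. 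The Freiheitssatz is then exactly the assertion that $X_{A}\hookrightarrow X$ and $X_{B}\hookrightarrow X$ are $\pi_{1}$-injective, and by symmetry it suffices to treat $X_{A}$. Assume for contradiction that some loop $\gamma$ in $X_{A}$ that is essential in $\pi_{1}X_{A}=A$ bounds a singular disc $f\colon D^{2}\to X$.

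Now pass to a maximal tower lift of $f$: a factorization $D^{2}\xrightarrow{\hat f}T\xrightarrow{p}X$ in which $p$ is a finite composite of connected covering maps and inclusions of $\pi_{1}$-injective connected subcomplexes, $\hat f$ is surjective (so $T$ is a finite connected complex), and the tower cannot be extended; such a lift exists by the usual complexity argument (cf.\ \cite{ArtHowOnpairs}). Two features of the top complex $T$ carry the proof. First, since $D^{2}$ is simply connected, $\hat f$ lifts along the universal cover of $T$, so by maximality that cover is trivial and $\pi_{1}T=1$. Second, $p$ preserves cell types, so $T$ decomposes as $T=T_{A}\cup T_{B}\cup(\text{cells over }e)$ with $T_{A}:=p^{-1}(X_{A})$ and $T_{B}:=p^{-1}(X_{B})$; the induced maps $T_{A}\to X_{A}$ and $T_{B}\to X_{B}$ are again tower maps, so each component of $T_{A}$ (resp.\ $T_{B}$) has fundamental group a subgroup of $A$ (resp.\ $B$) and hence is again locally indicable, while each $2$-cell over $e$ is attached along a loop in $T_{A}^{(1)}\cup T_{B}^{(1)}$ that strictly alternates between essential excursions into $T_{A}$ and essential excursions into $T_{B}$, with at least one of each: the exact analogue of ``$R$ cyclically reduced of length $\geq 2$'' over the graph of groups whose vertex spaces are the components of $T_{A}$ and $T_{B}$ (and primitive, by the reduction above). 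Finally, $\hat f|_{\partial D^{2}}$ is a loop lying in a single component $T_{A}^{0}$ of $T_{A}$; it is nontrivial in $\pi_{1}T_{A}^{0}$ because its image under $\pi_{1}T_{A}^{0}\to\pi_{1}X_{A}=A$ is $\gamma\neq 1$, yet it is null-homotopic in $T$ because $\pi_{1}T=1$.

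Everything now reduces to the combinatorial core, which is the main obstacle: \emph{a connected, simply connected, finite $2$-complex $T$ built from a disjoint union of $2$-complexes with locally indicable fundamental groups by attaching $2$-cells along primitive, strictly alternating, non-degenerate loops of syllable length $\geq 2$ admits no loop that is essential in a vertex subcomplex but trivial in $T$}; equivalently, each vertex subcomplex includes $\pi_{1}$-injectively into $T$, hence (as $\pi_{1}T=1$) is itself simply connected. This contradicts the last sentence of the previous paragraph. To prove the core one works with a minimal (reduced) such complex and analyses the interface between the cells over $e$ and the $A$- and $B$-parts; the essential input is that every non-trivial finitely generated subgroup of a locally indicable group maps onto $\mathbb{Z}$, which lets one put compatible homomorphisms to $\mathbb{Z}$ on the relevant finitely generated pieces of $\pi_{1}T_{A}$ and $\pi_{1}T_{B}$, orient the attaching loops of the cells over $e$ accordingly, and then derive a contradiction with $\pi_{1}T=1$ by a combinatorial Gauss--Bonnet / weight-test count in which the length-$\geq 2$ hypothesis forces strictly positive total curvature. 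This heart of the matter is the substance of \cite{ArtHowOnpairs} (see also \cite{ArtHowOnLocInd}); Brodskii \cite{ArtBrod84} and Short \cite{PhDShort} reach essentially the same point by an orderability and induction argument instead --- reducing to finitely generated $A$, $B$ (hence indicable), embedding $A\ast B$ into a locally indicable overgroup in which $R$ stays unperturbed, and inducting on one-relator quotients --- the variant best adapted to some of the generalisations in the sequel.
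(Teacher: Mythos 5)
This statement is not proved in the paper at all: it is Howie's Freiheitssatz, imported verbatim with a citation to \cite{ArtHowOnpairs} (and to the independent proofs of Brodskii and Short), so there is no ``paper's own proof'' to compare against. Judged on its own terms, your write-up is an accurate description of the standard reduction --- passing to a presentation complex, taking a maximal tower lift of a putative singular disc, and observing that the top of the tower is finite, simply connected, and again assembled from locally indicable vertex pieces with $2$-cells attached along alternating loops of syllable length at least $2$ --- and the preliminary reduction to $R$ not a proper power is valid (the root of a cyclically reduced word of length $\geqslant 2$ in $A \ast B$ is again cyclically reduced of length $\geqslant 2$).

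However, the argument stops exactly where the theorem actually lives. The ``combinatorial core'' is stated but not proved; you explicitly defer it to \cite{ArtHowOnpairs}, which makes the whole thing a citation with scaffolding rather than a proof. Moreover, the mechanism you gesture at for the core --- orienting the attaching loops via maps to $\mathbb{Z}$ and then running a ``combinatorial Gauss--Bonnet / weight-test count'' --- is not how Howie's argument (or Brodskii's or Short's) actually closes: Howie uses the surjection of the relevant finitely generated subgroup onto $\mathbb{Z}$ to pass to an infinite cyclic cover of the top of the tower, extracts a finite subcomplex carrying the disc with strictly fewer $2$-cells lying over $e$, and concludes by induction on that number. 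No curvature count is involved, and it is doubtful that a weight-test argument works at this level of generality: weight and curvature tests belong to the Gersten--Sieradski line and yield asphericity under extra combinatorial hypotheses on the relator, not the Freiheitssatz for arbitrary locally indicable factors. So as it stands the proposal has a genuine gap: the inductive descent that constitutes the entire content of the theorem is missing, and the hint given for how to supply it points in the wrong direction.
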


We slightly generalize this theorem in the following way. 

\begin{lemma} \label{lemmachi}
Let $A$, $B$ be two locally indicable groups and let $G:= \big(A \ast B \big) \times C$ for a group $C$. Further let $w \in G$ be an element that is neither conjugate to an element of $A \times C$ nor $B \times C$. We define $N:= \langle \! \langle w \rangle \! \rangle_{G} \cap C$ and $\widetilde{C} := C / N$. Then $A \times \widetilde{C}$ canonically embeds into $G / \langle \! \langle w \rangle \! \rangle_{G}$.
\end{lemma}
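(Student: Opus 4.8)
The plan is to write down the canonical homomorphism $A\times\widetilde C\to G/\langle\!\langle w\rangle\!\rangle_G$ explicitly and to compute its kernel, importing injectivity from Howie's Freiheitssatz (Theorem~\ref{HowFS}) via an auxiliary homomorphism that forgets $C$. Since $G=(A\ast B)\times C$, write $w=r\,c_0$ where $r\in A\ast B$ and $c_0\in C$ are uniquely determined. First I would translate the hypothesis on $w$ into a hypothesis on $r$: conjugation in $G$ acts on the $(A\ast B)$-coordinate by conjugation in $A\ast B$, so $w$ is conjugate in $G$ into $A\times C$ (resp.\ into $B\times C$) if and only if $r$ is conjugate in $A\ast B$ into $A$ (resp.\ into $B$). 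By the conjugacy theory of free products \cite{BuchLS}, an element of $A\ast B$ is conjugate into a free factor precisely when a cyclically reduced conjugate of it has syllable length at most $1$; hence the hypotheses on $w$ force some cyclically reduced conjugate $r_0$ of $r$ to have syllable length at least $2$, and $\langle\!\langle r_0\rangle\!\rangle_{A\ast B}=\langle\!\langle r\rangle\!\rangle_{A\ast B}$.

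Next I would set up the map. As $\langle\!\langle w\rangle\!\rangle_G$ and $C$ are both normal in $G$, the subgroup $N=\langle\!\langle w\rangle\!\rangle_G\cap C$ is normal in $G$, hence in $C$, and $N$ is contained in the kernel of the canonical homomorphism $\kappa\colon A\times C\hookrightarrow G\twoheadrightarrow G/\langle\!\langle w\rangle\!\rangle_G$ (its kernel is $(A\times C)\cap\langle\!\langle w\rangle\!\rangle_G$, which visibly contains $N\subseteq C$). Since $(A\times C)/N\cong A\times\widetilde C$, the homomorphism $\kappa$ descends to a homomorphism $\overline\kappa\colon A\times\widetilde C\to G/\langle\!\langle w\rangle\!\rangle_G$; this is the canonical map in the statement, and it remains to prove that $\overline\kappa$ is injective, equivalently that $(A\times C)\cap\langle\!\langle w\rangle\!\rangle_G\subseteq N$.

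The key device is the auxiliary homomorphism $\rho\colon G=(A\ast B)\times C\to (A\ast B)/\langle\!\langle r_0\rangle\!\rangle_{A\ast B}$ obtained by projecting onto the first factor and then passing to the quotient. It kills $w=r\,c_0$ because $r\in\langle\!\langle r_0\rangle\!\rangle_{A\ast B}$, so $\langle\!\langle w\rangle\!\rangle_G\subseteq\ker\rho$. Now suppose $a\in A$ and $c_1\in C$ satisfy $a\,c_1\in\langle\!\langle w\rangle\!\rangle_G$. Applying $\rho$ shows that the image of $a$ in $(A\ast B)/\langle\!\langle r_0\rangle\!\rangle_{A\ast B}$ is trivial. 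But $A$ and $B$ are locally indicable and $r_0$ is cyclically reduced of syllable length at least $2$, so Theorem~\ref{HowFS} applies and the canonical map $A\to(A\ast B)/\langle\!\langle r_0\rangle\!\rangle_{A\ast B}$ is injective; hence $a=1$. Then $c_1\in C\cap\langle\!\langle w\rangle\!\rangle_G=N$, so $a\,c_1\in N$, which is exactly what was needed.

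I expect the only delicate step to be the opening one: carefully passing from the conjugacy hypotheses on $w$ to the statement that a cyclically reduced conjugate $r_0$ of $r$ has syllable length at least $2$, so that Theorem~\ref{HowFS} is genuinely applicable (one also notes that if $A$ or $B$ is trivial the hypothesis on $w$ cannot hold, so there is nothing to prove in that case). Everything after that — the auxiliary map $\rho$ and the identification of $\ker\overline\kappa$ using the very definition of $N$ — is purely formal.
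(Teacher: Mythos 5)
Your proposal is correct and follows essentially the same route as the paper's proof: project to the free-product factor, apply Howie's Freiheitssatz to a cyclically reduced conjugate of the $(A\ast B)$-component of $w$ to conclude that the $A$-component of any element of $(A\times C)\cap\langle\!\langle w\rangle\!\rangle_{G}$ is trivial, and then identify $(A\times C)/N$ with $A\times\widetilde{C}$. Your treatment is in fact slightly more explicit than the paper's at the point where one must replace $r$ by a cyclically reduced conjugate of syllable length at least $2$ before invoking Theorem~\ref{HowFS}.
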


The proof of this lemma is straight forward. In \cite[Theorem C (i)]{ArtAntKar}, the statement can be found that under the conditions of Lemma~\ref{lemmachi} even the direct product $A \times C$ embeds into $G / \langle \! \langle w \rangle \! \rangle_{G}$, but this statement is not correct (see Example~\ref{exmachi}).

In 1982, J. Howie published the following result.

\begin{theorem} \textbf{\emph{\cite[Theorem 4.2]{ArtHowOnLocInd}}} \label{Howlokind}
Let $A$ and $B$ be locally indicable groups, and let $G$ be the quotient of $A \ast B$ by the normal closure of a cyclically reduced word $r$ of length at least $2$. Then the following are equivalent:
\begin{itemize}
\item[$(i)$] $G$ is locally indicable;
\item[$(ii)$] $G$ is torsion-free;
\item[$(iii)$] $r$ is not a proper power in $A \ast B$.
\end{itemize}

\end{theorem}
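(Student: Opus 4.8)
I would prove the three conditions in the cycle $(i)\Rightarrow(ii)\Rightarrow(iii)\Rightarrow(i)$. The implication $(i)\Rightarrow(ii)$ is immediate from the remark in the introduction that every locally indicable group is torsion-free: a non-trivial finite cyclic subgroup would be a non-trivial finitely generated subgroup admitting no epimorphism onto $\mathbb{Z}$.

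For $(ii)\Rightarrow(iii)$ I would argue by contraposition. Suppose $r=s^{n}$ with $n\geq 2$. After replacing $s$ by a cyclic conjugate we may take $s$ cyclically reduced; since $r=s^{n}$ is then reduced of length at least $2$ and a word of length $1$ lies in a single free factor (which would force $r$ into that factor), $s$ is cyclically reduced of length at least $2$, and we may further assume $s$ is not itself a proper power. The plan is to show that the image $\bar{s}$ of $s$ in $G$ has order exactly $n$; since $\bar{s}^{n}=\bar{r}=1$, only the lower bound is in question, and this is the locally indicable version of the Karrass--Magnus--Solitar theorem on torsion in one-relator groups (cf.\ \cite{ArtMaKaSo}): concretely one shows $\langle s\rangle\cap\langle\!\langle s^{n}\rangle\!\rangle_{A\ast B}=\langle s^{n}\rangle$, which yields an embedding of $\mathbb{Z}/n$ into $G$, so $G$ has torsion. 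This identity is proved by the same covering-space/tower technology as the next implication.

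The substance of the statement is $(iii)\Rightarrow(i)$, and here I would follow Howie's tower method from \cite{ArtHowOnpairs}. Assume $r$ is not a proper power in $A\ast B$ and realise $G$ as $\pi_{1}(X)$, where $X$ is a relative presentation complex: a single $2$-cell attached along $r$ to a $2$-complex built from $K(A,1)$ and $K(B,1)$ whose fundamental group is $A\ast B$. A non-trivial finitely generated subgroup $K\leq G$ corresponds to a covering $\hat{X}\to X$; after passing to a subcomplex of finite type carrying $K$, one applies the tower lemma to obtain a maximal tower $\hat{X}\to X_{m}\to\cdots\to X_{0}=X$, each step a covering followed by an inclusion of a subcomplex. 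Maximality, together with the Freiheitssatz (Theorem~\ref{HowFS}) --- which supplies the $\pi_{1}$-injectivity that keeps every $X_{i}$ a complex built over locally indicable groups --- forces $X_{m}$ to be ``small'': either it carries no $2$-cells, so $\pi_{1}(X_{m})$ is a free product of subgroups of $A$, of $B$ and of a free group, hence locally indicable; or its surviving $2$-cells are attached along words that are not proper powers, and then the Freiheitssatz together with an Euler-characteristic count (attaching a $2$-cell along a non-proper-power loop does not destroy the needed homology) shows $\pi_{1}(X_{m})$ surjects onto $\mathbb{Z}$, hence is indicable. Descending the tower by induction on the number of $2$-cells then gives that $K=\pi_{1}(\hat{X})$ is indicable, and since $K$ was an arbitrary non-trivial finitely generated subgroup, $G$ is locally indicable. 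The hypothesis ``$r$ not a proper power'' is used exactly here: if $r=s^{n}$, the $2$-cell is attached along a non-primitive loop, producing an order-$n$ cone point, so the tower cannot avoid torsion --- which is also why $(ii)$ fails in that case.

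The main obstacle is the tower machinery. Converting the subgroup problem for the quotient $G$ into a covering-space problem and then verifying that each stage of the maximal tower is again a $2$-complex built over locally indicable groups, so that Theorem~\ref{HowFS} may be invoked at every level, is the technical heart; and the analysis at the top of the tower --- ruling out further reductions and extracting a surjection onto $\mathbb{Z}$ --- is where the asphericity coming from ``$r$ not a proper power'' enters in an essential way. A secondary point requiring care is the torsion computation in $(ii)\Rightarrow(iii)$, which cannot simply invoke Karrass--Magnus--Solitar over free factors but must be redone over locally indicable factors with the same techniques.
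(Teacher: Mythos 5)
The paper does not prove this statement: it is quoted verbatim from Howie \cite{ArtHowOnLocInd} and used as a black box, so there is no internal proof to compare against. Your outline is a faithful reconstruction of Howie's own argument: $(i)\Rightarrow(ii)$ is the trivial observation that locally indicable groups are torsion-free; $(ii)\Rightarrow(iii)$ rests on the locally indicable analogue of the Karrass--Magnus--Solitar torsion theorem (the root $s$ of $r=s^{n}$ has order exactly $n$ in $G$, via $\langle s\rangle\cap\langle\!\langle s^{n}\rangle\!\rangle_{A\ast B}=\langle s^{n}\rangle$); and $(iii)\Rightarrow(i)$ is the tower method of \cite{ArtHowOnpairs} combined with the Freiheitssatz (Theorem~\ref{HowFS}). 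So the route is the correct one, and the points you flag as delicate (keeping every stage of the tower a complex over locally indicable groups so that the Freiheitssatz applies, and extracting indicability at the top of a maximal tower) are indeed where the work lies. As a self-contained proof, however, what you have written is a roadmap rather than an argument: the tower lemma, the maximality analysis, the descent induction on the number of $2$-cells, and the order-$n$ computation behind $(ii)\Rightarrow(iii)$ are all named but not established. Within the economy of this paper that is exactly how the result is treated --- as an imported theorem --- so the proposal is adequate as an identification of the proof, but it should not be mistaken for one.
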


Using the Freiheitssatz for locally indicable groups and Theorem~\ref{Howlokind}, M. Edjvet proved the following result:

\begin{theorem} \textbf{\emph{\cite[(Main) Theorem]{ArtEdjvet}}} \label{Edj}
Let $A$, $B$ be locally indicable groups, and let $R$, $S \in A \ast B$ be cyclically reduced words each of length at least $2$. If $\langle \! \langle R \rangle \!  \rangle_{A \ast B} = \langle \! \langle  S \rangle \! \rangle_{A \ast B}$ then $R$ is a conjugate in $A \ast B$ of $S^{\pm 1}$. 
\end{theorem}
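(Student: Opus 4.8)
The plan is to strip off proper powers with Howie's Theorem~\ref{Howlokind}, reduce to finitely generated factors via the Freiheitssatz~\ref{HowFS}, and then run a Magnus-style induction on the syllable length $|R|$ of $R$ in $A\ast B$. Throughout put $N:=\langle\!\langle R\rangle\!\rangle_{A\ast B}=\langle\!\langle S\rangle\!\rangle_{A\ast B}$ and $G:=(A\ast B)/N$. Since $R$ and $S$ are cyclically reduced of length at least $2$, neither is conjugate into a factor, so $A$ and $B$ embed in $G$ by Theorem~\ref{HowFS} (equivalently Lemma~\ref{lemmachi} with $C=\{1\}$). From $A\cap N=B\cap N=\{1\}$ and $S\in N$ it follows that $S$ too is not conjugate into a factor, so $|S|\ge 2$; being cyclically reduced in a product of two factors, $|S|$ is even, and by symmetry we may assume $2\le|S|\le|R|$.

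\emph{Reductions.} If $R=\rho^{m}$ with $\rho$ not a proper power and $m\ge 2$, then $\rho$ is still cyclically reduced with $|\rho|=|R|/m\ge 2$, and Theorem~\ref{Howlokind} shows $G$ is not torsion-free; applying it to the relator $S$ yields $S=\sigma^{n}$ with $\sigma$ not a proper power and $n\ge 2$. Since $A$ and $B$ are torsion-free, the torsion of $G$ consists, by the one-relator-product analogue of the Karrass--Magnus--Solitar theorem, of the conjugates of powers of $\bar\rho$ (of order $m$); hence the maximal order of a torsion element of $G$ equals $m$, so $m=n$, and moreover $\bar\sigma$ is conjugate to a power of $\bar\rho$ generating $\langle\bar\rho\rangle$, which gives $\langle\!\langle\rho\rangle\!\rangle_{A\ast B}=\langle\!\langle\sigma\rangle\!\rangle_{A\ast B}$ (both normal closures contain $N$). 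Replacing $(R,S)$ by $(\rho,\sigma)$, we may assume $R$ and $S$ are not proper powers. Next, let $A_{1}\le A$ and $B_{1}\le B$ be generated by the finitely many syllables occurring in $R$ or $S$; these are finitely generated, hence indicable, and $R,S\in A_{1}\ast B_{1}$. Regarding $A\ast B$ as a graph of groups with vertex groups $A$, $A_{1}\ast B_{1}$, $B$ (in a path) and edge groups $A_{1}$, $B_{1}$, and applying Theorem~\ref{HowFS} inside $A_{1}\ast B_{1}$, the middle vertex group $(A_{1}\ast B_{1})/\langle\!\langle R\rangle\!\rangle_{A_{1}\ast B_{1}}$ embeds in $G$; hence $\langle\!\langle R\rangle\!\rangle_{A\ast B}\cap(A_{1}\ast B_{1})=\langle\!\langle R\rangle\!\rangle_{A_{1}\ast B_{1}}$, and the same for $S$, so $\langle\!\langle R\rangle\!\rangle_{A_{1}\ast B_{1}}=\langle\!\langle S\rangle\!\rangle_{A_{1}\ast B_{1}}$ and it suffices to prove the theorem in $A_{1}\ast B_{1}$. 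Thus we may also assume $A$ and $B$ finitely generated, hence indicable.

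\emph{Induction on $|R|$.} For the base case $|R|=|S|=2$, after a cyclic permutation $R=ab$ with $a\in A$, $b\in B$ non-trivial; then $G\cong A\ast_{\langle a\rangle=\langle b^{-1}\rangle}B$ with infinite cyclic edge group. Normal-form analysis in this amalgam shows that a cyclically reduced length-$2$ word $S=a'b'\in N$ must have $a'=a^{k}$ and $b'=b^{k}$ for some $k\ne 0$, and $\langle\!\langle a^{k}b^{k}\rangle\!\rangle_{A\ast B}=\langle\!\langle ab\rangle\!\rangle_{A\ast B}$ forces $|k|=1$ (for $|k|\ge 2$ one has $ab\ne 1$ in $A\ast_{\langle a^{k}\rangle=\langle b^{-k}\rangle}B$), so $S$ is conjugate to $(ab)^{\pm1}$. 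For $|R|>2$ I would imitate Magnus's breakdown in the form used by Brodskii to prove the Freiheitssatz: as $A\ast B$ is now indicable, choose an epimorphism $\varphi\colon A\ast B\to\mathbb{Z}$ under which $R$ has exponent-sum zero (arranging this may need the customary stabilization, adjoining an auxiliary $\mathbb{Z}$-direct factor and killing it at the end); then $G$ is an HNN extension, with a stable letter of $\varphi$-value $1$, over a base group which is again a free product of locally indicable groups and whose relator $\bar R$ is a strictly shorter word, and correspondingly for $S$. Using the HNN normal form one transfers the hypothesis down to $\langle\!\langle\bar R\rangle\!\rangle=\langle\!\langle\bar S\rangle\!\rangle$, applies the inductive hypothesis to obtain $\bar R\sim\bar S^{\pm1}$, and lifts the conjugacy back up to $A\ast B$.

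\emph{Main obstacle.} The delicate part is the inductive step, specifically the bookkeeping around the HNN decomposition: one must verify that its base group really is a free product of \emph{locally indicable} groups (so that Theorem~\ref{HowFS} and the inductive hypothesis remain available), that the descended relators $\bar R$ and $\bar S$ honestly have equal normal closures, and that the syllable length strictly decreases. This is the same point that makes the Freiheitssatz and Theorem~\ref{Howlokind} hard, and it is exactly where local indicability --- not mere torsion-freeness --- enters, since it provides the maps to $\mathbb{Z}$ and keeps the relevant subgroups locally indicable throughout the rewriting. An alternative to the algebraic breakdown is to argue with reduced pictures over the relative presentation of $G$ together with a curvature estimate in the style of Howie's tower method: the non-proper-power hypothesis forces a minimal picture witnessing $S\in N$ to be ``tight'', hence $|S|\ge|R|$ and so $|R|=|S|$, and a second application of the same ideas promotes equality of length to conjugacy of $R$ and $S^{\pm1}$.
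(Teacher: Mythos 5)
The paper does not actually prove Theorem~\ref{Edj} --- it is imported from Edjvet --- but it carries out the same style of argument in full detail for the generalization Proposition~\ref{propedjgen2}, so that is the natural benchmark. Your architecture matches it: restrict to the finitely generated (hence indicable) subgroups generated by the syllables of the relators, settle the length-two base case by a normal-form computation in an amalgam over a cyclic subgroup, and then run a Magnus breakdown along an exponent-sum-zero epimorphism onto $\mathbb{Z}$. Your base case is correct, and your reduction to finitely generated factors is exactly the paper's decomposition \eqref{eqIsomAStrich} with trivial $C$. (Your preliminary stripping of proper powers is harmless but superfluous --- a cyclically reduced word of length $2$ is never a proper power, and the induction nowhere needs the relator to be root-free --- and it leans on a Karrass--Magnus--Solitar analogue for one-relator products that appears nowhere in this paper.)

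The genuine gap is the sentence ``Using the HNN normal form one transfers the hypothesis down to $\langle\!\langle\bar R\rangle\!\rangle=\langle\!\langle\bar S\rangle\!\rangle$.'' What passing to $\ker(\varphi)$ actually gives (Lemma~\ref{lemnormcleq}) is equality of the normal closures of the \emph{infinite families} $\{R_i\}_{i\in\mathbb{Z}}$ and $\{S_j\}_{j\in\mathbb{Z}}$ of conjugates by powers of an element of $\varphi$-value $1$, whereas the induction needs equality of the normal closures of a single shortened relator on each side. Bridging that is the heart of the whole proof: in the paper one writes $\ker(\varphi)/\langle\!\langle R_i,\dots,R_j\rangle\!\rangle$ as an iterated amalgamated product (Lemmas~\ref{lemamalgPr} and~\ref{lemamalgPrrefl}, whose well-definedness again rests on the Freiheitssatz~\ref{HowFS}), takes $j-i$ minimal such that $S_0$ dies in that quotient, and compares the $\alpha$- and $\omega$-limits of $R_i$, $R_j$ and $S_0$ to force $i=j=0$; the same comparison is what shows the rewritten relator is strictly shorter, which you also assert without argument. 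Britton's lemma alone cannot substitute for this, since $S_0$ need not lie in the base group of the HNN decomposition. Two further defects in the nonzero-exponent-sum case: the correct stabilization adjoins roots to the factors (amalgamating $\langle\widetilde{a}\rangle$ over $\widetilde{a}^{\mu}=a$, with Theorem~\ref{Howlokind} keeping the enlarged factors locally indicable), not a ``$\mathbb{Z}$-direct factor''; and after obtaining conjugacy in the enlarged group you must descend it back to $A\ast B$, which requires a statement like Lemma~\ref{lemassum12} and is missing from your outline.
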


By a small consideration (which can be found in the author's M.Sc.~thesis) we directly get the following corollary:

\begin{corollary} \label{corEdj}
Let $\mathcal{J}$ be a non-trivial index set and $A_{j}$ ($j \in \mathcal{J}$) locally indicable groups with the Magnus property. Then $G = \Asterisk_{j \in \mathcal{J}} A_{j}$ possesses the Magnus property.
\end{corollary}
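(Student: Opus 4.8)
The plan is to bootstrap from M.\ Edjvet's theorem (Theorem~\ref{Edj}) together with the Freiheitssatz (Theorem~\ref{HowFS}) after first reducing to a free product of two factors. Suppose $u,v\in G$ satisfy $\langle \! \langle u \rangle \! \rangle_{G}=\langle \! \langle v \rangle \! \rangle_{G}$. Only finitely many of the $A_{j}$ occur in the normal forms of $u$ and $v$; let $\mathcal{J}_{0}\subseteq\mathcal{J}$ be a finite set containing their indices, so that $u,v\in H:=\Asterisk_{j\in\mathcal{J}_{0}}A_{j}\leq G$. The retraction $\rho\colon G\to H$ that is the identity on each $A_{j}$ with $j\in\mathcal{J}_{0}$ and trivial on the remaining factors is onto, so $\rho(\langle \! \langle u \rangle \! \rangle_{G})=\langle \! \langle u \rangle \! \rangle_{H}$ and $\rho(\langle \! \langle v \rangle \! \rangle_{G})=\langle \! \langle v \rangle \! \rangle_{H}$, whence $\langle \! \langle u \rangle \! \rangle_{H}=\langle \! \langle v \rangle \! \rangle_{H}$; thus it suffices to prove that $H$ has the Magnus property. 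Since a free product of locally indicable groups is again locally indicable, an induction on $|\mathcal{J}_{0}|$ — splitting off one factor, $H=A_{j_{0}}\ast\big(\Asterisk_{j\in\mathcal{J}_{0}\setminus\{j_{0}\}}A_{j}\big)$, and applying the inductive hypothesis to the second factor — reduces everything to the case $G=A\ast B$ with $A$ and $B$ locally indicable and each possessing the Magnus property.

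So let $u,v\in G=A\ast B$ with $\langle \! \langle u \rangle \! \rangle_{G}=\langle \! \langle v \rangle \! \rangle_{G}$; since conjugation does not change normal closures, I may freely replace $u$ and $v$ by conjugates. If $u=1$ then $v=1=u$, so assume $u\neq 1$ and that $u$ is cyclically reduced. I would first treat the case that $u$ lies in a factor, say $u=a\in A\setminus\{1\}$ after a further conjugation. Then $a\in A\cap\langle \! \langle v \rangle \! \rangle_{G}$, so $A\cap\langle \! \langle v \rangle \! \rangle_{G}\neq\{1\}$. Running through the possibilities for $v$ up to conjugacy: $v=1$ is excluded because $u\neq 1$; if $v$ were conjugate to a nontrivial $b\in B$, then $G/\langle \! \langle v \rangle \! \rangle_{G}\cong A\ast(B/\langle \! \langle b \rangle \! \rangle_{B})$ would force $A\cap\langle \! \langle v \rangle \! \rangle_{G}=\{1\}$; and if $v$ were conjugate to a cyclically reduced word of syllable length at least $2$, the Freiheitssatz (Theorem~\ref{HowFS}) applied to $G/\langle \! \langle v \rangle \! \rangle_{G}$ would again give $A\cap\langle \! \langle v \rangle \! \rangle_{G}=\{1\}$. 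Hence $v$ is conjugate to some $v_{0}\in A$, and I may take $v=v_{0}$. Applying the retraction $G\to A$ (killing $B$) to $\langle \! \langle a \rangle \! \rangle_{G}=\langle \! \langle v_{0} \rangle \! \rangle_{G}$ yields $\langle \! \langle a \rangle \! \rangle_{A}=\langle \! \langle v_{0} \rangle \! \rangle_{A}$, so the Magnus property of $A$ gives that $a$ is conjugate in $A$, hence in $G$, to $v_{0}^{\pm1}$. The case $u\in B\setminus\{1\}$ is symmetric.

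It remains to handle the case that $u$ is not conjugate into either factor; then, after conjugation, $u=R$ is cyclically reduced of syllable length at least $2$. Applying the previous paragraph with the roles of $u$ and $v$ swapped shows that $v$ cannot be conjugate into a factor either, so after conjugation $v=S$ is cyclically reduced of syllable length at least $2$. Since $\langle \! \langle R \rangle \! \rangle_{G}=\langle \! \langle S \rangle \! \rangle_{G}$, Theorem~\ref{Edj} shows that $R$ is conjugate in $G$ to $S^{\pm1}$, i.e.\ $u$ is conjugate in $G$ to $v^{\pm1}$, as required. The step I expect to need the most care is the mixed subcase in the second paragraph, where one must exclude $v$ lying in the wrong factor or being long: that is exactly where the Freiheitssatz and the elementary identification of $G/\langle \! \langle b \rangle \! \rangle_{G}$ for $b$ in a factor are used, while the remaining work is routine normal-form and retraction bookkeeping.
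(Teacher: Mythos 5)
Your proof is correct, and it is essentially the ``small consideration'' the paper alludes to (and defers to the author's M.Sc.\ thesis): reduce to finitely many factors by retraction, induct down to $A \ast B$, dispose of the case of an element conjugate into a factor via the Freiheitssatz (Theorem~\ref{HowFS}) together with the Magnus property of the factors, and apply Theorem~\ref{Edj} to the remaining case of cyclically reduced words of length at least $2$. No gaps; this matches the intended derivation of Corollary~\ref{corEdj} from Theorem~\ref{Edj}.
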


In Section~\ref{secdir}, we proof the following result using the method of M. Edjvet's proof of Theorem~\ref{Edj} along with Lemma~\ref{lemmachi} and Theorem~\ref{thmdirect}.

\begin{theorem} \label{thmedjvetgen}
Let $\mathcal{J}$ be an index set and let $A_{j}$ ($j \in \mathcal{J}$) be indicable as well as locally indicable groups. Further let $C$ be a group. Then $(\bigast_{j \in \mathcal{J}} A_{j} ) \times C$ possesses the Magnus property if and only if all $A_{j} \times C$ ($j \in \mathcal{J}$) possess the Magnus property.
\end{theorem}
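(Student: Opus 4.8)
The plan is to prove the two implications separately; essentially all the content sits in the ``if'' direction, and within it in the case where neither of the two given elements is conjugate into a free factor, which I intend to handle by transcribing M.~Edjvet's proof of Theorem~\ref{Edj} with the direct factor $C$ carried along. For the ``only if'' direction, fix $j_{0}\in\mathcal{J}$, write $G:=(\bigast_{j\in\mathcal{J}}A_{j})\times C$, and use the retraction $\rho\colon G\to A_{j_{0}}\times C$ that kills every $A_{j}$ with $j\neq j_{0}$. Since the image of a normal closure under a surjection is the normal closure of the image, $\rho(\langle\!\langle u\rangle\!\rangle_{G})=\langle\!\langle u\rangle\!\rangle_{A_{j_{0}}\times C}$ for $u\in A_{j_{0}}\times C$, and likewise for $v$, so $\langle\!\langle u\rangle\!\rangle_{A_{j_{0}}\times C}=\langle\!\langle v\rangle\!\rangle_{A_{j_{0}}\times C}$ implies $\langle\!\langle u\rangle\!\rangle_{G}=\langle\!\langle v\rangle\!\rangle_{G}$. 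If $G$ has the Magnus property this gives $u\sim_{G}v^{\pm1}$, and since two elements of a single free factor that are conjugate in a free product are conjugate inside that factor (the conjugacy theorem for free products applied in the free-product coordinate, together with conjugacy inside $C$ in the other coordinate), we get $u\sim_{A_{j_{0}}\times C}v^{\pm1}$. This direction uses neither indicability nor local indicability.

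For the ``if'' direction, assume every $A_{j}\times C$ has the Magnus property and let $u,v\in G$ with $\langle\!\langle u\rangle\!\rangle_{G}=\langle\!\langle v\rangle\!\rangle_{G}$. Only finitely many factors occur in $u$ and $v$; with $A:=\bigast_{j\in\mathcal{J}_{0}}A_{j}$ for that finite set $\mathcal{J}_{0}$, the retraction $G\to A\times C$ gives $\langle\!\langle u\rangle\!\rangle_{A\times C}=\langle\!\langle v\rangle\!\rangle_{A\times C}$, and as $A$ is again indicable and locally indicable, the finite case of the theorem then settles the general one. For finite $\mathcal{J}$ I would induct on $|\mathcal{J}|$ (grouping all but one factor into a single free product, which is again indicable and locally indicable and, after $\times\,C$, has the Magnus property by the inductive hypothesis), reducing to $|\mathcal{J}|=2$, i.e.\ $G=(A\ast B)\times C\cong(A\times C)\ast_{C}(B\times C)$ with $A$, $B$ indicable and locally indicable and $A\times C$, $B\times C$ having the Magnus property. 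Fix $u,v\in G$ with $\langle\!\langle u\rangle\!\rangle_{G}=\langle\!\langle v\rangle\!\rangle_{G}$ and write $u=(u',c)$, $v=(v',d)$ with $u',v'\in A\ast B$. If $u$ is conjugate into $A\times C$, then in $G/\langle\!\langle u\rangle\!\rangle_{G}$ --- which, $u$ lying in the factor $A\times C$, is an amalgamated product in which the $B$-part still injects, only the amalgamated $C$ being further quotiented --- the vanishing of $v$ forces $v$ to be conjugate into $A\times C$ as well, and then the retraction to $A\times C$ gives $\langle\!\langle u\rangle\!\rangle_{A\times C}=\langle\!\langle v\rangle\!\rangle_{A\times C}$, so the Magnus property of $A\times C$ closes the case. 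The case ``$u$ conjugate into $B\times C$'' is symmetric, and the two cases together show that if $u$ is conjugate into neither factor then $v$ isn't either.

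It remains to treat the case where neither $u$ nor $v$ is conjugate into $A\times C$ or $B\times C$; equivalently, after conjugating, $u'$ and $v'$ are cyclically reduced of length at least $2$ in $A\ast B$. Here I would run Edjvet's proof of Theorem~\ref{Edj} step by step with $C$ along for the ride: set $N:=\langle\!\langle u\rangle\!\rangle_{G}\cap C$, $\widetilde{C}:=C/N$, and note $G/\langle\!\langle u\rangle\!\rangle_{G}=G/\langle\!\langle v\rangle\!\rangle_{G}$. Wherever Edjvet invokes the Freiheitssatz (Theorem~\ref{HowFS}) to embed a factor into a one-relator quotient, I would instead invoke Lemma~\ref{lemmachi}, which provides canonical embeddings of $A\times\widetilde{C}$ and of $B\times\widetilde{C}$ into $G/\langle\!\langle u\rangle\!\rangle_{G}$; Theorem~\ref{Howlokind} is used as Edjvet uses it, to control proper powers and torsion, with the indicability of $A$ and $B$ supplying the homomorphisms onto $\mathbb{Z}$ that keep this control compatible with the $C$-coordinate (which is precisely why the indicability hypothesis becomes dispensable only when $C=\{1\}$); and Theorem~\ref{thmdirect} is used to keep the Magnus property available for the direct products with subquotients of $C$ that turn up along the way. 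The conclusion is that $u'$ is conjugate in $A\ast B$ to $(v')^{\pm1}$ and that the $C$-components match up, whence $u\sim_{G}v^{\pm1}$.

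The hard part is this last case. The subtle point is that Lemma~\ref{lemmachi} is strictly weaker than the Freiheitssatz analogue one would want: it embeds $A\times\widetilde{C}$, not $A\times C$, into the one-relator quotient --- the stronger claim of \cite[Theorem~C~(i)]{ArtAntKar} being false, cf.\ Example~\ref{exmachi} --- so each step of Edjvet's induction must be re-checked to see that it still goes through with this weaker input and with every group and element decorated by its $C$-component. Verifying that Edjvet's proper-power analysis and his final conjugacy conclusion survive the substitution, and that the bookkeeping of $C$- versus $\widetilde{C}$-components stays coherent throughout, is where the genuine work lies; the rest is the routine translation sketched above.
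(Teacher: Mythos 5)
Your overall strategy coincides with the paper's: reduce to finite $\mathcal{J}$ by a retraction, then to $|\mathcal{J}|=2$, and there rerun Edjvet's argument with the factor $C$ carried along, substituting Lemma~\ref{lemmachi} for the Freiheitssatz and using Theorem~\ref{thmdirect} where free groups crossed with $C$ appear. The ``only if'' direction and the reduction to two factors are fine. But the proposal has a genuine gap, and you name it yourself: the entire technical core of the $|\mathcal{J}|=2$ case is deferred to ``re-checking Edjvet's induction'', and that re-checking is precisely where the proof lives. In the paper this occupies Proposition~\ref{propedjgen2} together with a battery of preparatory lemmata that your sketch does not reconstruct: the passage to the finitely generated subgroups $A'$, $B'$ generated by the syllables of $r$ and the amalgamated decomposition \eqref{eqIsomAStrich}; the induction on syllable length with base case $F_{2}\times C$ (which is exactly where Lemma~\ref{lemindC} and Theorem~\ref{thmdirect} enter, not for ``subquotients of $C$''); the Reidemeister--Schreier/Kurosh descriptions of kernels of maps to $\mathbb{Z}$ (Lemmas~\ref{lemtestudai1}, \ref{lemkernphi}, \ref{lemnormcleq}); the staggered amalgamated-product Lemmas~\ref{lemC}--\ref{lemamalgPrrefl}, which are needed to pass from the normal closure of the whole family $\{(r_{i},c)\}_{i\in\mathbb{Z}}$ in the kernel to that of a single $(r_{0},c)$ via a minimality argument on $j-i$; the length-reduction that makes the induction close; the root-adjunction of Case~2 controlled by Theorem~\ref{Howlokind}; and Lemma~\ref{lemassum12} to pull the conjugacy back down. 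None of this is routine translation, and the $C$-versus-$\widetilde{C}$ bookkeeping you flag as delicate in fact requires a separate lemma (Lemma~\ref{lemC}) showing the quotient $\widetilde{C}$ is independent of the index.

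A second, more local issue: the paper does not compare two arbitrary elements $u$, $v$ as you do. It first invokes Lemma~\ref{Lemnus} to reduce the Magnus property of $(A\ast B)\times C$ to the single implication that $\langle\!\langle(r,c)\rangle\!\rangle=\langle\!\langle(r^{-1},c)\rangle\!\rangle$ forces $r\sim r^{-1}$ or $c\sim c^{-1}$; this both disposes of your case analysis on whether $u$ is conjugate into a factor (replaced by the easy case $|r|=1$ handled by projection) and makes the final ``the $C$-components match up'' automatic rather than something to be argued. Your claim that the vanishing of $v$ in $G/\langle\!\langle u\rangle\!\rangle_{G}$ forces $v$ to be conjugate into $A\times C$ when $u$ is would itself need a proof (the normal closure of $u\in A\times C$ in $G$ intersected with $A\times C$ is larger than $\langle\!\langle u\rangle\!\rangle_{A\times C}$ in general), and Example~\ref{exmachi} shows that naive expectations about such intersections fail. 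As it stands the proposal is a correct plan whose decisive steps remain unexecuted.
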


The results of Section~\ref{secdir} allow us to prove Main Theorem~\ref{main} in Section~\ref{secmain} in a similar way to \cite{ArtOneRel} and \cite{ArtMyMAP}.

\section{Direct products and the Magnus property} \label{secdir}

\subsection{Indicable groups and limit groups}

The following result was part of the author's M.Sc. thesis \cite{MA}.

\begin{theorem} \label{thmFxF}
All groups $F_{m} \times F_{n}$ with $m$, $n \in \mathbb{N} \cup \{\infty\}$, where $F_{k}$ is the free group of rank $k$, possess the Magnus property.
\end{theorem}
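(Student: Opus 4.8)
The plan is to reduce everything to Magnus's original theorem (the Magnus property of free groups) together with a precise description of the normal closure of a single element of a direct product. Write $G = F_{m} \times F_{n}$ and suppose $u = (u_{1}, u_{2})$ and $v = (v_{1}, v_{2})$ satisfy $\langle \! \langle u \rangle \! \rangle_{G} = \langle \! \langle v \rangle \! \rangle_{G}$. Projecting onto the two factors gives $\langle \! \langle u_{1} \rangle \! \rangle = \langle \! \langle v_{1} \rangle \! \rangle$ in $F_{m}$ and $\langle \! \langle u_{2} \rangle \! \rangle = \langle \! \langle v_{2} \rangle \! \rangle$ in $F_{n}$, so by Magnus each $u_{i}$ is conjugate to $v_{i}$ or $v_{i}^{-1}$. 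The elementary but decisive observation is that conjugacy in a direct product decouples coordinatewise: $(a_{1},a_{2})$ and $(b_{1},b_{2})$ are conjugate in $G$ precisely when $a_{1}$ is conjugate to $b_{1}$ in $F_{m}$ and $a_{2}$ is conjugate to $b_{2}$ in $F_{n}$. If some coordinate of $u$ is trivial, the corresponding coordinate of $v$ is trivial too and the statement follows at once from the free-group case; otherwise, replacing $v$ by a suitable conjugate — and, if necessary, by $v^{-1}$, neither of which changes $\langle \! \langle v \rangle \! \rangle_{G}$ or the conclusion — we may assume that either $v = u$, in which case we are done, or that $v = (u_{1}, u_{2}^{-1})$ with $u_{1} \neq 1 \neq u_{2}$. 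Thus the theorem reduces to showing $\langle \! \langle (u_{1},u_{2}) \rangle \! \rangle_{G} \neq \langle \! \langle (u_{1},u_{2}^{-1}) \rangle \! \rangle_{G}$ whenever $u_{1}$ and $u_{2}$ are non-trivial.

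To handle this, I would compute normal closures in $G$ explicitly. For $a \in F_{m}$ put $M_{a} := \langle \! \langle [a, F_{m}] \rangle \! \rangle_{F_{m}}$, the smallest normal subgroup of $F_{m}$ modulo which the image of $a$ becomes central, and likewise $M_{b}$ for $b \in F_{n}$. Using the identity $[(a,b),(g,h)] = ([a,g],[b,h])$ one sees that $M_{a} \times M_{b} \leq \langle \! \langle (a,b) \rangle \! \rangle_{G}$ and that $(a,b)$ is central modulo $M_{a} \times M_{b}$; hence $\langle \! \langle (a,b) \rangle \! \rangle_{G} = \{ (x,y)(a,b)^{k} : x \in M_{a},\ y \in M_{b},\ k \in \mathbb{Z}\}$. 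In particular, if $\langle \! \langle (u_{1},u_{2}) \rangle \! \rangle_{G} = \langle \! \langle (u_{1},u_{2}^{-1}) \rangle \! \rangle_{G}$ then $(u_{1}, u_{2}^{-1}) \in \langle \! \langle (u_{1},u_{2}) \rangle \! \rangle_{G}$, which forces $u_{1}^{\,1-k} \in M_{u_{1}}$ and $u_{2}^{\,-1-k} \in M_{u_{2}}$ for one and the same $k \in \mathbb{Z}$. The key lemma is now the following: for every non-trivial element $w$ of a free group $F$, the image of $w$ in $F / \langle \! \langle [w,F] \rangle \! \rangle$ has infinite order; equivalently, $w^{j} \in M_{w}$ implies $j = 0$. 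Granting this, $u_{1}^{\,1-k} \in M_{u_{1}}$ gives $k = 1$ while $u_{2}^{\,-1-k} \in M_{u_{2}}$ gives $k = -1$, a contradiction, which finishes the proof.

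I expect the key lemma to be the one step requiring an actual idea rather than bookkeeping, and I would prove it via the lower central series $\gamma_{1}(F) = F$, $\gamma_{i+1}(F) = [\gamma_{i}(F), F]$. Since free groups are residually nilpotent, there is a unique $n$ with $w \in \gamma_{n}(F) \setminus \gamma_{n+1}(F)$. Then $[w,F] \subseteq [\gamma_{n}(F), F] = \gamma_{n+1}(F)$, so $\langle \! \langle [w,F] \rangle \! \rangle \subseteq \gamma_{n+1}(F)$ and $F/\gamma_{n+1}(F)$ is a quotient of $F/\langle \! \langle [w,F] \rangle \! \rangle$; the image of $w$ there lies in $\gamma_{n}(F)/\gamma_{n+1}(F)$, which is free abelian, and is non-trivial because $w \notin \gamma_{n+1}(F)$, hence of infinite order. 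The only routine points still to verify are that the reduction in the first paragraph behaves correctly in the degenerate ranks $m$ or $n$ in $\{0,1,\infty\}$ and when $u$ or $v$ has a trivial coordinate; in each such case the argument above applies verbatim, with $F_{0} \times F_{n} \cong F_{n}$ covered directly by Magnus's theorem.
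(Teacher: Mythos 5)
Your proof is correct. Note that the paper does not actually reprove Theorem~\ref{thmFxF} here --- it quotes it from \cite{MA} --- but it does prove the generalization Theorem~\ref{thmdirect}, and your argument shares its skeleton: reduce (you via the coordinatewise decoupling of conjugacy, the paper via Lemma~\ref{Lemnus}) to ruling out $\langle \! \langle (u_{1},u_{2}) \rangle \! \rangle = \langle \! \langle (u_{1},u_{2}^{-1}) \rangle \! \rangle$ for non-trivial $u_{1},u_{2}$, and then extract from the two coordinates the incompatible equations $k=1$ and $k=-1$. Where you genuinely diverge is in how the exponent $k$ is forced. The paper's Lemma~\ref{lemfirstorder} shows that a non-trivial $r$ in a free group cannot satisfy $r^{\varepsilon}=\prod_{j} g_{j}^{-1}r^{\delta_{j}}g_{j}$ with $\sum_{j}\delta_{j}\neq\varepsilon$, and proves this via asphericity of one-relator presentations (Propositions~\ref{propasphone} and~\ref{propasphpairs} of Lyndon--Schupp). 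You instead compute $\langle \! \langle (a,b) \rangle \! \rangle_{G}=(M_{a}\times M_{b})\cdot\langle (a,b)\rangle$ and prove that $w$ has infinite order modulo $M_{w}=\langle \! \langle [w,F] \rangle \! \rangle$ using residual nilpotence of $F$ and the Magnus--Witt fact that the lower central quotients $\gamma_{n}(F)/\gamma_{n+1}(F)$ are torsion-free; these two key lemmas are equivalent in content (well-definedness of the exponent sum in a product of conjugates) but proved by independent means. Both routes are sound. Yours is more elementary and self-contained, avoiding the identity-theorem machinery; the paper's formulation as a false existential first-order sentence is what allows the statement to be transported to arbitrary limit groups (which share the existential theory of free groups) in Theorem~\ref{thmdirect}, a transfer your lower-central-series argument does not give verbatim. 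One small point worth making explicit in a write-up: the reduction to the single case $v=(u_{1},u_{2}^{-1})$ tacitly uses that a non-trivial element of a free group is never conjugate to its inverse (e.g.\ by bi-orderability), which is what guarantees this case really is a potential counterexample to be excluded rather than an instance of the conclusion.
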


In this section, we want to generalize that result to Theorem~\ref{thmdirect}. First, we cite some further result of the author's M.Sc.~thesis which will be used in this section. To formulate this lemma, we use the notation $r=(r_{i})_{i \in M}$ for an element $r \in G:= \bigtimes^{n}_{i=1} G_{i}$, where $M=\{1,2,\dots,n\}$ and where the $r_{i}$ are the components of $r$ from $G_{i}$. Further, we denote by $\{-1,1\}^{M}$ the set of all elements $(a_{i})_{i \in M}$ with $a_{i} \in \{-1,1\}$ for all $i \in M$. 

\begin{lemma}\textbf{\emph{\cite[Lemma 4.5.2]{MA}}} \label{Lemnus}
Consider a direct product $G = \bigtimes_{i=1}^{n} G_{i}$ with $n \in \mathbb{N} \backslash \{1\}$, where each group $G_{i}$ possesses the Magnus property. Let $M=\{1,2,\dots,n\}$. Then $G$ has the Magnus property if and only if the following holds: For all $(r_{i})_{i \in M} \in G$ and $\varepsilon =( \varepsilon_{i})_{i \in M} \in \{-1,1\}^{M} \backslash \{(-1)_{i \in M}, (1)_{i \in M}\}$ such that $\langle \! \langle (r_{i})_{i \in M} \rangle \! \rangle_{G} = \langle \! \langle (r_{i}^{\varepsilon_{i}})_{i\in M} \rangle \! \rangle_{G}$ we have
\begin{eqnarray*}
r_{i} \sim_{G_{i}} r_{i}^{-1} \ \ \text{for all} \ i \ \text{with} \ \varepsilon_{i}=1 \ \ \ \text{or} \ \ \ r_{i} \sim_{G_{i}} r_{i}^{-1} \ \text{for all} \ i \ \text{with} \ \varepsilon_{i}=-1 .
\end{eqnarray*}

\end{lemma}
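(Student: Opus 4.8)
The plan is to reduce the claim to the componentwise behaviour of the direct product $G=\bigtimes_{i=1}^{n}G_{i}$, via the coordinate projections $\pi_{i}\colon G\to G_{i}$. First I would record two elementary facts. (a) For every $x\in G$ one has $\pi_{i}\bigl(\langle\!\langle x\rangle\!\rangle_{G}\bigr)=\langle\!\langle \pi_{i}(x)\rangle\!\rangle_{G_{i}}$, since $\pi_{i}$ is a surjective homomorphism. (b) Two elements $g,h\in G$ satisfy $g\sim_{G}h$ if and only if $\pi_{i}(g)\sim_{G_{i}}\pi_{i}(h)$ for every $i$, because conjugation in a direct product is componentwise and the conjugating factors may be chosen independently; in particular, replacing an element of $G$ by a conjugate (which leaves its normal closure in $G$ unchanged) amounts to replacing each component by an arbitrary conjugate in its own factor, independently over $i$. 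I would stress that one should \emph{not} expect $\langle\!\langle(r_{i})_{i}\rangle\!\rangle_{G}=\bigtimes_{i}\langle\!\langle r_{i}\rangle\!\rangle_{G_{i}}$ in general, so the argument has to be pushed through the maps $\pi_{i}$ rather than through a product decomposition of normal closures.

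For the ``if'' direction I would assume the displayed condition and take $u,v\in G$ with $\langle\!\langle u\rangle\!\rangle_{G}=\langle\!\langle v\rangle\!\rangle_{G}$. Applying $\pi_{i}$ together with (a) gives $\langle\!\langle u_{i}\rangle\!\rangle_{G_{i}}=\langle\!\langle v_{i}\rangle\!\rangle_{G_{i}}$, so the Magnus property of $G_{i}$ yields $u_{i}\sim_{G_{i}}v_{i}$ or $u_{i}\sim_{G_{i}}v_{i}^{-1}$. Put $P=\{\,i\in M:u_{i}\sim_{G_{i}}v_{i}\,\}$ and $Q=\{\,i\in M:u_{i}\sim_{G_{i}}v_{i}^{-1}\,\}$, so $P\cup Q=M$. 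If $P=M$ then $u\sim_{G}v$ by (b), and if $Q=M$ then $u\sim_{G}v^{-1}$; otherwise $M\setminus P$ and $M\setminus Q$ are both nonempty, with $M\setminus P\subseteq Q$ and $M\setminus Q\subseteq P$. Then I would set $\varepsilon_{i}=1$ for $i\in P$ and $\varepsilon_{i}=-1$ for $i\notin P$, note that $\varepsilon\in\{-1,1\}^{M}\setminus\{(-1)_{i\in M},(1)_{i\in M}\}$, and use (b) to replace $v$ by a conjugate whose $i$-th component equals $u_{i}^{\varepsilon_{i}}$ for all $i$. Writing $r:=u$, this gives $\langle\!\langle(r_{i})_{i}\rangle\!\rangle_{G}=\langle\!\langle(r_{i}^{\varepsilon_{i}})_{i}\rangle\!\rangle_{G}$, so the hypothesis applies to $r$ and $\varepsilon$. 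A short componentwise check, splitting according to $\varepsilon_{i}=\pm1$, then shows that the first alternative of the displayed conclusion forces $r_{i}\sim_{G_{i}}r_{i}^{-\varepsilon_{i}}$ for every $i$, i.e. $u\sim_{G}v^{-1}$, while the second alternative forces $r_{i}\sim_{G_{i}}r_{i}^{\varepsilon_{i}}$ for every $i$, i.e. $u\sim_{G}v$. Hence $G$ has the Magnus property.

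For the ``only if'' direction I would assume $G$ has the Magnus property and take $r=(r_{i})_{i}\in G$ and $\varepsilon\in\{-1,1\}^{M}\setminus\{(-1)_{i\in M},(1)_{i\in M}\}$ with $\langle\!\langle(r_{i})_{i}\rangle\!\rangle_{G}=\langle\!\langle(r_{i}^{\varepsilon_{i}})_{i}\rangle\!\rangle_{G}$. The Magnus property of $G$ gives $(r_{i})_{i}\sim_{G}(r_{i}^{\varepsilon_{i}})_{i}$ or $(r_{i})_{i}\sim_{G}(r_{i}^{-\varepsilon_{i}})_{i}$. By (b), the first case yields $r_{i}\sim_{G_{i}}r_{i}^{\varepsilon_{i}}$ for all $i$, hence $r_{i}\sim_{G_{i}}r_{i}^{-1}$ for every $i$ with $\varepsilon_{i}=-1$; the second case yields $r_{i}\sim_{G_{i}}r_{i}^{-\varepsilon_{i}}$ for all $i$, hence $r_{i}\sim_{G_{i}}r_{i}^{-1}$ for every $i$ with $\varepsilon_{i}=1$. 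In either case the displayed conclusion holds, which completes this direction.

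No part of this is genuinely hard; the step I would handle most carefully is the bookkeeping in the ``if'' direction — checking that when the componentwise signs cannot all be chosen equal the vector $\varepsilon$ is genuinely mixed (so that the hypothesis is applicable), and then correctly matching the two alternatives in the conclusion to $u\sim_{G}v^{-1}$ and $u\sim_{G}v$. Everything else reduces to the two facts that $\pi_{i}$ carries equalities of normal closures in $G$ to equalities of normal closures in $G_{i}$, and that conjugacy in $G$ is detected componentwise.
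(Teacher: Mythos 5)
Your proof is correct and complete; the paper itself does not reprove this lemma (it is quoted from Lemma 4.5.2 of the author's M.Sc.\ thesis without proof), but your argument via the coordinate projections $\pi_{i}$ and componentwise detection of conjugacy is the natural one and is consistent with how the lemma is applied throughout the paper. The one delicate point --- verifying that the sign vector $\varepsilon$ you build from $P$ and $Q$ is genuinely mixed so that the displayed hypothesis is applicable, and then matching the two alternatives of the conclusion to $u\sim_{G}v^{-1}$ and $u\sim_{G}v$ --- is handled correctly.
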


This small lemma is a  tool for examining the Magnus property of direct products. As mentioned in \cite[Example (b) to Lemma 4.5.2]{MA} it follows directly from Lemma~\ref{Lemnus}, that $\mathbb{Z}_{4} \times \mathbb{Z}_{6}$ does not have the Magnus property. Thus, not every direct product of two groups with the Magnus property possesses the Magnus property, but note that every factor of a direct product with the Magnus property also has the Magnus property.

Next, we recall two results by R. Lyndon and P. Schupp on aspherical presentations. For the definition of aspherical presentations see \cite[Chapter III]{BuchLS}.

\begin{proposition} \textbf{\emph{\cite[cf. Proposition 11.1]{BuchLS}}} \label{propasphone}
If $G=\langle X \mid R \rangle$ where $R$ consists of a single relator, then the presentation is aspherical.
\end{proposition}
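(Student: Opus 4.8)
The statement is a form of Lyndon's Identity Theorem, and the plan is to recall the classical proof, which runs by induction on the free-reduced length $|R|$ of the relator. In the base case $R$ involves a single generator $x$, so up to conjugacy $R=x^{n}$: if $n=\pm 1$ the group is trivial and the presentation complex is contractible, while if $|n|\ge 2$ the group is $\mathbb{Z}/n$ and the presentation is aspherical relative to the cyclic subgroup generated by (the image of) $R$, the second homotopy module being the expected cyclic module over that subgroup. Accordingly, for the inductive step we may assume $R$ involves at least two generators (and, if one wants asphericity in the strict sense, that $R$ is not a proper power).

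For the inductive step I would first arrange, by a change of generators of the ambient free group (Moldavanskii's trick when the relevant exponent sums are nonzero), that some generator $x$ occurring in $R$ has exponent sum $0$ in $R$; asphericity of the original presentation follows from that of the rewritten one. Magnus's rewriting process then exhibits $G$ as an HNN extension $G=\langle H,x\mid x\,A\,x^{-1}=B\rangle$, where $H$ is a one-relator group on the translates $y_{i,j}:=x^{i}y_{j}x^{-i}$ (here $y_{j}$ ranges over the generators other than $x$, and $i$ over a bounded interval of integers) whose defining relator $R'$ has strictly smaller length than $R$, and where the associated subgroups $A,B$ are free (Magnus's Freiheitssatz) and embed into $H$. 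By the induction hypothesis the presentation complex $K_{H}$ is aspherical; the subcomplexes realizing $A$ and $B$ are graphs, hence aspherical, and the edge maps are $\pi_{1}$-injective. The presentation complex $K$ of $G$ is thus the total space of a graph of aspherical spaces (one vertex space $K_{H}$, one edge space for the HNN gluing) with $\pi_{1}$-injective edge maps; its universal cover is a tree of contractible spaces and so is contractible, whence $\pi_{2}(K)=0$. In the purely diagrammatic formulation of Lyndon--Schupp one runs the same induction directly on reduced spherical diagrams over the symmetrized relator, performing diagram surgery along the HNN decomposition.

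The main obstacle is the combinatorial bookkeeping inside Magnus's rewriting: checking that $R'$ is genuinely shorter than $R$ (this is exactly where the exponent-sum-zero reduction is used) and that $A$ and $B$ are as described, together with the verification that asphericity is inherited by the total space of the graph of spaces. An alternative that stays within the circle of ideas used elsewhere in this article is Howie's tower technique --- the same machinery behind Theorem~\ref{HowFS}: a reduced spherical diagram over $K$ lifts through a tower of covering maps and subcomplex inclusions to a $\pi_{1}$-surjective map into a complex carrying a single efficiently attached $2$-cell labelled by a cyclic permutation of $R$, and when $R$ is not a proper power this forces the diagram to be empty, i.e.\ $\pi_{2}(K)=0$.
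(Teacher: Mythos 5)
The paper does not prove this proposition at all: it is quoted verbatim from Lyndon--Schupp (\cite[Proposition III.11.1]{BuchLS}), i.e.\ it is Lyndon's Identity Theorem in the form stated there, and the article simply uses it as a black box. So there is no internal proof to compare yours against; the only question is whether your sketch actually establishes the statement as it is used here.

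It does not, and the gap is in the very first move: you are proving the wrong notion of ``aspherical.'' In Lyndon--Schupp Chapter III, Section 10, asphericity of a presentation is a combinatorial condition on identities among relations (every identity sequence is equivalent to a ``trivial'' one), and Proposition III.11.1 asserts this for \emph{every} one-relator presentation, proper powers included. The topological statement you aim at, $\pi_{2}(K)=0$ for the presentation complex, is genuinely false when $R=s^{m}$ with $m\geqslant 2$, which is why you are forced to insert the caveat ``if one wants asphericity in the strict sense, that $R$ is not a proper power'' --- but that caveat excludes exactly the cases the paper needs. Indeed, Lemma~\ref{lemfirstorder} applies Proposition~\ref{propasphpairs} to an arbitrary non-trivial $r$ in a free group, and the pairing conclusion there relies on the centralizer clause $u_{i}\in u_{j}NC_{i}$ with $C_{i}$ generated by the \emph{root} of $r_{i}=s_{i}^{m_{i}}$, i.e.\ precisely on the proper-power case of combinatorial asphericity. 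Your Magnus-induction / graph-of-spaces / tower machinery is the right engine (it is essentially how Lyndon--Schupp and Howie argue), but it has to be run on identity sequences over the symmetrized relator set, or equivalently on the relation module, tracking the cyclic isotropy of a proper-power relator, rather than on $\pi_{2}$ of the $2$-complex. As written, your argument either proves a statement that is false in general (topological asphericity for all $R$) or proves a strictly weaker statement (asphericity only for non-proper-powers) that does not support the applications in the paper. There is also a small slip in the base case: for $R=x^{\pm1}$ the group is free on $X\setminus\{x\}$, not trivial.
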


\begin{proposition} \textbf{\emph{\cite[cf. Proposition 10.2]{BuchLS}}} \label{propasphpairs}
If $G = \langle X \mid R \rangle$ is aspherical, and no element of $R$ is conjugate to another or to its inverse, then the following condition holds for $F=\langle X \mid \rangle$ and $N = \langle \! \langle R \rangle \! \rangle_{F}$:

Let $p_{1} \cdots p_{n}=1$ where each $p_{i} = u_{i}r_{i}^{e_{i}} u_{i}^{-1}$ for some $u_{i} \in F$, $r_{i} \in R$ and $e_{i}= \pm 1$. Then the indices fall into pairs $(i,j)$ such that $r_{i} = r_{j}$, $e_{i} = - e_{j}$, and $u_{i} \in u_{j}NC_{i}$ where $C_{i}$ (the centralizer of $r_{i}$) is the cyclic group generated by the root $s_{i}$ of $r_{i} = s_{i}^{m_{i}}$.
\end{proposition}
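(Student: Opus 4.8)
The plan is to recast the statement in terms of the second homology of the Cayley complex and to extract the pairing by Fox calculus. Set $F=F(X)$, $N=\langle\!\langle R\rangle\!\rangle_{F}$ and $G=F/N$, let $K$ be the presentation complex of $\langle X\mid R\rangle$, and let $\widetilde{K}$ be its universal cover, the Cayley complex of $G$. Since $G$ acts freely on the cells of $\widetilde{K}$, the cellular chain complex is a complex of free $\mathbb{Z}[G]$-modules
\[
C_{2}\xrightarrow{\ \partial_{2}\ }C_{1}\xrightarrow{\ \partial_{1}\ }C_{0},
\]
with $C_{2}=\bigoplus_{r\in R}\mathbb{Z}[G]\,e_{r}$ and $\partial_{2}(e_{r})=\sum_{x\in X}\overline{\tfrac{\partial r}{\partial x}}\,e_{x}$, where $\tfrac{\partial r}{\partial x}\in\mathbb{Z}[F]$ is the Fox derivative and the bar is the ring homomorphism $\mathbb{Z}[F]\to\mathbb{Z}[G]$. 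Writing $r=s_{r}^{m_{r}}$ with $s_{r}$ the root (so that $C_{i}=\langle s_{r_{i}}\rangle$), the identity $\tfrac{\partial r}{\partial x}=(1+s_{r}+\dots+s_{r}^{m_{r}-1})\tfrac{\partial s_{r}}{\partial x}$ shows that $(\bar{s}_{r}-1)$ kills $\partial_{2}(e_{r})$ on the left, so each $(\bar{s}_{r}-1)e_{r}$ lies in $\ker\partial_{2}$. The algebraic content of the hypotheses --- $\langle X\mid R\rangle$ aspherical and no relator conjugate to another or to its inverse --- is exactly that $\ker\partial_{2}$ equals the $\mathbb{Z}[G]$-submodule generated by these obvious classes, i.e.\ $\ker\partial_{2}=\sum_{r\in R}\mathbb{Z}[G](\bar{s}_{r}-1)e_{r}$; when no relator is a proper power this reads $\ker\partial_{2}=0$, the topological asphericity of $K$. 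I would take this reformulation as the working input, deriving it from the diagram-theoretic definition in \cite[Ch.~III]{BuchLS} if necessary, and I would note that the non-conjugacy hypothesis is precisely what prevents extra kernel elements such as $e_{r'}-\bar{g}\,e_{r}$ (when $r'=grg^{-1}$) from appearing.

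Given $p_{1}\cdots p_{n}=1$ in $F$ with $p_{i}=u_{i}r_{i}^{e_{i}}u_{i}^{-1}$, I would differentiate. For each $x\in X$, applying the product rule for Fox derivatives to $\prod_{i}p_{i}$ and projecting to $\mathbb{Z}[G]$ makes every prefix image $\overline{p_{1}\cdots p_{k}}$ and every $\bar{p}_{i}$ equal to $1$ (since the $p_{i}$ lie in $N$), so all the ``telescoping'' terms vanish, and using $\overline{\tfrac{\partial r_{i}^{e_{i}}}{\partial x}}=e_{i}\,\overline{\tfrac{\partial r_{i}}{\partial x}}$ (again because $\bar{r}_{i}=1$) one is left with $\sum_{i}e_{i}\,\bar{u}_{i}\,\overline{\tfrac{\partial r_{i}}{\partial x}}=0$ for every $x$. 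Equivalently, the chain $c:=\sum_{i=1}^{n}e_{i}\,\bar{u}_{i}\,e_{r_{i}}\in C_{2}$ satisfies $\partial_{2}(c)=0$.

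By the reformulation of asphericity, $c=\sum_{r\in R}\lambda_{r}(\bar{s}_{r}-1)e_{r}$ for suitable $\lambda_{r}\in\mathbb{Z}[G]$; comparing coefficients of the basis vector $e_{r}$ (the $e_{r}$ being a $\mathbb{Z}[G]$-basis of $C_{2}$) gives $\sum_{i:\,r_{i}=r}e_{i}\,\bar{u}_{i}\in\mathbb{Z}[G](\bar{s}_{r}-1)$ for each $r\in R$. Decomposing $G$ into left cosets of $\langle\bar{s}_{r}\rangle$, membership in this left ideal means exactly that, for each coset, the signs $e_{i}$ attached to indices with $r_{i}=r$ and $\bar{u}_{i}$ in that coset sum to zero; matching each $+1$ with a $-1$ inside each coset produces pairs $(i,j)$ with $r_{i}=r_{j}$, $e_{i}=-e_{j}$ and $\bar{u}_{j}^{-1}\bar{u}_{i}\in\langle\bar{s}_{r}\rangle$, the last condition being equivalent to $u_{j}^{-1}u_{i}\in N\langle s_{r}\rangle=NC_{i}$, i.e.\ $u_{i}\in u_{j}NC_{i}$. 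Running over all $r\in R$ partitions $\{1,\dots,n\}$ as claimed.

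The main obstacle I anticipate is the first step: making precise --- and proving from the combinatorial definition of asphericity in \cite[Ch.~III]{BuchLS} --- the exact description of $\ker\partial_{2}$, which is where proper-power relators force $\ker\partial_{2}\neq 0$ and the cyclic subgroups $C_{i}$ genuinely enter. In the torsion-free case the whole argument collapses to ``$\partial_{2}$ injective'', and the conclusion simplifies to $u_{i}\in u_{j}N$. The remaining steps --- the Fox-derivative computation and the coset analysis of the left ideal $\mathbb{Z}[G](\bar{s}_{r}-1)$ --- are routine.
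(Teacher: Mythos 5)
This proposition is not proved in the paper at all: it is quoted from Lyndon--Schupp, so the only thing to compare against is their argument, which is combinatorial rather than homological --- from the identity $p_{1}\cdots p_{n}=1$ one forms a spherical diagram (equivalently, works with the identity sequence up to Peiffer moves), asphericity forces a cancelling pair of faces, that pair is one pair $(i,j)$ of the conclusion (the factor $C_{i}$ arising because the two boundary labels may be matched only after a cyclic rotation by a power of the root), and one deletes it and inducts. The downstream half of your argument is correct and complete: the Fox-derivative computation giving $\partial_{2}\bigl(\sum_{i}e_{i}\bar u_{i}e_{r_{i}}\bigr)=0$ is right, and so is the coset analysis, since $\mathbb{Z}[S](\bar s-1)$ is the augmentation ideal of $\mathbb{Z}[S]$ whether $\bar s$ has finite or infinite order, so $\mathbb{Z}[G](\bar s_{r}-1)$ is exactly the set of elements whose coefficients sum to zero over each left coset of $\langle\bar s_{r}\rangle$, and $\pi^{-1}(\langle\bar s_{r}\rangle)=NC_{i}$ turns the coset condition into $u_{i}\in u_{j}NC_{i}$.

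The genuine gap is the step you flag yourself: the identification $\ker\partial_{2}=\sum_{r\in R}\mathbb{Z}[G](\bar s_{r}-1)e_{r}$. This is not a harmless restatement of the hypotheses; it carries essentially the entire content of the proposition. The asphericity in \cite[Ch.~III]{BuchLS} is necessarily a combinatorial notion (Proposition~\ref{propasphone} asserts it for one-relator presentations with proper-power relators, whose presentation complexes have $\pi_{2}\neq 0$), and passing from that combinatorial condition to a generating set for $\ker\partial_{2}\cong\pi_{2}(\widetilde K)$ is exactly as hard as Proposition~\ref{propasphpairs}: the standard proofs that $\pi_{2}$ of such a presentation is generated by the root elements $(\bar s_{r}-1)e_{r}$ proceed precisely by pairing off the terms of identity sequences, i.e.\ by the very statement you are trying to prove. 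The dictionary between identity sequences and classes in $\ker\partial_{2}$ is also more delicate than your phrasing suggests: for $r=s^{2}$ the sequence $\bigl(r,\ sr^{-1}s^{-1}\bigr)$ is an identity whose class is the nonzero element $(1-\bar s)e_{r}$, even though its two terms are inverse as elements of $F$, so one must track conjugators and not merely elements of $N$. As written, the argument is therefore circular (or at best defers the whole difficulty to an unproved input); everything after that input is routine and correct, as you say.
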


Using those two propositions we are able to prove the following lemma.

\begin{lemma} \label{lemfirstorder}
Let $m \in \mathbb{N}$ and $\delta_{j}$, $\varepsilon \in \{\pm 1\}$ $(1 \leqslant j \leqslant m)$ with $\overset{m}{\underset{j=1}{\Sigma}} \delta_{j} \neq \varepsilon$. Further let $F$ be a free abelian or free group. Then the sentence
\begin{eqnarray} \label{eqsent1}
\exists g_{j} \ (1 \leqslant j \leqslant m), \ \exists r \colon \neg (r=1) \ \land \ r^{\varepsilon} = \overset{m}{\underset{j=1}{\Pi}}g_{j}^{-1} r^{\delta_{j}} g_{j}
\end{eqnarray}
is false in $F$.
\end{lemma}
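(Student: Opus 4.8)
The plan is to treat the two kinds of $F$ separately. If $F$ is free abelian, the sentence \eqref{eqsent1} fails for the cheap reason that everything commutes: the right‑hand side collapses to $r^{\sum_{j=1}^{m}\delta_{j}}$, so the displayed equation becomes $r^{\varepsilon-\sum_{j}\delta_{j}}=1$, and since $F$ is torsion‑free while $\varepsilon-\sum_{j}\delta_{j}\neq 0$ by hypothesis, this forces $r=1$. So from now on assume $F$ is free, and suppose for contradiction that \eqref{eqsent1} holds in $F$, witnessed by elements $g_{1},\dots,g_{m}$ and $r\neq 1$ with $r^{\varepsilon}=\prod_{j=1}^{m}g_{j}^{-1}r^{\delta_{j}}g_{j}$.

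The first step is a harmless normalization: we may replace $r$ by a cyclically reduced conjugate. Writing $r=wr_{0}w^{-1}$ with $r_{0}$ cyclically reduced, conjugating the witnessing identity by $w^{-1}$ turns it into the identity $r_{0}^{\varepsilon}=\prod_{j=1}^{m}(w^{-1}g_{j}w)^{-1}r_{0}^{\delta_{j}}(w^{-1}g_{j}w)$ of exactly the same shape, and $r_{0}\neq 1$. So we may assume $r$ itself is cyclically reduced and nontrivial. Now fix a free basis $X$ of $F$, regard $F$ as $\langle X\mid\ \rangle$, and consider the one‑relator presentation $\langle X\mid r\rangle$. By Proposition~\ref{propasphone} this presentation is aspherical. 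Moreover no nontrivial element of a free group is conjugate to its inverse (for instance because free groups are bi‑orderable), so $r$ is not conjugate to $r^{-1}$; and the relator set $\{r\}$ is a singleton, hence contains no two distinct mutually conjugate relators. Thus the hypotheses of Proposition~\ref{propasphpairs} are satisfied for $\langle X\mid r\rangle$.

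The second step is a parity/exponent count. Rewrite the witnessing identity as
\[ 1 = r^{-\varepsilon}\cdot\prod_{j=1}^{m} g_{j}^{-1}r^{\delta_{j}}g_{j} \]
in $F$. This exhibits $1$ as a product $p_{0}p_{1}\cdots p_{m}$ of $m+1$ conjugates of powers $r^{\pm 1}$: namely $p_{0}=r^{-\varepsilon}$ (so $u_{0}=1$, $e_{0}=-\varepsilon$) and $p_{j}=g_{j}^{-1}r^{\delta_{j}}g_{j}$ (so $u_{j}=g_{j}^{-1}$, $e_{j}=\delta_{j}$) for $1\leqslant j\leqslant m$, all with $r_{i}=r$. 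Proposition~\ref{propasphpairs} then forces the indices $0,1,\dots,m$ to fall into pairs on which the exponents cancel, i.e. on each pair $e_{i}=-e_{j}$; in particular $\sum_{i=0}^{m}e_{i}=0$, that is, $-\varepsilon+\sum_{j=1}^{m}\delta_{j}=0$. This contradicts the hypothesis $\sum_{j=1}^{m}\delta_{j}\neq\varepsilon$. Hence no such witnesses exist and \eqref{eqsent1} is false in $F$.

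The only point requiring genuine care is verifying that Proposition~\ref{propasphpairs} truly applies, namely that the single relator $r$ is not conjugate to its inverse — which is where $r\neq 1$ (together with the standard description of conjugacy in free groups) enters — and that the cyclic‑reduction normalization does not disturb the combinatorial shape of the identity. By contrast, the possibility that $r$ is a proper power causes no trouble: Proposition~\ref{propasphpairs} already absorbs this through the centralizers $C_{i}$, and it is irrelevant to the exponent count, which only uses the pairing of the $e_{i}$.
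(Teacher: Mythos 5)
Your proposal is correct and follows essentially the same route as the paper: the abelian case via torsion-freeness, and the free case via asphericity of the one-relator presentation (Proposition~\ref{propasphone}) combined with the pairing of exponents from Proposition~\ref{propasphpairs}, yielding $-\varepsilon+\sum_{j}\delta_{j}=0$. Your extra care in cyclically reducing $r$ and checking that $r$ is not conjugate to $r^{-1}$ (so that the hypotheses of Proposition~\ref{propasphpairs} are genuinely met) is a harmless refinement of the paper's argument, not a different approach.
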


\begin{proof}
We prove this lemma by contradiction and assume that \eqref{eqsent1} is true in $F$. First, we consider the case that $F$ is abelian. Then \eqref{eqsent1} corresponds to
\begin{eqnarray}
\exists r \colon \neg (r=1) \ \land \ r^{\varepsilon} = r^{\delta},
\end{eqnarray}
where $\delta := \overset{m}{\underset{j=1}{\Sigma}} \delta_{j}$. If this sentence is true in $F$, then we have $r^{\varepsilon-\delta}=1$. Thus for $\delta \neq \varepsilon$ the group $F$ has torsion. This is a contradiction. For the case that $F$ is not abelian let $\mathcal{X}$  be a basis of $F$. We consider the group presentation $G = \langle \mathcal{X} \mid r \rangle$. It follows directly from Proposition~\ref{propasphone} and Proposition~\ref{propasphpairs} that the set $\{- \varepsilon, \delta_{1},\delta_{2},\dots,\delta_{n}\}$ falls into pairs $(i,j)$ with $i=-j$. Finally, we have
\begin{eqnarray*}
- \varepsilon + \overset{m}{\underset{j=1}{\Sigma}} \delta_{j} = 0 \ \ \Leftrightarrow \ \ \overset{m}{\underset{j=1}{\Sigma}} \delta_{j} = \varepsilon
\end{eqnarray*}
which contradicts the assumption of the lemma.
\end{proof}

Before we can begin with the proof of Theorem~\ref{thmdirect} we need the following lemma concerning indicable groups.

\begin{lemma} \label{lemindC}
Let $C$ be an arbitrary group. If there is an indicable group $G$ such that $G \times C$ possess the Magnus property, then $\mathbb{Z} \times C$ also possess the Magnus property.
\end{lemma}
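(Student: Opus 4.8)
The plan is to use that $G$ is indicable to realize $\mathbb{Z}\times C$ as a subgroup of $G\times C$ and then to transport the Magnus property down along this embedding. Fix an epimorphism $\phi\colon G\rightarrow\mathbb{Z}$ and an element $t\in G$ with $\phi(t)=1$; since $\phi(t^{n})=n$, the element $t$ has infinite order, so $\langle t\rangle\cong\mathbb{Z}$ and hence $\langle t\rangle\times C\cong\mathbb{Z}\times C$ is a subgroup of $G\times C$. Now $\mathbb{Z}$ trivially possesses the Magnus property and $C$ possesses it as a direct factor of $G\times C$, so by Lemma~\ref{Lemnus} it suffices to verify, for every $(z,c)\in\mathbb{Z}\times C$ and every sign pattern $(\varepsilon_{1},\varepsilon_{2})\in\{(-1,1),(1,-1)\}$, that the equality $\langle \! \langle (z,c) \rangle \! \rangle_{\mathbb{Z}\times C}=\langle \! \langle (z^{\varepsilon_{1}},c^{\varepsilon_{2}}) \rangle \! \rangle_{\mathbb{Z}\times C}$ forces $z=0$ or $c\sim_{C}c^{-1}$.

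So assume $z\neq 0$. Transporting the hypothesis along the isomorphism $\mathbb{Z}\times C\rightarrow\langle t\rangle\times C$, $(n,d)\mapsto(t^{n},d)$, gives $\langle \! \langle (t^{z},c) \rangle \! \rangle_{\langle t\rangle\times C}=\langle \! \langle (t^{\varepsilon_{1}z},c^{\varepsilon_{2}}) \rangle \! \rangle_{\langle t\rangle\times C}$. Here I would insert the elementary observation that if $a$ and $b$ lie in a subgroup $H$ of a group $K$ and $\langle \! \langle a \rangle \! \rangle_{H}=\langle \! \langle b \rangle \! \rangle_{H}$, then $\langle \! \langle a \rangle \! \rangle_{K}=\langle \! \langle b \rangle \! \rangle_{K}$, because $b\in\langle \! \langle a \rangle \! \rangle_{H}\subseteq\langle \! \langle a \rangle \! \rangle_{K}$ and symmetrically. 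Applying this with $H=\langle t\rangle\times C\le K=G\times C$ yields $\langle \! \langle (t^{z},c) \rangle \! \rangle_{G\times C}=\langle \! \langle (t^{\varepsilon_{1}z},c^{\varepsilon_{2}}) \rangle \! \rangle_{G\times C}$, so the Magnus property of $G\times C$ applies to this pair.

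Finally I would invoke the Magnus property of $G\times C$: $(t^{z},c)$ is conjugate in $G\times C$ either to $(t^{\varepsilon_{1}z},c^{\varepsilon_{2}})$ or to its inverse $(t^{-\varepsilon_{1}z},c^{-\varepsilon_{2}})$. Projecting any such conjugacy onto the first coordinate and then applying $\phi$ shows that the option requiring $t^{z}$ to be conjugate to $t^{-z}$ in $G$ is impossible, since it would give $z=-z$ in $\mathbb{Z}$; hence in each case only the option in which the first coordinates already coincide can occur, which forces $\varepsilon_{2}=-1$ in the first case and $\varepsilon_{2}=1$ in the second, and projecting onto the second coordinate then yields $c\sim_{C}c^{-1}$ in both cases. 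This is exactly the conclusion required, so Lemma~\ref{Lemnus} shows that $\mathbb{Z}\times C$ possesses the Magnus property. I do not expect a serious obstacle; the only delicate point is the bookkeeping of the two sign patterns, and the reason for routing the argument through Lemma~\ref{Lemnus} rather than comparing two arbitrary elements of $\mathbb{Z}\times C$ directly is that such a direct comparison leaves the signs in the two coordinates uncoordinated, and it is precisely this coordination that the passage to $G\times C$ supplies.
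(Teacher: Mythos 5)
Your proof is correct and takes essentially the same route as the paper: both realize $\mathbb{Z}\times C$ inside $G\times C$ via an element $t$ with $\phi(t)=1$, push the normal-closure equality up, and use the epimorphism to $\mathbb{Z}$ to rule out the conjugacy option that would force $z=-z$. The paper's version is marginally more direct, arguing by contraposition with an arbitrary pair $r,s\in\mathbb{Z}\times C$ and using the retraction $\psi\circ\varphi=\mathrm{id}_{\mathbb{Z}\times C}$ to pull the conjugacy back wholesale, which bypasses the reduction through Lemma~\ref{Lemnus} and the sign bookkeeping; but your componentwise analysis is complete and the appeal to Lemma~\ref{Lemnus} is legitimate since $\mathbb{Z}$ and $C$ (as a direct factor of $G\times C$) both have the Magnus property.
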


\begin{proof}
Our proof will be by contradiction. We assume that $\mathbb{Z} \times C$ does not possess the Magnus property and that $G$ is an indicable group such that $G \times C$ possesses the Magnus property. Since $G$ is indicable there exists an epimorphism $\zeta \colon G \rightarrow \mathbb{Z}$. Let $g$ be an element of $G$ with $\zeta(g)=1$ (where 1 is a generator of $\mathbb{Z}$). Further let $\varphi \colon \mathbb{Z} \times C \hookrightarrow G \times C$ be the monomorphism defined by $\varphi(1,c)=(g,c)$ and $\psi \colon G \times C \rightarrow \mathbb{Z} \times C$ be the homomorphism defined by $\psi(h,c) = (\zeta(h),c)$ for all $h \in G$ and $c \in C$. We consider the chain of homomorphisms
\begin{eqnarray} \label{eqhomchain}
\mathbb{Z} \times C \ \  \overset{\varphi}{\hookrightarrow} \ \ G \times C \ \ \overset{\psi}{\longrightarrow} \ \ \mathbb{Z} \times C.
\end{eqnarray}
Note that $\psi \circ \varphi = \text{id}_{\mathbb{Z} \times C}$. By assumption, there are elements $r$, $s \in \mathbb{Z} \times C$  with the same normal closure in $\mathbb{Z} \times C$ such that  $r$ is neither conjugate to $s$ nor to $s^{-1}$. It follows immediately from \eqref{eqhomchain} that $\varphi(r)$ and $\varphi(s)$ have the same normal closure in $G \times C$, but cannot be conjugate in $G \times C$. Therefore $G \times C$ does not have the Magnus property which contradicts the assumption.
\end{proof}

Now we are able to give a proof of Theorem~\ref{thmdirect}.\medskip

\noindent\textit{Proof of Theorem~\ref{thmdirect}.\,}
Because of Lemma~\ref{lemindC}, the statement
\begin{center}
$\text{(i)}^{*}$ \ \ The direct product $\mathbb{Z} \times C$ possesses the Magnus property. 
\end{center}
follows from statement (i) of Theorem~\ref{thmdirect}. Since the statements (i) and $\text{(i)}^{*}$ are also contained in statement (ii) of Theorem~\ref{thmdirect}, it is sufficient to prove that statement (ii) follows from statement $\text{(i)}^{*}$.

Consider a direct product $G:= \bigtimes_{i \in \mathcal{I}} L_{i}$ of limit groups $L_{i}$ with the Magnus property. Note that $C$ possesses the Magnus property because it is a factor of the direct product $\mathbb{Z} \times C$ with the Magnus property. Since every counterexample for the Magnus property of $G \times C$ is contained in a direct product consisting of $C$ and finitely many direct factors of $G$, we assume w.\,l.\,o.\,g. $\mathcal{I} = \{1,2,\dots,n\}$ for some $n \in \mathbb{N}$. Because of Lemma~\ref{Lemnus} the direct product $G \times C$ possesses the Magnus property if and only if the statement 
\begin{eqnarray} \label{eqaim1}
&& \big( \ r_{i} \ \sim_{L_{i}} \ r_{i}^{-1} \ \ \forall \ i \ \text{with} \ \varepsilon_{i}=1 \ \text{and} \ c \ \sim_{C} \ c^{-1} \ \big)\nonumber \\
&\text{or}& \big( \ r_{i} \ \sim_{L_{i}} \ r_{i}^{-1} \ \ \forall \ i \ \text{with} \ \varepsilon_{i} =-1 \ \big)
\end{eqnarray}
is true for all elements $((r_{i})_{i \in \mathcal{I}},c) \in G \times C$ and all $(\varepsilon_{i})_{i \in \mathcal{I}} \in \{( \pm 1 )_{i \in \mathcal{I}} \} \backslash \{(1)_{i\in \mathcal{I}} \}$ with 
\begin{eqnarray} \label{eqt1}
\langle \! \langle \big((r_{i})_{i\in \mathcal{I}},c \big) \rangle \! \rangle_{G \times C} = \langle \! \langle \big((r_{i}^{\varepsilon_{i}})_{i\in \mathcal{I}},c \big) \rangle \! \rangle_{G \times C}.
\end{eqnarray}
By removing all factors $L_{i}$ for $i$ with $r_{i}=1$ we can assume w.\,l.\,o.\,g. $r_{i} \neq 1$ for all $i \in \mathcal{I}$. Let $((r_{i})_{i \in \mathcal{I}},c) \in G \times C$ and $(\varepsilon_{i})_{i \in \mathcal{I}} \in \{( \pm 1 )_{i \in \mathcal{I}} \} \backslash \{(1)_{i\in \mathcal{I}} \}$ with \eqref{eqt1}. Our aim is to show that statement \eqref{eqaim1} holds. Because of \eqref{eqt1} there exist elements $m \in \mathbb{N}$, $\delta_{j} \in \{-1,1\}$ and $w_{j} \in G \times C$ ($1 \leqslant j \leqslant m$) with
\begin{eqnarray} \label{eqhelp1}
\big( ( r_{i}^{\varepsilon_{i}})_{i \in \mathcal{I}},c \big) = \overset{m}{\underset{j=1}{\Pi}} w_{j}^{-1} ((r_{i})_{i\in \mathcal{I}},c)^{\delta_{j}} w_{j}.
\end{eqnarray}
In particular there exist for every $i \in \mathcal{I}$ elements $h(i)_{j} \in L_{i}$ ($1 \leqslant j \leqslant m$) with
\begin{eqnarray} \label{eqhelp2}
r_{i}^{\varepsilon_{i}} \ = \ \overset{m}{\underset{j=1}{\Pi}} h(i)_{j}^{-1} r_{i}^{\delta_{j}} h(i)_{j}.
\end{eqnarray}

We know from Lemma~\ref{lemfirstorder} that the sentence
\begin{eqnarray*}
\exists g_{j} \ (1 \leqslant j \leqslant m), \ r \colon \neg (r=1) \ \land \ r^{\varepsilon_{i}}= \overset{m}{\underset{j=1}{\Pi}} g_{j}^{-1} r^{\delta_{j}} g_{j}
\end{eqnarray*}
is false for $\overset{m}{\underset{j=1}{\Sigma}} \delta_{j} \neq \varepsilon_{i}$ in every free or free abelian group. Since limit groups possess the same existential theory as free or free abelian groups of finite rank it follows from \eqref{eqhelp2} that 
\begin{eqnarray} \label{eqhelp3}
\overset{m}{\underset{j=1}{\Sigma}} \delta_{j} = \varepsilon_{i} \ \ \forall i \in \mathcal{I}.
\end{eqnarray}
In particular all $\varepsilon_{i}$ ($i \in \mathcal{I}$) are equal. By assumption we therefore have $\varepsilon_{i} = -1$ ($i \in \mathcal{I}$) and it follows that \eqref{eqaim1} holds in the case $c=1$. For the case $c \neq 1$ we consider the canonical projection $\pi \colon G \times C \rightarrow L_{1} \times C$. By applying $\pi$ to \eqref{eqhelp1} we get
\begin{eqnarray} \label{eqhelp4}
(r_{1}^{-1},c) \ = \ \overset{m}{\underset{j=1}{\Pi}} (h(1)_{j},d_{j})^{-1} (r_{1},c)^{\delta_{j}} (h(1)_{j},d_{j}),
\end{eqnarray}
where $\Sigma_{j=1}^{m} \delta_{j} = -1$ and $\pi(w_{j})=(h(1)_{j},d_{j})$ ($1 \leqslant j \leqslant m, \ d_{j} \in C$). Let $z$ be a generator of $\mathbb{Z}$. We deduce from \eqref{eqhelp4} that
\begin{eqnarray*}
(z^{-1},c)=\overset{m}{\underset{j=1}{\Pi}} (1,d_{j})^{-1} (z,c)^{\delta_{j}} (1,d_{j}) \ \ \ \text{and} \ \ \ (z,c) = \overset{m}{\underset{j=1}{\Pi}} (1,d_{j})^{-1} (z^{-1},c)^{\delta_{j}} (1,d_{j})
\end{eqnarray*}
hold in $\mathbb{Z} \times C$. Altogether we get $\langle \! \langle (z,c) \rangle \! \rangle_{\mathbb{Z} \times C} = \langle \! \langle (z,c^{-1}) \rangle \! \rangle_{\mathbb{Z} \times C}$. Since $z$ is not conjugate to $z^{-1}$ in $\mathbb{Z}$ and $\mathbb{Z} \times C$ possesses the Magnus property by assumption, it follows from Lemma~\ref{Lemnus} that $c$ is conjugate to $c^{-1}$ in $C$. Finally, \eqref{eqaim1} holds because of $\varepsilon_{i}=-1$ ($i\in \mathcal{I}$).
\qed

\subsection{Small generalization of the Freiheitssatz for locally indicable groups} 

We already recalled the Freiheitssatz for locally indicable groups (see Theorem~\ref{HowFS}). In this subsection, we prove a small generalization of that Freiheitssatz, namely Lemma~\ref{lemmachi}. \medskip

\noindent\textbf{Proof of Lemma~\ref{lemmachi}.}
To the contrary, suppose that $u=(a,c)$ is an element of $(A \times C) \backslash (\{1\} \times C)$ which is trivial in $G/ \langle \! \langle w \rangle \! \rangle_{G}$. Then there exist $n \in \mathbb{N}$, $\varepsilon_{i} \in \{-1,1\}$ and elements $p_{i} \in A \ast B$, $d_{i} \in C$ ($1 \leqslant i \leqslant n$) with
\begin{eqnarray} \label{eqcont1}
(a,c) \ \ = \ \ \overset{n}{\underset{i=1}{\Pi}} (p_{i},d_{i})^{-1} w^{\varepsilon_{i}} (p_{i},d_{i}) \ \ \ \text{in} \ \ \ G.
\end{eqnarray}
Let $\pi \colon G \rightarrow A \ast B$ be the canonical projection. We have $\pi(a)=a \neq 1$. Further $\pi(w)$ is neither conjugate to an element of $A$ nor $B$. Because of $(a,c) \in \langle \! \langle w \rangle \! \rangle_{G}$ the element $\pi((a,c))=a$ is an element of the normal closure of $\pi(w)$ in $\pi(G)=A \ast B$. This contradicts Theorem~\ref{HowFS}. Thus, $\langle \! \langle w \rangle \! \rangle_{G} \cap (A \times C)= \langle \! \langle w \rangle \! \rangle_{G} \cap C$. Because of
\begin{eqnarray*}
(A \times C) / ( \langle \! \langle w \rangle \! \rangle_{G} \cap C) \ \ = \ \ (A \times C) / N \ \ = \ \ A \times (C / N) \ \ = \ \ A \times \widetilde{C}
\end{eqnarray*}
and the trivial canonical embedding
\begin{eqnarray*}
(A \times C) / ( \langle \! \langle w \rangle \! \rangle_{G} \cap (A \times C)) \ \ \hookrightarrow \ \ G / \langle \! \langle w \rangle \! \rangle_{G}
\end{eqnarray*}
we get the canonical embedding of $A \times \widetilde{C}$ into $G / \langle \! \langle w \rangle \! \rangle_{G}$.
\qed

The following example shows that under the condition of Lemma~\ref{lemmachi} it is in general not possible to embed $A \times C$ into $G / \langle \! \langle w \rangle \! \rangle_{G}$. 

\begin{example} \label{exmachi}
Let $A= \langle a \mid \rangle$, $B = \langle b \mid \rangle$ be free groups of rank $1$ and $C= \langle c,d \mid \rangle$ a free group of rank $2$. Further let 
\begin{eqnarray*}
G \ \ :=  \  \ (A \ast B) \times C \ \ = \ \ \langle a, b ,c,d \, \mid \, [a,c], \, [b,c], \, [a,d], \, [b,d] \, \rangle
\end{eqnarray*}
and $w:=abc$. Then we have
\begin{eqnarray*}
d^{-1}w^{-1}dw \ \ = \ \ [d,w]  \ \ = \ \ [d,abc] \ \ = \ \ [d,c]c^{-1} [d,ab]c \ \ = \ \ [d,c] \in C.
\end{eqnarray*}
Therefore, $[d,c]$ is a non-trivial element of $C$, which corresponds to the trivial element in $G/ \langle \! \langle w \rangle \! \rangle$. 
\end{example}

\subsection{Auxiliary results}

Before we consider the statement of Theorem~\ref{Edj}, we prove some auxiliary statements.

\begin{lemma} \label{lemtestudai1}
Let $A$, $B$ be two groups and $\varphi \colon A \ast B \twoheadrightarrow \mathbb{Z}$ an epimorphism mapping all elements of $B$ to the identity element $0 \in \mathbb{Z}$. Further let $a$ be an element of $A$ with $\varphi(a)=1$. Then we have
\begin{eqnarray*}
\ker(\varphi) \ = \ (A \cap \ker(\varphi)) \ast \underset{i \in \mathbb{Z}}{\bigast} a^{-i} B a^{i}.
\end{eqnarray*}
\end{lemma}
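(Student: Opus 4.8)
The plan is to use the standard Reidemeister--Schreier / Bass--Serre machinery. Set $F := A \ast B$ and $K := \ker(\varphi)$. The epimorphism $\varphi$ realizes $F$ as an HNN-type splitting only implicitly, so instead I would work directly with a Schreier transversal. Since $\varphi$ kills $B$ entirely and sends $a \in A$ to the generator $1 \in \mathbb{Z}$, the powers $\{a^i : i \in \mathbb{Z}\}$ form a set of coset representatives for $K$ in $F$ (they are mapped bijectively onto $\mathbb{Z}$, and every coset of $K$ is hit because $\varphi$ is surjective). First I would check that this transversal is a Schreier transversal with respect to a suitable generating set: pick a generating set $X_A$ of $A$ containing $a$, and a generating set $X_B$ of $B$; then $X_A \cup X_B$ generates $F$, and $\{a^i\}$ is Schreier since each $a^i$ is a prefix-closed word in this alphabet (prefixes of $a^i$ are again powers of $a$).

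Next I would apply the Reidemeister--Schreier rewriting process. The generators of $K$ are the elements $a^i x \, (\overline{a^i x})^{-1}$ for $x \in X_A \cup X_B$, where $\overline{g}$ denotes the coset representative of $g$. For $x \in X_B$ we have $\varphi(a^i x) = i$, so $\overline{a^i x} = a^i$, giving generators $a^i x a^{-i}$ — these generate the conjugate $a^{i} B a^{-i}$ (or $a^{-i} B a^{i}$ depending on conventions; I would fix the sign to match the statement). For $x = a$ the rewriting generator is $a^i \cdot a \cdot a^{-(i+1)} = 1$, so $a$ contributes nothing. For the remaining generators $x \in X_A \setminus \{a\}$, since $\varphi(x) = 0$ we get generators $a^i x a^{-i}$ lying in $a^i (A) a^{-i}$; together with the identity $a = a$ these generate exactly $a^i (A \cap K) a^{-i}$ for each $i$, but one must be slightly careful — actually the cleanest statement is that conjugating the whole subgroup relation downstairs, $K \cap a^i A a^{-i} = a^i (A \cap K) a^{-i}$, and for $i = 0$ this is just $A \cap K$.

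To get the free product decomposition rather than just a generating set, I would invoke the Kurosh subgroup theorem (or equivalently the Bass--Serre tree for the free product $F = A \ast B$): $K$ acts on the Bass--Serre tree of $A \ast B$, and since $K$ is normal the quotient graph is the Cayley graph of $F/K = \mathbb{Z}$, i.e.\ a line. The vertex stabilizers are the intersections of $K$ with conjugates of $A$ and of $B$; choosing the transversal $\{a^i\}$, the $A$-type vertices give stabilizers $a^i (A \cap K) a^{-i}$ and the $B$-type vertices give $a^i B a^{-i}$ (note $B \subseteq K$ already, so $B \cap K = B$). Since the quotient graph is a tree (a line), the fundamental group of this graph of groups is just the free product of all the vertex groups, with no edge contributions. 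But all the $A$-type vertices are $\mathbb{Z}$-translates of a single one (because $a$ normalizes nothing but shifts the indices), and conjugating by the appropriate power of $a$ identifies them; more precisely the free product $\bigast_{i \in \mathbb{Z}} a^i(A \cap K) a^{-i}$ collapses — no, rather, one keeps one copy $A \cap K$ and the $B$-conjugates are genuinely distinct. The upshot is $K = (A \cap K) \ast \bigast_{i \in \mathbb{Z}} a^{-i} B a^{i}$.

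The main obstacle I anticipate is bookkeeping: making sure the Schreier transversal is chosen so that the rewriting collapses the $A$-part to a single free factor $A \cap \ker\varphi$ rather than infinitely many conjugates, and pinning down the correct exponent sign on the $B$-conjugates. This is essentially a careful application of Reidemeister--Schreier with the observation that $a$ acts on the index set $\mathbb{Z}$ by translation, so all the $B$-conjugates are distinct free factors while the $A$-piece, being $\varphi$-invariant, contributes only once. Alternatively, and perhaps more transparently, one can give the argument entirely via Bass--Serre theory as sketched above, which avoids explicit rewriting at the cost of citing the structure theorem for groups acting on trees; I would likely present the Bass--Serre version as the main line and relegate the transversal computation to a remark, since it makes the "single copy of $A \cap \ker\varphi$" phenomenon geometrically obvious (there is one $A$-orbit of vertices and the point stabilizer is $A \cap \ker\varphi$).
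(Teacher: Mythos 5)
Your proposal is correct in substance and ends up leaning on the same main tool as the paper, namely the Kurosh subgroup theorem applied to $K=\ker(\varphi)$ acting on (the tree of) $A\ast B$ with the transversal $\{a^i\}$: one orbit of $A$-type cosets with stabilizer $A\cap K$ (note $a^{i}(A\cap K)a^{-i}=A\cap K$ since $A\cap K=\ker(\varphi|_A)\trianglelefteq A$), and $\mathbb{Z}$-many orbits of $B$-type cosets with stabilizers the conjugates $a^{-i}Ba^{i}\subseteq K$. The genuine difference is how the two arguments kill the free Kurosh factor. You argue geometrically that the quotient graph of the Bass--Serre tree is a tree, so there is no free contribution; the paper instead keeps the unknown free factor $F$ from Kurosh, observes that $N:=(A\cap K)\ast\bigast_i a^{-i}Ba^{i}$ is already normal in $A\ast B$ with $(A\ast B)/N\cong\mathbb{Z}\cong(A\ast B)/K$, and concludes $K=N$ because a surjection $\mathbb{Z}\to\mathbb{Z}$ has trivial kernel. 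Your route is more geometric and makes the ``one copy of $A\cap K$'' phenomenon visible; the paper's route avoids any discussion of the quotient graph at the cost of the extra quotient computation. One slip you should fix: the quotient graph is \emph{not} the Cayley graph of $\mathbb{Z}$ (a line). It is an infinite star: a single $A$-vertex (one double coset $K\backslash(A\ast B)/A$) joined by $\mathbb{Z}$-many edges to $\mathbb{Z}$-many $B$-vertices. Your conclusion survives because a star is still a tree, but the justification as written is inconsistent with your own (correct) count of one $A$-orbit. The Reidemeister--Schreier preamble is dispensable once you run the Bass--Serre argument, and as you yourself note, the claim that the $A$-type rewriting generators ``generate exactly $a^{i}(A\cap K)a^{-i}$ for each $i$'' needs the normality of $A\cap K$ in $A$ to collapse into a single free factor.
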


\begin{proof}
We choose $\{a^{i} \mid i \in \mathbb{Z} \}$ as a system of coset representatives of double cosets $\ker(\varphi) \backslash (A \ast B) / B$. The set $\ker(\varphi) \backslash (A \ast B) / A$ only consists of the trivial double coset. We choose the representative 1. The choice of these representatives enable us to apply the Kurosh subgroup theorem (see e.\,g. \cite[§5.5 Theorem 14]{BookSerre}). Thus, we have
\begin{eqnarray*}
\ker(\varphi) \ = \ F \ast \big( A \cap \ker(\varphi) \big) \ \ast \ \underset{i \in \mathbb{Z}}{\bigast} \big( a^{-i} B a^{i} \cap \ker(\varphi) \big) \ = \ F \ast \big( A \cap \ker(\varphi) \big) \ast \underset{i \in \mathbb{Z}}{\bigast} a^{-i} B a^{i}
\end{eqnarray*}
for a free group $F$. We further note that the subgroup
\begin{eqnarray*}
N \ := \ \big( A \cap \ker(\varphi) \big) \ast \underset{i \in \mathbb{Z}}{\bigast} a^{-i} B a^{i}
\end{eqnarray*}
of $\ker(\varphi)$ is normal in $A \ast B$ and $a^{i}$ ($i \in \mathbb{Z}$) is a system of coset representatives of $N$ in $A \ast B$. It follows:
\begin{eqnarray*}
\big( ( A \ast B ) / N \big) / ( \ker(\varphi) / N) \ \ \cong \ \ (A \ast B) / \ker(\varphi) \ \ \cong \ \ \langle a \mid \,\rangle \ \ \cong \ \ (A \ast B)/N
\end{eqnarray*}
Finally, we have $\ker(\varphi) = N$.
\end{proof}

\begin{lemma} \label{lemkernphi}
Let $A$, $B$ be two groups and $\psi \colon A \ast B \twoheadrightarrow \mathbb{Z}$ an epimorphism sending an element $a \in A$ and an element $b \in B$ to $1$ (where $1$ is a generator of $\mathbb{Z}$). We consider the group $G' := A \ast B \ast \langle x \rangle$ and extend $\psi$ to an epimorphism $\varphi \colon G' \rightarrow \mathbb{Z}$ with $\varphi(x)=1$. Let $a_{i} := x^{-i} a x^{i-1}$ and $b_{i} := x^{-i}bx^{i-1}$. Then we have
\begin{eqnarray*}
\ker(\varphi) \ \ = \ \ (A \cap \ker(\varphi) \big) \ast \big(B \cap \ker(\varphi) \big) \ \ast \ \langle \{a_{i} \mid i \in \mathbb{Z} \} \cup \{b_{i} \mid i \in \mathbb{Z} \} \mid \rangle.
\end{eqnarray*}
\end{lemma}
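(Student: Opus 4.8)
The plan is to follow the pattern of the proof of Lemma~\ref{lemtestudai1} (a Kurosh‑type description of $\ker(\varphi)$), but to make the free factor explicit by exploiting the splitting $G' = \ker(\varphi)\rtimes\langle x\rangle$ coming from $\varphi(x)=1$. Write $P := A\cap\ker\varphi$ and $Q := B\cap\ker\varphi$. Since $\varphi$ extends $\psi$ and $\psi(a)=\psi(b)=1$, the element $a$ has infinite order, $\langle a\rangle\cap P=\{1\}$ and $\langle a\rangle P = A$, so $A = P\rtimes\langle a\rangle$, and likewise $B = Q\rtimes\langle b\rangle$; let $c_a\in\operatorname{Aut}(P)$, $c_b\in\operatorname{Aut}(Q)$ be the conjugation automorphisms and set $a_0 := ax^{-1}$, $b_0 := bx^{-1}$, so that $a_i = x^{-i}a_0x^{i}$ and $b_i = x^{-i}b_0x^{i}$. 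Eliminating $a = a_0x$ and $b = b_0x$ by a Tietze transformation rewrites $G' = A\ast B\ast\langle x\rangle$ as
\[
G' = \big\langle\, P\cup Q\cup\{a_0,b_0,x\}\ \big|\ \operatorname{rel}(P),\ \operatorname{rel}(Q),\ xpx^{-1}=a_0^{-1}c_a(p)a_0\ (p\in P),\ xqx^{-1}=b_0^{-1}c_b(q)b_0\ (q\in Q)\,\big\rangle .
\]

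The first step is to check that $N := \langle\,P,\,Q,\,\{a_i\}_{i\in\mathbb Z},\,\{b_i\}_{i\in\mathbb Z}\,\rangle$ equals $\ker(\varphi)$. Clearly $N\subseteq\ker(\varphi)$. Conjugation by $x^{\pm1}$ sends $a_i\mapsto a_{i\mp1}$, $b_i\mapsto b_{i\mp1}$, and (by the displayed relations) $P$ and $Q$ into $N$, so $G' = \langle P,Q,a_0,b_0,x\rangle$ normalizes $N$; hence $N\trianglelefteq G'$. In $G'/N$ the images of $P,Q$ vanish and $\bar a = \bar b = \bar x$, so $G'/N = \langle\bar x\rangle$ is cyclic; since $N\subseteq\ker(\varphi)$ the map $\varphi$ induces an epimorphism $G'/N\twoheadrightarrow\mathbb Z$, so $G'/N$ is infinite cyclic, and the canonical epimorphism $G'/N\twoheadrightarrow G'/\ker(\varphi)\cong\mathbb Z$ is then an isomorphism, giving $N = \ker(\varphi)$.

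It remains to recognise $\ker(\varphi) = N$ as the internal free product $P\ast Q\ast F$ with $F$ free on $\{a_i\}\cup\{b_i\}$. For this I would build the abstract group $\widehat M := P\ast Q\ast F(\{\alpha_i,\beta_i : i\in\mathbb Z\})$ and, via the universal property of the free product, the automorphism $\theta$ with $\theta(\alpha_i)=\alpha_{i-1}$, $\theta(\beta_i)=\beta_{i-1}$, $\theta(p)=\alpha_0^{-1}c_a(p)\alpha_0$, $\theta(q)=\beta_0^{-1}c_b(q)\beta_0$ (its inverse being $\alpha_i\mapsto\alpha_{i+1}$, $\beta_i\mapsto\beta_{i+1}$, $p\mapsto\alpha_1c_a^{-1}(p)\alpha_1^{-1}$, $q\mapsto\beta_1c_b^{-1}(q)\beta_1^{-1}$), and then form the mapping torus $\widehat G := \widehat M\rtimes_\theta\langle t\rangle$. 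Comparing presentations, one gets homomorphisms $\widehat\Psi\colon\widehat G\to G'$ (via $p\mapsto p$, $q\mapsto q$, $\alpha_i\mapsto a_i$, $\beta_i\mapsto b_i$, $t\mapsto x$) and $\Theta\colon G'\to\widehat G$ (via $p\mapsto p$, $q\mapsto q$, $a_0\mapsto\alpha_0$, $b_0\mapsto\beta_0$, $x\mapsto t$): well‑definedness is immediate because the defining relations of $\widehat G$ are exactly the conjugation identities holding in $G'$ (for instance $xpx^{-1}=a_0^{-1}c_a(p)a_0$, which holds simply because $a = a_0x$, and $xa_ix^{-1}=a_{i-1}$, which holds by the definition of $a_i$). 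One checks $\widehat\Psi\circ\Theta = \operatorname{id}$ and $\Theta\circ\widehat\Psi = \operatorname{id}$ on generators — the only point to note being $\Theta(a_i) = t^{-i}\alpha_0t^{i} = \alpha_i$ in $\widehat G$ — so $\widehat\Psi$ is an isomorphism. It carries the normal subgroup $\widehat M\trianglelefteq\widehat G$ onto $\langle P,Q,\{a_i\},\{b_i\}\rangle = N = \ker(\varphi)$, whence $\ker(\varphi)\cong\widehat M = P\ast Q\ast F(\{a_i\}\cup\{b_i\})$ with $\alpha_i\leftrightarrow a_i$, $\beta_i\leftrightarrow b_i$, which is the assertion. (One could instead first extract the Kurosh decomposition $\ker(\varphi) = F\ast P\ast Q$ with $F$ free, as in Lemma~\ref{lemtestudai1}, and then run the same bookkeeping to exhibit $\{a_i\}\cup\{b_i\}$ as a basis of $F$; a Reidemeister–Schreier computation with the transversal $\{x^{i}:i\in\mathbb Z\}$ is yet another route.)

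The only real work is this last step, and it is routine: defining $\theta$ through the universal property, verifying it is an automorphism, and confirming that $\widehat\Psi$ and $\Theta$ respect the relations and compose to the identities. There is no conceptual obstacle beyond bookkeeping — once the presentation of $G'$ adapted to $A = P\rtimes\langle a\rangle$ and $B = Q\rtimes\langle b\rangle$ is written down, all relations match by inspection; the one thing to watch carefully is the index shift in $a_i = x^{-i}a_0x^{i}$ and in the action of $\theta$.
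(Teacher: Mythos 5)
Your proof is correct, but it takes a genuinely different route from the paper's. The paper first invokes the Kurosh subgroup theorem (with transversal $\{1\}$ for the double cosets) to obtain $\ker(\varphi)=F\ast(A\cap\ker\varphi)\ast(B\cap\ker\varphi)$ for \emph{some} free group $F$, then identifies $F$ with the free group on $\{a_i\}\cup\{b_i\}$ by asserting that these elements satisfy no relations among themselves or with $(A\cap\ker\varphi)\ast(B\cap\ker\varphi)$, and finally proves generation by explicitly rewriting a normal form (illustrated on the sample word $uvx^{3}u'$) into a product of the claimed generators. You instead exploit the splittings $A=(A\cap\ker\varphi)\rtimes\langle a\rangle$ and $B=(B\cap\ker\varphi)\rtimes\langle b\rangle$ (valid because $\mathbb{Z}$ is free), pass to a Tietze-transformed presentation of $G'$ in the generators $P\cup Q\cup\{a_0,b_0,x\}$, and exhibit $G'$ as a mapping torus of the abstract free product $P\ast Q\ast F(\{\alpha_i,\beta_i\})$; the identification $N=\ker\varphi$ then drops out of the clean quotient argument $G'/N\cong\mathbb{Z}$ rather than from a rewriting computation. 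Your approach buys an explicit isomorphism that simultaneously certifies both the generation statement and the freeness of the decomposition (the latter is only briefly asserted in the paper), at the cost of more setup; the paper's approach is shorter and runs parallel to its Lemma~\ref{lemtestudai1} and the Reidemeister--Schreier machinery used later. The only point to flag is notational: with $c_a(p):=a^{-1}pa$ the eliminated relation reads $xpx^{-1}=a_0^{-1}c_a^{-1}(p)a_0$ rather than $xpx^{-1}=a_0^{-1}c_a(p)a_0$, but since you never fix the direction of conjugation this is a convention, not an error, and the set of relations is the same either way.
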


\begin{proof}
We choose the representative systems $\{1\}$ of $\ker(\varphi) \backslash (A \ast B) / A$ and $\ker(\varphi) \backslash (A \ast B) / B$ respectively. Applying the Kurosh subgroup theorem (see e.\,g. \cite[§5.5 Theorem 14]{BookSerre}) we get
\begin{eqnarray*}
\ker(\varphi) \ = \ F \ast \big( A \cap \ker(\varphi) \big) \ast \big( B \cap \ker(\varphi) \big)
\end{eqnarray*}
for a free group $F$. Our aim is to show $F= \langle \{a_{i} \mid i \in \mathbb{Z} \} \cup \{b_{i} \mid i \in \mathbb{Z} \} \mid \rangle$. First, we note that there are no non-trivial relations between the generators $a_{i}$, $b_{i}$ ($i \in \mathbb{Z}$) in $G'$. Every non-trivial element of $\langle \{a_{i} \mid i \in \mathbb{Z} \} \cup \{b_{i} \mid i \in \mathbb{Z} \} \mid \rangle$ written in the generators $a$, $b$ and $x$ contain the generator $x$. Thus, there are no non-trivial relations between the elements from $(A \cap \ker(\varphi)) \ast (B \cap \ker(\varphi))$ and $\langle \{a_{i} \mid i \in \mathbb{Z} \} \cup \{b_{i} \mid i \in \mathbb{Z} \} \mid \rangle$. In particular, the free product $N:=(A \cap \ker(\varphi)) \ast (B \cap \ker(\varphi)) \ast \langle \{a_{i} \mid i \in \mathbb{Z} \} \cup \{b_{i} \mid i \in \mathbb{Z} \} \mid \rangle$ from the statement of the lemma is well-defined as a subgroup of $G'$. Note that $N$ lies in the kernel of $\varphi$. It is sufficient to show that every element $w \in \ker(\varphi)$ lies in $N$. For this purpose we exemplary consider a normal form $uvx^{3}u'$ of $w$ with respect to $G'$ where $u$, $u' \in A$ and $v \in B$. We have:
\begin{eqnarray} \label{eqDarwKurosh}
w &=& uvx^{3}u'\nonumber\\
&=& \underbrace{ua^{-\varphi(u)}}_{\in A \cap \ker(\varphi)} \cdot a^{\varphi(u)}x^{-\varphi(u)} \cdot x^{\varphi(u)}b^{-\varphi(u)} \cdot \underbrace{b^{\varphi(u)} v b^{-\varphi(uv)}}_{\in B \cap \ker(\varphi)} \cdot b^{\varphi(uv)} x^{-\varphi(uv)} \cdot x^{\varphi(uv)+3}a^{-\varphi(uvx^{3})}\nonumber\\
& & \cdot \ \underbrace{a^{\varphi(uvx^{3})}u'}_{\in A \cap \ker(\varphi)}\underbrace{a^{\varphi(-uvx^{3}u')}}_{=1}
\end{eqnarray} 
Every element $a^{k}x^{-k}$ ($k \in \mathbb{Z}$) can be written in the generators $a_{i}$ ($i \in \mathbb{Z}$) since
\begin{eqnarray*}
a^{k}x^{-k} \ = \ \overset{k-1}{\underset{j=0}{\Pi}} x^{j}ax^{-j-1} \ = \ \overset{k-1}{\underset{j=0}{\Pi}} a_{-j}.
\end{eqnarray*}
Analogously every element $b^{k}x^{-k}$ ($k \in \mathbb{Z}$) can be written using the generators $b_{i}$ ($i \in \mathbb{Z}$). Therefore, \eqref{eqDarwKurosh} is a presentation of $w$ as an element of $N$.
\end{proof}

The next auxiliary lemma will be helpful at several points in this article.

\begin{lemma} \label{lemnormcleq}
Let $G$ be a group and $\varphi \colon G \rightarrow \mathbb{Z}$ a homomorphism. Further let $r$, $x \in G$ be elements with $\varphi(r)=0$ and $\varphi(x)=1$. We define $r_{i} := x^{-i}rx^{i}$. Then we have
\begin{eqnarray*}
\langle \! \langle r \rangle \! \rangle_{G} \ \ = \ \ \langle \! \langle r_{i} \mid i \in \mathbb{Z} \rangle \! \rangle_{\ker(\varphi)}.
\end{eqnarray*}
\end{lemma}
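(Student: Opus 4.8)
The plan is to prove the two inclusions of $\langle \! \langle r \rangle \! \rangle_{G}$ and $\langle \! \langle r_{i} \mid i \in \mathbb{Z} \rangle \! \rangle_{\ker(\varphi)}$ separately. Write $K := \ker(\varphi)$, $N := \langle \! \langle r \rangle \! \rangle_{G}$ and $M := \langle \! \langle r_{i} \mid i \in \mathbb{Z} \rangle \! \rangle_{K}$. First I would record two elementary facts. Since $\varphi(x)=1$ and $1$ generates $\mathbb{Z}$, the homomorphism $\varphi$ is surjective; hence for every $g \in G$ the element $g x^{-\varphi(g)}$ lies in $K$, so $G = \langle K, x \rangle$. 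Second, $r = r_{0} \in K$ and, because $K$ is normal in $G$, each conjugate $r_{i} = x^{-i} r x^{i}$ again lies in $K$, so $M$ is a well-defined normal subgroup of $K$.

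The inclusion $M \subseteq N$ is immediate: $M$ is generated by the elements $k^{-1} r_{i} k$ with $k \in K$ and $i \in \mathbb{Z}$, and each such element is a $G$-conjugate of $r$, hence lies in $N$; since $N$ is a subgroup, $M \subseteq N$.

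For the reverse inclusion $N \subseteq M$ it suffices to show that $M$ is a normal subgroup of $G$: it contains $r = r_{0}$, and $N$ is by definition the smallest normal subgroup of $G$ containing $r$. Using $G = \langle K, x \rangle$, normality of $M$ in $G$ reduces to checking that $M$ is invariant under conjugation by elements of $K$ and by $x^{\pm 1}$. Invariance under $K$ holds by construction, since $M$ is normal in $K$. For $x$, the one computation that matters is $x^{-1} r_{i} x = r_{i+1}$ together with $x r_{i} x^{-1} = r_{i-1}$: combining this with the fact that $x^{\pm 1}$ normalizes $K$, a typical generator $k^{-1} r_{i} k$ of $M$ satisfies $x^{-1}(k^{-1} r_{i} k) x = (x^{-1} k x)^{-1} r_{i+1} (x^{-1} k x) \in M$, and symmetrically for conjugation by $x$; hence $x M x^{-1} = M$. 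Therefore $M$ is normal in $G$, which yields $N \subseteq M$ and, with the previous paragraph, the claimed equality.

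I do not expect any genuine obstacle here. The one point worth flagging is the observation underlying the whole argument: conjugation by $x$ merely shifts the index in the family $\{r_{i} \mid i \in \mathbb{Z}\}$, so this set is $x$-invariant, and that is precisely why the normal closure it generates inside $K$ is already normal in all of $G$.
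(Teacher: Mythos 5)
Your proof is correct and rests on the same key observation as the paper's, namely that conjugation by $x$ shifts the index in the family $\{r_{i}\}$ and that every $g \in G$ splits as an element of $\ker(\varphi)$ times a power of $x$. The only difference is packaging: the paper verifies the nontrivial inclusion directly by writing $w^{-1}rw = \bigl(w^{-1}x^{\varphi(w)}\bigr)\, r_{\varphi(w)}\, \bigl(x^{-\varphi(w)}w\bigr)$ with the outer factors in $\ker(\varphi)$, whereas you show that $\langle\!\langle r_{i} \mid i \in \mathbb{Z} \rangle\!\rangle_{\ker(\varphi)}$ is normal in all of $G$ and invoke minimality of the normal closure --- two equivalent routes with no gap in either.
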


\begin{proof}
The inclusion of $\langle \! \langle r_{i} \mid i \in \mathbb{Z} \rangle \! \rangle_{\ker(\varphi)}$ in $\langle \! \langle r \rangle \! \rangle_{G}$ is trivial. For the other inclusion it is sufficient to show that for every $w \in G$ the conjugate $w^{-1} r w$ is an element of $\langle \! \langle r_{i} \mid i \in \mathbb{Z} \rangle \! \rangle_{\ker(\varphi)}$. To do that we write
\begin{eqnarray*}
w^{-1}rw \ \ = \ \  \underbrace{w^{-1} x^{\varphi(w)}}_{\in \, \ker(\varphi)} \ \underbrace{x^{-\varphi(w)} r x^{\varphi(w)}}_{= \, r_{\varphi(w)}} \ \underbrace{x^{-\varphi(w)} w}_{\in \, \ker(\varphi)}.
\end{eqnarray*}
Therefore, $w^{-1} r w$ is an element of $\langle \! \langle r_{i} \mid i \in \mathbb{Z} \rangle \! \rangle_{\ker(\varphi)}$.
\end{proof}

For the following technical lemmata and the proof of Theorem~\ref{thmedjvetgen} we introduce additional notation.

\begin{notation} \label{notindex}
Let $A$, $B$, $C$ be groups where $A$, $B$ are two locally indicable and indicable groups. We consider the epimorphism $\psi \colon A \twoheadrightarrow \mathbb{Z}$ and extend it to the epimorphism $\varphi \colon (A \ast B) \times C \twoheadrightarrow \mathbb{Z}$ by setting $\varphi_{\mid A}=\psi$ and $\varphi(b)=\varphi(d)=0$ for all $b \in B$, $d \in C$. Further let $a \in A$ be an element with $\varphi(a)=1$. We consider an arbitrary element $c \in C$ and a presentation $r$ of a non-trivial element of $A \ast B$ in normal form such that $\varphi((r,c))=0$ and $r$ has even length. Let $r_{i} := a^{-i}ra^{i}$. Then it follows by Lemma~\ref{lemnormcleq} that the normal closure of $(r,c)$ in $(A \ast B) \times C$ is the same as the normal closure of the elements $\{(r_{i},c) \mid i \in \mathbb{Z} \}$ in $\ker(\varphi)$. We define $\widetilde{A} := A \cap \ker(\varphi)$ and $B_{i} := a^{-i}Ba^{i}$ ($i \in \mathbb{Z}$). Due to Lemma~\ref{lemtestudai1} we have $\ker(\varphi)= \widetilde{A} \ast \bigast_{i \in \mathbb{Z}} B_{i}$. We denote by $\alpha_{r_{i}}$ the smallest and by $\omega_{r_{i}}$ the greatest index $j$ such that the presentation of $r_{i}$ in normal form with respect to $\widetilde{A} \ast \bigast_{i \in \mathbb{Z}} B_{i}$ contains a piece from $B_{j}$. Finally, we define for $\alpha$, $\omega \in \mathbb{Z} \cup \{\pm \infty \}$ the free products
\begin{eqnarray*}
B_{\alpha,\omega} \ := \ \overset{\omega}{\underset{i= \alpha}{\bigast}} B_{i} \ \text{ if } \ \alpha \leqslant \omega \ \ \ \text{ and } \ \ \ B_{\alpha,\omega} \ := \ \{1\} \ \text{ if } \ \alpha > \omega.
\end{eqnarray*}  
\end{notation}

The next lemma serves the preparation of the presentation of $\ker(\varphi) / \langle \! \langle (r_{i},c) \mid i \in \mathbb{Z} \rangle \! \rangle$ as an amalgamated product (see Lemma~\ref{lemamalgPr}).

\begin{lemma} \label{lemC}
Let $C$ be a group. Using the notation of Notation~\ref{notindex} we consider the groups
\begin{eqnarray*}
G_{i} \ = \ (\widetilde{A} \ast B_{\alpha_{r_{i}},\omega_{r_{i}}}) \times C, \ \ N_{i} \ = \ \langle \! \langle (r_{i},c) \rangle \! \rangle_{G_{i}} \ \ \text{and} \ \ C_{i} \ = \ C/N_{i}.
\end{eqnarray*}
We have $C_{k} = C_{\ell}$ for all $k$, $\ell \in \mathbb{Z}$ and can therefore define $\widetilde{C} := C_{j}$ ($j \in \mathbb{Z}$).
\end{lemma}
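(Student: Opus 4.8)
The plan is to show that the normal closure $N_i = \langle\!\langle (r_i,c)\rangle\!\rangle_{G_i}$ intersects the central factor $C$ in a subgroup that does not depend on $i$, which immediately yields $C_i = C/N_i = C/(N_i\cap C)$ being independent of $i$ (note $N_i\cap C$ is exactly the kernel of the composite $C\hookrightarrow G_i\twoheadrightarrow G_i/N_i$ restricted to the $C$-factor, and more to the point $C/(N_i\cap C)$ is what embeds as the image of $C$). So the real content is the identity $N_k\cap C = N_\ell\cap C$ for all $k,\ell$. I would first reduce to comparing $N_i\cap C$ with $N_{i+1}\cap C$, and observe that conjugation by $a$ in $A\ast B$ (equivalently, the shift $x^{-1}(-)x$ coming from $\varphi$) sends $r_i$ to $r_{i+1}$ and sends $B_{\alpha_{r_i},\omega_{r_i}}$ onto $B_{\alpha_{r_i}+1,\omega_{r_i}+1}$, hence is an isomorphism $\widetilde A \ast B_{\alpha_{r_i},\omega_{r_i}} \to a^{-1}\widetilde A a \ast B_{\alpha_{r_i}+1,\omega_{r_i}+1}$; but this is \emph{not} literally $G_{i+1}$ unless $\widetilde A$ is $a$-invariant, which in general it is not. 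So the shift argument alone does not finish it, and I expect that to be the main obstacle.

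The way around this is to compute $N_i\cap C$ directly and see it is governed only by $C$ and $c$, not by the free-product factor at all. Concretely, the element $(r_i,c)$ in $G_i = (\widetilde A\ast B_{\alpha_{r_i},\omega_{r_i}})\times C$ has first coordinate $r_i$, a nontrivial element of the free product $H_i := \widetilde A\ast B_{\alpha_{r_i},\omega_{r_i}}$ which (by the even-length hypothesis and the definition of $\alpha_{r_i},\omega_{r_i}$) is cyclically reduced of length at least $2$ and is not conjugate into either of the given factors $\widetilde A$, $B_{\alpha_{r_i},\omega_{r_i}}$ — wait, one should be slightly careful since $B_{\alpha_{r_i},\omega_{r_i}}$ is itself a free product of the $B_j$; in any case $r_i$ is not conjugate into a single $B_j$ nor into $\widetilde A$, which is exactly the hypothesis of Lemma~\ref{lemmachi}. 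Applying Lemma~\ref{lemmachi} with $A$ there taken to be $\widetilde A$ (or $B_{\alpha_{r_i},\omega_{r_i}}$), $B$ the complementary factor, $C$ the same $C$, and $w = (r_i,c)$, gives that $N_i\cap C$ is such that $\widetilde A\times (C/(N_i\cap C))$ embeds in $G_i/N_i$. I would then pin down $N_i\cap C$ precisely: it is the normal closure in $C$ of all elements of the form obtained by "killing the free part" of conjugates of $(r_i,c)$, and by the Freiheitssatz (Theorem~\ref{HowFS}) the only way a product of conjugates of $(r_i,c)^{\pm1}$ lands in $\{1\}\times C$ is if the corresponding product of conjugates of $r_i^{\pm1}$ is trivial in $H_i$; the presentation $\langle X\mid r_i\rangle$ being aspherical (Proposition~\ref{propasphone}) then lets me read off, via Proposition~\ref{propasphpairs}, that the $c$-contributions cancel in a controlled way, leaving $N_i\cap C = \langle\!\langle\, [c, \text{(stuff)}] \,\rangle\!\rangle$ — and the point is that "stuff" ranges over a set whose image in $C$ is independent of $i$.

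To make that last clause honest without a long computation I would instead argue as follows: fix a retraction-type map. Since $H_i$ is locally indicable (free products of locally indicable groups are locally indicable), pick an epimorphism $H_i\to\mathbb Z$ and use it together with Lemma~\ref{lemkernphi}/Lemma~\ref{lemtestudai1} to write a normal form; but honestly the cleanest finish is: the composite $G_i \twoheadrightarrow G_i/N_i$ restricted to $\{1\}\times C$ has kernel $N_i\cap C$, and by Lemma~\ref{lemmachi} the quotient $G_i/N_i$ contains $\widetilde A\times (C/(N_i\cap C))$; running the same for $G_{i+1}$ and using that $r_i$ and $r_{i+1}$ are conjugate in $A\ast B$ by $a$ — which acts on $C$ trivially since $\varphi(a)=1$ and $C$ is a direct factor, so conjugation by $a$ fixes $C$ pointwise — shows the two subgroups $N_i\cap C$ and $N_{i+1}\cap C$ of $C$ coincide, because they are both equal to $\{\,\gamma\in C : (\,1,\gamma)\in\langle\!\langle(r,c)\rangle\!\rangle_{(A\ast B)\times C}\,\}$, a set manifestly independent of $i$ (here $(r,c)$ and $(r_i,c)$ have the same normal closure in $(A\ast B)\times C$ by Lemma~\ref{lemnormcleq} and Notation~\ref{notindex}). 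Thus $C_k = C_\ell$ for all $k,\ell$, and we set $\widetilde C := C_j$. The main obstacle, as flagged, is justifying that the intersection with $C$ computed \emph{inside the small group} $G_i$ agrees with the intersection computed inside the big group $(A\ast B)\times C$ — i.e. that no extra relations in $C$ are forced by the larger free product; this is precisely what Lemma~\ref{lemmachi} (our generalized Freiheitssatz) buys us, applied on both sides.
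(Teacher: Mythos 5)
Your proposal goes off the rails at the point where you dismiss the shift argument. You claim that conjugation by $a$ sends $\widetilde{A}\ast B_{\alpha_{r_i},\omega_{r_i}}$ to $a^{-1}\widetilde{A}a \ast B_{\alpha_{r_i}+1,\omega_{r_i}+1}$ and that this ``is not literally $G_{i+1}$ unless $\widetilde{A}$ is $a$-invariant, which in general it is not.'' But $\widetilde{A}=A\cap\ker(\varphi)=\ker(\psi\colon A\twoheadrightarrow\mathbb{Z})$ is a normal subgroup of $A$, hence $a^{-1}\widetilde{A}a=\widetilde{A}$ for the element $a\in A$. Consequently conjugation by $a^{\ell-k}$ really is an isomorphism $\zeta\colon G_{k}\to G_{\ell}$ carrying $(r_{k},c)$ to $(r_{\ell},c)$ and restricting to the identity on the direct factor $C$; applying $\zeta$ to a product of conjugates of $(r_{k},c)^{\pm1}$ representing an element of $N_{k}\cap C$ shows $N_{k}\cap C\subseteq N_{\ell}\cap C$, and equality follows by symmetry. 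This two-line conjugation argument is precisely the paper's proof, and you discarded it on a false premise.

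The replacement argument you offer instead has a genuine gap. Your ``cleanest finish'' rests on the identity $N_{i}\cap C=\{\gamma\in C:(1,\gamma)\in\langle\!\langle(r,c)\rangle\!\rangle_{(A\ast B)\times C}\}$, i.e.\ that the intersection with $C$ of a normal closure taken in the subgroup $G_{i}$ coincides with that taken in the whole group. Only the inclusion $\subseteq$ is clear; the reverse inclusion is exactly the nontrivial ``no extra relations are forced by the larger group'' statement, and Lemma~\ref{lemmachi} does not deliver it: that lemma compares $C$ with the quotient of the \emph{same} group in which the normal closure was formed, not normal closures formed in a subgroup versus an overgroup. The results in the paper that do permit such comparisons (Lemma~\ref{lememb12} and the amalgamated-product decomposition of Lemma~\ref{lemamalgPr}) are stated in terms of $\widetilde{C}$ and therefore presuppose Lemma~\ref{lemC}, so invoking them here would be circular. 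The earlier sketch via asphericity and Proposition~\ref{propasphpairs} is likewise not carried out far enough to substitute for this missing step.
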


\begin{proof}
Let $k$, $\ell \in \mathbb{Z}$. Further let $w$ be an element of $N_{k}$. We show that $w$ is also an element of $N_{\ell}$. There exist numbers $m \in \mathbb{N}$, $\varepsilon_{i} \in \mathbb{Z}$ and elements $u_{i} \in G_{k}$ ($i \in \{1,2,\dots,m\}$) with
\begin{eqnarray} \label{eqw1}
w \ \ = \ \ \overset{m}{\underset{i=1}{\Pi}} u_{i}^{-1} (r_{k},c)^{\varepsilon_{i}} u_{i}.
\end{eqnarray}
We consider the isomorphism $\zeta \colon G_{k} \rightarrow G_{\ell}$ which is given by $\zeta(x)=a^{-\ell+k}x a^{\ell-k}$. Note that, $\zeta(u_{i}) \in G_{\ell}$, $\zeta((r_{k},c))=(r_{\ell},c)$ and that $\zeta_{\mid C}$ is the identity on $C$. In particular, we have $w = \zeta(w)$. Applying $\zeta$ to \eqref{eqw1} we get
\begin{eqnarray*}
w \ \ = \ \ \zeta(w) \ \ = \ \ \overset{m}{\underset{i=1}{\Pi}} \zeta(u_{i})^{-1} (r_{\ell},c)^{\varepsilon_{i}} \zeta(u_{i}) \ \ \in \ \ \langle \! \langle (r_{\ell},c) \rangle \! \rangle_{G_{\ell}},
\end{eqnarray*}
thus $w \in N_{\ell}$. It follows $N_{k} = N_{\ell}$ and therefore $C_{k} = C_{\ell}$ for all $k$, $\ell \in \mathbb{Z}$. 
\end{proof}

With the next lemma we prove two embeddings. Again, we use the notations of Notation~\ref{notindex} and Lemma~\ref{lemC}.

\begin{lemma} \label{lememb12}
Let $m \in \mathbb{Z}$ and $k$, $\ell \in \mathbb{Z} \cup \{\pm \infty\}$ with $k \leqslant \alpha_{r_{m}}$ and $\ell \geqslant \omega_{r_{m}}$. We define $P_{n}:=(\widetilde{A} \ast B_{\alpha_{r_{n}},\omega_{r_{n}}-1} \times \widetilde{C})$ where $n \in \mathbb{Z}$. Then we have the following embeddings:
\begin{itemize}
\item[$(i)$] $P_{m} \hookrightarrow \big( ( \widetilde{A} \ast B_{k, \ell}) \times C \big) / \langle \! \langle (r_{m},c) \rangle \! \rangle$
\item[$(ii)$] $P_{m+1} \hookrightarrow \big( ( \widetilde{A} \ast B_{k, \ell}) \times C \big) / \langle \! \langle (r_{m},c ) \rangle \! \rangle$
\end{itemize}
\end{lemma}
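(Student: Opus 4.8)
The plan is to prove both embeddings simultaneously by realising the quotient $\big( (\widetilde{A} \ast B_{k,\ell}) \times C \big) / \langle\!\langle (r_m,c) \rangle\!\rangle$ as a direct product with $\widetilde{C}$-factor and then invoking the generalised Freiheitssatz Lemma~\ref{lemmachi}. Write $H := \widetilde{A} \ast B_{k,\ell}$ and note that the normal-form presentation of $r_m$ with respect to $\widetilde{A} \ast \bigast_{i \in \mathbb{Z}} B_i$ only involves pieces from $\widetilde{A}$ and from $B_i$ with $\alpha_{r_m} \leqslant i \leqslant \omega_{r_m}$; since $k \leqslant \alpha_{r_m}$ and $\ell \geqslant \omega_{r_m}$, we may split $H = \big( \widetilde{A} \ast B_{\alpha_{r_m},\omega_{r_m}} \big) \ast B_{k,\alpha_{r_m}-1} \ast B_{\omega_{r_m}+1,\ell}$, but more useful is to group it as a free product $H = \big( \widetilde{A} \ast B_{\alpha_{r_m},\omega_{r_m}} \big) \ast K$ where $K := B_{k,\alpha_{r_m}-1} \ast B_{\omega_{r_m}+1,\ell}$ collects all the free factors not touched by $r_m$. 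Then $H \times C = \big( (\widetilde{A} \ast B_{\alpha_{r_m},\omega_{r_m}}) \ast K \big) \times C$ and the element $(r_m,c)$ lies in the first factor direct-times-$C$.

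**Applying the Freiheitssatz.** The element $(r_m, c) \in G_m = (\widetilde{A} \ast B_{\alpha_{r_m},\omega_{r_m}}) \times C$ is (by the choice of $\alpha_{r_m}, \omega_{r_m}$, which are the extreme indices actually occurring and so $r_m$ genuinely involves at least two free factors) neither conjugate in $\widetilde{A} \ast B_{\alpha_{r_m},\omega_{r_m}}$ to an element of $\widetilde{A}$ nor to an element of any $B_i$; hence, regarding $\widetilde{A} \ast B_{\alpha_{r_m},\omega_{r_m}}$ as a free product of two locally indicable groups in a suitable way (e.g. $\widetilde{A} \ast (\text{rest})$, or $B_{\alpha_{r_m}} \ast (\text{rest})$ as needed), the element $r_m$ is not conjugate into either factor, so Lemma~\ref{lemmachi} applies with the appropriate choice of $A$, $B$. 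Concretely, to get embedding $(i)$ take $A := \widetilde{A} \ast B_{\alpha_{r_m},\omega_{r_m}-1}$ and $B := B_{\omega_{r_m}}$ inside $\widetilde{A} \ast B_{\alpha_{r_m},\omega_{r_m}}$ (note $r_m$ does involve $B_{\omega_{r_m}}$ by definition of $\omega_{r_m}$, and it is not conjugate into $B_{\omega_{r_m}}$ alone since it also involves $\widetilde{A}$ or some $B_i$ with $i < \omega_{r_m}$). Lemma~\ref{lemmachi} then yields that $\big(\widetilde{A} \ast B_{\alpha_{r_m},\omega_{r_m}-1}\big) \times \widetilde{C}$ — which is exactly $P_m$, since $\widetilde{C} = C_m = C/N_m$ is the appropriate quotient by Lemma~\ref{lemC} — embeds into $\big( (\widetilde{A} \ast B_{\alpha_{r_m},\omega_{r_m}}) \times C \big) / \langle\!\langle (r_m,c) \rangle\!\rangle$. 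For $(ii)$, symmetrically take $A := B_{\alpha_{r_m}+1,\omega_{r_m}}$ and $B := \widetilde{A} \ast B_{\alpha_{r_m}}$ (wait — we need $P_{m+1} = \widetilde{A} \ast B_{\alpha_{r_{m+1}}, \omega_{r_{m+1}}-1}$, and since $r_{m+1} = a^{-1} r_m a$ we have $\alpha_{r_{m+1}} = \alpha_{r_m}+1$ and $\omega_{r_{m+1}} = \omega_{r_m}+1$, so $P_{m+1} = \widetilde{A} \ast B_{\alpha_{r_m}+1, \omega_{r_m}}$); so split $\widetilde{A} \ast B_{\alpha_{r_m},\omega_{r_m}}$ as $\big(\widetilde{A} \ast B_{\alpha_{r_m}+1,\omega_{r_m}}\big) \ast B_{\alpha_{r_m}}$ — no, $\widetilde{A}$ can't be separated from a $B_i$ that way inside a free product; rather use $A := B_{\alpha_{r_m}+1,\omega_{r_m}}$, $B := \widetilde{A} \ast B_{\alpha_{r_m}}$, so that $A \times \widetilde{C} = B_{\alpha_{r_m}+1,\omega_{r_m}} \times \widetilde{C}$, and then note $P_{m+1} \cong \widetilde{A} \ast \big(B_{\alpha_{r_m}+1,\omega_{r_m}}\big) \times \widetilde{C}$ contains... hmm, this needs the further step that $\widetilde{A} \times \widetilde C$ and $B_{\alpha_{r_m}+1,\omega_{r_m}} \times \widetilde C$ together generate their free-product-times-$\widetilde C$ inside the quotient, which follows because the images of $\widetilde{A}$, of $B_{\alpha_{r_m}+1,\omega_{r_m}}$, and of $\widetilde C$ inside the quotient pairwise generate the expected subgroups and $\widetilde A, B_{\alpha_{r_m}+1,\omega_{r_m}}$ generate their free product (no new relations among them modulo $\langle\!\langle(r_m,c)\rangle\!\rangle$, again by the Freiheitssatz applied to the subgroup $(\widetilde A \ast B_{\alpha_{r_m}+1,\omega_{r_m}}) \times C$, into which $(r_m,c)$ is not conjugate).

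**Handling the extra free factors and the main obstacle.** Finally, to pass from the embedding of $P_m$ (resp. $P_{m+1}$) into $\big( (\widetilde{A} \ast B_{\alpha_{r_m},\omega_{r_m}}) \times C \big)/\langle\!\langle (r_m,c)\rangle\!\rangle$ to the claimed embedding into $\big( (\widetilde{A} \ast B_{k,\ell}) \times C \big)/\langle\!\langle (r_m,c)\rangle\!\rangle$, I would use that $\widetilde{A} \ast B_{k,\ell} = (\widetilde{A} \ast B_{\alpha_{r_m},\omega_{r_m}}) \ast K$ with $K$ as above, so that $(\widetilde{A} \ast B_{k,\ell}) \times C = \big( (\widetilde{A} \ast B_{\alpha_{r_m},\omega_{r_m}}) \times C \big) \ast_C (K \times C)$ — an amalgamated free product over $C$ — and $(r_m,c)$ lies entirely in the first amalgamand; hence by the standard fact that the normal closure of such an element meets the first amalgamand exactly in $\langle\!\langle(r_m,c)\rangle\!\rangle$ computed there (which itself reduces to the Freiheitssatz / Lemma~\ref{lemmachi}), the first amalgamand-quotient $\big((\widetilde{A} \ast B_{\alpha_{r_m},\omega_{r_m}}) \times C\big)/\langle\!\langle(r_m,c)\rangle\!\rangle$ embeds into $\big((\widetilde{A} \ast B_{k,\ell}) \times C\big)/\langle\!\langle(r_m,c)\rangle\!\rangle$, and composing embeddings gives the result. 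I expect the main obstacle to be the bookkeeping in $(ii)$: correctly identifying $P_{m+1}$ (using $\alpha_{r_{m+1}} = \alpha_{r_m}+1$, $\omega_{r_{m+1}} = \omega_{r_m}+1$) and verifying that $\widetilde{A}$ together with $B_{\alpha_{r_m}+1,\omega_{r_m}}$ and $\widetilde C$ generate the full direct-times-$\widetilde C$ of their free product inside the quotient — i.e. that no unexpected relations are introduced — which requires a careful second application of the Freiheitssatz rather than a single invocation of Lemma~\ref{lemmachi}; the direct-factor $C$ and the passage to $\widetilde{C} = C/N$ must be threaded consistently through all of this, using Lemma~\ref{lemC} to know $\widetilde C$ is well-defined.
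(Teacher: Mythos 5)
Your argument for part $(i)$ is essentially workable, though it is more roundabout than necessary: you first apply Lemma~\ref{lemmachi} inside the ``support'' group $(\widetilde{A}\ast B_{\alpha_{r_m},\omega_{r_m}})\times C$ and then embed its quotient into the larger quotient via an amalgamated product over $C$. The paper instead applies Lemma~\ref{lemmachi} once, directly to the decomposition $\widetilde{A}\ast B_{k,\ell}=(\widetilde{A}\ast B_{k,\omega_{r_m}-1})\ast B_{\omega_{r_m},\ell}$ of the full group: $r_m$ uses $B_{\omega_{r_m}}$ (by definition of $\omega_{r_m}$) and at least one further factor lying in $\widetilde{A}\ast B_{k,\omega_{r_m}-1}$, so its normal form with respect to this two-factor splitting is cyclically reduced of length at least $2$, and Lemma~\ref{lemmachi} immediately embeds $(\widetilde{A}\ast B_{k,\omega_{r_m}-1})\times\widetilde{C}\supseteq P_m$ into the quotient. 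Your extra step (that the quotient of the support subgroup embeds into the big quotient, and that the two candidate subgroups $N$ of $C$ coincide) is fixable but is additional work the one-step route avoids.

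Part $(ii)$ contains a genuine gap. You reject the decomposition $(\widetilde{A}\ast B_{\alpha_{r_m}+1,\ell})\ast B_{k,\alpha_{r_m}}$ on the grounds that ``$\widetilde{A}$ can't be separated from a $B_i$ that way inside a free product'' --- but it can: $\widetilde{A}$ and all the $B_i$ are free factors of one free product, and free factors may be regrouped arbitrarily. This rejected decomposition is exactly the one that makes $(ii)$ analogous to $(i)$: $r_m$ uses $B_{\alpha_{r_m}}$ (by definition of $\alpha_{r_m}$) and at least one further factor, which necessarily lies in $\widetilde{A}\ast B_{\alpha_{r_m}+1,\ell}$; hence $r_m$ has cyclically reduced length at least $2$ with respect to this splitting, and Lemma~\ref{lemmachi} embeds $(\widetilde{A}\ast B_{\alpha_{r_m}+1,\ell})\times\widetilde{C}\supseteq P_{m+1}$ directly. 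The substitute you propose --- taking $A:=B_{\alpha_{r_m}+1,\omega_{r_m}}$, obtaining only $B_{\alpha_{r_m}+1,\omega_{r_m}}\times\widetilde{C}$ embedded, and then arguing that this together with $\widetilde{A}\times\widetilde{C}$ generates a free product with no new relations ``by the Freiheitssatz applied to the subgroup $(\widetilde{A}\ast B_{\alpha_{r_m}+1,\omega_{r_m}})\times C$, into which $(r_m,c)$ is not conjugate'' --- is not a valid use of Theorem~\ref{HowFS} or Lemma~\ref{lemmachi}: both require the relator to be an element of the group being decomposed, with length at least $2$ in that decomposition, and then embed one factor of that decomposition; neither says anything about a subgroup that merely fails to contain a conjugate of the relator. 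Showing that the images of $\widetilde{A}$ and $B_{\alpha_{r_m}+1,\omega_{r_m}}$ generate their free product (times $\widetilde{C}$) in the quotient is precisely the content of the embedding you are trying to prove, so as written your argument for $(ii)$ is circular at this point.
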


\begin{proof}
For the proof of $(i)$ recall that r is a presentation of a non-trivial element of $A \ast B$ in normal form such that $r$ is of even length. It follows that $r_{m}$ is cyclically reduced and uses the factor $B_{m}$ along with at least one further factor $\widetilde{A}$ or $B_{k}$ with $k \neq m$. In particular, $r_{m}$ uses at least two factors of the free product $\widetilde{A} \ast ( \bigast_{k \leqslant i  \leqslant \ell} B_{i})$. By the definition of $\omega_{r_{m}}$, the presentation $r_{m}$ uses the factor $B_{\omega_{r_{m}}}$. Since $\alpha_{r_{m}}$ and $\omega_{r_{m}}$ are defined such that $r_{m}$ uses exclusively factors from $\widetilde{A}$ or $B_{i}$ with $\alpha_{r_{m}} \leqslant i \leqslant \omega_{r_{m}}$, the second used factor lies in $\widetilde{A} \ast B_{k, \omega_{r_{m}}-1}$. Thus, the normal form of $r_{m}$ with respect to the free product $(\widetilde{A} \ast B_{k,\omega_{r_{m}}-1}) \ast B_{\omega_{r_{m}},\ell}$ has a length of at least $2$. Finally, the embedding of $P_{m}$ in $\big( ( \widetilde{A} \ast B_{k, \ell}) \times C \big) / \langle \! \langle (r_{m},c) \rangle \! \rangle$ follows by Lemma~\ref{lemmachi}. The embedding $(ii)$ can be proved analogously.
\end{proof} 

Now we are able to deduce a presentation of $\ker(\varphi) / \langle \! \langle r_{i} \mid i \in \mathbb{Z} \rangle \! \rangle$ as an amalgamated product. This presentation plays an important role in the proof of our small generalization (see Lemma~\ref{lemmachi}) of the Freiheitssatz for locally indicable groups (see Theorem~\ref{HowFS}). For an illustration of the statement of the next lemma see Figure~\ref{fig1}. 

\begin{figure}
\centering
\includegraphics[scale=0.12]{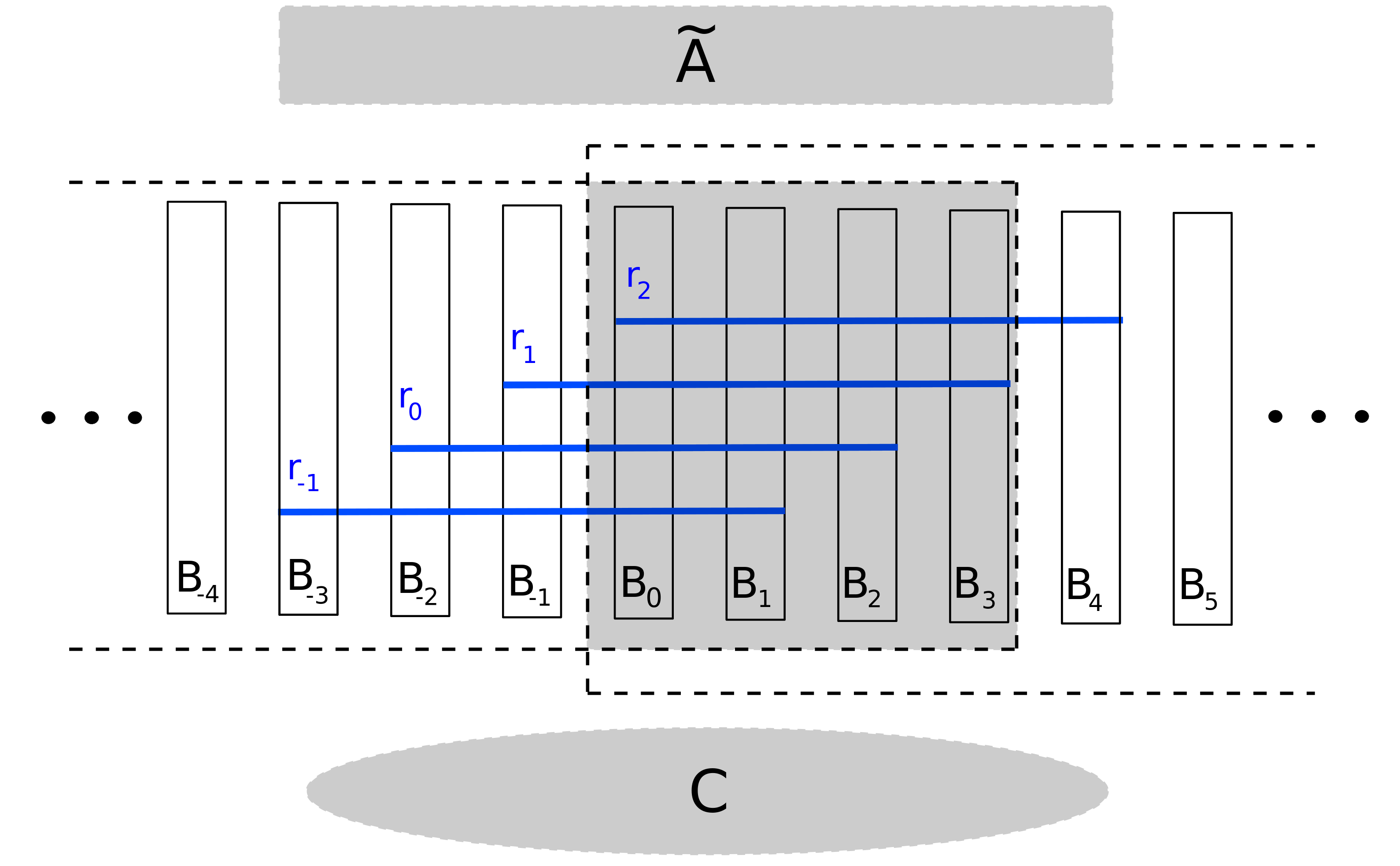}
\caption{Illustration of Lemma~\ref{lemamalgPr} for $i=-1$, $j=2$, $\alpha_{2}=0$ and $\omega_{2}=4$. (The amalgamated group $P_{2}$ is illustrated by the greyed out areas.)}
\label{fig1}
\end{figure}

\begin{lemma} \label{lemamalgPr}
We use the notation introduced in Notation~\ref{notindex} and Lemma~\ref{lemC}. Let $i$, $j \in \mathbb{Z}$ be two numbers with $i<j$. We define $P_{j}:=(\widetilde{A} \ast B_{\alpha_{r_{j}},\omega_{r_{j-1}}}) \times \widetilde{C}$. Then we have
\begin{center}
$\big( ( \widetilde{A} \ast B_{-\infty,\infty}) \times C \big) / \langle \! \langle (r_{i},c),(r_{i+1},c),\dots,(r_{j},c) \rangle \! \rangle$ \\
$\cong \ \big( ( \widetilde{A} \ast B_{-\infty,\omega_{r_{j-1}}}) \times C \big) / \langle \! \langle (r_{i},c), \dots,(r_{j-1},c)\rangle \! \rangle \ \underset{P_{j}}{\ast} \ \big( ( \widetilde{A} \ast B_{\alpha_{r_{j}}, \infty} ) \times C \big) / \langle \! \langle (r_{j},c) \rangle \! \rangle.$
\end{center}
\end{lemma}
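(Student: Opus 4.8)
The plan is to build the isomorphism of Lemma~\ref{lemamalgPr} by hand, using the group on the right-hand side (the amalgamated product over $P_j$) as the model and producing mutually inverse homomorphisms between it and the left-hand quotient. First I would fix the ambient free product $K := (\widetilde{A} \ast B_{-\infty,\infty}) \times C = \ker(\varphi)$ and observe that the generating set $\widetilde{A} \cup \bigcup_{k} B_k \cup C$ of $K$ splits into the generators used by the ``left'' vertex group $K_L := (\widetilde A \ast B_{-\infty,\omega_{r_{j-1}}}) \times C$ and those used by the ``right'' vertex group $K_R := (\widetilde A \ast B_{\alpha_{r_j},\infty}) \times C$, with overlap exactly $P_j = (\widetilde A \ast B_{\alpha_{r_j},\omega_{r_{j-1}}}) \times \widetilde C$ (note $\alpha_{r_j} \le \omega_{r_{j-1}}$ since the staffing hypothesis forces consecutive $r_i$'s to overlap in at least one $B$-factor, which is what makes the amalgam non-degenerate and is worth remarking explicitly). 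The key structural input is that $K$ itself is the amalgamated product $K \cong K_L \ast_{P_j} K_R$ as groups with $P_j$ replaced by $(\widetilde A \ast B_{\alpha_{r_j},\omega_{r_{j-1}}}) \times C$; this is just a Kurosh/van-Kampen-type splitting of the free product along the partition of $B$-indices and is purely formal. One then passes to the quotients: killing the normal closure of $(r_i,c),\dots,(r_{j-1},c)$ only involves relators lying in $K_L$, and killing $(r_j,c)$ only involves a relator lying in $K_R$ (this uses the definitions of $\alpha_{r_\cdot}$ and $\omega_{r_\cdot}$ in Notation~\ref{notindex}), so the quotient of an amalgamated product by a normal closure of words in the two vertex groups is the amalgamated product of the two quotient vertex groups over the image of the edge group.

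Concretely, the steps in order: (1) Establish the ``unquotiented'' splitting $K \cong K_L \ast_{E} K_R$ where $E = (\widetilde A \ast B_{\alpha_{r_j},\omega_{r_{j-1}}}) \times C$; this is immediate from the structure of free products and direct products with a common factor $C$. (2) Use the general fact that for an amalgam $X \ast_E Y$ and normal subgroups generated respectively by subsets $S_X \subseteq X$, $S_Y \subseteq Y$, one has $(X \ast_E Y)/\langle\!\langle S_X \cup S_Y\rangle\!\rangle \cong (X/\langle\!\langle S_X\rangle\!\rangle) \ast_{\bar E} (Y/\langle\!\langle S_Y\rangle\!\rangle)$, where $\bar E$ is the image of $E$ in either quotient (the same image by the presentation calculus), \emph{provided} $E$ injects into both quotients so that $\bar E$ is well-defined as a common subgroup. (3) Identify the relators: $(r_m,c) \in K_L$ for $i \le m \le j-1$ and $(r_j,c) \in K_R$, so $S_X = \{(r_i,c),\dots,(r_{j-1},c)\}$ and $S_Y = \{(r_j,c)\}$. (4) Compute the edge group of the resulting amalgam: the image of $E = (\widetilde A \ast B_{\alpha_{r_j},\omega_{r_{j-1}}}) \times C$ in the right vertex quotient $\big((\widetilde A \ast B_{\alpha_{r_j},\infty}) \times C\big)/\langle\!\langle (r_j,c)\rangle\!\rangle$. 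Here is where Lemma~\ref{lememb12}(ii) enters: it says $P_{j} = (\widetilde A \ast B_{\alpha_{r_j},\omega_{r_j}-1}) \times \widetilde C$ embeds into that quotient (taking $m = j$, $k = \alpha_{r_j}$, $\ell = \infty$ in the lemma, whose conclusion $(ii)$ gives the embedding of $P_{m+1}$ after an index shift — one must carefully match the indices, since the $P_n$ in Lemma~\ref{lememb12} runs up to $\omega_{r_n}-1$ while the $P_j$ here runs up to $\omega_{r_{j-1}}$, and the hypothesis $\omega_{r_{j-1}} \le \omega_{r_j}-1$ — equivalently the $B$-supports of successive relators are ``staffed'' — is exactly what reconciles them). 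The same argument on the left, using Lemma~\ref{lememb12}(i) inductively over $i,\dots,j-1$, shows $P_j$ also embeds in the left quotient, so $\bar E = P_j$ on both sides.

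The main obstacle I expect is step (4): verifying that the \emph{edge group actually stabilizes to exactly $P_j$} rather than collapsing or being larger, and matching this up with the index bookkeeping of Notation~\ref{notindex} and Lemma~\ref{lememb12}. The subtlety is that $E$ as defined in step (1) uses the full factor $C$, but after quotienting by $(r_j,c)$ the $C$-part collapses to $\widetilde C = C/N_j$ (Lemma~\ref{lemC} guarantees this $\widetilde C$ is independent of which relator we use, so the two collapses agree), and one must check that no further $B$-syllables of $E$ get identified — which is precisely the content of the embedding in Lemma~\ref{lememb12}. A secondary obstacle is making the induction on $j$ clean: one can prove the statement by induction on $j - i$, peeling off $(r_j,c)$ one relator at a time, with the base case $j = i+1$ handled directly by the two-relator-free-product splitting and Lemma~\ref{lememb12}. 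Once the edge group is pinned down, the isomorphism is the canonical map on generators, and its inverse is built from the universal property of the amalgamated product applied to the obvious inclusions of the two quotient vertex groups into the left-hand side; checking well-definedness of that inverse reduces to checking the defining relators of each vertex group (and of the amalgamation) hold in the left-hand quotient, which is routine.
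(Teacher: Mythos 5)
Your proposal follows essentially the same route as the paper: an induction on $j-i$ peeling off one relator at a time, with the whole content concentrated in showing that the edge group $P_j$ embeds into both vertex quotients via Lemma~\ref{lememb12} (plus the consistency of $\widetilde{C}$ from Lemma~\ref{lemC}), after which the isomorphism is a routine common-presentation computation. Two small bookkeeping corrections: it is part $(i)$ of Lemma~\ref{lememb12} with $m=j$ (not part $(ii)$) that gives the embedding into the right factor, since $\omega_{r_j}-1=\omega_{r_{j-1}}$ exactly so the two definitions of $P_j$ literally coincide; and your claim that $\alpha_{r_j}\leqslant\omega_{r_{j-1}}$ always holds is not needed and not always true --- when $r$ meets only one $B$-factor one has $\alpha_{r_j}=\omega_{r_{j-1}}+1$ and $P_j$ degenerates to $\widetilde{A}\times\widetilde{C}$, but the splitting and the argument go through unchanged.
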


\begin{proof}
Our proof will be by induction over $j$ for a fixed element $i \in \mathbb{Z}$. As the induction base we consider the case $j=i+1$. Thus, we have to show:
\begin{gather}
\big( (\widetilde{A} \ast B_{-\infty,\infty}) \times C \big) / \langle \! \langle (r_{i},c),(r_{j},c) \rangle \! \rangle \nonumber \\
\cong \big( (\widetilde{A} \ast B_{-\infty,\omega_{r_{j-1}}}) \times C \big)  / \langle \! \langle(r_{i},c) \rangle \! \rangle \ \underset{P_{j}}{\ast} \  \big( (\widetilde{A} \ast B_{\alpha_{r_{j}},\infty}) \times C \big) / \langle \! \langle(r_{j},c) \rangle \! \rangle \label{eqamal}
\end{gather}
First, we show that the amalgamated product \eqref{eqamal} is well-defined by proving that $P_{j}$ embeds into both factors. The embedding of $P_{j}$ in the right factor follows from Lemma~\ref{lememb12} $(i)$ for $m=j$. The embedding in the left factor follows from Lemma~\ref{lememb12} $(ii)$ for $m=j-1$ because of $i=j-1$. Now the isomorphism between the two sides of \eqref{eqamal} can be easily shown by giving a common presentation.

For the induction step $(j \rightarrow j+1)$ we define
\begin{eqnarray*}
G &:=& \big( (\widetilde{A} \ast B_{-\infty,\omega_{r_{j}}}) \times C \big) \, / \, \langle \! \langle (r_{i},c),(r_{i+1},c), \dots , (r_{j},c) \rangle \! \rangle \\
&\cong & \big( (\widetilde{A} \ast B_{-\infty,\omega_{r_{j}}}) \times \widetilde{C} \big) \, / \, \langle \! \langle (r_{i},c),(r_{i+1},c), \dots , (r_{j},c) \rangle \! \rangle
\end{eqnarray*}
and want to show the presentation
\begin{eqnarray} \label{eqIndSchr2}
& & \big( (\widetilde{A} \ast B_{-\infty,\infty}) \times C \big) \, / \, \langle \! \langle (r_{i},c), (r_{i+1},c), \dots , (r_{j+1},c) \rangle \! \rangle \nonumber \\ 
&\cong & \ G \ \underset{P_{j+1}}{\ast} \ \big( (\widetilde{A} \ast B_{\alpha_{r_{j+1}},\infty}) \times C \big) \, / \, \langle \! \langle(r_{j+1},c) \rangle \! \rangle,
\end{eqnarray}
where $P_{j+1} = ( \widetilde{A} \ast B_{\alpha_{r_{j+1}},\omega_{r_{j}}}) \times \widetilde{C}$. To do that we show that $P_{j+1}$ embeds into the left and the right factor. Note that the embedding of $P_{j+1}$ in the right factor follows from Lemma~\ref{lememb12} (i) with $m=j+1$. It remains to show the embedding of $P_{j+1}$ in the left factor $G$: Because of Lemma~\ref{lememb12} (ii) with $m=j$ the group $P_{j+1}$ embeds into $\big( ( \widetilde{A} \ast B_{\alpha_{r_{j}},\infty} ) \times C \big) / \langle \! \langle (r_{j},c) \rangle \! \rangle$. The group $\big( ( \widetilde{A} \ast B_{\alpha_{r_{j}},\infty} ) \times C \big) / \langle \! \langle (r_{j},c) \rangle \! \rangle$ embeds as a factor of the amalgamated product from the induction hypothesis into
\begin{eqnarray*}
G' \ := \ \big( ( \widetilde{A} \ast B_{-\infty,\infty}) \times C) / \langle \! \langle (r_{i},c), (r_{i+1},c), \dots, (r_{j},c) \rangle \! \rangle. 
\end{eqnarray*}
We define $\varphi \colon P_{j+1} \rightarrow G$ as the composition of the trivial embedding of $P_{j+1}$ into $(\widetilde{A} \ast B_{-\infty,\omega_{r_{j}}}) \times C$ and the canonical homomorphism from $(\widetilde{A} \ast B_{-\infty,\omega_{r_{j}}}) \times C$ in the factor group $G$. Altogether we get the following commuting diagram:
\begin{displaymath}
	\xymatrix@C=2cm@R=2cm{
        P_{j+1} \ \ar@{^{(}->}[rr] \ar@{-->}[rrd]^{\text{\large $\varphi$}} &&  \ G' \\
       && \ G \ar@{->}[u] }
\vspace{0.3cm}
\end{displaymath}
Finally, we can read that $\varphi$ is a monomorphism and $P_{j+1}$ embeds into $G$.
\end{proof}

Analogously one can prove the following ``reflected'' version of Lemma~\ref{lemamalgPr}.

\begin{lemma} \label{lemamalgPrrefl}
We use the notation introduced in Notation~\ref{notindex} and Lemma~\ref{lemC}. Let $i$, $j\in \mathbb{Z}$ be two numbers and let $i < j$. We define $P_{i+1}:=(\widetilde{A} \ast B_{\alpha_{r_{i+1},\omega_{r_{i}}}}) \times \widetilde{C}$. Then we have:
\begin{gather*}
\big( (\widetilde{A} \ast B_{-\infty,\infty}) \times C \big) / \langle \! \langle (r_{i},c), (r_{i+1},c), \dots , (r_{j},c) \rangle \! \rangle \\
\cong \ \big( (\widetilde{A} \ast B_{-\infty,\omega_{r_{i}}}) \times C \big) / \langle \! \langle (r_{i},c) \rangle \! \rangle \underset{P_{i+1}}{\ast} \big( (\widetilde{A} \ast B_{\alpha_{r_{i+1}},\infty}) \times C \big) / \langle \! \langle (r_{i+1},c), \dots , (r_{j},c) \rangle \! \rangle \\
\end{gather*}
\end{lemma}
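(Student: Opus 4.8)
The plan is to mirror the proof of Lemma~\ref{lemamalgPr}, interchanging throughout the roles of the two amalgamation factors and, correspondingly, of $\alpha_{r_{\bullet}}$ with $\omega_{r_{\bullet}}$ and of the ``rightmost'' with the ``leftmost'' relator. Concretely, I would fix $j \in \mathbb{Z}$ and argue by downward induction on $i$ (for $i<j$; equivalently, induction on $j-i$), so that at each step one adjoins a new \emph{leftmost} relator $(r_{i-1},c)$ and splits it off on the left. Throughout one uses the identities $\alpha_{r_i}=\alpha_{r_{i-1}}+1$ and $\omega_{r_i}=\omega_{r_{i-1}}+1$, which hold because conjugation by $a^{-1}$ is an automorphism of $\ker(\varphi)=\widetilde{A}\ast\bigast_{k\in\mathbb{Z}}B_k$ carrying $r_{i-1}$ to $r_i$ and each $B_k$ to $B_{k+1}$.

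For the induction base $i=j-1$ the asserted decomposition has $P_{i+1}=P_j=(\widetilde{A}\ast B_{\alpha_{r_j},\omega_{r_{j-1}}})\times\widetilde{C}$ and is exactly the induction base of Lemma~\ref{lemamalgPr} read with its index $i$ set equal to $j-1$; so it is handled just as there: $P_j$ embeds into the right factor by Lemma~\ref{lememb12}$(i)$ with $m=j$ and into the left factor by Lemma~\ref{lememb12}$(ii)$ with $m=j-1$, and the isomorphism is obtained from a common presentation. For the induction step $(i\to i-1)$, put $G:=\big((\widetilde{A}\ast B_{\alpha_{r_i},\infty})\times C\big)/\langle\!\langle(r_i,c),\dots,(r_j,c)\rangle\!\rangle$, the prospective right factor at level $i-1$. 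The embedding of $P_i$ into the left factor $\big((\widetilde{A}\ast B_{-\infty,\omega_{r_{i-1}}})\times C\big)/\langle\!\langle(r_{i-1},c)\rangle\!\rangle$ is immediate from Lemma~\ref{lememb12}$(ii)$ with $m=i-1$, $k=-\infty$, $\ell=\omega_{r_{i-1}}$. For $P_i\hookrightarrow G$ I would run the commuting-triangle argument of Lemma~\ref{lemamalgPr}: by Lemma~\ref{lememb12}$(i)$ with $m=i$, $k=-\infty$, $\ell=\omega_{r_i}$, the group $P_i$ embeds into $\big((\widetilde{A}\ast B_{-\infty,\omega_{r_i}})\times C\big)/\langle\!\langle(r_i,c)\rangle\!\rangle$, which is precisely the \emph{left} factor of the amalgamated decomposition given by the induction hypothesis and hence embeds into $G':=\big((\widetilde{A}\ast B_{-\infty,\infty})\times C\big)/\langle\!\langle(r_i,c),\dots,(r_j,c)\rangle\!\rangle$; letting $\varphi\colon P_i\to G$ be the canonical map induced by the inclusions of $\widetilde{A}$, the $B_k$ and $C$ followed by the quotient, and noting that it is compatible, via the canonical homomorphism $G\to G'$, with the monomorphism $P_i\hookrightarrow G'$ just constructed, the commuting triangle
\begin{displaymath}
\xymatrix@C=2cm@R=1.8cm{
P_{i} \ \ar@{^{(}->}[rr] \ar@{-->}[rrd]^{\varphi} && \ G' \\
&& \ G \ar@{->}[u] }
\end{displaymath}
forces $\varphi$ to be injective. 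With both amalgamating embeddings in hand the amalgamated product is well-defined, and the asserted isomorphism follows by exhibiting a common presentation (generators: those of $\widetilde{A}$, the $B_k$ for $k\in\mathbb{Z}$, and $C$; relations: all commutators making $\widetilde{A}\ast B_{-\infty,\infty}$ centralize $C$, together with $(r_{i-1},c)=\dots=(r_j,c)=1$).

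The step I expect to demand the most care is the same one as in Lemma~\ref{lemamalgPr}, namely checking that the triangle above genuinely commutes. The subtlety is that $P_i$ carries the quotient $\widetilde{C}=C/N_i$ rather than $C$ itself, so one first has to see that $\varphi$ is well-defined — this uses $N_i\subseteq\langle\!\langle(r_i,c),\dots,(r_j,c)\rangle\!\rangle\cap C$, which holds because $G_i=(\widetilde{A}\ast B_{\alpha_{r_i},\omega_{r_i}})\times C$ is a subgroup of $(\widetilde{A}\ast B_{\alpha_{r_i},\infty})\times C$ — and then that tracing a generator of $P_i$ around either path of the triangle yields the same class in $G'$; once this is granted, injectivity of $\varphi$ is automatic from that of the composite $P_i\hookrightarrow G'$. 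Apart from this, the remaining work is just the index bookkeeping and the routine verification that the two presentations (amalgamated product versus one-step-larger quotient) literally coincide.
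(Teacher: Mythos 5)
Your proposal is correct and is precisely the argument the paper intends: the paper proves Lemma~\ref{lemamalgPrrefl} only by the remark that it follows ``analogously'' to Lemma~\ref{lemamalgPr}, and your downward induction on $i$ for fixed $j$ --- with the base case coinciding with that of Lemma~\ref{lemamalgPr}, the two amalgamating embeddings supplied by Lemma~\ref{lememb12}$(i)$ and $(ii)$, and the commuting-triangle argument for the embedding of $P_i$ into the new right factor --- is exactly that mirrored proof, with the index bookkeeping done correctly. Your additional remark on why the map out of $P_i\times\widetilde{C}_{\phantom{i}}$ is well-defined (namely $N_i\subseteq\langle\!\langle(r_i,c),\dots,(r_j,c)\rangle\!\rangle\cap C$) addresses a point the paper glosses over even in the proof of Lemma~\ref{lemamalgPr} itself.
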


The next lemma is an auxiliary statement that will be used at several places of the following proofs.

\begin{lemma} \label{lemassum12}
Let $X$, $Y$ be two groups, $W:=X \ast_{Z} Y$ an amalgamated product and $r$, $s$ two elements of $X$. Further let at least one of the following assumptions be true:
\begin{itemize}
\item[$(i)$] The subgroup $Z$ lies in the center of $Y$.
\item[$(ii)$] The factor $Y$ is a free group and $Z$ is a maximal cyclic subgroup of $Y$.
\end{itemize}
Then $r$ and $s$ are conjugate in $X$ if and only if they are conjugate in $W$.
\end{lemma}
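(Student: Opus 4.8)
The direction ``$r\sim_X s\Rightarrow r\sim_W s$'' is immediate from $X\leqslant W$, so the content is the converse, and I would prove it via the action of $W=X\ast_Z Y$ on its Bass--Serre tree $T$: the vertex set is $W/X\sqcup W/Y$, the edge set is $W/Z$, the edge $wZ$ joins $wX$ to $wY$, and the base vertex $v:=1\cdot X$ has stabiliser $X$ (while $\mathrm{Stab}(wZ)=wZw^{-1}$, $\mathrm{Stab}(wY)=wYw^{-1}$). We may assume $r\neq 1$, since if $r=1$ then every $W$-conjugate of $r$ equals $1$. Suppose $s=w^{-1}rw$. As $r,s\in X=\mathrm{Stab}(v)$, the element $r$ fixes $v$ and, being equal to $wsw^{-1}$, also fixes $wv$; since $\mathrm{Fix}(r)\subseteq T$ is a subtree, the geodesic $\gamma=[v,wv]$ lies in $\mathrm{Fix}(r)$. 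The two vertex types alternate along $\gamma$, so $\gamma$ has even length $2k$; if $k=0$ then $wv=v$, i.e.\ $w\in\mathrm{Stab}(v)=X$, and we are done. So assume $k\geqslant 1$.

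Next I would walk along $\gamma$ one ``double edge'' at a time. Write $v=p_0,p_1,\dots,p_{2k}=wv$ for the vertices of $\gamma$, each $p_{2i}$ of type $X$, and choose $g_{2i}\in W$ with $g_{2i}v=p_{2i}$, taking $g_0=1$ and $g_{2k}=w$. Then $r_i:=g_{2i}^{-1}rg_{2i}$ lies in $X$ (it fixes $v$), with $r_0=r$, $r_k=s$, and each $r_i\neq 1$. Applying $g_{2i}^{-1}$ to the subpath $p_{2i}p_{2i+1}p_{2i+2}$: its first edge, being incident to $v$, equals $x_iZ$ for some $x_i\in X$; the far endpoint of that edge is $x_i\cdot(1\cdot Y)$; and the second edge, incident to this middle vertex, equals $x_iy_iZ$ for some $y_i\in Y$. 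Since $r_i$ fixes $v$ and the middle vertex, it fixes the edge $x_iZ$, so $t_i:=x_i^{-1}r_ix_i\in Z$ with $t_i\neq 1$; in particular $r_i\sim_X t_i$. Comparing endpoints gives $g_{2i+2}=g_{2i}\,x_iy_ix_i'$ for some $x_i'\in X$, and then a direct computation yields $r_{i+1}=g_{2i+2}^{-1}rg_{2i+2}=(x_i')^{-1}(y_i^{-1}t_iy_i)x_i'$, so $r_{i+1}\sim_X y_i^{-1}t_iy_i$; moreover $y_i^{-1}t_iy_i=x_i'r_{i+1}(x_i')^{-1}\in X$ while also $y_i^{-1}t_iy_i\in Y$, hence $y_i^{-1}t_iy_i\in X\cap Y=Z$.

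Everything then reduces to the claim that $y_i^{-1}t_iy_i=t_i$ for each $i$: granting it, $r_i\sim_X t_i=y_i^{-1}t_iy_i\sim_X r_{i+1}$, and chaining over $i=0,\dots,k-1$ gives $r=r_0\sim_X r_k=s$. Under hypothesis $(i)$ the claim is immediate since $t_i\in Z$ is centralised by all of $Y$. Under hypothesis $(ii)$, $Z$ is cyclic and $Z\neq\{1\}$ (else $t_i=1$), so $Z=\langle z\rangle$ with $z$ of infinite order, and maximality means $z$ is not a proper power in $Y$; write $t_i=z^m$ ($m\neq 0$) and $y_i^{-1}t_iy_i=z^{\ell}$. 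Conjugate elements of a free group have equal cyclically reduced length, and that of $z^n$ is $|n|$ times the positive cyclically reduced length of $z$, so $|\ell|=|m|$, i.e.\ $y_i^{-1}t_iy_i=t_i^{\pm 1}$. The minus sign is impossible because no nontrivial element of a free group is conjugate to its inverse: if $h^{-1}gh=g^{-1}$ with $g\neq 1$, then $h^2$ centralises $g$ and hence lies in the cyclic centraliser $C_Y(g)=\langle c\rangle$ (with $c$ the root of $g$); since $c$ is not a proper power this forces $h\in\langle c\rangle=C_Y(g)$, whence $g^{-1}=h^{-1}gh=g$ and $g=1$, a contradiction. Therefore $y_i^{-1}t_iy_i=t_i$, finishing the proof. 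The genuinely delicate point is exactly this last step — extracting $y_i^{-1}t_iy_i=t_i$ from the two alternative hypotheses on $(Y,Z)$, and in particular excluding the inversion $t_i\mapsto t_i^{-1}$ in case $(ii)$; the rest is bookkeeping on the tree, and a reader wishing to avoid Bass--Serre theory could run the same induction directly off the conjugacy theorem for amalgamated products (see \cite{BuchLS}).
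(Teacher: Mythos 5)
Your proof is correct. The direction of travel is the same as in the paper -- everything reduces to showing that the single $Y$-syllable separating two consecutive $X$-conjugates can be stripped off because it conjugates the relevant element of $Z$ to itself -- but the machinery is genuinely different. The paper works with normal forms in $X \ast_Z Y$ and a minimal-length counterexample $w$ with $r=w^{-1}sw$: it peels off the first $X$- and $Y$-syllables, forces the conjugate $p=g_1^{-1}sg_1$ into $Z$ by comparing syllable lengths, and then derives a contradiction with the minimality of $w$ (case $(i)$) or with $h_1\in Y\setminus Z$ (case $(ii)$). You instead run an induction along the geodesic $[v,wv]$ in the Bass--Serre tree, which replaces the minimality argument by explicit bookkeeping of edge stabilizers; this is arguably cleaner, since the fixed-tree argument hands you $t_i\in Z\setminus\{1\}$ for free rather than via a length comparison. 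The other real difference is in case $(ii)$: the paper invokes asphericity of one-relator presentations (its Propositions~\ref{propasphone} and \ref{propasphpairs}) to conclude $z^{\alpha}=h_1^{-1}z^{\beta}h_1$ forces $\alpha=\beta$ and then uses maximality of $Z$ to push $h_1$ into $Z$ for a contradiction, whereas you get $|\ell|=|m|$ from invariance of cyclically reduced length and exclude the inversion $t_i\mapsto t_i^{-1}$ by the standard centralizer argument showing no non-trivial element of a free group is conjugate to its inverse. Both are complete; yours is more elementary in that it avoids the aspherical-presentation results, and as you note it in fact never uses maximality of $Z$ beyond $X\cap Y=Z$, so it proves case $(ii)$ for an arbitrary cyclic $Z\leqslant Y$.
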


\begin{proof}
Note that $r$ and $s$ are conjugate in $W$ if they are already conjugate in $X$. It remains to show that $r$ and $s$ are conjugate in $X$ if the are conjugate in $W$. Thus, let $w \in W$ be an element with $r=w^{-1}sw$. We use the unique length of normal forms of amalgamated products to show that $w$ is an element of $X$: For $w \in X$ there is nothing to prove. So we can consider the normal form
\begin{eqnarray}
w \ = \ g_{1} h_{1} g_{2} h_{2} \cdots g_{n-1}h_{n-1}g_{n}
\end{eqnarray}
of $w$, where $g_{i} \in (X \backslash Z) \cup \{1\}$, $h_{i} \in Y \backslash Z$ and $n \geqslant 2$. To avoid a larger case analysis, we allow $g_{1}$ and $g_{n}$ to be trivial, but all other $g_{i}$ shall be non-trivial. We choose an element $w$ which has minimal length among all elements with the described properties. Then we have
\begin{eqnarray} \label{eqNF} 
r=g_{n}^{-1}h_{n-1}^{-1}g_{n-1}^{-1} \cdots h_{2}^{-1}g_{2}^{-1} \overbrace{h_{1}^{-1} \underbrace{g_{1}^{-1} \ s \ g_{1}}_{=: \, p} h_{1}}^{=: \, q} g_{2}h_{2} \cdots g_{n-1}h_{n-1}g_{n}.
\end{eqnarray}
As a product of elements of $X$ the element $p:=g_{1}^{-1}sg_{1}$ is also an element of $X$. Thus it suffices to consider the following two cases.

\noindent\underline{Case 1.} Let $p$ be an element of $X \backslash Z$.\\
Note that $h_{1}$ cannot be trivial because of $n \geqslant 2$. On the one hand, by the right side of \eqref{eqNF}, the element $r$ possesses a length greater or equal $3$ with respect to the normal form of the amalgamated product $W$. On the other hand $r$ is an element of $X$ and possesses therefore a length of $1$ which is a contradiction.

\noindent\underline{Case 2.} Let $p$ be an element of $Z$.\\
We define $q:=h_{1}^{-1}ph_{1}$. As a product of elements of $Y$ the element $q$ lies in $Y$. By assumption, $r$ is no element of $Y \backslash Z$. If $q$ is an element of $Y \backslash Z$ the normal form of $r$ would have length at least three. This is a contradiction. Thus we have $q \in Z$.

\underline{Case 2.1.} Let assumption $(i)$ of Lemma~\ref{lemassum12} be valid.\\
Since $p$ is in the center of $Y$ und therefore commutes with $h_{1}$, we have
\begin{eqnarray*}
r=g_{n}^{-1}h_{n-1}^{-1}g_{n-1}^{-1} \cdots h_{2}^{-1}g_{2}^{-1}g_{1}^{-1} \ s \ g_{1}g_{2}h_{2} \cdots g_{n-1}h_{n-1}g_{n}.
\end{eqnarray*}
Thus
\begin{eqnarray} \label{eqw'}
w'  :=\underbrace{g_{1}g_{2}}_{=:g'_{1}} \underbrace{h_{2}}_{=:h'_{1}} \cdots \underbrace{g_{n-1}}_{=: \, g'_{n-2}} \ \ \underbrace{h_{n-1}}_{=: \, h'_{n-2}} \ \ \underbrace{g_{n}}_{=: \, g'_{n-1}}
\end{eqnarray}
fulfils the equation $r = w'^{-1}sw'$. Note that $w'$ contains one $H$-piece less than $w$. This contradicts the choice of $w$ as an element of minimal length with $r=w^{-1}sw$.

\underline{Case 2.2.} Let assumption $(ii)$ of Lemma~\ref{lemassum12} be valid.\\
Let $z$ be a generator of $Z$. Then, because of $p$, $q \in Z$ and $q= h_{1}^{-1}ph_{1}$, we have the following in $Y$:
\begin{eqnarray*}
z^{\alpha} \ = \ h_{1}^{-1}z^{\beta} h_{1} \ \ \text{for elements} \ \ \alpha, \beta \in \mathbb{Z} \backslash \{0\} \ \ \text{and} \ \ h_{1} \in Y \backslash Z
\end{eqnarray*}
By assumption $(ii)$ of Lemma~\ref{lemassum12}, $Y$ is a free group. Therefore, the presentation $\langle \mathcal{Y} \mid z \rangle$, where $\mathcal{Y}$ is the basis of $Y$, is aspherical by Proposition~\ref{propasphone}. By Proposition~\ref{propasphpairs}, we have $\alpha=\beta$. Thus, $h_{1}$ is in the centralizer of the element $z^{\alpha}$. Since the centralizer of a non-trivial element of a free group is cyclic and since by assumption (ii) $Z=\langle z \rangle$ is a maximal cyclic subgroup of $Y$, it follows $h_{1}=z^{\ell}$ for an $\ell \in \mathbb{Z}$. This contradicts $h_{1} \in Y \backslash Z$. 
\end{proof}

\subsection{Generalization of a result by M. Edjvet}

In this subsection we want to prove Theorem~\ref{thmedjvetgen}. Our proof will be by a combination of M. Edjvet's methods used for the proof of Theorem~\ref{Edj} and some results from \cite{MA} and the previous subsections. 

\begin{remark} \label{rempres}
Let $G$ be a free group with basis $\mathcal{X}$ and let $H$ be a group. Further let $\mathcal{R}$ be a set of elements from $H$ and let $S$ be a set of elements from $G \ast H$. We define $\widetilde{H} := H / \langle \! \langle \mathcal{R} \rangle \! \rangle$ and the canonical homomorphism
\begin{eqnarray*}
\varphi \colon G \ast H \ \ \rightarrow \ \ (G \ast H ) / \langle \! \langle \mathcal{R} \rangle \! \rangle_{G \ast H} \ \ \cong \ \ G \ast \widetilde{H}.
\end{eqnarray*}
Then we have:
\begin{eqnarray*}
(G \ast H)/\langle \! \langle \mathcal{R},\mathcal{S} \rangle \! \rangle &\cong& \varphi(G \ast H)/\varphi(\langle \! \langle \mathcal{R},\mathcal{S} \rangle \! \rangle_{G \ast H}) \ \ \ \ \ (\text{because of} \ \operatorname{ker}(\varphi) \subseteq \langle \! \langle \mathcal{R}, \mathcal{S} \rangle \! \rangle_{G \ast H}) \\
& \cong &  \varphi(G \ast H)/\langle \! \langle \varphi(\mathcal{S}) \rangle \! \rangle_{\varphi(G \ast H)} \ \ \ \ \ \, (\text{because of} \ \mathcal{R} \subseteq \operatorname{ker}(\varphi)) \\
& \cong & (G \ast \widetilde{H}) / \langle \! \langle \varphi(\mathcal{S}) \rangle \! \rangle_{G \ast \widetilde{H}}
\end{eqnarray*}
\end{remark}

In view of Remark~\ref{rempres} we introduce the following notation.

\begin{notation} \label{notpres}
Let $G= \langle \mathcal{X} \mid \rangle$ and $H = \langle \mathcal{Y} \mid \rangle$ be free groups with basis $\mathcal{X}$ and $\mathcal{Y}$. Further let $\mathcal{R}_{1}$, $\mathcal{R}_{2}$ be subsets of $H$ and let $S$ be a subset of $G \ast H$. We set $\widetilde{H} := H / \langle \! \langle \mathcal{R}_{1} \rangle \! \rangle$ and define the canonical homomorphism
\begin{eqnarray*}
\varphi \colon G \ast H \ \ \rightarrow \ \ (G \ast H) / \langle \! \langle \mathcal{R}_{1} \rangle \! \rangle_{G \ast H} \ \ \cong \ \ G \ast \widetilde{H}.
\end{eqnarray*}
Further we define for $\bar{H} := \widetilde{H} / \langle \! \langle \varphi(\mathcal{R}_{2}) \rangle \! \rangle$ the canonical homomorphism
\begin{eqnarray*}
\psi \colon G \ast \widetilde{H} \ \ \rightarrow \ \ (G \ast \widetilde{H} ) / \langle \! \langle \varphi(\mathcal{R}_{1}) \rangle \! \rangle_{G \ast \widetilde{H}} \ \ \cong \ \ G \ast \bar{H}. 
\end{eqnarray*}
Then we use the following notation:
\begin{eqnarray*}
& & \langle \mathcal{X} \cup \mathcal{Y} \, \mid \, \mathcal{R}_{1} \cup \mathcal{R}_{2} \cup \mathcal{S} \rangle \ = \ \langle G,H \, \mid \, \mathcal{R}_{1} \cup \mathcal{R}_{2} \cup \mathcal{S} \rangle \ = \ \langle G, \widetilde{H} \, \mid \, \varphi(\mathcal{R}_{2}) \cup \varphi(\mathcal{S}) \rangle \\
&=&  \langle G, \widebar{H} \, \mid \, \psi(\varphi(\mathcal{S})) \rangle
\end{eqnarray*}
If there is no danger of misunderstandings, we omit mentioning the homomorphism $\varphi$ and $\psi$ by writing - in abuse of notation - $w$ for the images $\varphi(w) \in G \ast \widetilde{H}$ or $\psi(\varphi(w)) \in G \ast \bar{H}$ of an element $w \in G \ast H$.
\end{notation}

We start our proof of Theorem~\ref{thmedjvetgen} by considering the case $|\mathcal{J}|=2$, i.\,e. we prove the following:

\begin{proposition} \label{propedjgen2}
Let $A$, $B$ be indicable as well as locally indicable groups. Further let $C$ be a group. Then $(A \ast B) \times C$ possesses the Magnus property if and only if $A \times C$ and $B \times C$ possess the Magnus property.
\end{proposition}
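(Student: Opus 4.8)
The plan is to prove both implications; the forward one is a short retract observation and the backward one is the substance. For ``$(A\ast B)\times C$ has the Magnus property $\Rightarrow$ $A\times C,\, B\times C$ have it'': each of $A\times C$, $B\times C$ is a retract of $(A\ast B)\times C$ (collapse the other free factor to $1$, fix everything else), and the Magnus property passes to retracts by exactly the chain-of-homomorphisms argument in the proof of Lemma~\ref{lemindC}, since a retraction sends normal closures to normal closures and conjugacy to conjugacy. In particular $A$, $B$, $C$ each inherit the Magnus property (they are retracts of $A\times C$ or $B\times C$), so $A\ast B$ has the Magnus property by Corollary~\ref{corEdj}; this lets me treat $(A\ast B)\times C$ as a two-factor direct product of Magnus groups and use Lemma~\ref{Lemnus} for the converse.

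For the converse, assume $A\times C$ and $B\times C$ have the Magnus property. By Lemma~\ref{Lemnus} (for a two-factor product the only admissible sign vectors are $(1,-1)$ and $(-1,1)$, and they give the same condition since $\langle\!\langle(r^{-1},c)\rangle\!\rangle=\langle\!\langle(r,c^{-1})\rangle\!\rangle$), it suffices to show: if $(r,c)\in(A\ast B)\times C$ satisfies $\langle\!\langle(r,c)\rangle\!\rangle=\langle\!\langle(r^{-1},c)\rangle\!\rangle$ then $r\sim_{A\ast B}r^{-1}$ or $c\sim_{C}c^{-1}$. I would argue this by induction on the length of the cyclically reduced form of $r$, following Edjvet's proof of Theorem~\ref{Edj} (whose ingredients are the Freiheitssatz, Theorem~\ref{Howlokind}, and exponent-sum arguments). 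If $r=1$ or $c=1$ the conclusion is immediate. If $r$ is conjugate in $A\ast B$ into a factor, say $r\sim_{A\ast B}a$ for some $a\in A\setminus\{1\}$, then $\langle\!\langle(a,c)\rangle\!\rangle=\langle\!\langle(a^{-1},c)\rangle\!\rangle$ in $(A\ast B)\times C$; intersecting with the retract $A\times C$ (a normal closure of an element of a retract is detected inside the retract, by applying the retraction) gives the same equality in $A\times C$, and the Magnus property of $A\times C$ forces $a\sim_{A}a^{-1}$ (whence $r\sim_{A\ast B}r^{-1}$) or $c\sim_{C}c^{-1}$; the case $r\sim_{A\ast B}b\in B$ is symmetric.

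So assume $r$ is cyclically reduced of even length $\geq 2$ and $c\neq 1$. The key preliminary is to construct an epimorphism $\varphi\colon(A\ast B)\times C\twoheadrightarrow\mathbb{Z}$ with $\varphi((r,c))=0$: since $A$ and $B$ are both indicable, $\operatorname{Hom}(A,\mathbb{Z})\oplus\operatorname{Hom}(B,\mathbb{Z})$ has rank at least $2$, hence the linear condition that a homomorphism $A\ast B\to\mathbb{Z}$ annihilate the product of the $A$-syllables together with the product of the $B$-syllables of $r$ has a nonzero solution; extending by $0$ on $C$ and rescaling yields $\varphi$. If such a $\varphi$ cannot simultaneously be made surjective on $A$ (or, after interchanging $A$ and $B$, on $B$), I would first enlarge to $(A\ast B\ast\langle x\rangle)\times C$ and put the grading on the stable letter $x$ via Lemma~\ref{lemkernphi}. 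Now set $N:=\ker(\varphi)$. By Lemma~\ref{lemnormcleq}, $\langle\!\langle(r,c)\rangle\!\rangle=\langle\!\langle(r_i,c):i\in\mathbb{Z}\rangle\!\rangle_{N}$ with $r_i=a^{-i}ra^{i}$ (and likewise for $(r^{-1},c)$), and by Lemma~\ref{lemtestudai1} / Notation~\ref{notindex}, $N$ is the direct product with $C$ of the free product of $\widetilde A=A\cap\ker(\varphi)$ with the $B_i$. So the hypothesis becomes $\langle\!\langle(r_i,c):i\rangle\!\rangle_{N}=\langle\!\langle(r_i^{-1},c):i\rangle\!\rangle_{N}$ inside $N$.

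An element witnessing this equality involves only finitely many indices, so I can truncate to a window $[p,q]$ covering all supports $[\alpha_{r_i},\omega_{r_i}]$ in play; using Remark~\ref{rempres} and Notation~\ref{notpres} to stage the presentations, Lemmas~\ref{lemamalgPr} and~\ref{lemamalgPrrefl} then exhibit $\big((\widetilde A\ast B_{-\infty,\infty})\times C\big)\big/\langle\!\langle(r_p,c),\dots,(r_q,c)\rangle\!\rangle$ as an iterated amalgamated product whose vertex groups are single-relator quotients $\big((\widetilde A\ast B_{\alpha,\omega})\times C\big)\big/\langle\!\langle(r_m,c)\rangle\!\rangle$ and whose edge groups are the $P_m$ of Lemma~\ref{lememb12}, the decompositions being honest by the Freiheitssatz inputs Lemma~\ref{lemmachi} and Lemma~\ref{lememb12}. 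I would then descend: in a single vertex group the one-relator presentation is aspherical, so a pairing argument in the spirit of Proposition~\ref{propasphpairs} (as used in the proof of Lemma~\ref{lemfirstorder}, with Proposition~\ref{propasphone} supplying asphericity) forces the conjugating occurrences of the $r_i$ to cancel in pairs and pins down their exponent sums; Lemma~\ref{lemassum12} pushes any conjugacy detected in the big amalgam down into a single vertex group, which carries a strictly simpler relator and falls under the inductive hypothesis; and the exponent-sum data, read off in the $C$-coordinate, either delivers $r\sim_{A\ast B}r^{-1}$ or collapses the whole obstruction to an equality $\langle\!\langle(z,c)\rangle\!\rangle=\langle\!\langle(z,c^{-1})\rangle\!\rangle$ in $\mathbb{Z}\times C$ for a generator $z$ of $\mathbb{Z}$ — and since $\mathbb{Z}\times C$ has the Magnus property by Lemma~\ref{lemindC} while $z\not\sim z^{-1}$, this gives $c\sim_{C}c^{-1}$, exactly as in the last paragraph of the proof of Theorem~\ref{thmdirect}. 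I expect the hard part to be precisely this descent — choosing the right complexity measure, tracking the supports $[\alpha_{r_i},\omega_{r_i}]$, and controlling which amalgam factor absorbs each conjugating element so that the inductive hypothesis genuinely applies — together with the auxiliary task of producing (or, in the degenerate case, simulating via the stable letter) the grading $\varphi$.
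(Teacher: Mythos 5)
Your plan follows the same route as the paper's proof: the forward direction by retractions, the reduction via Lemma~\ref{Lemnus} to showing that $\langle\!\langle(r,c)\rangle\!\rangle=\langle\!\langle(r^{-1},c)\rangle\!\rangle$ forces $r\sim r^{-1}$ or $c\sim c^{-1}$, the syllable-length-one case by projecting to $A\times C$, the construction of a grading $\varphi$ with $\varphi((r,c))=0$ (your linear-algebra formulation reproduces exactly the paper's two cases: one exponent sum vanishes, or one takes $\psi(r_B)\varphi(w_A)-\varphi(r_A)\psi(w_B)$ and must adjoin roots and a stable letter to invoke Lemma~\ref{lemkernphi}), the window-collapsing via Lemmas~\ref{lemamalgPr} and~\ref{lemamalgPrrefl} to replace the family $\{(r_i,c)\}_{i\in\mathbb{Z}}$ by the single element $(r_0,c)$, and the final descent via Lemma~\ref{lemassum12} with the base case supplied by Theorem~\ref{thmdirect}. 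However, the heart of the induction is missing, and you flag this yourself. Two concrete gaps. First, the quantity that must strictly decrease is the syllable length of $r_0$ with respect to the \emph{new} free-product decomposition of the kernel ($K\ast B_0$ in the paper's Case~1, $K\ast L$ in Case~2); establishing this decrease is a genuine rewriting argument about how the $A'$- and $B'$-syllables of $r$ recombine into pieces of the new factors, and nothing in your sketch produces it, so the induction is not known to terminate. Second --- and this is where the plan would fail as written --- the inductive hypothesis requires the two free factors to be \emph{indicable}, not merely locally indicable, but the factors you land in after passing to the kernel ($\widetilde A=A\cap\ker\varphi$, the $B_i$, and free products thereof) need not be indicable, and $\widetilde A$ need not even be finitely generated or non-trivial. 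The paper repairs this by replacing each factor with the finitely generated subgroup generated by the syllables of $r_0$ that actually occur (a non-trivial finitely generated locally indicable group is indicable), and for the same reason it replaces $A\ast B$ by the finitely generated $A'\ast B'$ at the very outset via the amalgam decomposition \eqref{eqIsomAStrich}; your proposal never performs either replacement.

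A smaller misfire: your suggestion to run an asphericity/pairing argument in the spirit of Propositions~\ref{propasphone} and~\ref{propasphpairs} inside the vertex groups is not available, since those vertex groups are quotients of $(\widetilde A\ast B_{\alpha,\omega})\times C$ rather than of free groups. In the paper asphericity is used only for free groups, namely in Lemma~\ref{lemfirstorder} (which feeds Theorem~\ref{thmdirect}, i.e.\ the base case $F_2\times C$) and in Lemma~\ref{lemassum12}; the alternative $c\sim_C c^{-1}$ is extracted there and in the length-one case, not from an exponent-sum count in the vertex groups.
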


\begin{proof}
First, we show by contradiction that the groups $A \times C$ and $B \times C$ possess the Magnus property if $(A \ast B) \times C$ possesses the Magnus property. W.\,l.\,o.\,g. we assume that $A \times C$ does not possess the Magnus property. Then by Lemma~\ref{Lemnus} there exists an element $(a,c) \in A \times C$ with $\langle \! \langle (a,c) \rangle \! \rangle_{A \times C} = \langle \! \langle (a^{-1},c) \rangle \! \rangle_{A \times C}$, but $a$ is neither conjugate to $a^{-1}$ in $A$ nor is $c$ conjugate to $c^{-1}$ in $C$. Because of the unique length of normal forms for free products $a$ cannot be conjugate to $a^{-1}$ in $A \ast B$. So, by Lemma~\ref{Lemnus}, $(A \ast B) \times C$ does not possess the Magnus property.

It remains to prove the Magnus property of $(A \ast B) \times C$ under the condition that $A \times C$ and $B \times C$ possess the Magnus property. Under that condition $A$, $B$ and $C$ also possess the Magnus property. Since $A$ and $B$ are in addition locally indicable, the group $A \ast B$ possesses the Magnus property by Corollary~\ref{corEdj}. Due to Lemma~\ref{Lemnus} $(A \ast B) \times C$ possesses the Magnus property if and only if for all $r \in (A \ast B) \backslash \{1\}$ and $c \in C \backslash \{1\}$ we have:
\begin{eqnarray} \label{eqaim2}
\langle \! \langle (r,c) \rangle \! \rangle_{(A \ast B) \times C} \ = \ \langle \! \langle (r^{-1},c) \rangle \! \rangle_{(A \ast B) \times C} \ \ \ \ \Longrightarrow \ \ \ \ ( r \sim_{(A \ast B)} r^{-1} \ \ \lor \ \ c \sim_{C} c^{-1})
\end{eqnarray}
Let $r \in (A \ast B) \backslash \{1\}$ and $c \in C \backslash \{1\}$ be two elements with
\begin{eqnarray} \label{eqcond1}
\langle \! \langle (r,c) \rangle \! \rangle_{(A \ast B) \times C} \ \ = \ \ \langle \! \langle ( r^{-1},c) \rangle \! \rangle_{(A \ast B) \times C}.
\end{eqnarray}
We want to show implication \eqref{eqaim2}. First, we consider the case that $r$ has length $|r|=1$ with respect to the normal form with respect to $A \ast B$. W.\,l.\,o.\,g. we assume $r \in A$. Let $\pi \colon (A \ast B) \times C \rightarrow A \times C$ be the canonical projection. Because of \eqref{eqcond1} the elements $\pi((r,c))=(r,c)$ and $\pi((r^{-1},c)) = (r^{-1},c)$ possess the same normal closure in $A \times C$. Since $A \times C$ possesses the Magnus property by assumption we have $r \sim r^{-1}$ in $A$ (thus also in $A \ast B$) or $c \sim c^{-1}$ in $C$ due to Lemma~\ref{Lemnus}.

In the following, we assume $|r| \geqslant 2$. Let $r=u(1)v(1)u(2)v(2)\dots u(n)v(n)$ be the normal form of $r$ with respect to $A \ast B$. Since normal closures are invariant under conjugation, we may assume w.\,l.\,o.\,g. that $r$ is cyclically reduced and begins with an $A$-piece. We define the subgroups
\begin{eqnarray*}
A' &:=& \langle u(1),u(2),\dots ,u(n) \rangle \ \ \text{of} \ \ A \ \ \text{and} \\
B' &:=& \langle v(1),v(2),\dots, v(n) \rangle \ \ \text{of} \ \ B.
\end{eqnarray*}
Further let $R := \langle \! \langle (r,c) \rangle \! \rangle_{(A \ast B) \times C}$, $R' := \langle \! \langle (r,c) \rangle \! \rangle_{(A' \ast B') \times C}$, $N:= C \cap R'$ and $\widetilde{C} := C / N$. 

Our next intermediate aim is to prove the following:
\begin{eqnarray} \label{eqIsomAStrich}
((A \ast B) \times C) / R \ \ \cong \ \ (A \times \widetilde{C}) \ \underset{A' \times \widetilde{C}}{\ast} \ ((A' \ast B') \times C) / R' \ \underset{B' \times \widetilde{C}}{\ast} \ (B \times \widetilde{C})
\end{eqnarray}
Note that the amalgamated product is well-defined because the embeddings in the middle factor follow by Lemma~\ref{lemmachi}. The embeddings in the outer factors are embeddings as subgroups. It remains to show that both sides of \eqref{eqIsomAStrich} are isomorphic by finding a common presentation of both sides. Let $\pi \colon A \ast B \ast C \rightarrow A \ast B \ast \widetilde{C}$ and $\pi' \colon A' \ast B' \ast C \rightarrow A' \ast B' \ast \widetilde{C}$ be the canonical projections. For the right side of \eqref{eqIsomAStrich} we write using Notation~\ref{notpres} and Tietze transformations:
\begin{eqnarray*}
&& (A \times \widetilde{C}) \ \underset{A' \times \widetilde{C}}{\ast} \ ((A' \ast B') \times C) / R' \ \underset{B' \times \widetilde{C}}{\ast} \ (B \times \widetilde{C})\\
&=& (A \times \widetilde{C}) \ \underset{A' \times \widetilde{C}}{\ast} \ \langle A',B',C \, \mid \, [A',C], \, [B',C], \, N, \, rc \rangle \ \underset{B' \times \widetilde{C}}{\ast} \ (B \times \widetilde{C})\\
&=& (A \times \widetilde{C}) \ \underset{A' \times \widetilde{C}}{\ast} \ \langle A',B',(C/N) \, \mid \, \pi'([A',C]), \, \pi'([B',C]), \, \pi'(rc) \rangle \ \underset{B' \times \widetilde{C}}{\ast} \ (B \times \widetilde{C})\\
&=& \langle A, \widetilde{C} \, \mid \, [A, \widetilde{C}] \rangle  \underset{A' \times \widetilde{C}}{\ast}  \langle A',B',\widetilde{C} \, \mid \, [A',\widetilde{C}], \, [B',\widetilde{C}], \, \pi'(rc) \rangle  \underset{B' \times \widetilde{C}}{\ast}  \langle B, \widetilde{C} \, \mid \, [B, \widetilde{C}] \rangle\\
&=&  \langle A,A',B,B',\widetilde{C} \, \mid \, [A,\widetilde{C}], \, [A',\widetilde{C}], \, [B,\widetilde{C}], \, [B',\widetilde{C}], \, \pi'(rc) \rangle\\
&\overset{\bf{(\ast)}}{=}& \langle A,B,\widetilde{C} \, \mid \, [A,\widetilde{C}], \, [B,\widetilde{C}], \, \pi'(rc) \rangle = \langle A,B,\widetilde{C} \, \mid \, [A,\widetilde{C}], \, [B,\widetilde{C}], \, \pi(rc) \rangle
\end{eqnarray*}
Recall that we use Notation~\ref{notpres}. Thus, the omission of $A'$, $B'$, $[A',\widetilde{C}]$ and $[B',\widetilde{C}]$ in step $\bf{(\ast)}$ is justified because $A'$ and $B'$ are subgroups of $A$ and $B$. In the last step we used that $\pi_{\mid A' \ast B' \ast \widetilde{C}}$ is the projection $\pi'$. For the left side of \eqref{eqIsomAStrich} we write:
\begin{eqnarray*}
&& ((A \ast B) \times C) / R  \ = \  \langle A,B,C \, \mid \, [A,C], \, [B,C], \, rc \rangle \ = \ \langle A,B,C \, \mid \, [A,C], \, [B,C], \, N, \, rc \rangle\\
&=& \langle A,B,(C/N) \, \mid \, \pi([A,C]), \, \pi([B,C]), \, \pi(rc) \rangle \ = \ \langle A,B,\widetilde{C} \, \mid  \, [A,\widetilde{C}], \, [B,\widetilde{C}], \, \pi(rc) \rangle
\end{eqnarray*}
This finishes the proof of \eqref{eqIsomAStrich}.

As a factor of the amalgamated product in \eqref{eqIsomAStrich} the group $\big( ( A' \ast B') \times C \big) / R'$ embeds into $\big( ( A \ast B) \times C \big) / R$. Since the element $(r^{-1},c)$ is an element of $(A' \ast B') \times C \subseteq (A \ast B) \times C$ which is trivial in $\big( ( A \ast B) \times C \big) /R$, it is also trivial in $\big((A' \ast B') \times C \big) / R'$. By symmetry, we have
\begin{eqnarray}
\langle \! \langle (r,c) \rangle \! \rangle_{(A' \ast B') \times C} \ \ = \ \ \langle \! \langle (r^{-1},c) \rangle \! \rangle_{(A' \ast B') \times C}.
\end{eqnarray}
Note, that to prove \eqref{eqaim2} it is sufficient to show the conjugation of $(r,c)$ to $(r^{-1},c)$ or $(r^{-1},c)^{-1}$ in $(A' \ast B') \times C$, since the conjugation of $r$ to $r^{-1}$ in $A' \ast B'$ also implies the conjugation of $r$ to $r^{-1}$ in $A \ast B$. The remaining proof will be by induction over $|r|$.

For the base of induction we consider the case $|r|=2$. W.\,l.\,o.\,g. let $r=u(1)v(1)$. Then we have $(A' \ast B') \times C \cong F_{2} \times C$, where $F_{2}$ is the free group of rank $2$. Since $A \times C$ possesses the Magnus property by assumption and since $A$ is indicable, the group $\mathbb{Z} \times C$ possesses the Magnus property by Lemma~\ref{lemindC}. Because of Theorem~\ref{thmdirect}, $F_{2} \times C$ also possesses the Magnus property.

In the following we consider two cases for the induction step, but before doing that we make some further remarks. Normal closures are invariant under conjugation. Therefore, we can assume w.\,l.\,o.\,g. that the normal form of $r$ with respect to $A' \ast B'$ has even length. Since $A'$ and $B'$ are indicable, there exist epimorphisms
\begin{eqnarray*}
\varphi \colon A' \twoheadrightarrow \mathbb{Z} \ \ \ \text{and} \ \ \ \psi \colon B' \twoheadrightarrow \mathbb{Z}.
\end{eqnarray*}
For any element $w=u(1)v(1)u(2)v(2) \dots u(n)v(n) \in A \ast B$ in normal form we define
\begin{eqnarray*}
w_{A}:=u(1)u(2)\dots u(n) \ \ \ \ \text{and} \ \ \ \ w_{B} := v(1)v(2)\dots v(n).
\end{eqnarray*}

\noindent\underline{Case 1.} Let $\varphi(r_{A})=0$ or $\psi(r_{B}) = 0$.\\
W.\,l.\,o.\,g. we assume $\varphi(r_{A})=0$. Let $\zeta \colon (A' \ast B') \times C \twoheadrightarrow \mathbb{Z}$ be the epimorphism given by $\zeta(w,d) = \varphi(w_{A})$. Then $(r,c)$ is an element of $\ker(\zeta)$. Further let $a \in A'$ be an element with $\varphi(a)=1$. We define $\widetilde{A} := \ker(\varphi)$ and $B_{i} := a^{-i} B' a^{i}$ ($i \in \mathbb{Z}$). Since $\zeta(c)=0$ for all $c \in C$, we can apply Lemma~\ref{lemtestudai1} and write
\begin{eqnarray} \label{eqkerzeta}
\ker(\zeta) \ \ = \ \ (\widetilde{A} \ast \underset{i \in \mathbb{Z}}{\bigast} B_{i}) \times C.
\end{eqnarray}
For $r_{i} := a^{-i} r a^{i}$ we have 
\begin{eqnarray*}
\langle \! \langle (r_{i},c) | i \in \mathbb{Z} \rangle \! \rangle_{\ker(\zeta)} = \langle \! \langle (r_{i}^{-1},c) | i \in \mathbb{Z} \rangle \! \rangle_{\ker(\zeta)}.
\end{eqnarray*}
by Lemma~\ref{lemnormcleq}. Note that because of $r_{i} = a^{-i} r a^{i}$ and Notation~\ref{notindex} we have
\begin{eqnarray} \label{eqao1}
\omega_{r_{k+\ell}} \ = \ \omega_{r_{k}} + \ell \ \ \ \text{and} \ \ \ \alpha_{r_{k+\ell}} \ = \ \alpha_{r_{k}} + \ell \ \ \ \ \text{for all} \ \ k, \ \ell \in \mathbb{Z}.
\end{eqnarray}
We choose indices $i$, $j \in \mathbb{Z}$ with $i \leqslant j$ such that $(r_{0}^{-1},c)$ is an element of
\begin{eqnarray*}
\langle \! \langle (r_{i},c),(r_{i+1},c), \dots, (r_{j},c) \rangle \! \rangle_{\ker(\zeta)}
\end{eqnarray*} 
and such that $j-i$ is minimal with this property. Our aim is to show $i=j=0$. To the contrary, assume $i < j$. Using Lemma~\ref{lemamalgPr} we deduce $\omega_{r_{0}^{-1}}=\omega_{r_{0}} \geqslant \omega_{r_{j}}$ since for $\omega_{r_{0}} < \omega_{r_{j}}$ the element $(r_{0}^{-1},c)$ would be an element of the left factor of the amalgamated product of Lemma~\ref{lemamalgPr} and would therefore be trivial in that factor. This contradicts the minimality of $i-j$. With the help of Lemma~\ref{lemamalgPrrefl} we analogously deduce $\alpha_{r_{0}^{-1}} = \alpha_{r_{0}} \leqslant \alpha_{r_{i}}$. Overall we have 
\begin{itemize}
\item[(i)] $\alpha_{r_{0}} \leqslant \alpha_{r_{i}} \ \overset{\eqref{eqao1}}{=} \ \alpha_{r_{0}}+i \ \Rightarrow \ 0 \leqslant i$
\item[(ii)] $\omega_{r_{0}} \geqslant \omega_{r_{j}} \ \overset{\eqref{eqao1}}{=} \ \omega_{r_{0}}+j \ \Rightarrow \ 0 \geqslant j$
\end{itemize}
From (i) and (ii) we get $0 \leqslant i \leqslant j \leqslant 0$. Thus, we have $i=j=0$. Finally, $(r_{0}^{-1},c)$ is an element of $\langle \! \langle (r_{0},c) \rangle \! \rangle_{\ker(\zeta)}$. By symmetry, $(r_{0},c)$ is an element of $\langle \! \langle (r_{0}^{-1},c) \rangle \! \rangle_{\ker(\zeta)}$ and altogether we have
\begin{eqnarray} \label{eqequalnc}
\langle \! \langle (r_{0},c) \rangle \! \rangle_{\ker(\zeta)} \ \ = \ \ \langle \! \langle (r_{0}^{-1},c) \rangle \! \rangle_{\ker(\zeta)}.
\end{eqnarray}

Next, we show that $r_{0}$ written as a word in the generators of the free product \eqref{eqkerzeta} uses at least two different factors $B_{i}$. To the contrary, we assume that $r_{0}$ and therefore also $r_{0}^{-1}$ uses only one factor $B_{i}$. Note that this factor has to be $B_{0}$ since the normal form of $r$ with respect to $A' \ast B'$ has even length of at least $2$. By assumption we therefore have $r_{0}=r \in \widetilde{A} \ast B_{0} \cong \widetilde{A} \ast B'$. This contradicts the definition of $A'$ since $\widetilde{A}$ is a proper subgroup of $A'$ because of $a \notin \widetilde{A}$. 

We write
\begin{eqnarray*}
\ker(\zeta) \ = \ (K \ast B_{0}) \times C \ \ \ \text{with} \ \ \ K \ := \ \widetilde{A} \ast \underset{k \neq 0}{\bigast} B_{k}.
\end{eqnarray*}
Since $r_{0}$ uses aside from $B_{0}$ at least one further factor $B_{i}$ with $i \in \mathbb{Z} \backslash \{0\}$, the normal form of $r_{0} \in K \ast B_{0}$ possesses an even length of at least 2. Next, we argue that this length must indeed be smaller than the length of the original normal form of $r \in A' \ast B'$: While rewriting the normal form of $r_{0} \in K \ast B_{0}$ into the normal form of $r \in A' \ast B'$ we get a $B'$-piece for every $B_{0}$-piece because of $B' = B_{0}$. Every piece of $K$ corresponds to a part of the normal form $r$ which is an $A'$-piece or which begins and ends with an $A'$-piece since elsewise the first or last piece of that part from $K=\widetilde{A} \ast \bigast_{k \neq 0} B_{k}$ would be a $B_{0}$-piece. Therefore combining these parts with $B'$-pieces cannot create any cancellation. In particular $r_{0}$ cannot begin and end with a $B_{0}$ piece simultaneously. So for every piece of the normal form of $r_{0} \in K \ast B_{0}$ we get at least one piece of the normal form of $r \in A' \ast B'$. By assumption, at least one $K$-piece of $r_{0}$ is a $B_{i}$-piece. For this $K$-piece we get at least three pieces in the normal form of $r \in A' \ast B'$. Hence the normal form of $r_{0} \in K \ast B_{0}$ is shorter than the normal form of $r \in A' \ast B'$.

In order to be able to apply the induction hypothesis we want to show that $r_{0}$ can be written as an element of a free product of two factors which are not only locally indicable but also indicable. To achieve that, we consider the normal form of $r_{0} \in K \ast B_{0}$ and denote the subgroup generated by the $K$-pieces resp. $B_{0}$-pieces of $r_{0}$ by $K'$ resp. $B_{0}'$. Note that both subgroups are finitely generated, locally indicable and therefore also indicable. We have already seen that the normal form of $r_{0} \in K \ast B_{0}$ possesses an even length greater or equal $2$. Therefore $(r_{0},c) \in (K' \ast B_{0}') \times C$ cannot be conjugate to an element of $K' \times C$ or $B_{0}' \times C$. By applying Lemma~\ref{lemmachi} for $\widetilde{C} := C \cap \langle \! \langle r_{0} \rangle \! \rangle_{(K' \ast B_{0}') \times C}$ we get the embeddings of $K' \times \widetilde{C}$ and $B_{0}' \times \widetilde{C}$ into $\big( ( K' \ast B_{0}') \times C \big) / \widetilde{R}$, where $\widetilde{R} := \langle \! \langle (r_{0},c) \rangle \! \rangle_{(K' \ast B_{0}') \times C}$. This justifies analogously to \eqref{eqIsomAStrich}:
\begin{eqnarray*}
((K \ast B_{0}) \times C) / \langle \! \langle (r_{0},c) \rangle \! \rangle \ \ \cong \ \ (K \times \widetilde{C}) \ \underset{K' \times \widetilde{C}}{\ast} \ ((K' \ast B_{0}') \times C) / \widetilde{R} \ \underset{B_{0}' \times \widetilde{C}}{\ast} \ (B_{0} \times \widetilde{C})
\end{eqnarray*} 
As a factor of the amalgamated product the group $\big( ( K' \ast B_{0}') \times C \big) / \widetilde{R}$ embeds into $\big( ( K \ast B_{0}) \times C \big) / \langle \! \langle (r_{0},c) \rangle \! \rangle$. Since $(r_{0}^{-1},c)$ is trivial in $\big( ( K \ast B_{0}) \times C \big) / \langle \! \langle (r_{0},c) \rangle \! \rangle$ and because of \eqref{eqequalnc} the element $(r_{0}^{-1},c)$ has to be trivial in $\big( ( K' \ast B_{0}') \times C \big) / \langle \! \langle (r_{0},c) \rangle \! \rangle$. By symmetry $(r_{0},c)$ is also trivial in $\big( ( K' \ast B_{0}') \times C \big) / \langle \! \langle (r_{0}^{-1},c) \rangle \! \rangle$. Altogether we get the equality of the normal closures of $(r_{0},c)$ and $(r_{0}^{-1},c)$ in $(K' \ast B_{0}') \times C$. Applying the induction hypothesis, we deduce that $(r_{0},c)$ and $(r_{0}^{-1},c)$ are conjugate in $(K' \ast B_{0}') \times C$. Finally, $(r,c)$ is conjugate to $(r^{-1},c)^{\pm 1}$ in $(A \ast B) \times C$ since $K' \ast B_{0}'$ is a subgroup of $A \ast B$ by construction. This finishes the proof of implication \eqref{eqaim2} in case 1.

\noindent\underline{Case 2.} Let $\varphi(r_{A}) \neq 0$ and $\psi(r_{B}) \neq 0$.\\
We consider the homomorphism $\xi \colon (A' \ast B') \times C \rightarrow \mathbb{Z}$ which is given by
\begin{eqnarray*}
\xi(w,d) \ \ = \ \ \psi(r_{B}) \ \cdot \ \varphi(w_{A}) \ - \ \varphi(r_{A}) \ \cdot \ \psi(w_{B}).
\end{eqnarray*}
Let $r=u(1)v(1)u(2)v(2)\dots u(\frac{n}{2}) v(\frac{n}{2})$ be the normal form of $r$ with respect to $A' \ast B'$, where $u(i) \in A'$ and $v(i) \in B'$ ($1 \leqslant i \leqslant \frac{n}{2}$). Because of $\varphi(r_{A}) \neq 0$ there is at least one $A'$-piece $a:=u(i)$ of $r$ with $\xi(u(i)) \neq 0$. By cyclically permuting $r$ if necessary, we can assume w.\,l.\,o.\,g. $a = u(1)$. We choose an $b \in B'$ with $\psi(b)=-1$ and define $\mu := \xi(a) \neq 0$ along with $\nu := \xi(b)$ ($=\varphi(r_{A}) \neq 0$).

Since $A'$, $B'$ are torsion-free we are able to define:
\begin{eqnarray} \label{eqrefGSchlangeA''}
\widetilde{G} &:=& \big(\langle \widetilde{a} \mid \rangle \underset{\widetilde{a}^{\mu}=a}{\ast} (A' \ast B') \underset{b=\widetilde{b}^{\nu}}{\ast}  \langle \widetilde{b} \mid \rangle \big) \times C \ = \ \big( A'' \ast  B'' \big) \times C \\
\text{with} \ \ A'' &:=& \langle A',\widetilde{a} \, \mid \, \widetilde{a}^{\mu}=a \rangle \ \ \ \text{and} \ \ \ B'' \ := \ \langle B',\widetilde{b} \, \mid \, \widetilde{b}^{\nu}=b \rangle \nonumber
\end{eqnarray}
By Theorem~\ref{Howlokind}, $A''$ and $B''$ are locally indicable groups. We choose an additional generator $x$ and expand $\xi$ by $\xi(\widetilde{a})= \xi(\widetilde{b}) = \xi(x)=1$ to an epimorphism from $\widetilde{G} \ast \langle x \rangle$ onto $\mathbb{Z}$. Finally, we define $\widetilde{a}_{i} := x^{-i} \widetilde{a} x^{i-1}$ and $\widetilde{b}_{i} := x^{-i} \widetilde{b} x^{i-1}$ ($i \in \mathbb{Z}$). Then we have by Lemma~\ref{lemkernphi}\begin{eqnarray*}
\ker(\xi) \ = \  \big( \big( A'' \cap \ker(\xi) \big) \ast \big( B'' \cap \ker(\xi) \big) \ast \langle \widetilde{a}_{i}, \widetilde{b}_{i} \, (i \in \mathbb{Z}) \mid \, \rangle \big) \times C.
\end{eqnarray*}
For $r_{i} := x^{-i} r x^{i}$ we get by Lemma~\ref{lemnormcleq} the equality of the normal closures of the sets $\{(r_{i},c) \mid i \in \mathbb{Z} \}$ and $\{(r_{i}^{-1},c) \mid i \in \mathbb{Z} \}$ in $\ker(\xi)$. Let $\alpha_{r_{i}}$ resp. $\omega_{r_{i}}$ be the smallest resp. greatest index $k$ such that $r_{i} \in \ker(\xi)$ uses a generator $b_{k}$. We have:
\begin{eqnarray*}
\alpha_{r_{j}} \ = \ \alpha_{r_{j}^{-1}}, \ \ \ \omega_{r_{j}} \ = \ \omega_{r_{j}^{-1}} \ \ \ \ \text{and} \ \ \ \ \alpha_{r_{j+k}}=\alpha_{r_{j}}+k, \ \ \ \omega_{r_{j+k}}=\omega_{r_{j}}+k \ \ (\forall j, k \in \mathbb{Z})
\end{eqnarray*}
We choose indices $i$, $j \in \mathbb{Z}$ with $i \leqslant j$ such that $(r_{0}^{-1},c)$ is an element of
\begin{eqnarray*}
\langle \! \langle (r_{i},c),(r_{i+1},c),\dots,(r_{j},c) \rangle \! \rangle_{\ker(\xi)}
\end{eqnarray*}
and $j-i$ is minimal with that property. Our aim is to show $i=j=0$. To the contrary, let $i<j$. We want to apply Lemma~\ref{lemamalgPr} on $B_{i} := \langle \widetilde{b}_{i} \rangle$ and $\widetilde{A} := \big( A'' \cap \ker(\xi) \big) \ast \big(B'' \cap \ker(\xi) \big) \ast \langle \widetilde{a}_{i} \ (i \in \mathbb{Z}) \mid \rangle$. Analogously to case 1 we deduce from Lemma~\ref{lemamalgPr} and Lemma~\ref{lemamalgPrrefl} the chain of equalities $i=j=0$ and thus
\begin{eqnarray} \label{eqequalnc2}
\langle \! \langle (r_{0},c) \rangle \! \rangle_{\ker(\xi)} \ \ = \ \ \langle \! \langle (r_{0}^{-1},c) \rangle \! \rangle_{\ker(\xi)}.
\end{eqnarray}
We now write $\ker(\xi)$ in the form
\begin{eqnarray*}
\ker(\xi) &=& (K \ast L) \times C \ \ \text{with}\\
K &=&  A'' \cap \ker(\xi) \ \ \text{and} \ \ L  \, = \,  \big( B'' \cap \ker(\xi) \big) \ast \langle \widetilde{a}_{i}, \widetilde{b}_{i} \ (i \in \mathbb{Z}) \mid \, \rangle.
\end{eqnarray*}
To show that the normal form of $r_{0}$ with respect to $K \ast L$ is shorter than the normal form $$r=av(1)u(2)v(2) \dots u(\frac{n}{2})v(\frac{n}{2})$$ with respect to $A' \ast B'$, we rewrite the latter normal form into the first: We write
\begin{eqnarray*}
r &=&  av(1)u(2)\dots v(\frac{n}{2}) \\
&=& \widetilde{a}^{\mu} x^{-\mu} \cdot x^{\mu}\widetilde{b}^{-\mu} \cdot \widetilde{v}(1) \cdot \widetilde{b}^{\delta_{1}}x^{-\delta_{1}} \cdot x^{\delta_{1}}\widetilde{a}^{-\delta_{1}} \cdot \widetilde{u}(2)  \dots \widetilde{a}^{\delta_{2n-2}}x^{-\delta_{2n-2}} \cdot  x^{\delta_{2n-2}}\widetilde{b}^{-\delta_{2n-2}} \cdot \widetilde{v}(\frac{n}{2})
\end{eqnarray*}
for elements $\widetilde{u}(i) \in A'' \cap \ker(\xi)$, $\widetilde{v}(i) \in B'' \cap \ker(\xi)$ and $\delta_{i} \in \mathbb{Z}$. Just as in the proof of Lemma~\ref{lemkernphi} we have $\widetilde{a}^{k} x^{-k} = \Pi_{j=0}^{k-1} \widetilde{a}_{-j}$ and $\widetilde{b}^{k}x^{-k}= \Pi_{j=0}^{k-1} \widetilde{b}_{-j}$. Finally, we get the presentation
\begin{eqnarray*}
r &=& \underbrace{\widetilde{a}_{0}\widetilde{a}_{-1} \dots \widetilde{a}_{-\mu+1} \cdot g(1) \cdot \widetilde{v}(1) \cdot g(2) \cdot f(2)}_{\in L} \cdot \underbrace{\widetilde{u}(2)}_{\in K}  \dots \underbrace{f(n-2) \cdot g(n-1) \cdot \widetilde{v}(\frac{n}{2})}_{\in L},
\end{eqnarray*}
where $f(i)$ are elements in the generators $\widetilde{a}_{j}$ ($j \in \mathbb{Z}$) and $g(i)$ are elements in the generators $\widetilde{b}_{j}$ ($j \in \mathbb{Z}$). Note that apart from the first $A'$-piece $a=u(1)$ of $r \in A' \ast B'$ all pieces of $r \in A' \ast B'$ pass over to respectively at most one piece from $r_{0} \in K \ast L$. Since $a$ and $v(1)$ pass over to a common $L$-piece, the length of the normal form of $r_{0} \in K \ast L$ is shorter than $n$. Before we can apply the induction hypothesis we have to pass over to factors which are not only locally indicable but also indicable. To do that we define $K'$ resp.~$L'$ as the subgroup of $K \ast L$ generated by all $K$- resp.~$L$-pieces of $r_{0} \in K \ast L$. For $K' = \{1\}$ we have $A' \cong \mathbb{Z}$. In this case we go back to the beginning of case 2 and go through the proof with reversed roles of $A'$ and $B'$. If we thereby get to the case $A' \cong B' \cong \mathbb{Z}$ we are in the situation $A' \ast B' = F_{2}$ of the induction hypothesis. We can therefore assume w.\,l.\,o.\,g. that the normal form of $r_{0} \in K' \ast L'$ has an even length of at least $2$. Similar to our considerations for \eqref{eqIsomAStrich} we deduce with the help of Lemma~\ref{lemmachi} for $\widetilde{R} := \langle \! \langle (r_{0},c) \rangle \! \rangle_{(K' \ast L') \times C}$, $N:= C \cap \widetilde{R}$ and $\widetilde{C} := C / N$ the presentation
\begin{eqnarray*}
((K \ast L) \times C) / \langle \! \langle (r_{0},c) \rangle \! \rangle \ \ \cong \ \ (K \times \widetilde{C}) \ \underset{K' \times \widetilde{C}}{\ast} \ ((K' \ast L') \times C) / \widetilde{R} \ \underset{L' \times \widetilde{C}}{\ast} \ (L \times \widetilde{C}).
\end{eqnarray*}
Analogously to case 1 we get the equality of the normal closures of $(r_{0},c)$ and $(r_{0}^{-1},c)$ in $(K' \ast L') \times C$. Thus, by the induction hypothesis, $(r_{0},c)$ is conjugate to $(r_{0}^{-1},c)^{\pm 1}$ in $(K' \ast L') \times C$. Since $(K' \ast L') \times C$ is a subgroup of $(\widetilde{G} \ast \langle x \rangle ) \times C$ we get the conjugation of $(r,c)$ to $(r^{-1},c)^{\pm 1}$ in $(\widetilde{G} \ast \langle x \rangle) \times C$. Recall that $r$ is an element of $\widetilde{G}$. By applying the canonical projection from $(\widetilde{G} \ast \langle x \rangle) \times C$ onto $\widetilde{G} \times C$ we get the conjugation of $(r,c)$ and $(r^{-1},c)$ in $\widetilde{G} \times C$. Further we have $r \in A' \ast B'$. Applying Lemma~\ref{lemassum12} on the amalgamated product from \eqref{eqrefGSchlangeA''} twice we finally get the conjugation of $(r,c)$ to $(r^{-1},c)^{\pm 1}$ in $(A' \ast B') \times C \subseteq (A \ast B) \times C$. 
\end{proof}

Now we are able to deduce the statement of Theorem~\ref{thmedjvetgen}.

\noindent\textbf{Proof of Theorem~\ref{thmedjvetgen}.}\\
Since we already proved Proposition~\ref{propedjgen2} and since free products of locally indicable groups are again locally indicable, we get the statement of Theorem~\ref{thmedjvetgen} for finite sets $\mathcal{J}$. We prove the remaining case, that $\mathcal{J}$ is infinite, by contradiction. So let $(r,c)$, $(s,d) \in G := ( \bigast_{j \in \mathcal{J}} A_{j} ) \times C$ be elements with the same normal closure in $G$ such that $(r,c)$ is neither conjugate to $(s,d)$ nor to $(s,d)^{-1}$ in $G$. There exist finite subsets $\mathcal{J}' \subset \mathcal{J}$ such that $(r,c)$ and $(s,d)$ are elements of the subgroup $G' := (\bigast_{j \in \mathcal{J}'} A_{j}) \times C$ of $G$. We define the canonical projection $\pi \colon G \rightarrow G'$. Because of $\pi((r,c)) = (r,c)$ and $\pi((s,d)) = (s,d)$ the normal closures of $(r,c)$ and $(s,d)$ in $G'$ are equal, so $(r,c)$ is conjugate to $(s,d)$ or $(s,d)^{-1}$ in $G' \subset G$ which is a contradiction.
\qed

\section{Proof of the main theorem} \label{secmain}

In this section we prove Main Theorem~\ref{main} using results from the previous section and similar methods as in \cite{ArtOneRel}, \cite{MA}. For the groups $G$, $F(a,b)$, $C$ and the element $u \in G$ of Main Theorem~\ref{main} we define
\begin{eqnarray*}
G' := G \ast_{u=[a,b]} F(a,b) \ \ \ \text{and} \ \ \ \widetilde{G} := G' \times C.
\end{eqnarray*}

\subsection{Reduction to a new group}

First, we show that $G \times C$ possesses the Magnus property if $\widetilde{G}$ possesses the Magnus property. For this purpose we prove with Lemma~\ref{lemcnmp2} a slightly weaker statement and define the following. 

\begin{definition}
Let $G'$ be a group and $G$ be a subgroup of $G'$. We denote $G$ as a \emph{conjugacy-neutral subgroup} of $G'$ if the conjugation of two arbitrary elements $r$, $s \in G$ in $G'$ always implies the conjugation of $r$ and $s$ in $G$, that is
\begin{eqnarray*}
\forall \ r,s \in G \colon \ \big( ( \exists u \in G' \colon u^{-1} r u = s ) \ \ \Rightarrow \ \ ( \exists v \in G \colon \ v^{-1} r v = s) \big).
\end{eqnarray*}
\end{definition}

For conjugacy-neutral subgroups we prove the following simple statement which will shorten the proof of Lemma~\ref{lemcnmp2}. 

\begin{lemma} \label{lemcnmp}
Let $G'$ be a group and $G \subseteq G'$ a conjugacy-neutral subgroup. If $G'$ possesses the Magnus property then $G$ possesses the Magnus property.
\end{lemma}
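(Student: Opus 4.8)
\textit{Proof sketch.} The plan is to deduce the statement directly from the definitions; the only substantive point is that equality of normal closures passes upward from $G$ to $G'$. I would start with two elements $u$, $v \in G$ having $\langle \! \langle u \rangle \! \rangle_{G} = \langle \! \langle v \rangle \! \rangle_{G}$, and aim to show that $u$ is conjugate in $G$ to $v$ or to $v^{-1}$, which is exactly the Magnus property for $G$.

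The key step is the observation that $v \in \langle \! \langle u \rangle \! \rangle_{G} \subseteq \langle \! \langle u \rangle \! \rangle_{G'}$, so the normal closure of $v$ in $G'$ is contained in that of $u$; symmetrically $u \in \langle \! \langle v \rangle \! \rangle_{G} \subseteq \langle \! \langle v \rangle \! \rangle_{G'}$ gives the reverse inclusion. Hence $\langle \! \langle u \rangle \! \rangle_{G'} = \langle \! \langle v \rangle \! \rangle_{G'}$, i.e.\ $u$ and $v$ have the same normal closure in $G'$.

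Since $G'$ possesses the Magnus property, $u$ is conjugate in $G'$ to $v$ or to $v^{-1}$. Both $v$ and $v^{-1}$ lie in $G$ (this is the small point where one uses that $G$ is closed under inversion, so the dichotomy stays inside $G$), and $G$ is a conjugacy-neutral subgroup of $G'$; therefore the conjugation already takes place in $G$, and we conclude that $u \sim_{G} v$ or $u \sim_{G} v^{-1}$.

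There is essentially no obstacle here: the argument is a short unwinding of definitions, and the only thing worth stating carefully is the upward inheritance of normal closures together with the applicability of conjugacy-neutrality in each of the two cases.
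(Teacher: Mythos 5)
Your proof is correct and follows essentially the same route as the paper's: pass the equality of normal closures up from $G$ to $G'$, apply the Magnus property of $G'$, and use conjugacy-neutrality to pull the conjugation back down into $G$. The only difference is that you spell out the upward inheritance of normal closures via the two inclusions, which the paper states without elaboration.
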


\begin{proof}
Let $r$, $s \in G$ be two elements with $\langle \! \langle r \rangle \! \rangle_{G} = \langle \! \langle s \rangle \! \rangle_{G}$. Then we also have $\langle \! \langle r \rangle \! \rangle_{G'} = \langle \! \langle s \rangle \! \rangle_{G'}$. By assumption, $G'$ possesses the Magnus property. Thus, $r \sim_{G'} s^{\pm 1}$. Since $G$ is a conjugacy-neutral subgroup of $G'$, we finally get $r \sim_{G} s^{\pm 1}$.
\end{proof}

\begin{lemma} \label{lemcnmp2}
Let $G',C$ be groups and $G$ a conjugacy-neutral subgroup of $G'$. If $G' \times C$ possesses the Magnus property then $G \times C$ possesses the Magnus property.
\end{lemma}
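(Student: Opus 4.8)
The plan is to reduce Lemma~\ref{lemcnmp2} to Lemma~\ref{lemcnmp} by showing that whenever $G$ is a conjugacy-neutral subgroup of $G'$, the direct product $G \times C$ is a conjugacy-neutral subgroup of $G' \times C$. Once this is established the claim is immediate: by hypothesis $G' \times C$ possesses the Magnus property, so Lemma~\ref{lemcnmp} applied to the pair $G \times C \subseteq G' \times C$ yields the Magnus property for $G \times C$.

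To verify the conjugacy-neutrality of $G \times C$ in $G' \times C$, I would take arbitrary elements $(r,c)$ and $(s,d)$ of $G \times C$ and assume they are conjugate in $G' \times C$, say $(g,e)^{-1}(r,c)(g,e) = (s,d)$ for some $(g,e) \in G' \times C$. Reading this equation coordinatewise gives $g^{-1} r g = s$ in $G'$ and $e^{-1} c e = d$ in $C$. Since $r, s \in G$ and $G$ is a conjugacy-neutral subgroup of $G'$, there exists $v \in G$ with $v^{-1} r v = s$. Then $(v,e) \in G \times C$ and $(v,e)^{-1}(r,c)(v,e) = (v^{-1}rv,\, e^{-1}ce) = (s,d)$, so $(r,c)$ and $(s,d)$ are conjugate in $G \times C$, as required.

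I do not expect any real obstacle here; the only point to be attentive to is that the conjugating element $e$ in the second coordinate can simply be reused, so conjugacy-neutrality needs to be invoked only in the $G'$-coordinate and no separate argument for the $C$-coordinate is necessary. Assembling the two coordinatewise observations completes the proof.
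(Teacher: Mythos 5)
Your proof is correct, and it takes a genuinely different route from the paper's. You prove the stronger, reusable observation that conjugacy-neutrality is inherited by direct products: if $G$ is conjugacy-neutral in $G'$, then $G\times C$ is conjugacy-neutral in $G'\times C$, because conjugation in a direct product acts coordinatewise and the conjugator $e$ in the $C$-coordinate can simply be reused. A single application of Lemma~\ref{lemcnmp} to the pair $G\times C\subseteq G'\times C$ then finishes the argument. The paper instead argues by contradiction via Lemma~\ref{Lemnus}: it first notes that $G'$, $C$ (as direct factors) and $G$ (by Lemma~\ref{lemcnmp}) all have the Magnus property, so a failure of the Magnus property for $G\times C$ must come from a pair $(r,c)$, $(r,c^{-1})$ with equal normal closures in $G\times C$ where neither $r\sim_G r^{-1}$ nor $c\sim_C c^{-1}$; passing to $G'\times C$ and applying Lemma~\ref{Lemnus} there forces $r\sim_{G'}r^{-1}$, which contradicts conjugacy-neutrality. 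Your version is shorter, avoids Lemma~\ref{Lemnus} entirely, and isolates a cleaner structural fact; the paper's version stays closer to the componentwise analysis of Magnus-property failures that it uses elsewhere, but buys nothing extra here.
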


\begin{proof}
Assume that $G' \times C$ has the Magnus property. Then $G'$ and $C$ also possess the Magnus property. Applying Lemma~\ref{lemcnmp} we get the Magnus property of $G$.

To the contrary, assume that $G \times C$ does not possess the Magnus property. Then, by Lemma~\ref{Lemnus}, there exist elements $(r,c)$, $(r,c^{-1}) \in G \times C$ with the same normal closure in $G \times C$ such that neither $r$ is conjugate to $r^{-1}$ in $G$ nor $c$ is conjugate to $c^{-1}$ in $C$. Since $G$ is a subgroup of $G'$ and since, by assumption, $G' \times C$ possesses the Magnus property we have that $r$ is either conjugate to $r^{-1}$ in $G'$ or $c$ is conjugate to $c^{-1}$ in $C$. We have already excluded the latter. Thus, we get
\begin{eqnarray*}
r \sim r^{-1} \ \text{in} \ G', \ \ \text{but} \ \ r \nsim r^{-1} \ \text{in} \ G.
\end{eqnarray*}
Since $r$ is an element of the conjugation-neutral subgroup $G$ of $G'$, this is a contradiction.
\end{proof}

Note that $G$ from Main Theorem~\ref{main} is a conjugacy-neutral subgroup of $G \underset{u=[a,b]}{\ast} F(a,b)$ due to Lemma~\ref{lemassum12}. Thus, we can apply Lemma~\ref{lemcnmp2} to $G' := G \ast_{u=[a,b]} F(a,b)$. Hence $G \times C$ possesses the Magnus property if $\widetilde{G} = G' \times C$ possesses the Magnus property. This proves one direction of Main Theorem~\ref{main}.

It remains to prove the other direction of the equivalence claimed in Main Theorem~\ref{main}, namely:

\begin{theorem} \label{thmonedirection}
Assume that the assumptions of Main Theorem~\ref{main} are given. If $G \times C$ possesses the Magnus property then $\widetilde{G} = \big( G \ast_{u=[a,b]} F(a,b) \big) \times C$ possesses the Magnus property.
\end{theorem}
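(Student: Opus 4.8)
The plan is to deduce the Magnus property of $\widetilde G=G'\times C$ from Lemma~\ref{Lemnus}, running the same kind of syllable-length induction as in the proof of Proposition~\ref{propedjgen2}; the genuinely new feature is that $G'$ is an amalgamated, not a free, product. First note that $G'$ is indicable as well as locally indicable: writing $G'=(G\ast F(a,b))/\langle\!\langle u[b,a]\rangle\!\rangle$, the relator $u[b,a]$ is cyclically reduced of syllable length $2$ in $G\ast F(a,b)$ and is not a proper power, so $G'$ is locally indicable by Theorem~\ref{Howlokind}, while $(G')^{\mathrm{ab}}=(G^{\mathrm{ab}}\oplus\mathbb Z a\oplus\mathbb Z b)/\langle\bar u\rangle$ retains the free summand $\mathbb Z a\oplus\mathbb Z b$, so $G'$ is indicable. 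Moreover $C$ inherits the Magnus property as a direct factor of $G\times C$, and $\mathbb Z\times C$ has it by Lemma~\ref{lemindC} (here we use that $G$ is indicable), hence $F_2\times C$ and $F_\infty\times C$ have it by Theorem~\ref{thmdirect}. The case $C=\{1\}$ — in which we only need $G$ locally indicable — is the instance that establishes that $G'$ itself has the Magnus property; it is handled by the same induction, with Theorem~\ref{Edj}/Corollary~\ref{corEdj} used at the base in place of Lemma~\ref{Lemnus} and of Lemma~\ref{lemindC}, and there is no circularity since the general-$C$ argument invokes only this special case. Granting that $G'$ has the Magnus property, Lemma~\ref{Lemnus} reduces the theorem to showing: for every non-trivial $r\in G'$ and $c\in C$ with $\langle\!\langle(r,c)\rangle\!\rangle_{\widetilde G}=\langle\!\langle(r^{-1},c)\rangle\!\rangle_{\widetilde G}$ one has $r\sim_{G'}r^{-1}$ or $c\sim_C c^{-1}$.

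We argue by induction on the syllable length $|r|$ of $r$ with respect to $G'=G\ast_{u=[a,b]}F(a,b)$, assuming $r$ cyclically reduced. If $|r|\le 1$ then $r$ is conjugate into $G$ or into $F(a,b)$; using Lemma~\ref{lemmachi} (applied to the relator $(u[b,a],1)$ of $\widetilde G=((G\ast F(a,b))\times C)/\langle\!\langle(u[b,a],1)\rangle\!\rangle$) to control how $\langle\!\langle(r,c)\rangle\!\rangle_{\widetilde G}$ meets the relevant factor times $C$, the conclusion follows from the Magnus property of $G\times C$, resp.\ of $F(a,b)\times C$. If $|r|=2$, the subgroup of $G'$ generated by the two (suitably cyclically permuted) syllables of $r$ together with $C$ is $F_2\times C$, which has the Magnus property; this is the base of the induction.

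For the inductive step with $|r|\ge 2$ we imitate Cases~1 and~2 of the proof of Proposition~\ref{propedjgen2}. Pick an epimorphism $\varphi\colon\widetilde G\twoheadrightarrow\mathbb Z$ with $\varphi(r)=0$ and $\varphi|_C=0$ (possible since $(G')^{\mathrm{ab}}$ has free rank at least $2$); if necessary adjoin $\mu$-th and $\nu$-th roots of suitable $G$- and $F(a,b)$-syllables of $r$ as in~\eqref{eqrefGSchlangeA''}, using that $G$ and $F(a,b)$ are torsion-free and that Theorem~\ref{Howlokind} keeps the enlarged factors locally indicable, so that the exponent sums involved become $\pm1$. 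By Lemma~\ref{lemtestudai1} (or Lemma~\ref{lemkernphi}), $\ker(\varphi)$ is the direct product of $C$ with a free product of conjugates of finitely generated subgroups of $G$ and of $F(a,b)$ and infinitely many infinite cyclic groups $B_i=\langle b_i\rangle$; after relabelling this fits the setup of Notation~\ref{notindex}, and by Lemma~\ref{lemnormcleq} the families $\{(r_i,c)\}$ and $\{(r_i^{-1},c)\}$, $r_i:=x^{-i}rx^{i}$, have equal normal closure in $\ker(\varphi)$. The minimal-interval argument — choose $i\le j$ minimal with $(r_0^{-1},c)\in\langle\!\langle(r_i,c),\dots,(r_j,c)\rangle\!\rangle_{\ker\varphi}$, then compare $\alpha_{r_0},\omega_{r_0}$ with $\alpha_{r_i},\omega_{r_j}$ using the shift identities $\alpha_{r_{k+\ell}}=\alpha_{r_k}+\ell$, $\omega_{r_{k+\ell}}=\omega_{r_k}+\ell$ and the amalgam presentations of Lemma~\ref{lemamalgPr} and Lemma~\ref{lemamalgPrrefl} (whose well-definedness rests on Lemma~\ref{lememb12}) — forces $i=j=0$, i.e.\ $\langle\!\langle(r_0,c)\rangle\!\rangle_{\ker\varphi}=\langle\!\langle(r_0^{-1},c)\rangle\!\rangle_{\ker\varphi}$. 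Rewriting $r_0$ in the free-product generators of $\ker(\varphi)$ strictly shortens its syllable length (the syllable or commutator carrying the nonzero $\varphi$-value is absorbed), so passing to the subgroups $K',L'$ generated by the two colours of syllables of $r_0$ — finitely generated, hence indicable as well as locally indicable — Lemma~\ref{lemmachi} yields the amalgamated presentation of $((K'\ast L')\times C)/\langle\!\langle(r_0,c)\rangle\!\rangle$ exactly as in~\eqref{eqIsomAStrich}, whence the normal closures of $(r_0,c)$ and $(r_0^{-1},c)$ agree in $(K'\ast L')\times C$ and the induction hypothesis gives $(r_0,c)\sim(r_0^{-1},c)^{\pm1}$. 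Finally, Lemma~\ref{lemassum12} applied to the root-adjoining amalgamations and — crucially — to $G'=G\ast_{u=[a,b]}F(a,b)$ itself (its factor $F(a,b)$ is free and $\langle[a,b]\rangle$ is a maximal cyclic subgroup of it, so hypothesis~$(ii)$ is met) transports the conjugacy back to $G'$, giving $r\sim_{G'}r^{-1}$; once $c\not\sim_C c^{-1}$ the sign must be $+$.

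The main obstacle is exactly that $G'$ is a genuine amalgamated product: one must check that passing to $\ker(\varphi)$ restores a free-product structure of the precise shape demanded by Notation~\ref{notindex} and Lemmas~\ref{lemtestudai1}--\ref{lemamalgPrrefl}, that the rewriting really does decrease the syllable length at every recursion, and — the most delicate point — that Lemma~\ref{lemassum12}$(ii)$ is available at the end to push the conjugacy of $(r_0,c)$ and $(r_0^{-1},c)^{\pm1}$ all the way back through the amalgamation $G\ast_{u=[a,b]}F(a,b)$ to $G'$. A secondary, fiddly point is running the $C=\{1\}$ and general-$C$ cases off one induction and dispatching the ``$r$ conjugate into a factor'' base case, where no retraction $G'\to G$ or $G'\to F(a,b)$ is available, purely by means of Lemma~\ref{lemmachi} together with the Magnus property of $G\times C$ and of $F(a,b)\times C$.
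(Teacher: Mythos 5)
Your overall philosophy (pass to the kernel of an exponent-sum homomorphism, run a minimal-interval argument with $\alpha$- and $\omega$-limits, then quote known Magnus properties) points in the right direction, but the plan as written has genuine gaps at exactly the places where the amalgamated-product structure of $G'$ bites. First, you invoke Lemma~\ref{lemtestudai1}/Lemma~\ref{lemkernphi} to claim that $\ker(\varphi)$ ``fits the setup of Notation~\ref{notindex}'': those are Kurosh-type statements about kernels of maps from \emph{free} products, and they do not apply to $G'=G\ast_{u=[a,b]}F(a,b)$. The paper instead computes the kernel $M$ of the $x$-exponent-sum map on $G\ast_{u=[x^k,b]}F(x,b)$ by Reidemeister--Schreier and shows by explicit Tietze transformations (Proposition~\ref{propEi}) that the unwound relation $b_{i+k}=b_iu_i^{-1}$ can be eliminated, so that $M\cong F_k\ast\bigast_j G_j$ is a genuine free product; because the $b$-generators satisfy these relations, the ``index range'' of an element is not visible from a single presentation, and the paper has to build new machinery (reduced presentations with respect to the shifting generating sets $\mathcal{E}_{s,\infty}$, Algorithm~\ref{algalpha} to show the $\alpha$-limit is well defined, suitable conjugates, the dual structure) and reprove the amalgam decompositions from scratch (Lemmas~\ref{lemcentral}, \ref{lemcentralrefl}); Lemmas~\ref{lemamalgPr}/\ref{lemamalgPrrefl} from Section~2 cannot simply be relabelled to cover this situation. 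Second, there is no syllable-length induction on $G'$ in the paper at all: once the normal closures of the single elements $(\widetilde r_m,c)$ and $(\widetilde s_0,d)$ are matched inside $K$, the conclusion follows because $K\cong(F_k\ast\bigast_jG_j)\times C$ already has the Magnus property by Theorem~\ref{thmedjvetgen} (Lemma~\ref{lemKMP}). Your proposed recursion (``rewriting $r_0$ strictly shortens its syllable length, apply the induction hypothesis'') would have to be justified independently, and it is not clear what group the shortened element lives in or why the recursion terminates.

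The most serious concrete gap is your treatment of the case where $r$ is conjugate into a factor. You propose to dispatch $|r|\le 1$ ``using Lemma~\ref{lemmachi} \ldots{} to control how $\langle\!\langle(r,c)\rangle\!\rangle_{\widetilde G}$ meets the relevant factor times $C$'', but Lemma~\ref{lemmachi} explicitly requires the relator \emph{not} to be conjugate into a factor, so it says nothing here; and there is no retraction of $\widetilde G$ onto $G\times C$ or $F(a,b)\times C$ to fall back on. This is precisely the hard ``non-positive $\alpha$-$\omega$-length'' case of the paper, where $r$ lies in $\langle x,b\rangle$ and one must analyse $\langle\!\langle(\widetilde r,c)\rangle\!\rangle\cap(Z\times C)$ for the amalgamating subgroup $Z=\langle[x^k,b]\rangle$ via Lemma~\ref{lemmunu} and the Karrass--Magnus--Solitar torsion theorems (Theorems~\ref{thmMKS1}, \ref{thmMKS2}) to rule out $\mu>1$ and the bad subcase of $\mu=1$. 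Your base case $|r|=2$ also asserts without proof that two syllables of $r$ generate a free group of rank~$2$ in the amalgam, which is not automatic. In short: the skeleton is recognisable, but the three pillars carrying the actual weight --- the free-product structure of the kernel, the well-definedness of the $\alpha$/$\omega$-limits, and the factor-conjugate case --- are each either missing or supported by lemmas that do not apply.
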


First, we deduce Theorem~\ref{thmonedirection} from Proposition~\ref{propmain} which we will prove throughout the remaining part of Section~\ref{secmain}. As a preparation we remark the following.

\begin{remark} \label{remexpsum}
Consider the group presentation $G= \langle \widetilde{E} \mid \widetilde{R} \rangle$ with generating set $\widetilde{E}$ and relation-set $\widetilde{R}$. Let $x$ be a generator of $\widetilde{E}$. If all relations of $\widetilde{R}$ considered as words in the free group with basis $\widetilde{E}$ possess an exponential sum of $0$ with respect to $x$ then the exponential sum of an arbitrary element $r \in G$ with respect to $x$ is well-defined.
\end{remark}

\begin{proposition} \label{propmain}
Let $C$ and $G$ be groups such that one of the following two conditions hold.
\begin{itemize}
\item[$(i)$] Let $C=\{1\}$ and $G$ be locally indicable.
\item[$(ii)$] Let $C \neq \{1\}$ and $G$ be locally indicable as well as indicable.
\end{itemize}
We assume further that $G \times C$ possesses the Magnus property and consider the group
\begin{eqnarray*}
\widetilde{H} \ \ = \ \ \big( G \underset{u=[x^{k},b]}{\ast} F(x,b) \big) \times C
\end{eqnarray*}
for a non-trivial element $u \in G$ and $k \in \mathbb{Z} \backslash \{0\}$. Let $(r,c)$, $(s,d)$ be elements of $\widetilde{H}$ with the following properties:
\begin{itemize}
\item The elements $(r,c)$ and $(s,d)$ possess the same normal closure in $\widetilde{H}$.
\item The element $r \in G \underset{u = [x^{k},b]}{\ast} F(x,b)$ is non-trivial and possesses an exponential sum of $0$ with respect to $x$.
\item In the case $k \neq 1$, the exponential sum of $r \in G \underset{u=[x^{k},b]}{\ast} F(x,b)$ with respect to $b$ is not $0$. (Note that this exponential sum is well-defined by Remark~\ref{remexpsum}.)
\end{itemize}
Then $(r,c)$ is conjugate to $(s,d)$ or $(s,d)^{-1}$ in $\widetilde{H}$. 
\end{proposition}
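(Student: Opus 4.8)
The plan is to run the Magnus--Moldavanskii reduction along the $x$-exponential sum, and feed it into the machinery of Section~\ref{secdir}. Since the defining word $u^{-1}[x^{k},b]$ has exponential sum $0$ in $x$, Remark~\ref{remexpsum} gives an epimorphism $\psi\colon\widetilde H\twoheadrightarrow\mathbb Z$ with $\psi(x)=1$ and $\psi|_{G}=\psi|_{C}=0=\psi(b)$. By hypothesis $(r,c)\in\ker\psi$, and hence $(s,d)\in\langle\!\langle(s,d)\rangle\!\rangle_{\widetilde H}=\langle\!\langle(r,c)\rangle\!\rangle_{\widetilde H}\subseteq\ker\psi$ as well. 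If $r$ is conjugate in $G\ast_{u=[x^{k},b]}F(x,b)$ into one of the two amalgam factors one finishes directly: $\langle u\rangle=\langle[x^{k},b]\rangle$ is a maximal cyclic subgroup of the free group $F(x,b)$, so Lemma~\ref{lemassum12} applies and reduces the conjugacy question to $G\times C$ (Magnus by assumption) or to a free, resp.\ free-times-$C$, subgroup of $F(x,b)\times C$; note that for $k\neq1$ the hypothesis that the $b$-exponent sum of $r$ is nonzero already excludes $r$ being conjugate into $G$. So we may assume $r$ is cyclically reduced of amalgam-length $\ge2$.

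Next I would realise $\ker\psi$ concretely. A Reidemeister--Schreier/Kurosh computation with transversal $\{x^{i}:i\in\mathbb Z\}$ --- as in Lemmas~\ref{lemtestudai1} and \ref{lemkernphi}, combined with an auxiliary root-adjunction of the kind used in the proof of Proposition~\ref{propedjgen2}, Case~2, so that conjugation by the stable letter becomes a clean index shift --- rewrites the relation $u=[x^{k},b]$ as $b_{i+k}=b_{i}u_{i}^{-1}$ (with $b_{i}:=x^{-i}bx^{i}$, $u_{i}:=x^{-i}ux^{i}$) and, after Tietze-eliminating all but finitely many $b_{i}$, presents
\[
\ker\psi\;\cong\;\Bigl(\widetilde A\ \ast\ \underset{i\in\mathbb Z}{\bigast}B_{i}\Bigr)\times C,
\]
a direct product of $C$ with a free product of copies $B_{i}\cong G$ and a free group $\widetilde A$, conjugation by the stable letter shifting $B_{i}\mapsto B_{i+1}$ and fixing $\widetilde A$ setwise. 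Thus the whole apparatus of Notation~\ref{notindex}, Lemma~\ref{lemC}, Lemma~\ref{lemamalgPr} and Lemma~\ref{lemamalgPrrefl} becomes applicable, with $\widetilde C:=C/(C\cap\langle\!\langle(r,c)\rangle\!\rangle)$. Moreover $\ker\psi$ itself has the Magnus property: $B_{i}\times C\cong G\times C$ does by assumption, $\widetilde A\times C\cong F_{m}\times C$ does by Theorem~\ref{thmdirect} once one knows $\mathbb Z\times C$ does, and the latter is Lemma~\ref{lemindC} in case~$(ii)$ --- while in case~$(i)$, $C=\{1\}$ and one invokes Corollary~\ref{corEdj} together with the Magnus property of free groups. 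Hence Theorem~\ref{thmedjvetgen} (or Corollary~\ref{corEdj}) applies to $\ker\psi$.

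Now I would transport the hypothesis and collapse the layered relators. Writing $r_{i}:=x^{-i}rx^{i}$ and $s_{j}:=x^{-j}sx^{j}$, Lemma~\ref{lemnormcleq} gives $\langle\!\langle(r_{i},c)\mid i\rangle\!\rangle_{\ker\psi}=\langle\!\langle(s_{j},d)\mid j\rangle\!\rangle_{\ker\psi}$. Choose $i\le j$ with $j-i$ minimal such that $(s_{0},d)\in\langle\!\langle(r_{i},c),\dots,(r_{j},c)\rangle\!\rangle_{\ker\psi}$, and symmetrically $i'\le j'$ minimal such that $(r_{0},c)\in\langle\!\langle(s_{i'},d),\dots,(s_{j'},d)\rangle\!\rangle_{\ker\psi}$. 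Exactly as in the proof of Proposition~\ref{propedjgen2} --- splitting the relevant quotient as an amalgamated product by Lemma~\ref{lemamalgPr} or Lemma~\ref{lemamalgPrrefl} and noting that otherwise $(s_{0},d)$ (resp.\ $(r_{0},c)$) would collapse into an outer factor, contradicting minimality --- one obtains $\omega_{s_{0}}\ge\omega_{r_{0}}+j$, $\alpha_{s_{0}}\le\alpha_{r_{0}}+i$ and, symmetrically, $\omega_{r_{0}}\ge\omega_{s_{0}}+j'$, $\alpha_{r_{0}}\le\alpha_{s_{0}}+i'$. (Here $r$ and $s$ must meet at least two of the free factors $\widetilde A,B_{i}$: for $k\neq1$ the nonzero $b$-exponent sum of $r$ secures this, and the degenerate positions --- $r_{0}$ inside a single $B_{i}\cong G$, or inside $\widetilde A\cong F_{m}$ --- reduce to the already-known Magnus property of $G\times C$, resp.\ $F_{m}\times C$.) Combining these inequalities with $i\le j$ and $i'\le j'$ forces $i=j=:i_{0}$ and $i'=j'=-i_{0}$, so $(s_{0},d)\in\langle\!\langle(r_{i_{0}},c)\rangle\!\rangle_{\ker\psi}$ and $(r_{0},c)\in\langle\!\langle(s_{-i_{0}},d)\rangle\!\rangle_{\ker\psi}$; applying the inner automorphism ``conjugation by $x^{i_{0}}$'' of $\ker\psi$ to the first inclusion turns it into $\langle\!\langle(r,c)\rangle\!\rangle_{\ker\psi}=\langle\!\langle(s_{-i_{0}},d)\rangle\!\rangle_{\ker\psi}$.

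Since $\ker\psi$ has the Magnus property, $(r,c)$ is conjugate in $\ker\psi$ to $(s_{-i_{0}},d)^{\pm1}$; as $s_{-i_{0}}=x^{i_{0}}sx^{-i_{0}}$ is conjugate to $s$ in $\widetilde H$, this yields $(r,c)\sim(s,d)^{\pm1}$ in $\widetilde H\supseteq\ker\psi$, as desired. The principal obstacle is the second step: pinning $\ker\psi$ down as a free product of indicable, locally indicable groups on which the stable letter acts as a clean index shift. This is precisely what forces one to carry the commutator $[x^{k},b]$ with a general $k$ and to carry out the root-adjunction bookkeeping, and it is what makes the hypothesis on the $b$-exponent sum (when $k\neq1$) indispensable --- it guarantees that, after the rewriting, $r$ still meets more than one layer, so that the amalgamation lemmas have something to act on. The secondary subtlety --- that $r$ and $s$ are a priori unrelated, unlike the $r$ versus $r^{-1}$ situation of Proposition~\ref{propedjgen2} --- is handled purely by the symmetry argument of the third step.
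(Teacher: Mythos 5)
Your main line --- pass to $K=\ker\psi$, present it via Reidemeister--Schreier as $\bigl(\widetilde A\ast\bigast_{i}B_{i}\bigr)\times C$ with $B_{i}\cong G$ and $\widetilde A$ free of rank $k$, establish the Magnus property of $K$ from Theorem~\ref{thmedjvetgen} resp.\ Corollary~\ref{corEdj}, replace $(r,c)$ by the layered family $(r_{i},c)$ via Lemma~\ref{lemnormcleq}, and use amalgamated-product splittings plus a minimality argument on $j-i$ to collapse to a single layer --- is essentially the paper's proof of the case of positive $\alpha$-$\omega$-length, with Lemma~\ref{lemcentral} and Lemma~\ref{lemcentralrefl} playing the role you assign to Lemma~\ref{lemamalgPr} and Lemma~\ref{lemamalgPrrefl}, and your squeeze on the four inequalities is a legitimate variant of Lemma~\ref{leminequabc}.

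There is, however, a genuine gap in how you dispose of the degenerate case. When $r$ is conjugate into $F(x,b)$ --- equivalently, when $\widetilde r$ has non-positive $\alpha$-$\omega$-length, so that it sits inside the single free factor generated by the $b$-generators and the splitting lemmas have nothing to act on --- you claim one ``finishes directly'' via Lemma~\ref{lemassum12} and the known Magnus property of $F_{2}\times C$. But Lemma~\ref{lemassum12} only transfers \emph{conjugacy} between an amalgam and its factor; it does not let you restrict \emph{equality of normal closures} to the factor, and $\langle\!\langle(r,c)\rangle\!\rangle_{F(x,b)\times C}=\langle\!\langle(s,d)\rangle\!\rangle_{F(x,b)\times C}$ is exactly what you would need before the Magnus property of $F_{2}\times C$ can be invoked. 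Establishing this is the content of the last subsection of Section~\ref{secmain}: one must show that $(\langle x,b\mid\rangle\times C)/\langle\!\langle(\widetilde r,c)\rangle\!\rangle$ embeds into $\widetilde H/\langle\!\langle(\widetilde r,c)\rangle\!\rangle$, and this can fail unless one rules out that a proper power of the amalgamating element $[x^{k},b]$, but not $[x^{k},b]$ itself, becomes trivial in the one-relator group $\langle x,b\mid\widetilde r\rangle$ --- which the paper does using Lemma~\ref{lemmunu}, Corollary~\ref{cor0pos} and the Karrass--Magnus--Solitar torsion theorems (Theorems~\ref{thmMKS1} and \ref{thmMKS2}). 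This is also where the exponent-sum hypotheses really enter; your alternative explanation --- that a nonzero $b$-exponent sum forces $r$ to meet two free factors --- is false: for $k=2$ the element $b_{0}b_{1}^{-1}b_{0}$ has $b$-exponent sum $1$ yet lies entirely in the free factor $\langle b_{0},b_{1}\mid\rangle$. So the case you wave away is neither vacuous nor easy, and your proposal is missing the argument for it.
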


\noindent\textbf{Proof of Theorem~\ref{thmonedirection} assuming Proposition~\ref{propmain}}\\
This proof will be analogous to the deduction of the main theorem from Proposition~2.1 in \cite{ArtOneRel}. Let $(r,c)$ and $(s,d)$ be elements from $\widetilde{G}$ with the same normal closure in $\widetilde{G}$. We want to show that $(r,c)$ is conjugate to $(s,d)^{\pm 1}$ in $\widetilde{G}$. For that purpose we first consider the case that $r$ and therefore also $s$ is trivial. In this case, $c$ and $d$ have the same normal closure in $C$. Thus, $c$ is conjugate to $d$ in $C$. Hence, $(1,c)$ is conjugate to $(1,d)^{\pm 1}$ in $\widetilde{G}$ and we can assume in the following that $r$ is non-trivial. We denote the exponential sum of an element $w$ with respect to a generator $z$ by $w_{z}$. 

\noindent\underline{Case 1.} Let $r_{b}=0$.\\
We use the presentation
\begin{eqnarray*}
\widetilde{G} \ \ = \ \ \big( G \underset{u^{-1}=[a,b]}{\ast} F(a,b) \big) \times C.
\end{eqnarray*}
By renaming the generators $a$, $b$, $u$ to $b$, $x$, $u^{-1}$ we get a presentation of the form $\widetilde{H}$ from Proposition~\ref{propmain} along with the demanded conditions. Therefore Theorem~\ref{thmonedirection} follows directly from Proposition~\ref{propmain}.

\noindent\underline{Case 2.} Let $r_{b} \neq 0$.\\
In this case there exists the canonical embedding
\begin{eqnarray} \label{eqG'}
G' \ \ := \ \ G \underset{u=[a,b]}{\ast} F(a,b) \ \ \hookrightarrow \ \ H' := G' \underset{a=x^{r_{b}}}{\ast} F(x), 
\end{eqnarray}
which induces the canonical embedding 
\begin{eqnarray*}
\widetilde{G} \ \ \hookrightarrow \ \ \widetilde{H}' \ := \ H' \times C \ \ = \ \ \big(G \underset{u=[x^{r_{b}},b]}{\ast} F(x,b) \big) \times C.
\end{eqnarray*}
Because the normal closures of $(r,c)$ and $(s,d)$ are equal in $\widetilde{G}$ they are also equal in $\widetilde{H}'$. Let $\bar{b} = x^{-r_{a}}b$. Then we get the presentation
\begin{eqnarray} \label{eqH'}
H' \ \ = \ \ G \underset{u=[x^{r_{b}}, \bar{b}]}{\ast} F(x,\bar{b}).
\end{eqnarray}
We consider the element $r \in G'$ as an element of $H'$, denote the exponential sums of $r$ with respect to $a$, $b$ in $G'$ by $r_{a}$, $r_{b}$ and the exponential sums of $r$ with respect to $x$, $\bar{b}$ in $H'$ by $r_{x}$, $r_{\bar{b}}$. It is
\begin{eqnarray*}
r_{x} \ = \ r_{a} r_{b} - r_{b} r_{a} \ = \ 0 \ \ \ \text{and} \ \ \ r_{\bar{b}} \ = \ r_{b} \ \neq \ 0.
\end{eqnarray*}
Thus, all conditions of Proposition~\ref{propmain} are given and $(r,c)$ is conjugate to $(s,d)^{\pm 1}$ in $\widetilde{H}'$. By inverting $(s,d)$ if necessary, we can assume w.\,l.\,o.\,g. that $(r,c)$ is conjugate to $(s,d)$ in $\widetilde{H}'$. Thus we have $r \sim_{H'} s$ and $c \sim_{C} d$. Because of \eqref{eqG'} we have $r \sim_{G'} s$ by Lemma~\ref{lemassum12} with assumption (i) and therefore $(r,c) \sim_{\widetilde{G}} (s,d)$.
\qed

As a preparation for the next subsection we introduce the following notations.

\begin{notation} \label{notK}
We define
\begin{eqnarray*}
H \ \ := \ \ G \underset{u=[x^{k},b]}{\ast} F(x,b) \ \ \text{and} \ \ \widetilde{H} \ \ := \ \ H \times C \ \ \text{(see } \widetilde{H} \ \text{from Proposition~\ref{propmain})}.
\end{eqnarray*}
Recall that we denote the exponential sum of $h$ with respect to $x$ by $h_{x}$. We introduce the homomorphisms
\begin{eqnarray*}
&& \varphi \colon \ H \ \rightarrow \ \mathbb{Z}, \ \ h \mapsto h_{x},\\
&& \widetilde{\varphi} \colon \ \widetilde{H} \ \rightarrow \ \mathbb{Z}, \ \ (h,c) \mapsto h_{x} \ \ (h \in H, \ c \in C).
\end{eqnarray*}
Further, we define
\begin{eqnarray*}
M \ := \ \ker(\varphi) \ \ \ \ \text{and} \ \ \ \ K \ := \ \ker(\widetilde{\varphi}).
\end{eqnarray*}
Then we have $K=M \times C$.
\end{notation}

\begin{remark}
By the assumptions of Proposition~\ref{propmain}, $r$ has exponential sum $0$ with respect to $x$. Thus, we have $r \in M$ and $(r,c) \in K$. We will show that $(r,c)$ is conjugate to $(s,d)^{\pm 1}$ in the subgroup $K$ of $\widetilde{H}$. The assumptions of Proposition~\ref{propmain} allow us to consider the kernel $K$ instead of $\widetilde{H}$ in our proof.
\end{remark}

\subsection[Structure of $K$ and  $b$-left/-right generating sets]{Structure of {\boldmath $K$} and {\boldmath $b$}-left/-right generating sets} \label{subsecKleftright}

We use the notation introduced in Notation~\ref{notK}. Applying the rewriting process of Reidemeister--Schreier we will find a presentation of the kernel $K= M \times C$ of $\widetilde{\varphi}$.

The group $H$ is generated by $G \cup \{x,b\}$. Let $H^{*}$ be the free group with basis $G \cup \{x,b\}$ and let $\xi \colon H^{*} \rightarrow H$ be the canonical homomorphism. We define $M^{*} := \xi^{-1}(M)$. One can verify easily that $\mathcal{T} = \{x^{i} \mid i \in \mathbb{Z} \}$ is a Schreier transversal for $M^{*} \subset H^{*}$. Applying the rewriting process of Reidemeister--Schreier we get the generators $x^{-i}gx^{i}$ and $x^{-i}bx^{i}$ ($g \in G$, $i \in \mathbb{Z}$) of $M$.

\begin{notation} \label{notgi}
For an element $v \in H$ with $v_{x}=0$ and $i \in \mathbb{Z}$ we define $v_{i}$ as the element $x^{-i} v x^{i}$ of $M$. So we get the generators
\begin{eqnarray} \label{eqgibi}
g_{i} \ = \ x^{-i} g x^{i} \ \ \ \text{and} \ \ \ b_{i} \ = \ x^{-i} b x^{i} \ \ \ (g \in G, \ i \in \mathbb{Z})
\end{eqnarray}
of $M$ and define the subgroups $G_{i} := x^{-i} G x^{i}$ ($i \in \mathbb{Z}$) of $M$.
\end{notation}

Applying the rewriting process of Reidemeister--Schreier we get for every relation $v$ of $H$ and every $t$ in the Schreier transversal $\mathcal{T}$ a relation of $M$ which corresponds to the element $t^{-1}vt$ written in the generators of \eqref{eqgibi}. For the relation $[x^{k},b]u^{-1}=1$ of $H$ we get the relations
\begin{eqnarray*}
x^{-i}([x^{k},b]u^{-1})x^{i}=1 \  \Leftrightarrow \  x^{-k-i}b^{-1}x^{k+i} \cdot x^{-i}bx^{i} \cdot x^{-i}u^{-1}x^{i} =1 \ \Leftrightarrow  \ b_{k+i}=b_{i}u_{i}^{-1}  \ \ \ (i \in \mathbb{Z})
\end{eqnarray*}
of $M$. Every relation $w$ of $G$ gives us countable many relations $w_{i}$ ($i \in \mathbb{Z}$). For every $\ell \in \mathbb{Z}$ we have
\begin{eqnarray} \label{eqM}
M &=& N_{\ell} \ast N_{\ell +1} \ast \cdots \ast N_{\ell +k -1} \ \ \ \text{with} \\
N_{i} &=& \cdots \underset{Z_{i-k}}{\ast} ( G_{i-k} \ast \langle b_{i-k} \mid \rangle ) \underset{Z_{i}}{\ast} ( G_{i} \ast \langle b_{i} \mid \rangle ) \underset{Z_{i+k}}{\ast} ( G_{i+k} \ast \langle b_{i+k} \mid \rangle ) \underset{Z_{i+2k}}{\ast} \cdots \ \ \ (i \in \mathbb{Z}), \nonumber
\end{eqnarray}
where $Z_{i+k}$ is the cyclical subgroup generated by $b_{i}u_{i}^{-1}$ in $G_{i} \ast \langle b_{i} \mid \rangle$ and by $b_{i+k}$ in $G_{i+k} \ast \langle b_{i+k} \mid \rangle$. 

\textit{Note:} In the following, $G_{ij}$ denotes the group $G_{\lambda}$, where $\lambda$ is the product of $i$ and $j$. This group is different from the group $G_{i,j}$ introduced in Notation~\ref{notMst}.

The next proposition provides a more concise presentation of $M$ as a free product.

\begin{proposition} \label{propEi}
For every $i \in \mathbb{Z}$
\begin{eqnarray*}
\mathcal{E}(i) \ \ := \ \ \{b_{i},b_{i+1},\dots, b_{i+k-1}\} \ \cup \ \underset{j \in \mathbb{Z}}{\bigcup} G_{j}
\end{eqnarray*}
is a generating set of $M$. Further we have for every $i \in \mathbb{Z}$ a presentation
\begin{eqnarray*}
M \ \ = \ \ \langle b_{i},b_{i+1},\dots, b_{i+k-1} \mid \rangle \ \ast \ \underset{j \in \mathbb{Z}}{\bigast} G_{j}. 
\end{eqnarray*}
\end{proposition}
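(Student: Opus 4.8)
### Proof proposal for Proposition~\ref{propEi}

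The plan is to derive the claimed presentation from the amalgamated-product description \eqref{eqM} of $M$ by a sequence of Tietze transformations, eliminating the generators $b_i$ for $i \not\equiv$ (anything in $\{i, i+1, \dots, i+k-1\}$ mod $k$) and showing that once these are eliminated, the defining relations $b_{k+i} = b_i u_i^{-1}$ become redundant. First I would fix $i$ (by relabeling, we may as well take $i = 0$ and recover the general case by applying the shift automorphism $x^{-1}(\cdot)x$, under which \eqref{eqM} is equivariant). The group $M$ is generated by all $g_j$ ($g \in G$, $j \in \mathbb{Z}$) together with all $b_j$ ($j \in \mathbb{Z}$), subject to the relations of each $G_j$ and the relations $b_{k+j} = b_j u_j^{-1}$ for all $j \in \mathbb{Z}$. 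Here $u_j = x^{-j} u x^j \in G_j$ is a word in the $g_j$'s only, so each relation $b_{k+j} = b_j u_j^{-1}$ expresses $b_{k+j}$ as a word in $b_j$ and elements of $G_j$.

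The key observation is that these relations let us recursively solve for every $b_m$ in terms of a single representative $b_{m \bmod k}$ (choosing representatives $0, 1, \dots, k-1$) and elements of the $G_j$'s: for $m \geq k$ use $b_m = b_{m-k} u_{m-k}^{-1}$ going down, and for $m < 0$ use $b_m = b_{m+k} u_m$ going up (rearranging $b_{k+m} = b_m u_m^{-1}$). Carrying out the Tietze transformations that eliminate $b_m$ for $m \notin \{0, 1, \dots, k-1\}$ one at a time — each such elimination removes one generator and the one relation defining it — we are left with generators $b_0, \dots, b_{k-1}$ and $\bigcup_{j \in \mathbb{Z}} G_j$, the only surviving relations being those internal to each $G_j$. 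Hence $\mathcal{E}(0)$ generates $M$, and $M = \langle b_0, \dots, b_{k-1} \mid \rangle * \bigast_{j \in \mathbb{Z}} G_j$, provided we check there are no further relations. The general $i$ follows by applying the iterated shift.

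To justify that no relations survive — i.e. that the presentation is genuinely a free product — I would go back to \eqref{eqM}: $M = N_0 * N_1 * \cdots * N_{k-1}$, and it suffices to identify each $N_i$ with $\langle b_i \mid \rangle * \bigast_{j \equiv i \ (k)} G_j$. Now $N_i$ is the iterated amalgam $\cdots *_{Z_{i-k}} (G_{i-k} * \langle b_{i-k}\rangle) *_{Z_i} (G_i * \langle b_i \rangle) *_{Z_{i+k}} \cdots$, where $Z_{i+k}$ is generated by $b_i u_i^{-1}$ on one side and $b_{i+k}$ on the other. Each factor $G_{i+mk} * \langle b_{i+mk} \rangle$ is a free product, and the amalgamating subgroup $\langle b_{i+mk}\rangle$ is a free factor of the adjacent factor on that side — so amalgamating it identically with the maximal cyclic subgroup $\langle b_{i+(m-1)k} u_{i+(m-1)k}^{-1}\rangle$ of $G_{i+(m-1)k} * \langle b_{i+(m-1)k}\rangle$ is exactly the operation of substituting the generator $b_{i+mk}$ by the word $b_{i+(m-1)k} u_{i+(m-1)k}^{-1}$. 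Performing all these substitutions collapses the whole chain $N_i$ onto $\langle b_i \mid \rangle * \bigast_{m \in \mathbb{Z}} G_{i+mk}$ with no new relations, because substituting a free generator by an arbitrary word never creates a relation. Assembling $M = N_0 * \cdots * N_{k-1}$ and regrouping the free factors gives the stated presentation with generating set $\mathcal{E}(i)$.

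The main obstacle is the bookkeeping in the last step: one must be careful that the amalgamation in \eqref{eqM} is along a \emph{maximal} cyclic subgroup on the $G_i * \langle b_i\rangle$ side (so that the substitution is legitimate and doesn't hide torsion or extra identifications), and that iterating the substitution in both directions ($m \to +\infty$ and $m \to -\infty$) is consistent — i.e. that the infinitely many substitutions can be made simultaneously without circularity. Since each $b_m$ with $m \equiv i \pmod k$ is determined from its unique neighbor toward the representative index in $\{0,\dots,k-1\}$, there is no cycle, and a direct limit / colimit argument (or simply noting that any relation in $M$ involves only finitely many generators, hence lives in a finite sub-amalgam where the substitution is plainly valid) closes the gap.
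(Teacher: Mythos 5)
Your proposal is correct and follows essentially the same route as the paper: decompose $M=N_{0}\ast\cdots\ast N_{k-1}$ via \eqref{eqM}, eliminate the generators $b_{jk+i}$ with $j\neq 0$ by Tietze transformations (each relation $b_{m}=b_{m\mp k}u_{m\mp k}^{\mp 1}$ defining exactly the generator being removed), handle the infinite chain by exhausting $N_{i}$ with the finite sub-amalgams $N_{i,p}$ (your ``any relation lives in a finite sub-amalgam'' remark), and transport the result to arbitrary $i$ by the shift automorphism $w\mapsto x^{-1}wx$. The paper's proof is exactly this, written out for $N_{0,p}$ and then passed to the ascending union.
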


\begin{proof}
Let
\begin{eqnarray*}
S_{i} \ \ := \ \ \{b_{i}\} \ \cup \ \underset{j \in \mathbb{Z}}{\bigcup} G_{jk+i} \ \ \ (i \in \mathbb{Z}).
\end{eqnarray*}
First, we show that $S_{0}$ is a generating set of $N_{0}$. For $p \in \mathbb{N}_{0}$ we define the subgroups
\begin{eqnarray} \label{eqM0p}
N_{0,p}&:=&(G_{-pk} \ast \langle b_{-pk} \mid \rangle ) \underset{Z_{-(p-1)k}}{\ast} \cdots  \underset{Z_{0}}{\ast}  (G_{0} \ast \langle b_{0} \mid \rangle )  \underset{Z_{k}}{\ast}  \cdots \underset{Z_{pk}}{\ast} (G_{pk} \ast \langle b_{pk} \mid \rangle ) \nonumber \\
&=& \big\langle b_{-pk},b_{-(p-1)k},\dots,b_{pk},G_{-pk},G_{-(p-1)k},\dots ,G_{pk} \mid  \nonumber \\
& & b_{jk}=b_{(j+1)k}u_{jk} \ (-p \leqslant j \leq -1), \ b_{(j+1)k}=b_{jk}u_{jk}^{-1} \ (0 \leqslant j \leq p-1) \big\rangle
\end{eqnarray}
of $N_{0}$. Then $N_{0}$ is the union of the infinite ascending chain $N_{0,0} \subset N_{0,1} \subset N_{0,2} \subset \dots$. We apply a Tietze transformation of the form ``removing a generator'' on the generator $b_{-pk}$ in \eqref{eqM0p}. By doing so the relation $b_{-pk} = b_{(-p+1)k} u_{-pk}$ is deleted. Note that the generator $b_{-pk}$ does not appear in any other relation. Applying Tietze transformation of the same form step by step to the generators $b_{-(p-1)k},b_{-(p-2)k}, \dots , b_{-k}$ and $b_{pk},b_{(p-1)k}, \dots, b_{k}$ we get the presentation:
\begin{eqnarray*}
N_{0,p} \ \ = \ \ \langle b_{0},G_{-pk},G_{-(p-1)k},\dots , G_{pk} \mid \rangle \ \ = \ \ \langle b_{0} \mid \rangle \ \ast \ \underset{|j| \leqslant p}{\bigast} G_{jk}
\end{eqnarray*}

Thus, $S_{0,p} := \{b_{0}\} \cup \bigcup_{-p \leqslant j \leqslant p} G_{jk}$ is a generating set of $N_{0,p}$. Since $S_{0}$ is the union of the generating sets in the infinite, ascending chain $S_{0,0} \subset S_{0,1} \subset S_{0,2} \subset \dots$, we deduce that $S_{0}$ is a generating set of $N_{0}$. Further $N_{0}$ is the union of the subgroups in the infinite, ascending chain $N_{0,0} \subset N_{0,1} \subset N_{0,2} \subset \dots$ and therefore possesses the presentation 
\begin{eqnarray} \label{eqN0}
N_{0} \ \ = \ \ \langle b_{0} \mid \rangle \ \ast \ \underset{j \in \mathbb{Z}}{\bigast} G_{j,k}.
\end{eqnarray}
The operation of $x$ on $M$ by conjugation induces an automorphism of $M$. Because of $x^{-i}G_{0} x^{i} = G_{i}$ and $x^{-i}b_{0}x^{i} =b_{i}$  for all $i \in \mathbb{Z}$ we have $x^{-i} N_{0} x^{i} = N_{i}$ and $x^{-i} S_{0} x^{i} = S_{i}$. Thus, $S_{i}$ is a generating set of $N_{i}$ for all $i \in \mathbb{Z}$. Considering the free product from \eqref{eqM} we get a generating set of $M$ for every $i \in \mathbb{Z}$ by uniting the generating sets $S_{j}$ of $N_{j}$ for $j \in \{i,i+1,\dots, i +k-1\}$. This union corresponds to the set $\mathcal{E}(i)$. Finally we get the desired presentation of $M$ because of \eqref{eqM} and \eqref{eqN0}.
\end{proof}

The next lemma plays an important role in the conclusion of the proof of Proposition~\ref{propmain}.

\begin{lemma} \label{lemKMP}
The kernel $K$ of $\widetilde{\varphi} \colon \widetilde{H} \rightarrow \mathbb{Z}$ (see Notation~\ref{notK}) possesses the Magnus property.
\end{lemma}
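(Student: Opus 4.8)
The plan is to read off the Magnus property of $K$ from the explicit free-product description of $M$ given in Proposition~\ref{propEi}, combined with the generalized Edjvet theorem of the previous section. Recall from Notation~\ref{notK} that $K = M \times C$, and from Proposition~\ref{propEi} (taking $i = 0$) that
\[
M \;=\; F \ast \bigast_{j\in\mathbb{Z}} G_j,
\]
where $F = \langle b_0,\dots,b_{k-1}\mid\rangle$ is a free group of finite rank $\ge 1$ and each $G_j := x^{-j} G x^j$ is an isomorphic copy of $G$; hence $K = \big(F \ast \bigast_{j\in\mathbb{Z}} G_j\big)\times C$. Every $G_j$ is locally indicable because $G$ is, and $F$ is locally indicable because it is free; moreover all these free factors are non-trivial (since $u\in G$ is non-trivial we have $G\neq\{1\}$, and $F$ has positive rank).

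First I would handle case (i) of Proposition~\ref{propmain}, where $C=\{1\}$ and $G$ is only assumed locally indicable: then $K = M$, each $G_j\cong G$ has the Magnus property because $G\times C = G$ does, and $F$ has the Magnus property by Magnus's theorem for free groups, so Corollary~\ref{corEdj} gives the Magnus property of $M = K$. In case (ii), where $C\neq\{1\}$ and $G$ is indicable as well as locally indicable, every $G_j\cong G$ and also $F$ are indicable as well as locally indicable, so Theorem~\ref{thmedjvetgen} applies to $K = (F\ast\bigast_j G_j)\times C$ and reduces its Magnus property to that of $F\times C$ and of all the $G_j\times C$. Each $G_j\times C\cong G\times C$ has the Magnus property by hypothesis. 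For $F\times C$ I would argue as follows: since $G$ is indicable and $G\times C$ has the Magnus property, Lemma~\ref{lemindC} gives that $\mathbb{Z}\times C$ has the Magnus property; as $\mathbb{Z}$ is a limit group with the Magnus property, Theorem~\ref{thmdirect} then yields that $L\times C$ has the Magnus property for \emph{every} limit group $L$ with the Magnus property, in particular for the finite-rank free group $L = F$. This completes case (ii).

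I do not expect a substantial obstacle; the argument is an assembly of results already established. The one point that needs care is checking that the hypotheses of Theorem~\ref{thmedjvetgen} — indicability together with local indicability of each free factor, and non-triviality — genuinely hold, which is exactly why cases (i) and (ii) must be separated: in (i) indicability of $G$ is unavailable, so one must fall back on Corollary~\ref{corEdj} instead; the free factor $F$ is never problematic because $k\neq 0$ forces its rank to be positive. The only mildly non-obvious step is routing $F\times C$ through Lemma~\ref{lemindC} and Theorem~\ref{thmdirect} rather than trying to treat it by hand.
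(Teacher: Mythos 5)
Your proposal is correct and follows essentially the same route as the paper's proof: the same free-product decomposition of $M$ from Proposition~\ref{propEi}, the same case split between $C=\{1\}$ (using Corollary~\ref{corEdj}) and $C\neq\{1\}$ (using Theorem~\ref{thmedjvetgen}), and the same handling of $F_k\times C$ via Lemma~\ref{lemindC} and Theorem~\ref{thmdirect}. No discrepancies worth noting.
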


\begin{proof}
Because of $K=M \times C$ and Proposition~\ref{propEi} we may write
\begin{eqnarray*}
K \ \ = \ \ \big( \underbrace{\langle b_{i},b_{i+1},\dots , b_{i+k-1} \mid \rangle}_{= F_{k}} \ \ast \ \underset{j \in \mathbb{Z}}{\bigast} G_{j} \big) \ \times \ C,
\end{eqnarray*}
where $F_{k}$ is the free group of rank $k$. In the case that $C$ is non-trivial we want to apply Theorem~\ref{thmedjvetgen} with the presentation $M$ from Proposition~\ref{propEi}. For $C = \{1\}$ we want to apply Corollary~\ref{corEdj}. Thus, we shall check the following claims:
\begin{itemize}
\item[$(i)$] The group $F_{k}$ is indicable and locally indicable.
\item[$(ii)$] The groups $G_{j}$ ($j \in \mathbb{Z}$) are locally indicable and, if $C$ is non-trivial, also indicable.
\item[$(iii)$] The groups $F_{k} \times C$ and $G_{j} \times C$ ($j \in \mathbb{Z}$) possess the Magnus property.
\end{itemize}

\emph{To $(i)$.} Every non-trivial free group is indicable and subgroups of free groups are free.

\emph{To $(ii)$.} The subgroup $G_{0}$ of $K$ is by construction isomorphic to the subgroup $G$ of $\widetilde{H}$. By the assumptions of Proposition~\ref{propmain}, $G_{0}$ is locally indicable and, if $C$ is non-trivial, also indicable. The subgroup $G_{j}$ is the image of $G_{0}$ under the automorphism of $K$ which is induced by the conjugation with $x^{j}$. Thus, the desired properties translate from $G_{0}$ to $G_{j}$ for all $j \in \mathbb{Z}$. 

\emph{To $(iii)$.} As noted in the proof of $(ii)$, $G_{j}$ is isomorphic to $G$ for all $j \in \mathbb{Z}$. Therefore $G_{j} \times C$ possesses the Magnus property by the assumptions of Proposition~\ref{propmain} for all $j \in \mathbb{Z}$. For $C=\{1\}$ the free group $F_{k} \times C \cong F_{k}$ possesses the Magnus property. We did not use that $G$ is indicable here. If $C$ is non-trivial, $G_{j}$ is indicable by $(ii)$ and we deduce with the help of Lemma~\ref{lemindC} that $\mathbb{Z} \times C$ possesses the Magnus property. Finally it follows with Theorem~\ref{thmdirect} that $L \times C$ possesses the Magnus property for every limit group $L$. In particular, $F_{k} \times C$ possesses the Magnus property.
\end{proof}

At this point our proof of Proposition~\ref{propmain} that $(r,c)$ is conjugate to $(s,d)^{\pm 1}$ is in no way finished. Even though we found the subgroup $K$ of $\widetilde{H}$ that possesses the Magnus property and contains the elements $(r_{0},c)$ and $(s_{0},d)$, the normal closure of $(r,c)$ in $\widetilde{H}$ does in general not correspond to the normal closure of $(r_{0},c)$ in $K$. Therefore it is unclear if the normal closures of $(r_{0},c)$ and $(s_{0},d)$ in $K$ are equal. However, we know by Lemma~\ref{lemnormcleq} that the normal closures of $\{(r_{i},c) \mid i \in \mathbb{Z} \}$ and $\{ (s_{i},d) \mid i \in \mathbb{Z} \}$ in $K$ are equal. 

Before we introduce some tool which will later allow us to pass on to the normal closure of $(r_{0},c)$ in $K$ we need some further notation.

\begin{notation} \label{notMst}
For $s$, $t \in \mathbb{Z}$ we define the following subgroups of $M$:
\begin{eqnarray*}
M_{s,t} &:=& \langle G_{j},b_{j} \mid s \leqslant j \leqslant t \rangle, \ \ M_{s,\infty} := \langle G_{j},b_{j} \mid j \geqslant s \rangle, \ \  M_{-\infty,t} := \langle G_{j},b_{j} \mid j \leqslant t \rangle, \\
G_{s,t} &:=& \langle G_{j} \mid s \leqslant j \leqslant t  \rangle, \ \ G_{s,\infty} := \langle G_{j} \mid j \geqslant s \rangle, \ \ G_{-\infty,t} := \langle G_{j} \mid j \leqslant t \rangle \medskip
\end{eqnarray*}
\end{notation}

\begin{remark} \label{remMiinf}
Analogously to the proof of Proposition~\ref{propEi} we get for every $s \in \mathbb{Z}$ and every $i \in \mathbb{Z}$ greater or equal $s$ the presentation
\begin{eqnarray*}
M_{s,\infty} = \langle b_{i},b_{i+1},\dots,b_{i+k-1} \mid \rangle \ast \underset{j \geqslant s}{\bigast} \ G_{j}
\end{eqnarray*}
along with the presentation
\begin{eqnarray*}
M_{-\infty,t} = \langle b_{i-k+1},b_{i-k+2},\dots,b_{i} \mid \rangle \ast \underset{j \leqslant t}{\bigast} \ G_{j}
\end{eqnarray*}
for every $t \in \mathbb{Z}$ and every $i \in \mathbb{Z}$ less or equal $t$.
\end{remark}

\begin{notation}[{\boldmath $b$}\textbf{-right/left-generating set}] \label{NotrlE} For $s$, $t \in \mathbb{Z}$ let
\begin{eqnarray*}
\mathcal{E}_{s,\infty} := \{ b_{s},b_{s+1}, \dots ,b_{s+k-1} \} \ \cup \ \underset{j \geqslant s}{\bigcup} \ G_{j}
\end{eqnarray*}
be the \emph{$b$-left-generating set} of $M_{s,\infty}$ and
\begin{eqnarray*}
\mathcal{E}_{-\infty,t} := \{ b_{t-k+1},b_{t-k+2}, \dots ,b_{t} \} \ \cup \ \underset{j \leqslant t}{\bigcup} \ G_{j}
\end{eqnarray*}
be the \emph{$b$-right-generating set} of $M_{-\infty,t}$.
\end{notation}

\begin{definition} \textbf{(presentations of group elements)}
Let $\langle \mathcal{E} \mid \mathcal{R} \rangle$ be a group presentation, $v$ an element of $U$ and let $\zeta \colon \langle \mathcal{E} \mid \rangle \rightarrow U$ be the canonical homomorphism. Then we call each element $\bar{v} \in \langle \mathcal{E} \mid \rangle$ with $\zeta(\bar{v}) = v$ a \emph{presentation of $v \in U$}. In slight abuse of notation we denote the element of $U$ corresponding to a presentation $w \in \langle \mathcal{E} \mid \rangle$ also by $w$.
\end{definition}

\begin{definition} \textbf{(reduced presentations of elements of {\boldmath $M$})} \label{defredpr} Let $w$ be an element of $M$. We denote a presentation $\bar{w}$ of $w$ with respect to the generating set $\mathcal{E}(i)$ ($i \in \mathbb{Z}$) as a \emph{reduced presentation of $w$ with respect to $\mathcal{E}(i)$} if it is of the form
\begin{eqnarray*}
\bar{w} \ \ = \ \ \overset{n}{\underset{\ell = 1}{\Pi}} \tau(\ell),
\end{eqnarray*}
where the following statements hold:
\begin{itemize}
\item[(i)] We have $n \in \mathbb{N}_{0}$. (For $n=0$ we have $\bar{w}=1$.)
\item[(ii)] Every $\tau(\ell)$ is a non-trivial, freely reduced word in the group $\langle b_{i},b_{i+1},\dots,b_{i+k-1} \mid \rangle$ or a generator from $G_{j} \backslash \{1\}$ for some $j \in \mathbb{Z}$.
\item[(iii)] Consecutive $\tau(\ell)$, $\tau(\ell+1)$ originate from different generating sets $G_{j}$ or from a generating set $G_{j}$ and $\langle b_{i},b_{i+1},\dots,b_{i+k-1} \mid \rangle$. 
\end{itemize}
We call $n$ the \emph{length} of the reduced presentation of $w$ with respect to $\mathcal{E}(i)$ and denote the elements $\tau(\ell)$ ($1 \leqslant \ell \leqslant n$) as \emph{pieces} of the reduced presentation.

Analogously we define for every element $w$ from $M_{s,\infty}$ resp. $M_{-\infty,t}$ reduced presentation of $w$ with respect to $\mathcal{E}_{s,\infty}$ resp. $\mathcal{E}_{-\infty,t}$. 
\end{definition}

Concerning the different possibilities of presenting an element $w \in M$ we note the following.

\begin{remark} \textbf{(uniqueness and reducing process)}
Let $w$ be an arbitrary presentation with respect to a generating set $\mathcal{E}(i)$ of $M$. By consolidating generators of the same generating set $G_{i}$ next to each other and freely reducing the generators $b_{i}$ ($i \in \mathbb{Z}$), $w$ becomes a reduced presentation with respect to $\mathcal{E}(i)$. We refer to such a transition from a presentation to a reduced presentation as \emph{reducing} of a presentation. Note, that reduced presentations are unique because of the presentation of $M$ from Proposition~\ref{propEi} as a free product. The same uniqueness holds for the reduced presentations of elements from $M_{s,\infty}$ with respect to $\mathcal{E}_{s,\infty}$ or from $M_{-\infty,t}$ with respect to $\mathcal{E}_{-\infty,t}$. 
\end{remark}

Finally, we give an algorithm for the reducing process from a presentation to a reduced presentation.

\begin{algorithm} \label{algrewrprred}
Let $w$ be a presentation of an element of $M$ with respect to some generating set $\{b_{i} \mid i \in \mathbb{Z} \} \cup \bigcup_{i \in \mathbb{Z}} G_{i}$. Further let $s$ be the smallest index $j$ such that $w$ uses a generator with index $j$. The following process rewrites a given presentation of $w$ into a reduced presentation with respect to the generating set $\mathcal{E}_{s,\infty}$: First, we replace every generator $b_{j}$ in $w$ by $b_{j-k}u_{j-k}^{-1}$ if $j \geqslant s+k$. After that we reduce. If the new presentation still contains generators $b_{j}$ which are not in $\mathcal{E}_{s,\infty}$, we repeat the process and finally get to a reduced presentation with respect to $\mathcal{E}_{s,\infty}$. The process terminates after finitely many iterations since the greatest index $j$ such that the current presentation uses a generator $b_{j}$ strictly decreases with every iteration.

Let $t$ be the greatest index $j$ such that the initially given presentation of $w$ contains a generator with index $j$. Then we can rewrite $w$ analogously into a reduced presentation with respect to $\mathcal{E}_{-\infty,t}$.
\end{algorithm}

\subsection[Dual structure of $M$]{Dual structure of {\boldmath $M$}} \label{subsecdual}
In this short, technical subsection we point out a special structure of the group $K$ which we call \emph{dual} to the structure of $K$ described in Subsection~\ref{subsecKleftright}. The dual structure will help us to significantly shorten certain parts of our proof (e.\,g.~the deduction of Lemma~\ref{lemcentralrefl} from Lemma~\ref{lemcentral} and the proof of the finiteness of Algorithm~\ref{algalpha} in the dual case).

For every $i \in \mathbb{Z}$ and $g \in G$ we define
\begin{eqnarray*}
b'_{i} \ \ := \ \ b_{-i}u_{-i}^{-1} \ \ \ \ \text{and} \ \ \ \ g'_{i} \ \ := \ \ g_{-i}. 
\end{eqnarray*}

We denote the group $G_{-i}$ presented using the generators $g'_{i}$ instead of $g_{-i}$ by $G'_{i}$. Thus, $G_{i}'$ and $G_{-i}$ are two different names for the same group. We call the generators from $\{b'_{i} \mid i \in \mathbb{Z} \} \cup \bigcup_{i \in \mathbb{Z}} G'_{i}$ \emph{dual generators} of $M$ and we say that a generator $g'_{i}$ ($g \in G$, $i \in \mathbb{Z}$) is \emph{dual} to $g_{-i}$.

Rewriting the relation $b_{i}u_{i}^{-1}=b_{i+k}$ with the help of dual generators we get $b'_{-i} = b_{-i-k}' u_{-i-k}'^{-1}$, where the presentation $u_{-i-k}' \in G'_{-i-k}$ is obtained from the presentation $u_{i+k} \in G_{i+k}$ by inverting and afterwards replacing all generators by dual generators from $G'_{-i-k}$. Applying an index shift $i'=-i-k$ we get the form $b'_{i'+k} = b'_{i'} u_{i'}'^{-1}$ ($i' \in \mathbb{Z}$) of the original relation $b_{i}u_{i}^{-1} = b_{i+k}$ ($i \in \mathbb{Z}$). This justifies the name ``dual''.

Analogously to Notation~\ref{notMst} we define for $s$, $t \in \mathbb{Z}$ the subgroups
\begin{eqnarray*}
M'_{s,t} &:=& \langle G'_{j},b'_{j} \mid s \leqslant j \leqslant t \rangle, \ \ M'_{s,\infty} := \langle G'_{j},b'_{j} \mid j \geqslant s \rangle, \ \  M'_{-\infty,t} := \langle G'_{j},b'_{j} \mid j \leqslant t \rangle, \\
G'_{s,t} &:=& \langle G'_{j} \mid s \leqslant j \leqslant t  \rangle, \ \ G'_{s,\infty} := \langle G'_{j} \mid j \geqslant s \rangle \ \ \text{and} \ \ G'_{-\infty,t} := \langle G'_{j} \mid j \leqslant t \rangle
\end{eqnarray*}
of $M$. Thus, the subgroups $M'_{s,t}$, $M'_{s,\infty}$ and $M'_{-\infty,t}$ correspond to the subgroups $M_{-t,-s}$, $M_{-\infty,-s}$ and $M_{-t,\infty}$. Analogously to Subsection~\ref{subsecdual} we get the presentations
\begin{eqnarray*}
M &=& \langle b'_{i},b'_{i+1},\dots,b'_{i+k-1} \mid \rangle \ \ast \ \underset{j \in \mathbb{Z}}{\bigast} \ G'_{j} \ \ \ \ \  \text{for all} \ i \in \mathbb{Z}, \\
M'_{s,\infty} &=& \langle b'_{i},b'_{i+1},\dots,b'_{i+k-1} \mid \rangle \ \ast \ \underset{j \geqslant s}{\bigast} \ G'_{j} \ \ \ \ \ \text{for all} \ i \in \mathbb{Z} \ \text{with} \ i \geqslant s, \\
M'_{-\infty,t} &=& \langle b'_{i-k+1},b'_{i-k+2},\dots,b'_{i} \mid \rangle \ \ast \ \underset{j \leqslant t}{\bigast} \ G'_{j} \ \ \text{for all} \ i \in \mathbb{Z} \ \text{with} \ i \leqslant t
\end{eqnarray*}
along with the generating sets
\begin{eqnarray*}
\mathcal{E}'(i) \ \ := \ \ \ \ \ \{b'_{i},b'_{i+1},\dots,b'_{i+k-1}\} \ \cup \ \underset{j \in \mathbb{Z}}{\bigcup} \ G'_{j}  \ \ &\text{of}& \ \ M \ \ \ \ \ \ \, \ \text{for all} \ i \in \mathbb{Z}, \\
\{b'_{i},b'_{i+1},\dots,b'_{i+k-1}\} \ \cup \ \underset{j \geqslant s}{\bigcup} \ G'_{j} \ \ &\text{of}& \ \ M'_{s,\infty} \ \ \ \ \text{for all} \ i \in \mathbb{Z} \ \text{with} \ i \geqslant s, \\
\{ b'_{i-k+1},b'_{i-k+2}, \dots ,b'_{i} \} \ \cup \ \underset{j \leqslant t}{\bigcup} \ G'_{j} \ \ &\text{of}& \ \ M'_{-\infty,t} \ \ \ \text{for all} \ i \in \mathbb{Z} \ \text{with} \ i \leqslant t.
\end{eqnarray*}
Analogously to Notation~\ref{NotrlE} we define for $s$, $t \in \mathbb{Z}$ the $b'$-left generating set
\begin{eqnarray*}
\mathcal{E}'_{s,\infty} := \{ b'_{s},b'_{s+1}, \dots ,b'_{s+k-1} \} \ \cup \ \underset{j \geqslant s}{\bigcup} \ G'_{j} \ \ \text{of} \ \ M'_{s,\infty}
\end{eqnarray*}
and the \emph{$b'$-right generating set}
\begin{eqnarray*}
\mathcal{E}'_{-\infty,t} \ := \ \{ b'_{t-k+1},b'_{t-k+2}, \dots ,b'_{t} \} \ \cup \ \underset{j \leqslant t}{\bigcup} \ G'_{j} \ \ \ \ \text{of} \ \ \ \ M'_{-\infty,t}.
\end{eqnarray*}
Finally, we define the \emph{reduced presentation} with respect to dual generating sets in the same way as in Definition~\ref{defredpr}.

\subsection[$\alpha$- and $\omega$-limits]{{\boldmath $\alpha$}- and {\boldmath $\omega$}-limits}

Before defining $\alpha$- and $\omega$-limits we motivate this definition by considering different presentations of $b_{k}b_{k+1}^{-1} \in M$:
\begin{eqnarray*}
b_{0}u_{0}^{-1}u_{1}u_{1-k}b_{1-k}^{-1} \ = \ b_{0}u_{0}^{-1}u_{1}b_{1}^{-1}=b_{k}b_{k+1}^{-1}=b_{2k}u_{k}u_{k+1}^{-1}b_{2k+1}^{-1}\ = \ b_{3k}u_{2k}u_{k}u_{k+1}^{-1}u_{2k+1}^{-1}b_{3k+1}^{-1}
\end{eqnarray*}
Note that the presentations use generators with indices in different intervals. However, there seems to be no presentation of $b_{k}b_{k+1}^{-1}$ using exclusively generators that are greater than $k$. Furthermore there seems to be no presentation using exclusively generators that are less than $1$.

\begin{definition}\textbf{({\boldmath$\alpha$}-/{\boldmath$\omega$}-limits and {\boldmath $\alpha$}-{\boldmath $\omega$}-length)} \label{defalpha}
Let $r$ be a non-trivial element of $M$. Then the $\alpha$-limit $\alpha_{r}$, the $\omega$-limit $\omega_{r}$ and the $\alpha$-$\omega$-length $|r|_{\alpha,\omega}$ of $r$ are defined by
\begin{eqnarray*}
\alpha_{r} &:=& \max\{j \in \mathbb{Z} \mid r \in M_{j,\infty} \}, \\
\omega_{r} &:=& \min\{j \in \mathbb{Z} \mid r \in M_{-\infty,j} \}, \\
|r|_{\alpha,\omega} &:=& \omega_{r}-\alpha_{r}+1.
\end{eqnarray*}
Analogously we define the dual $\alpha$- and $\omega$-limits
\begin{eqnarray*}
\alpha'_{r} \ := \ \max\{j \in \mathbb{Z} \mid r \in M'_{j,\infty} \} \ \ \ \ \text{and} \ \ \ \ \omega'_{r} \ := \ \min\{j \in \mathbb{Z} \mid r \in M'_{-\infty,j} \}
\end{eqnarray*}
along with the dual $\alpha$-$\omega$-length $|r|_{\alpha',\omega'} := \omega'_{r}-\alpha'_{r}+1$.
\end{definition}

We prove step by step that $\alpha$- and $\omega$-limits are well-defined. First, we consider the $\alpha$-limit $\alpha_{r}$ for a non-trivial element $r \in M$. Lemma~\ref{lemalgvalid} shows, that the following algorithm indeed rewrites a presentation $r$ written with the generators from $\{b_{i} \mid i \in \mathbb{Z} \} \cup \bigcup_{i\in \mathbb{Z}} G_{i}$ in finitely many iterations to a presentation $r^{*}$ of $r$ with respect to the generating set $\mathcal{E}_{\alpha_{r},\infty}$ of $M_{\alpha_{r}, \infty}$.

\begin{algorithm} \label{algalpha}
Let $r$ be a non-trivial element of $M$.
\begin{itemize}
\item[(1)] We consider the presentation of $r$ with respect to the generating set $\{b_{i} \mid i \in \mathbb{Z} \} \cup \bigcup_{i \in \mathbb{Z}} G_{i}$. Let $\lambda$ be the smallest index $\mu \in \mathbb{Z}$ such that the presentation of $r$ uses a generator with index $\mu$. In particular, we have $r \in M_{\lambda,\infty}$. Using Algorithm~\ref{algrewrprred} we determine a reduced presentation $r[1]$ of $r$ with respect to the generating set $\mathcal{E}_{\lambda,\infty}$. Let $j$ be the smallest index $v \in \mathbb{Z}$ such that $r[1]$ uses a generator with index $\nu$. Note that $r[1]$ is also a reduced presentation with respect to the generating set $\mathcal{E}_{j,\infty}$.
\item[(2)] Let $r[2]$ be the reduced presentation of $r$ emerging from $r[1]$ by replacing all generators $b_{j}$ with $b_{j+k}u_{j}$ and reducing afterwards. Then $r[2]$ is a presentation of $r$ with respect to the generating set
\begin{align} \label{eqgsplus1}
\{ b_{j+1},b_{j+2}, \dots ,b_{j+k}\} \cup \bigcup_{i \geqslant j} G_{i} .
\end{align}
\begin{itemize}
\item[(2a)] If $r[2]$ contains no generator from $G_{j}$, $r[2]$ is a reduced presentation with respect to the generating set $\mathcal{E}_{j+1,\infty}$. We choose $\ell$ as the smallest index $\mu \in \mathbb{Z}$ such that $r[2]$ uses a generator with index $\mu$. With the input $j:= \ell$ and $r[1] := r[2]$ we begin Step (2) again. Note that $j$ strictly increased.
\item[(2b)] If $r[2]$ contains a generator from $G_{j}$, the algorithm terminates with the output $\alpha_{r} := j$ and $r^{*} := r[1]$. 
\end{itemize}
\end{itemize}
\end{algorithm}

\begin{lemma} \label{lemalgvalid}
The output $\alpha_{r}$ of Algorithm~\ref{algalpha} is the $\alpha$-limit of $r$ introduced in Definition~\ref{defalpha}. The output $r^{*}$ is a reduced presentation of $r$ with respect to the generating set $\mathcal{E}_{\alpha_{r}, \infty}$.
\end{lemma}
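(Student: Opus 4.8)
The plan is to run Algorithm~\ref{algalpha} while carrying along the invariant that, at the start of each execution of Step~(2) with parameter $j$, the current presentation $r[1]$ is a reduced presentation of $r$ with respect to $\mathcal{E}_{j,\infty}$ and $j$ is the smallest index occurring in it; in particular $r\in M_{j,\infty}$ at every stage. The base case is Step~(1) together with Algorithm~\ref{algrewrprred} and Remark~\ref{remMiinf}. For the inductive step I would note that the substitution $b_j\mapsto b_{j+k}u_j$ of Step~(2) is legitimate, since $b_{j+k}=b_j u_j^{-1}$ is a defining relation of $M$ (see \eqref{eqM}), so $r[2]$ again presents $r$; moreover $r[2]$ uses only the generators displayed in \eqref{eqgsplus1}. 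Hence in case~(2a) the word $r[2]$ uses no index-$j$ generator, so $r\in M_{j+1,\infty}$, and because the $b$-letters occurring in $r[2]$ all have indices in $\{j+1,\dots,j+k\}$, the word $r[2]$ is already a reduced presentation with respect to $\mathcal{E}_{\ell,\infty}$ for the new minimal index $\ell\ge j+1$, by Remark~\ref{remMiinf} and uniqueness of reduced presentations. This re-establishes the invariant with $j$ replaced by $\ell>j$.

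Next I would settle correctness at termination. Suppose the algorithm stops in case~(2b) with value $j$. By Remark~\ref{remMiinf}, applied with base index $j+1\ge j$, the group $M_{j,\infty}$ also decomposes as the free product $\langle b_{j+1},\dots,b_{j+k}\mid\rangle\ast\bigast_{i\ge j}G_i$, i.e.\ with respect to the generating set \eqref{eqgsplus1}; in this decomposition $G_j$ is still a free factor, since $b_j=b_{j+k}u_j$ lies in the span of $b_{j+k}$ and $u_j\in G_j$. After reduction, $r[2]$ is the normal form of $r$ in this free product, so by the normal form theorem $r$ lies in the sub-free-product obtained by deleting the factor $G_j$ — which is precisely $M_{j+1,\infty}$ — if and only if $r[2]$ contains no $G_j$-syllable. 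Case~(2b) asserts that $r[2]$ does contain one, whence $r\notin M_{j+1,\infty}$; combined with $r\in M_{j,\infty}$ from the invariant and the fact that the subgroups $M_{s,\infty}$ form a descending chain, this gives $\alpha_r=\max\{s\in\mathbb{Z}\mid r\in M_{s,\infty}\}=j$, as in Definition~\ref{defalpha}. The output $r^\ast=r[1]$ is then reduced with respect to $\mathcal{E}_{j,\infty}=\mathcal{E}_{\alpha_r,\infty}$ by the invariant.

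The genuinely delicate point is termination, i.e.\ that only finitely many iterations fall into case~(2a) — each such iteration strictly increases $j$, whereas case~(2b) ends the algorithm. I would prove that the length of the reduced presentation (Definition~\ref{defredpr}) strictly drops at every case-(2a) step. The key combinatorial observation is that in case~(2a) the generator $b_j$ must actually occur in $r[1]$ (otherwise the substitution does nothing and the $G_j$-generator realizing the minimal index $j$ survives, forcing case~(2b)), and that each occurrence of $b_j^{\pm1}$, sitting inside an $\langle b_j,\dots,b_{j+k-1}\rangle$-syllable, produces after the substitution a letter $u_j^{\pm1}$ whose only neighbour capable of cancelling it is an adjacent $G_j$-syllable of $r[1]$ equal to $u_j^{\mp1}$; a $u_j^{\pm1}$ flanked by $\langle b_{j+1},\dots,b_{j+k}\rangle$-letters, or occurring at an end of the word, cannot cancel. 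Since case~(2a) requires every $G_j$-letter to disappear upon reduction, each occurrence of $b_j$ therefore annihilates a distinct neighbouring $G_j$-syllable, so $r[2]$ has strictly fewer syllables than $r[1]$; as the remaining bookkeeping steps never lengthen the presentation and the length is a non-negative integer, the algorithm terminates. Carrying out this case analysis carefully — in particular keeping track of the syllables at the two ends of the word and of secondary mergers created by the cancellations — is where the real work lies; everything else reduces to the normal form theorem for free products and the presentations of $M_{s,\infty}$ from Remark~\ref{remMiinf}.
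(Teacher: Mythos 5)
Your proposal is correct and follows essentially the same route as the paper's proof: termination via a strict decrease of the syllable length of $r[1]$ at every non-terminating iteration (forced by the complete cancellation of the new $u_j^{\pm 1}$ against existing $G_j$-pieces), and correctness of the output via $r\in M_{j,\infty}$ together with $r\notin M_{j+1,\infty}$, the latter read off from the uniqueness of normal forms in the relevant free product decomposition. The only cosmetic difference is that you phrase the last step through the decomposition of $M_{j,\infty}$ with respect to \eqref{eqgsplus1} rather than through the generating set $\mathcal{E}(\widetilde{\alpha}_r+1)$ of all of $M$, and you spell out the cancellation analysis in more detail than the paper does.
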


\begin{proof}
To avoid a confusion with the $\alpha$-limit $\alpha_{r}$ from Definition~\ref{defalpha} we denote the output $\alpha_{r}$ from Algorithm~\ref{algalpha} for the purpose of this proof with $\widetilde{\alpha}_{r}$. We want to show that the algorithm ends after finitely many iterations with $\widetilde{\alpha}_{r}=\alpha_{r}$.

To prove the finiteness of the algorithm we show that the length of the reduced presentation $r[1]$ strictly decreases with every iteration which does not terminate the algorithm. The presentation $r[2]$ arises from $r[1]$ by replacing every generator $b_{j}$ with $b_{j+k}u_{j}$. If the reduced presentation $r[1]$ contains no piece from $G_{j} \backslash \{1\}$ (cf. Definition~\ref{defredpr}), there cannot be any cancellation of the new pieces $u_{j}^{\pm 1} \in G_{j}$ in $r[2]$ and so the algorithm terminates. If $r[1]$ contains a piece from $G_{j} \backslash \{1\}$ and we arrive at Step~(2a), there must have been complete cancellations of the new pieces $u_{j}^{\pm 1}$ with pieces of $r[1]$ from $G_{j}$. Therefore the reduced presentation $r[2]$ is strictly shorter than $r[1]$. Note that $r[2]$ is the presentation $r[1]$ of the next iteration.

Since Algorithm~\ref{algalpha} rewrites the element $r$ as an element of $M_{\widetilde{\alpha}_{r},\infty}$ we have $\widetilde{a}_{r} \leqslant \alpha_{r}$ by Definition~\ref{defalpha}. It remains to show that $r$ is no element of $M_{\widetilde{\alpha}_{r}+1,\infty}$. Algorithm~\ref{algalpha} can only end in Step~(2b). Therefore the reduced presentation $r[2]$ of $r$ written with the generators  from \eqref{eqgsplus1} (for $j = \widetilde{a}_{r}$) uses at least one generator from $G_{\widetilde{\alpha}_{r}}$. Note that $r[2]$ is the unique reduced presentation of $r$ with respect to $\mathcal{E}(\widetilde{\alpha}_{r} + 1)$. Further the reduced presentation of an arbitrary element $w \in M_{\widetilde{\alpha}_{r}+1}$ with respect to $\mathcal{E}(\widetilde{\alpha}_{r}+1)$ corresponds to the reduced presentation of $w$ with respect to $\mathcal{E}_{\widetilde{\alpha}_{r}+1,\infty}$. Since $r[2]$ contains a generator of $\mathcal{E}(\widetilde{\alpha}_{r}+1) \backslash \mathcal{E}_{\widetilde{\alpha}_{r}+1,\infty}$, we deduce that $r$ cannot be an element of $M_{\widetilde{\alpha}_{r}+1,\infty}$.
\end{proof}

\begin{remark}
By replacing all objects of Algorithm~\ref{algalpha} with the corresponding dual objects we get an algorithm which calculates to a given presentation $r \in M \backslash \{1\}$ the dual $\alpha$-limit $\alpha'$ and a reduced presentation of $r$ with respect to the dual generating set $\mathcal{E}'_{\alpha'_{r}, \infty}$. The proof of the functionality of this algorithm is analogous to Lemma~\ref{lemalgvalid}.
\end{remark}

Thus, the $\alpha$-limits $\alpha_{r}$ and $\alpha'_{r}$ are well-defined for every non-trivial element $r \in M$. Because of point (a) of the following lemma this also secures that the $\omega$-limits $\omega_{r}$ and $\omega_{r}'$ of $r$ are well-defined.

\begin{lemma} \label{lemalphaomega}
Let $r$ be a non-trivial element of $M$.
\begin{itemize}
\item[$(a)$] We have $\omega_{r} = -\alpha'_{r}$ and $\omega'_{r} = - \alpha_{r}$. In particular we have $|r|_{\alpha,\omega} = |r|_{\alpha', \omega'}$.
\item[$(b)$] For all $i,j \in \mathbb{Z}$ we have $\alpha_{r_{i+j}} = \alpha_{r_{i}}+j$ and $\omega_{r_{i+j}} = \omega_{r_{i}}+j$. In particular we have $|r_{i}|_{\alpha,\omega} = |r_{i+j}|_{\alpha,\omega}$. (Recall $r_{i}:=x^{-i}rx^{i}$; see Notation~\ref{notgi}.)
\end{itemize}
\end{lemma}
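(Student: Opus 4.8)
The plan is to deduce both parts purely formally from two facts that have already been set up in the text. The first is the dictionary between the dual subgroups and the ordinary ones recorded just before Definition~\ref{defalpha}: as subgroups of $M$ one has $M'_{s,\infty} = M_{-\infty,-s}$ and $M'_{-\infty,t} = M_{-t,\infty}$ for all $s,t \in \mathbb{Z}$. (This is an honest equality of subgroups, not merely an isomorphism, because $b'_j = b_{-j}u_{-j}^{-1}$ differs from $b_{-j}$ only by the element $u_{-j} \in G_{-j}$, so one may freely swap $b'_j$ for $b_{-j}$ in a generating set without changing the generated subgroup.) The second is the fact, already used in the proof of Proposition~\ref{propEi}, that conjugation by $x$ is an automorphism of $M = \ker(\varphi)$ shifting all generator indices by one.

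For part $(a)$, I would first note that $M'_{s,\infty} \supseteq M'_{s+1,\infty}$ and that $\alpha'_r$ is well-defined (it is computed by the dual of Algorithm~\ref{algalpha}; cf.\ Lemma~\ref{lemalgvalid} and its dual), so $\{s \in \mathbb{Z} \mid r \in M'_{s,\infty}\} = \{s \mid s \leqslant \alpha'_r\}$. Substituting $s=-j$ into $M'_{s,\infty} = M_{-\infty,-s}$ gives, for every $j \in \mathbb{Z}$, the chain of equivalences $r \in M_{-\infty,j} \Leftrightarrow r \in M'_{-j,\infty} \Leftrightarrow -j \leqslant \alpha'_r \Leftrightarrow j \geqslant -\alpha'_r$. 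Hence $\{j \mid r \in M_{-\infty,j}\} = \{j \mid j \geqslant -\alpha'_r\}$, so its minimum exists and equals $-\alpha'_r$; this simultaneously shows that $\omega_r$ is well-defined and that $\omega_r = -\alpha'_r$. The identity $\omega'_r = -\alpha_r$ follows by exactly the same argument, now using $M'_{-\infty,t} = M_{-t,\infty}$ together with the already established well-definedness of $\alpha_r$. The ``in particular'' is then immediate: $|r|_{\alpha,\omega} = \omega_r - \alpha_r + 1 = (-\alpha'_r) + \omega'_r + 1 = \omega'_r - \alpha'_r + 1 = |r|_{\alpha',\omega'}$.

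For part $(b)$, since $M = \ker(\varphi)$ is normal in $H$, conjugation by $x$ restricts to an automorphism $\theta \colon M \to M$, $m \mapsto x^{-1}mx$. From $G_\ell = x^{-\ell}Gx^\ell$ and $b_\ell = x^{-\ell}bx^\ell$ one reads off $\theta(G_\ell) = G_{\ell+1}$ and $\theta(b_\ell) = b_{\ell+1}$, so $\theta$ permutes the generating set $\{b_i \mid i \in \mathbb{Z}\} \cup \bigcup_{i\in\mathbb{Z}} G_i$ by raising each index by $1$; consequently $\theta^j(M_{s,\infty}) = M_{s+j,\infty}$ and $\theta^j(M_{-\infty,t}) = M_{-\infty,t+j}$ for all $j \in \mathbb{Z}$. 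Since $r_{i+j} = x^{-j}r_i x^j = \theta^j(r_i)$, the bijectivity of $\theta^j$ gives $r_{i+j} \in M_{s,\infty} \Leftrightarrow r_i \in \theta^{-j}(M_{s,\infty}) = M_{s-j,\infty}$, and taking the maximum over such $s$ (reindexing $s = s'+j$) yields $\alpha_{r_{i+j}} = \alpha_{r_i} + j$; the computation for $\omega$ is identical, giving $\omega_{r_{i+j}} = \omega_{r_i} + j$. Subtracting the two equalities gives $|r_{i+j}|_{\alpha,\omega} = |r_i|_{\alpha,\omega}$.

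I do not expect a genuine obstacle here; this is essentially bookkeeping. The only points requiring a little care are checking that the ``correspondences'' listed before Definition~\ref{defalpha} really are equalities of subgroups of $M$ (handled by the observation about $b'_j$ above) and keeping the signs straight when the substitution $j \mapsto -j$ converts a minimum into a maximum in part $(a)$.
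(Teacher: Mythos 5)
Your proposal is correct and follows essentially the same route as the paper: part $(a)$ via the subgroup identities $M'_{s,\infty}=M_{-\infty,-s}$ and $M'_{-\infty,t}=M_{-t,\infty}$ recorded in the dual-structure subsection (turning a minimum into a maximum with a sign flip), and part $(b)$ via the index-shifting automorphism induced by conjugation with $x$. Your extra checks --- that the dual correspondences are genuine equalities of subgroups because $b'_j$ and $b_{-j}$ differ by $u_{-j}\in G_{-j}$, and that $\omega_r$ is well-defined as a by-product of $(a)$ --- only make explicit what the paper leaves implicit.
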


\begin{proof}
$(a)$. This statement follows immediately from Definition~\ref{defalpha} since we noted in Subsection~\ref{subsecdual} that the subgroup $M_{-\infty,t}$ corresponds to the subgroup $M'_{-t,\infty}$ for all $t \in \mathbb{Z}$. If $i$ is the smallest index such that $r$ is an element of $M_{-\infty,i}$ then $-i$ is the greatest index such that $r$ is an element of $M'_{-i,\infty}$. So we have $\omega_{r}=-\alpha'_{r}$. The equality $\omega'_{r}=-\alpha_{r}$ can be deduced analogously. Finally, note that $|r|_{\alpha,\omega} = \omega_{r}-\alpha_{r}+1=-\alpha'_{r}-(-\omega'_{r})+1=\omega'_{r}-\alpha'_{r}+1=|r|_{\alpha',\omega'}$.\\
$(b)$. It is sufficient to note that a presentation $r_{i+j}$ is obtained from $r_{i}$ by increasing all indices by $j$.
\end{proof}

\subsection{Suitable conjugates}

This section will allow us later to apply Lemma~\ref{lemmachi}. For this purpose we want to find a conjugate $\widetilde{r}$ of $r$ such that every reduced presentation of $\widetilde{r}$ with respect to the generating sets $\mathcal{E}(i)$ ($i \in \mathbb{Z}$) and their dual correspondents $\mathcal{E}'(i)$ ($i \in \mathbb{Z}$) are cyclically reduced in the sense of Definition~\ref{defcycred}.

\begin{notation}
Let $r$ be a non-trivial element of $M$. Then we denote the reduced presentation of $r$ with respect to $\mathcal{E}(i)$ by $r(i)$. We denote the reduced presentation of $r$ with respect to $\mathcal{E}'(i)$ by $r'(i)$. 

Further, we call the elements of the set $\{b_{j} \mid j \in \mathbb{Z}\}$ \emph{$b$-generators} and generators from the set $\{b'_{j} \mid j \in \mathbb{Z} \}$ \emph{$b'$-generators}.

As a short notation we write that a reduced presentation begins with a $b$- resp. $b'$-generator meaning that the first piece of the reduced presentation begins with a $b$- resp. $b'$-generator. Analogously we say that a presentation ends with a $b$- or $b'$-generator.
\end{notation}

\begin{definition} \label{defcycred}
Let $r$ be a non-trivial element of $M$ and $i \in \mathbb{Z}$. We call a reduced presentation $r(i)$ resp. $r'(i)$ \emph{cyclically reduced} if one of the following statements hold:
\begin{itemize}
\item[$(i)$] The reduced presentation $r(i)$ resp. $r'(i)$ has a length smaller or equal 1.
\item[$(ii)$] The first and last piece of the reduced presentation $r(i)$ resp. $r'(i)$ neither originate from the same generating set $G_{j}$ nor consist both of $b$- resp. $b'$-generators.
\item[$(iii)$] The reduced presentation $r(i)$ resp. $r'(i)$ begins or ends with $b$- resp. $b'$-generators which are not inverse to each other.
\end{itemize} 
\end{definition}

\begin{lemma} \label{lemeq4}
Let $r$ be a non-trivial element of $M$. Then the following statements are equivalent:
\begin{itemize}
\item[$(i)$] There exists an $i \in \mathbb{Z}$ such that $r(i)$ begins with a positive power of a $b$-generator.
\item[$(ii)$] For all $i \in \mathbb{Z}$ $r(i)$ begins with a positive power of a $b$-generator.
\item[$(iii)$] There exists an $i \in \mathbb{Z}$ such that $r'(i)$ begins with a positive power of a $b'$-generator.
\item[$(iv)$] For all $i \in \mathbb{Z}$ $r'(i)$ begins with a positive power of a $b'$-generator.
\end{itemize}
\end{lemma}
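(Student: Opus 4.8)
The statement is a biconditional chain $(i)\Leftrightarrow(ii)\Leftrightarrow(iii)\Leftrightarrow(iv)$. The implications $(ii)\Rightarrow(i)$ and $(iv)\Rightarrow(iii)$ are trivial, so the plan is to prove $(i)\Rightarrow(ii)$, then $(i)\Rightarrow(iii)$ (equivalently, by the symmetric role of dual generators, $(iii)\Rightarrow(iv)$ and $(iii)\Rightarrow(i)$), which closes the loop. The key observation driving everything is how the $b$-generators in a reduced presentation $r(i)$ transform when we pass from $\mathcal{E}(i)$ to $\mathcal{E}(i+1)$ or to $\mathcal{E}'(i)$: the relation $b_{i+k}=b_iu_i^{-1}$ (equivalently $b_i = b_{i+k}u_i$) and the definitions $b_i' = b_{-i}u_{-i}^{-1}$, $g_i'=g_{-i}$ tell us precisely how the \emph{first piece} of a reduced word gets rewritten, and in particular whether it remains a positive power of a $b$-generator.

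\textbf{Step 1: $(i)\Rightarrow(ii)$.} Suppose $r(i_0)$ begins with $b_j^m$ for some $m\geqslant 1$ and $b$-generator $b_j$ (so necessarily $i_0\leqslant j\leqslant i_0+k-1$). I want to show that for every $i$, the reduced presentation $r(i)$ also starts with a positive power of a $b$-generator. It suffices to check the two ``unit steps'' $i\mapsto i+1$ and $i\mapsto i-1$ and iterate. Passing from $\mathcal{E}(i)$ to $\mathcal{E}(i+1)$ amounts to Step~(2) of Algorithm~\ref{algalpha}: every occurrence of $b_i$ is replaced by $b_{i+k}u_i$, every other generator is untouched, and then one reduces. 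If the first piece of $r(i)$ is $b_j^m$ with $j>i$, then $b_j$ is unaffected and $r(i+1)$ still begins with $b_j^m$, $m\geqslant 1$. If $j=i$ and $m\geqslant 1$, then after substitution the leading piece becomes $b_{i+k}u_i b_{i+k} u_i\cdots$; since $u_i\in G_i$ and $b_{i+k}$ is a free generator, no cancellation can occur at the front, so $r(i+1)$ begins with $b_{i+k}^1$, a positive power of a $b$-generator. (If instead $m\leqslant -1$ one would get $u_i^{-1}b_{i+k}^{-1}\cdots$, but that case does not arise here since we assumed $m\geqslant 1$; I only need to propagate the ``positive'' property.) The step $i\mapsto i-1$ uses the inverse substitution $b_{i-1}\leftarrow$ nothing new and $b_{i+k-1}\mapsto b_{i-1}u_{i-1}^{-1}$ in reverse, i.e.\ $b_{i+k-1}$ is rewritten; a symmetric bookkeeping shows a leading $b_j^m$ with $j<i+k-1$ is preserved and a leading $b_{i+k-1}^m$ ($m\geqslant 1$) becomes a leading $b_{i-1}^m$ after substituting $b_{i+k-1}=b_{i-1}u_{i-1}^{-1}$ --- wait, one must be careful: $b_{i+k-1}^m = (b_{i-1}u_{i-1}^{-1})^m$ begins with $b_{i-1}^1$, again positive. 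So the property propagates in both directions and $(ii)$ holds.

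\textbf{Step 2: relating $\mathcal{E}(i)$ and $\mathcal{E}'(i)$.} Here I use that the dual generating set is obtained by the substitution $g_{-i}\mapsto g_i'$, $b_{-i}u_{-i}^{-1}\mapsto b_i'$, i.e.\ $b_{-i} = b_i' u_i'$ (reading the relation in the dual direction as recorded in Subsection~\ref{subsecdual}). The cleanest route is: first reduce $r$ with respect to $\mathcal{E}(i)$ for a conveniently chosen $i$ (using Step~1 we may take any $i$), then rewrite each generator in terms of dual generators and re-reduce to obtain $r'(i')$ for the appropriate $i'$. A leading piece $b_j^m$ with $m\geqslant 1$ in $r(i)$ becomes, under $b_j = b_{-j}'u_{-j}'$, the word $(b_{-j}'u_{-j}')^m$, which begins with $(b_{-j}')^1$ --- a positive power of a $b'$-generator --- and again the trailing $u_{-j}'\in G_{-j}'$ cannot cancel against whatever follows (which, after the rewrite, starts with either a $G$-piece from a different index or another $b'$-generator). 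Hence $r'(i')$ begins with a positive power of a $b'$-generator, giving $(iii)$, and by Step~1 applied in the dual world (whose validity is asserted by the ``reflected'' constructions of Subsection~\ref{subsecdual}), $(iv)$.

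\textbf{Main obstacle.} The only genuinely delicate point is verifying the \emph{no-cancellation} claims: when we substitute $b_i\mapsto b_{i+k}u_i$ (or $b_j\mapsto b_{-j}'u_{-j}'$) into a reduced word, we must be sure that the new $G_i$-piece $u_i$ introduced after the leading $b$-generator does not get cancelled or absorbed, which could in principle change the leading generator or even shorten the word below length~1. This is handled by noting that in a \emph{reduced} presentation consecutive pieces come from different free factors (Definition~\ref{defredpr}(iii)) and that $u_i$ is inserted between a $b$-generator and whatever originally followed $b_i$; after the substitution the word $b_{i+k}u_i(\text{rest})$ is either already reduced or requires only an interior reduction well to the right of the leading $b_{i+k}$, because $b_{i+k}\notin G_i$ and $b_{i+k}$ is free. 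A short case analysis on what the second piece of $r(i)$ is (a $b$-generator $b_{i'}$ with $i'\neq i$, or a $G_{i'}$-generator) closes this. Once the no-cancellation lemma is in place, the four-way equivalence follows by the propagation argument above; I expect the write-up to be two or three paragraphs of careful but routine case-checking.
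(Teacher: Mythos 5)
Your proposal is correct and follows essentially the same route as the paper: propagate the leading positive $b$-letter through the unit rewriting $b_i\mapsto b_{i+k}u_i$ between $\mathcal{E}(i)$ and $\mathcal{E}(i+1)$ (giving $(i)\Leftrightarrow(ii)$, and dually $(iii)\Leftrightarrow(iv)$), then pass to dual generators via $b_j\mapsto b'_{-j}u'^{-1}_{-j}$, $g_j\mapsto g'_{-j}$ to link $(i)$ with $(iii)$, with the no-cancellation claims justified by the free-product decomposition of Proposition~\ref{propEi}. The only blemish is the inessential sign slip $b_{-i}=b'_iu'_i$ instead of $b_{-i}=b'_iu'^{-1}_i$, which does not affect the leading letter.
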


\begin{proof}
The reduced presentation $r(i+1)$ emerges from the reduced presentation $r(i)$ by replacing every generator $b_{i}$ in $r(i)$ with $b_{i+k}u_{i}$ and reducing afterwards. Conversely we get the reduced presentation $r(i+1)$ from the presentation $r(i)$ by replacing all generators $b_{i+k}$ with $b_{i}u_{i}^{-1}$ and reduce afterwards. The generator $b_{i+k}$ is not in the set of the $b$-generators used in $r(i)$ and $b_{i}$ is not in the set of $b$-generators used in $r(i+1)$. Because of the presentation of $M$ from Proposition~\ref{propEi} as a free product there cannot be any cancellations of $b$-generators. So a presentation $r(i)$ begins with a positive $b$-generator if and only if $r(i+1)$ begins with a positive $b$-generator. This proves the equivalence of $(i)$ and $(ii)$.

Analogously we can prove the equivalence of $(iii)$ and $(iv)$. For the equivalence of $(i)$ and $(iii)$ we show that $r(i)$ begins with a positive power of a $b$-generator if and only if $r'(-i-k+1)$ begins with a positive power of a $b'$-generator. We recall the notation $b'_{j}=b_{-j}u_{-j}^{-1}$ and $g_{j}=g'_{j}$ ($\forall j \in \mathbb{Z}$, $g_{j} \in G_{j}$) from Subsection~\ref{subsecdual}. The reduced presentation of $r(i)$ in dual generators emerges from $r(i)$ by replacing all generators $b_{j}$ with $b'_{-j}u_{j}=b'_{-j}u_{-j}'^{-1}$, replacing all generators $g_{j} \in G_{j}$ with $g'_{-j}$ and reducing afterwards. This shows that the dual presentations begins with a positive power of a $b'$-generator if $r(i)$ begins with a positive power of a $b$-generator. Since $r(i)$ uses $b$-generator with indices of the set $\{i,i+1,\dots,i+k-1\}$ the dual presentation uses $b$-generators exclusively with indices from $\{-i-k+1,-i-k+2,\dots,-i\}$. This is the presentation $r'(-i-k+1)$. Analogously one can show that $r(i)$ begins with a positive power of a $b$-generator if $r'(-i-k+1)$ begins with a positive power of a $b'$-generator.
\end{proof}

\begin{corollary} \label{corsuitconj}
For every non-trivial $r$ in $M$ there exists a conjugate $\widetilde{r}$ of $r$ such that $\widetilde{r}(i)$ and $\widetilde{r}'(i)$ is cyclically reduced for all $i \in \mathbb{Z}$ (in the sense of Definition~\ref{defcycred}).
\end{corollary}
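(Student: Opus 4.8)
The plan is to single out a conjugate $\widetilde r$ of $r$ that is ``maximally reduced'' in a presentation-independent sense, and then to verify, presentation by presentation, that Definition~\ref{defcycred} is satisfied for every $\mathcal E(i)$ and every $\mathcal E'(i)$.

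First I would record the routine variants of Lemma~\ref{lemeq4}: the same statement with ``positive power'' replaced by ``negative power'' (identical proof), and, applying both to $r^{-1}$ — whose reduced presentations are the formal inverses of those of $r$ — the corresponding statements for ``\emph{ends} with a positive/negative $b$-generator''. Combining these shows that whether $r(i)$ begins with some $b_m^{\varepsilon}$ and ends with $b_m^{-\varepsilon}$ is independent of $i$ and is shared by all $r'(i)$; hence failure of condition $(iii)$ of Definition~\ref{defcycred} is a property of the conjugacy class alone. I would also note that passing from $\mathcal E(i)$ to $\mathcal E(i+1)$ replaces $b_i$ by $b_{i+k}u_i$ and introduces no new pieces outside $G_i$, so the first (resp.\ last) piece of a reduced presentation can change from a $b$-piece to a $G$-piece only at an end carrying $b_i^{-1}$ (resp.\ $b_i^{+1}$), the new piece then lying in $G_i$; the analogous statement holds for $\mathcal E(i-1)$ with $b_{i+k-1}$ and $G_{i-1}$, and dual statements hold on the $\mathcal E'$-side by Subsection~\ref{subsecdual}.

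Next I would choose $\widetilde r$ conjugate to $r$ with $|\widetilde r|_{\alpha,\omega}$ minimal among all conjugates (this is an integer, bounded below on the conjugacy class by Definition~\ref{defalpha}), and, among those, minimizing a suitable secondary complexity built from the lengths of the reduced presentations $\widetilde r(\alpha_{\widetilde r})$, $\widetilde r(\alpha_{\widetilde r}-1)$ and $\widetilde r(\omega_{\widetilde r}+1)$. Using Proposition~\ref{propEi} one checks in the usual way that $\widetilde r(\alpha_{\widetilde r})$ is cyclically reduced: otherwise its first and last pieces lie in a common $G_j$ or are both $b$-pieces with mutually inverse end-letters, and conjugating by the offending element of $G_j$, resp.\ by the relevant power of $b_j$ — an operation which, since $\alpha_{\widetilde r}\le j\le\omega_{\widetilde r}$ in these situations, cannot enlarge the pair $(\alpha_{\widetilde r},\omega_{\widetilde r})$ — would give a conjugate of strictly smaller $\alpha$-$\omega$-length, or of equal $\alpha$-$\omega$-length but strictly smaller secondary complexity, contradicting the choice of $\widetilde r$. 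By the first paragraph, cyclic reducedness of $\widetilde r(\alpha_{\widetilde r})$ already forces condition $(iii)$ for every $\widetilde r(i)$; and since $|\widetilde r|_{\alpha,\omega}=|\widetilde r|_{\alpha',\omega'}$ by Lemma~\ref{lemalphaomega}, the same $\widetilde r$ makes $\widetilde r'(\alpha'_{\widetilde r})$ cyclically reduced, so condition $(iii)$ holds for every $\widetilde r'(i)$ as well.

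It remains to exclude a failure of condition $(ii)$ for some $\widetilde r(i)$ with $i$ not lying between $\alpha_{\widetilde r}$ and $\omega_{\widetilde r}$. By the second paragraph such a failure is created on passing to $\mathcal E(i+1)$ (or $\mathcal E(i-1)$) only when $\widetilde r(i)$ begins with $b_i^{-1}$ or a $G_i$-piece and simultaneously ends with $b_i^{+1}$ or a $G_i$-piece; the cases where both ends are $G_i$-pieces, or $\widetilde r(i)$ has the form $b_i^{-1}\cdots b_i$, are already ruled out by cyclic reducedness of $\widetilde r(\alpha_{\widetilde r})$, leaving the mixed configurations $g\cdots b_i$ and $b_i^{-1}\cdots g$ with $g\in G_i$. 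In such a configuration the index $i$ occurs, forcing $\alpha_{\widetilde r}\le i\le\omega_{\widetilde r}$, so conjugating $\widetilde r$ by $b_i$ keeps $|\widetilde r|_{\alpha,\omega}$ fixed; and because $b_i$ absorbs the opposite $G_i$-piece into its expansion $b_i=b_{i+k}u_i=\cdots$ upon passing to $\mathcal E(\omega_{\widetilde r}+1)$ (resp.\ $\mathcal E(\alpha_{\widetilde r}-1)$), this conjugation strictly decreases the secondary complexity — again contradicting the choice of $\widetilde r$. Hence $\widetilde r(i)$, and by the dual argument $\widetilde r'(i)$, satisfies $(i)$, $(ii)$ or $(iii)$ of Definition~\ref{defcycred} for every $i$. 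The main obstacle, and the part requiring genuine care, is the design of the secondary complexity used in the second and third paragraphs: one must check that it is actually attained, that each of the finitely many ``corner'' normalizations at the two ends of the window $\{\alpha_{\widetilde r},\dots,\omega_{\widetilde r}\}$ strictly decreases it, and that these normalizations do not undo one another, so that a single extremal conjugate is at once cyclically reduced with respect to all $\mathcal E(i)$ and all $\mathcal E'(i)$.
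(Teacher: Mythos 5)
Your first paragraph is essentially sound and records the same mechanism the paper itself exploits (though note that failure of condition $(iii)$ is independent of $i$, not of the conjugacy class --- conjugation is exactly what repairs it). The genuine gap is in your second and third paragraphs: the entire argument rests on a ``secondary complexity'' that is never defined, and whose required properties --- that it is attained, that each corner normalization strictly decreases it, and that the normalizations at the two ends of the window and on the dual side do not undo one another --- are precisely what you defer at the end. Since every contradiction you derive has the form ``this conjugation decreases the secondary complexity'', nothing is actually established until that object is constructed and those properties are verified; as written this is a plan rather than a proof. There is also a concrete case your minimization does not reach: when $|\widetilde r|_{\alpha,\omega}\leqslant 0$ the window $\{\alpha_{\widetilde r},\dots,\omega_{\widetilde r}\}$ is empty (cf.\ Lemma~\ref{lemexclb}, where $\widetilde r$ is a pure $b$-word), so the normalizations you justify by ``$\alpha_{\widetilde r}\leqslant j\leqslant\omega_{\widetilde r}$'' are unavailable and this case would need separate treatment.

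The paper's own proof shows that none of this machinery is needed. One simply picks a conjugate $\widetilde r$ whose reduced presentation $\widetilde r(0)$ is cyclically reduced and, if it contains a $b$-generator, is cyclically permuted so as to begin with a positive power of a $b$-generator or end with a negative one (not both for the same index). Since rewriting between $\mathcal E(i)$ and $\mathcal E(i+1)$, and between $\mathcal E(i)$ and $\mathcal E'(-i-k+1)$, never cancels $b$-generators (this is the content of Lemma~\ref{lemeq4} and its proof), this single normalization at level $0$ already forces all $\widetilde r(i)$ and all $\widetilde r'(i)$ to be cyclically reduced; the case where $\widetilde r(0)$ contains no $b$-generator is immediate because then all these presentations coincide up to dualizing the $G_j$-generators. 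You should either carry out your minimization in full or switch to this direct propagation argument.
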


\begin{proof}
We choose a conjugate $\widetilde{r}$ of $r$ such that $\widetilde{r}(0)$ is cyclically reduced.

\noindent\underline{Case 1.} The conjugate $\widetilde{r}(0)$ contains no $b$-generator.\\
In this case all presentations $r(i)$ with $i \in \mathbb{Z}$ are identical and thus cyclically reduced. The presentations $\widetilde{r}'(i)$ are in this case also identical for all $i \in \mathbb{Z}$ and emerge from $\widetilde{r}(0)$ by replacing every generator $g_{j} \in G_{j}$ ($j \in \mathbb{Z}$) with the dual generator $g'_{-j} \in G'_{-j}$ (see Subsection~\ref{subsecdual}). Therefore all presentations $\widetilde{r}'(i)$ ($i \in \mathbb{Z}$) are cyclically reduced as well.

\noindent\underline{Case 2.} The conjugate $\widetilde{r}(0)$ contains a $b$-generator.\\
Note that, by inverting, Lemma~\ref{lemeq4} can be applied on presentations that end with a negative power of a $b$-generator. W.\,l\,o.\,g. we can assume that $\widetilde{r}(0)$ begins with a positive power of a $b$-generator or ends with a negative power of a $b$-generator (but not both for a $b$-generator with the same index). Because of Lemma~\ref{lemeq4} $(i)$,$(ii)$ the same holds for all $\widetilde{r}(i)$ ($i \in \mathbb{Z}$). In particular, $\widetilde{r}(i)$ ($i \in \mathbb{Z}$) is therefore cyclically reduced. By applying Lemma~\ref{lemeq4} $(iv)$ we deduce that all presentations $\widetilde{r}'(i)$ ($i \in \mathbb{Z}$) are cyclically reduced.
\end{proof}

\begin{definition} \textbf{(suitable conjugates)} We call a non-trivial element $\widetilde{r}$ of $M$ \emph{suitable conjugate} if the reduced presentation of $\widetilde{r}$ with respect to the generating sets $\mathcal{E}(i)$ and $\mathcal{E}'(i)$ is cyclically reduced for every $i \in \mathbb{Z}$ in the sense of Definition~\ref{defcycred}.
\end{definition}

\begin{remark} \label{remconjtosconj}
Because of Corollary~\ref{corsuitconj} there exist a suitable conjugate for every non-trivial element $r$ of $M$. Since normal closures are invariant under conjugation, we may in the following consider suitable conjugates $\widetilde{r}$ and $\widetilde{s}$ instead of $r$ and $s$. 
\end{remark}

\subsection[Proof of Proposition~\ref{propmain} for $\widetilde{r}$ with positive $\alpha$-$\omega$-length]{Proof of Proposition~\ref{propmain} for {\boldmath $\widetilde{r}$} with positive {\boldmath$\alpha$-$\omega$-}length}

This subsection completes the proof of Proposition~\ref{propmain} in the case that $\widetilde{r}$ possesses positive $\alpha$-$\omega$-length. In the first part of this subsection we show some auxiliary lemmata that only hold in this case. In the second part we prove the central Lemma~\ref{lemcentral} which finally enables us to pass over from the normal closure of $\{(r_{i},c) \mid i \in \mathbb{Z} \}$ in $K$ to the normal closure of a single element $(r_{i},c)$ in $K$ and thereby finish the proof of this case in the third part of this subsection with the help of Lemma~\ref{lemKMP}.

\subsubsection[Properties of $\widetilde{r}$ with positive $\alpha$-$\omega$-length]{Properties of { \boldmath $\widetilde{r}$ with positive $\alpha$-$\omega$-length}}

\begin{lemma} \label{lemcontainsindex}
Let $r \in M \backslash \{1\}$ with $|r|_{\alpha,\omega} \geqslant 1$. Then every presentation of $r$ written in the generators of the generating set $\mathcal{E}_{\alpha_{r},\infty}$ of $M_{\alpha_{r},\infty}$ contains at least one generator $g_{i} \in G_{i}$ with $i \geqslant \omega_{r}$. Every presentation of $r$ written with generators of the generating set $\mathcal{E}'_{\alpha'_{r},\infty}$ of $M'_{\alpha'_{r},\infty}$ contains at least one generator $g'_{i} \in G'_{i}$ with $i \geqslant \omega'_{r}$.
\end{lemma}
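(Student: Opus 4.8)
The plan is to prove the first assertion by contradiction and then deduce the second one verbatim in the dual setting. First I would assume that $r$ has a presentation $P$ over the generating set $\mathcal{E}_{\alpha_{r},\infty}=\{b_{\alpha_{r}},\dots,b_{\alpha_{r}+k-1}\}\cup\bigcup_{j\geqslant\alpha_{r}}G_{j}$ of $M_{\alpha_{r},\infty}$ that uses no generator $g_{i}\in G_{i}$ with $i\geqslant\omega_{r}$. Then $P$ is built only from $b_{\alpha_{r}},\dots,b_{\alpha_{r}+k-1}$ and from generators of the free factors $G_{j}$ with $\alpha_{r}\leqslant j\leqslant\omega_{r}-1$, and the goal is to show that this forces $r\in M_{-\infty,\omega_{r}-1}$, contradicting the minimality of $\omega_{r}$ in Definition~\ref{defalpha}.

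The next step is to verify that every generator occurring in $P$ already lies in $M_{-\infty,\omega_{r}-1}$. For the $G_{j}$-generators with $\alpha_{r}\leqslant j\leqslant\omega_{r}-1$ this is immediate from $M_{-\infty,t}=\langle G_{j},b_{j}\mid j\leqslant t\rangle$ (Notation~\ref{notMst}). For the $b$-generators I would first record the auxiliary fact that $b_{\ell}\in M_{-\infty,t}$ whenever $\ell\leqslant t+k$: the relation $b_{\ell}=b_{\ell-k}u_{\ell-k}^{-1}$ coming from the Reidemeister--Schreier rewriting (cf.~\eqref{eqM}) expresses $b_{\ell}$ through $b_{\ell-k}$ and $u_{\ell-k}\in G_{\ell-k}$, both of which lie in $M_{-\infty,t}$ as soon as $\ell-k\leqslant t$. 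Applying this with $t=\omega_{r}-1$, every $b$-generator appearing in $P$ has index at most $\alpha_{r}+k-1\leqslant(\omega_{r}-1)+k$, where the inequality $\alpha_{r}\leqslant\omega_{r}$ is precisely the hypothesis $|r|_{\alpha,\omega}=\omega_{r}-\alpha_{r}+1\geqslant1$; hence such a $b$-generator also lies in $M_{-\infty,\omega_{r}-1}$. Consequently every generator of $P$ lies in $M_{-\infty,\omega_{r}-1}$, so $r\in M_{-\infty,\omega_{r}-1}$, the desired contradiction.

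For the second assertion I would rerun this argument inside the dual structure of Subsection~\ref{subsecdual}: the presentation of $M$ over the dual generators, the subgroups $M'_{-\infty,t}$, the generating set $\mathcal{E}'_{\alpha'_{r},\infty}$ of $M'_{\alpha'_{r},\infty}$ and the dual form of the defining relation all have the same shape, so $b'_{\ell}\in M'_{-\infty,t}$ whenever $\ell\leqslant t+k$, and by Lemma~\ref{lemalphaomega}$(a)$ one has $|r|_{\alpha',\omega'}=|r|_{\alpha,\omega}\geqslant1$. The same reasoning then shows that a presentation of $r$ over $\mathcal{E}'_{\alpha'_{r},\infty}$ avoiding every $g'_{i}\in G'_{i}$ with $i\geqslant\omega'_{r}$ would force $r\in M'_{-\infty,\omega'_{r}-1}$, contradicting the definition of $\omega'_{r}$.

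I do not expect a genuine obstacle here. The only slightly delicate points are the index bookkeeping with $k$ — in particular the observation that $\alpha_{r}\leqslant\omega_{r}$ is exactly where the hypothesis $|r|_{\alpha,\omega}\geqslant1$ is used — and the auxiliary membership $b_{\ell}\in M_{-\infty,t}$ for $\ell\leqslant t+k$, which is the short computation above (its converse, which would follow from the free-product presentations in Remark~\ref{remMiinf} since a generator of a free factor $G_{j}$ with $j>t$ cannot be absorbed, is not even needed). Everything else is a direct translation of the non-dual argument.
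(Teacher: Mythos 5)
Your argument is correct and is essentially the paper's own proof: both assume a presentation over $\mathcal{E}_{\alpha_{r},\infty}$ avoiding all $G_{i}$ with $i\geqslant\omega_{r}$, use the relation $b_{i}=b_{i-k}u_{i-k}^{-1}$ together with $\alpha_{r}\leqslant\omega_{r}$ to push $r$ into $M_{-\infty,\omega_{r}-1}$, contradict the minimality of $\omega_{r}$ in Definition~\ref{defalpha}, and dispose of the dual statement by the symmetry of Subsection~\ref{subsecdual}. No issues.
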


\begin{proof}
We only prove the non-dual statement of this lemma. Because of the dual structure of $M$ described in Section~\ref{subsecdual} the dual statement can be shown analogously.

By assumption we have $\alpha_{r} \leqslant \omega_{r}$. For the purpose of a contradiction let
\begin{eqnarray*}
r \in \langle b_{\alpha_{r}},b_{\alpha_{r}+1},\dots , b_{\alpha_{r}+k-1}, G_{\alpha_{r}}, G_{\alpha_{r}+1}, \dots , G_ {\omega_{r}-1} \mid \rangle.
\end{eqnarray*}
Using the relations $b_{i}=b_{i-k}u_{i-k}^{-1}$ ($i \in \mathbb{Z}$) we get
\begin{eqnarray*}
r \in \langle b_{\alpha_{r}-k},b_{\alpha_{r}-k+1}, \dots , b_{\alpha_{r}-1},G_{\alpha_{r}-k}, G_{\alpha_{r}-k+1}, \dots , G_{\omega_{r}-1} \mid \rangle \subset \langle \mathcal{E}_{-\infty,\omega_{r}-1} \mid \rangle = M_{-\infty,\omega_{r}-1}
\end{eqnarray*}
which contradicts Definition~\ref{defalpha}.
\end{proof}

Using the embedding theorem from Lemma~\ref{lemmachi} we get the following corollary.

\begin{corollary} \label{corNCtilde}
Let $(\widetilde{r},c)$ be an element of $K$ with $|\widetilde{r}|_{\alpha,\omega} \geqslant 1$, where $\widetilde{r}$ is a suitable conjugate. We define
\begin{eqnarray*}
N \ \ := \ \ \langle \! \langle (\widetilde{r},c) \rangle \! \rangle_{K} \cap C \ \ \ \ \text{and} \ \ \ \ \widetilde{C} \ := \ C/N.
\end{eqnarray*}
Then $M_{\alpha_{\widetilde{r}}+1,\infty} \times \widetilde{C}$ and $M_{-\infty,\omega_{\widetilde{r}}-1} \times \widetilde{C}$ embed canonically into $K / \langle \! \langle ( \widetilde{r},c) \rangle \! \rangle$. Further $M'_{\alpha'_{\widetilde{r}}+1,\infty} \times \widetilde{C}$ and $M'_{-\infty,\omega'_{\widetilde{r}}-1} \times \widetilde{C}$ embed canonically into $K / \langle \! \langle (\widetilde{r},c) \rangle \! \rangle$.
\end{corollary}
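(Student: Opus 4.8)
The statement is essentially a direct application of Lemma~\ref{lemmachi} to the group $K = M \times C$ together with the presentation of $M$ as an amalgamated product coming from the decomposition $M = M_{-\infty,\omega_{\widetilde r}-1} \ast_{?} M_{\alpha_{\widetilde r},\infty}$ (or rather the pieces that make it an amalgam). The first thing I would do is establish that $K/\langle\!\langle(\widetilde r,c)\rangle\!\rangle$ decomposes as an amalgamated product in which $M_{\alpha_{\widetilde r}+1,\infty}\times\widetilde C$ is one of the factors, via exactly the device used to prove \eqref{eqIsomAStrich}: write $M = X \ast_{Z} Y$ where $Y = M_{\alpha_{\widetilde r},\infty}$ contains $\widetilde r$ (after passing to the suitable conjugate), $Z$ is the amalgamating subgroup between the $b$-generator windows, and $X$ is generated by the remaining $G_j$'s and $b$-generators with small index. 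Then $K = (X\ast_Z Y)\times C$, and since $\widetilde r$ lives in $Y$, modding out by $\langle\!\langle(\widetilde r,c)\rangle\!\rangle$ only affects the $Y$-factor and the $C$-factor; a Tietze/presentation argument as in the proof of \eqref{eqIsomAStrich} shows $K/\langle\!\langle(\widetilde r,c)\rangle\!\rangle \cong (X\times\widetilde C) \ast_{Z\times\widetilde C} \big((Y\times C)/\langle\!\langle(\widetilde r,c)\rangle\!\rangle\big)$.

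Next I would feed the $Y$-factor into Lemma~\ref{lemmachi}. For this I need: (a) $Y = M_{\alpha_{\widetilde r},\infty}$ is a free product with two appropriate free factors, and (b) the image of $\widetilde r$ under the rewriting to the $\mathcal E_{\alpha_{\widetilde r},\infty}$ generating set is not conjugate into either factor. Here is where Lemma~\ref{lemcontainsindex} does the real work: since $|\widetilde r|_{\alpha,\omega}\geqslant 1$, every presentation of $\widetilde r$ with respect to $\mathcal E_{\alpha_{\widetilde r},\infty}$ uses some generator $g_i\in G_i$ with $i\geqslant\omega_{\widetilde r}$, and by the definition of $\alpha_{\widetilde r}$ it also uses generators (or $b$-generators) with the minimal index $\alpha_{\widetilde r}$; because $\widetilde r$ is a \emph{suitable conjugate}, its reduced presentation with respect to $\mathcal E_{\alpha_{\widetilde r},\infty}$ is moreover cyclically reduced, so $\widetilde r$ is not conjugate into the subgroup $M_{\alpha_{\widetilde r}+1,\infty}$ (and not into the complementary free factor carrying the small-index and $b$-generators). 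Thus Lemma~\ref{lemmachi} applies with $A := M_{\alpha_{\widetilde r}+1,\infty}$, $B$ the complementary factor, $C := C$, and the element $w := (\widetilde r, c)$: it yields the canonical embedding $M_{\alpha_{\widetilde r}+1,\infty}\times\big(C/(\langle\!\langle(\widetilde r,c)\rangle\!\rangle_{Y\times C}\cap C)\big)\hookrightarrow (Y\times C)/\langle\!\langle(\widetilde r,c)\rangle\!\rangle$. One then checks that $\langle\!\langle(\widetilde r,c)\rangle\!\rangle_{Y\times C}\cap C = \langle\!\langle(\widetilde r,c)\rangle\!\rangle_K\cap C = N$ — the inclusion $\subseteq$ is clear, and the reverse follows because $C$ maps into the $Y$-amalgam-complementary side only via $Z\times C$, so nothing new in $C$ is killed — hence $C/N = \widetilde C$ and composing with the factor embedding of $(Y\times C)/\langle\!\langle(\widetilde r,c)\rangle\!\rangle$ into $K/\langle\!\langle(\widetilde r,c)\rangle\!\rangle$ gives the first claimed embedding. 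The embedding of $M_{-\infty,\omega_{\widetilde r}-1}\times\widetilde C$ is obtained by the symmetric argument using the $b$-right-generating set $\mathcal E_{-\infty,\omega_{\widetilde r}-1}$ and the complementary amalgam decomposition; the two dual embeddings follow verbatim by replacing all objects with their dual counterparts from Subsection~\ref{subsecdual}, using Lemma~\ref{lemalphaomega}(a) to translate $\alpha',\omega'$ into $-\omega,-\alpha$.

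\textbf{Main obstacle.} The routine part is the Tietze bookkeeping; the genuine content is verifying hypothesis (b) of Lemma~\ref{lemmachi} — that $\widetilde r$, written in the $\mathcal E_{\alpha_{\widetilde r},\infty}$ generators, is genuinely not conjugate into either free factor of the chosen splitting. This is exactly why the notion of \emph{suitable conjugate} was introduced: without cyclic reducedness of the reduced presentation, $\widetilde r$ could a priori be conjugate into $M_{\alpha_{\widetilde r}+1,\infty}$ even while its reduced presentation nominally mentions a low-index generator. So the careful step is to combine Lemma~\ref{lemcontainsindex} (the presentation must touch index $\geqslant\omega_{\widetilde r}$ and, dually, index $\alpha_{\widetilde r}$) with cyclic reducedness to conclude the normal form of $\widetilde r$ in $M_{\alpha_{\widetilde r}+1,\infty}\ast(\text{complement})$ has length $\geqslant 2$ and is cyclically reduced of length $\geqslant 2$, which is the precise hypothesis of Lemma~\ref{lemmachi}. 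I expect this to be a short but delicate paragraph, essentially mirroring the length-$\geqslant 2$ argument inside the proof of Lemma~\ref{lememb12}.
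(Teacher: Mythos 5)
Your core mechanism is the right one and matches the paper's: split off $M_{\alpha_{\widetilde r}+1,\infty}$ as a free factor, use Lemma~\ref{lemcontainsindex} together with the cyclic reducedness built into the notion of suitable conjugate to verify the non-conjugacy hypothesis of Lemma~\ref{lemmachi}, and then read off the embedding. However, your route is needlessly indirect and this creates two real problems. First, the paper does not pass through an amalgamated product at all: by Proposition~\ref{propEi} with the generating set $\mathcal E(\alpha_{\widetilde r}+1)$ one has the \emph{plain free product} $M = G_{-\infty,\alpha_{\widetilde r}} \ast M_{\alpha_{\widetilde r}+1,\infty}$ (the $b$-window $b_{\alpha_{\widetilde r}+1},\dots,b_{\alpha_{\widetilde r}+k}$ sits entirely inside the right factor, and the left factor is just $\bigast_{j\leqslant\alpha_{\widetilde r}}G_j$), so $K=(A\ast B)\times C$ and Lemma~\ref{lemmachi} applies to $K$ itself; in particular the subgroup $N$ of Lemma~\ref{lemmachi} is literally the $N$ of the corollary, with nothing to transfer. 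Your version instead forms an amalgam $X\ast_Z Y$ with $Y=M_{\alpha_{\widetilde r},\infty}$, and the claimed decomposition $K/\langle\!\langle(\widetilde r,c)\rangle\!\rangle\cong(X\times\widetilde C)\ast_{Z\times\widetilde C}\big((Y\times C)/\langle\!\langle(\widetilde r,c)\rangle\!\rangle\big)$ is not a mere Tietze computation: its well-definedness requires the embedding of $Z\times\widetilde C$ into the quotiented factor, where $Z$ contains $G_{\alpha_{\widetilde r}}$ and a $b$-window and is \emph{not} contained in the $A$ of your Lemma~\ref{lemmachi} application, so that embedding is an additional claim of the same kind as the one being proved (it is essentially claim (a) in the base case of Lemma~\ref{lemcentral}). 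Your identification $\langle\!\langle(\widetilde r,c)\rangle\!\rangle_{Y\times C}\cap C=\langle\!\langle(\widetilde r,c)\rangle\!\rangle_K\cap C$ also rests on that unproved decomposition.

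Second, and more concretely, your blanket assertion that $\widetilde r$ is ``not conjugate into the complementary free factor carrying the small-index and $b$-generators'' is false in a case that the hypothesis $|\widetilde r|_{\alpha,\omega}\geqslant 1$ does not exclude: if $\widetilde r\in G_{\alpha_{\widetilde r}}$ (e.g.\ $\widetilde r=g_0$ for a nontrivial $g\in G$, so $\alpha_{\widetilde r}=\omega_{\widetilde r}$ and the $\alpha$-$\omega$-length is exactly $1$), then $\widetilde r$ \emph{lies in} the complementary factor $G_{-\infty,\alpha_{\widetilde r}}$ and Lemma~\ref{lemmachi} simply does not apply. The paper isolates this as a separate Case~1, where the quotient visibly splits as $\big((G_{-\infty,\alpha_{\widetilde r}}\times C)/\langle\!\langle(\widetilde r,c)\rangle\!\rangle\big)\ast_{\widetilde C}(M_{\alpha_{\widetilde r}+1,\infty}\times\widetilde C)$ and the embedding is immediate. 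This omission is easily patched, but as written your argument would break there.
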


\begin{proof}
We only prove the first of the two embeddings in the non-dual case since the other embeddings can be proved analogously. Because of Proposition~\ref{propEi}, Notation~\ref{notMst} and Remark~\ref{remMiinf} we may write
\begin{eqnarray} \label{eqKorDarK1}
K&=&M\times C= \big(\langle b_{\alpha_{\widetilde{r}}+1},b_{\alpha_{\widetilde{r}}+2}, \dots, b_{\alpha_{\widetilde{r}}+k} \mid \rangle \ \ast \ \underset{j \in \mathbb{Z}}{\bigast} \ G_{j} \big) \times C \nonumber \\
&=& \big(G_{-\infty,\alpha_{\widetilde{r}}} \ \ast \ M_{\alpha_{\widetilde{r}}+1, \infty} \big) \times C \\
&\cong &  \ (G_{-\infty,\alpha_{\widetilde{r}}} \times C) \ \underset{C}{\ast} \ (M_{\alpha_{\widetilde{r}}+1,\infty} \times C). \label{eqKorDarK2}
\end{eqnarray}

\noindent\underline{Case 1.} Let $\widetilde{r}$ be an element of $G_{-\infty,\alpha_{\widetilde{r}}}$.\\
Applying the projection $\pi \colon \big(G_{-\infty,\alpha_{\widetilde{r}}} \ \ast \ M_{\alpha_{\widetilde{r}}+1, \infty} \big) \times C \rightarrow G_{-\infty,\alpha_{\widetilde{r}}} \times C$ we get the equation $N=\langle \! \langle (\widetilde{r},c) \rangle \! \rangle_{G_{-\infty,\alpha_{\widetilde{r}}}} \cap C$. So by the definition of $\widetilde{C}$ and \eqref{eqKorDarK2} we have
\begin{eqnarray*}
K / \langle \! \langle (\widetilde{r},c) \rangle \! \rangle \ \ \ \cong \ \ \ \big( (G_{-\infty,\alpha_{\widetilde{r}}} \times C) / \langle \! \langle (\widetilde{r},c) \rangle \! \rangle \big) \ \ \underset{\widetilde{C}}{\ast} \ \ (M_{\alpha_{\widetilde{r}}+1,\infty} \times \widetilde{C})
\end{eqnarray*}
and therefore the desired embedding.

\noindent\underline{Case 2.} Assume that $\widetilde{r}$ is not an element of $G_{-\infty,\alpha_{\widetilde{r}}}$.\\
In this case $\widetilde{r}$ written as an element of the free product in \eqref{eqKorDarK1} uses the factor $M_{\alpha_{\widetilde{r}}+1, \infty}$. By Definition~\ref{defalpha} the presentation of $\widetilde{r}$ also uses the factor $G_{-\infty,\alpha_{\widetilde{r}}}$. Since $\widetilde{r}$ is a suitable conjugate and, therefore, cyclically reduced (in the sense of Definition~\ref{defcycred}), $\widetilde{r}$ can neither be conjugate to an element of $G_{-\infty,\alpha_{\widetilde{r}}}$ nor to an element of $M_{\alpha_{\widetilde{r}}+1,\infty}$. Hence, all preconditions are given to apply Lemma~\ref{lemmachi}. 
\end{proof}

\subsubsection{Structure of some quotients of {\boldmath $K$}}

We begin with some further notation.

\begin{notation} \label{notNrCtilde}
Let $(\widetilde{r},c)$ be an element of $K$ where $\widetilde{r}$ is a suitable conjugate. Then we define as before in Corollary~\ref{corNCtilde}
\begin{eqnarray*}
N_{\widetilde{r}} \ \ := \ \ \langle \! \langle (\widetilde{r},c) \rangle \! \rangle_{K} \cap C \ \ \ \ \ \text{and} \ \ \ \ \ \widetilde{C}_{\widetilde{r}} \ \ :=\ \ C / N_{\widetilde{r}}.
\end{eqnarray*}
Note that the endomorphism of $K$ that maps every generator $b_{i}$ ($i \in \mathbb{Z}$) to $b_{i+1}$, every generator $g_{i} \in G_{i}$ ($i \in \mathbb{Z}$) to $g_{i+1}$ and every generator of $C$ to itself is by Section~\ref{subsecKleftright} an automorphism of $K$ which restricts to the identity on $C$. Hence $\widetilde{C}_{\widetilde{r}}$ is for all $i \in \mathbb{Z}$ identical to $\widetilde{C}_{\widetilde{r}_{i}}$ (where $\widetilde{r}_{i} := x^{-i}\widetilde{r} x^{i}$; see Notation~\ref{notgi}).
\end{notation}

In the following we slightly abuse the notion denoting the image of $c \in C$ in the quotient group $\widetilde{C}_{\widetilde{r}}$ of $C$ also with $c$.

\begin{lemma} \label{lemcentral}
Let $(\widetilde{r},c)$ be an element of $K$ with $|\widetilde{r}|_{\alpha,\omega} \geqslant 1$ where $\widetilde{r}$ is a suitable conjugate. Further let $m$, $n \in \mathbb{Z}$ with $m<n$. We define $s:=\alpha_{\widetilde{r}_{n}}$, $t:=\omega_{\widetilde{r}_{n}-1}$ and $\omega_{i} := b_{i}u_{i}^{-1}$ for all $i \in \mathbb{Z}$. Then we have:
\begin{itemize}
\item[\emph{(1)}] $\ \ \ \ \, K / \langle \! \langle (\widetilde{r}_{i},c) \mid m \leqslant i \leqslant n \rangle \! \rangle$ \\
$\cong \ \big( (M_{-\infty,t} \times C) / \langle \! \langle (\widetilde{r}_{i},c) \mid m \leqslant i \leqslant n-1 \rangle \! \rangle \big) \underset{P \times\widetilde{C}_{\widetilde{r}}}{\ast} \big( (M_{s,\infty} \times C) /  \langle \! \langle (\widetilde{r}_{n},c) \rangle \! \rangle \big)$ \\
with $P\cong\langle w_{t-k+1},w_{t-k+2},\dots,w_{t} \mid \rangle \ast G_{s,t} = \langle b_{t+1},b_{t+2},\dots, b_{t+k} \mid \rangle \ast G_{s,t}$
\item[\emph{ (2)}] The group $M_{s+1,\infty} \times \widetilde{C}_{\widetilde{r}}$ embeds canonically into $K / \langle \! \langle (\widetilde{r}_{i},c) \mid m \leqslant i \leqslant n \rangle \! \rangle$.
\end{itemize}
\end{lemma}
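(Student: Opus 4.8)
The plan is to prove the two assertions simultaneously by induction on $n-m$, exactly parallel to the proof of Lemma~\ref{lemamalgPr}. For the induction base $n = m+1$ I would establish the amalgamated product decomposition in (1) directly. The splitting is the natural one: writing $M = \langle b_{j},\dots,b_{j+k-1}\mid\rangle \ast \bigast_{\ell\in\mathbb{Z}} G_{\ell}$ via Proposition~\ref{propEi} and choosing generating sets adapted to the cut between index $t$ and index $t+1$, one sees that $M$ itself splits as $M_{-\infty,t}\ast_{P} M_{s,\infty}$ where the amalgamated subgroup is $P = \langle b_{t+1},\dots,b_{t+k}\mid\rangle \ast G_{s,t}$ (the equality with $\langle w_{t-k+1},\dots,w_{t}\mid\rangle\ast G_{s,t}$ is just the relabelling $w_i = b_i u_i^{-1} = b_{i+k}$). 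Taking the product with $C$ and passing to the quotient by the normal closure of the $(\widetilde r_i,c)$ for $m\le i\le n-1$ (which only involves elements of $M_{-\infty,t}\times C$ by definition of $t = \omega_{\widetilde r_{n-1}}$, since the relators with index $\le n-1$ live in $M_{-\infty,\omega_{\widetilde r_{n-1}}}$) and separately by $\langle\!\langle(\widetilde r_n,c)\rangle\!\rangle$ (which lives in $M_{s,\infty}\times C$ by definition of $s=\alpha_{\widetilde r_n}$), I get the claimed decomposition — once I know the amalgam is well-defined.

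**Well-definedness of the amalgam.** The real content, as in Lemma~\ref{lemamalgPr}, is showing $P\times\widetilde C_{\widetilde r}$ embeds into both factors. For the right factor $(M_{s,\infty}\times C)/\langle\!\langle(\widetilde r_n,c)\rangle\!\rangle$: I would argue that $\widetilde r_n$ written over $\mathcal E_{s,\infty}$ uses the factor $G_{\omega_{\widetilde r_n}}$ and, because $|\widetilde r_n|_{\alpha,\omega}\ge 1$ (this is where the positive-$\alpha$-$\omega$-length hypothesis enters) together with $\widetilde r_n$ being a suitable conjugate (hence cyclically reduced in the sense of Definition~\ref{defcycred}), $\widetilde r_n$ is not conjugate into $P = \langle b_{t+1},\dots,b_{t+k}\mid\rangle\ast G_{s,t}$ nor into the complementary free factor of $M_{s,\infty}$ over the amalgamation cut $M_{s,\infty}\cong P \ast (\text{rest})$; then Lemma~\ref{lemmachi} gives the embedding of $P\times\widetilde C_{\widetilde r}$. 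This is the analogue of Lemma~\ref{lememb12}(i), and I expect I can quote Corollary~\ref{corNCtilde} (applied to $\widetilde r_n$, using $\widetilde C_{\widetilde r}=\widetilde C_{\widetilde r_n}$ from Notation~\ref{notNrCtilde}) to get $M_{\alpha_{\widetilde r_n}+1,\infty}\times\widetilde C_{\widetilde r}\hookrightarrow (M_{s,\infty}\times C)/\langle\!\langle(\widetilde r_n,c)\rangle\!\rangle$, and $P\times\widetilde C_{\widetilde r}$ is a subgroup of that (here I must check $P\subseteq M_{\alpha_{\widetilde r_n}+1,\infty}$, i.e. $s+1 = \alpha_{\widetilde r_n}+1$ and $t\ge\alpha_{\widetilde r_n}$, which follows from $|\widetilde r_n|_{\alpha,\omega}\ge 1$ so that $\omega_{\widetilde r_n}\ge\alpha_{\widetilde r_n}$ hence $t=\omega_{\widetilde r_{n-1}}=\omega_{\widetilde r_n}-1\ge\alpha_{\widetilde r_n}$). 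For the left factor in the base case $n=m+1$ the quotient is just $(M_{-\infty,t}\times C)/\langle\!\langle(\widetilde r_m,c)\rangle\!\rangle$ with $t=\omega_{\widetilde r_m}$, and the dual form of Corollary~\ref{corNCtilde} gives $M_{-\infty,\omega_{\widetilde r_m}-1}\times\widetilde C_{\widetilde r}\hookrightarrow$ this quotient, but I actually need $P = \langle b_{t+1},\dots,b_{t+k}\mid\rangle\ast G_{s,t}$ to embed, and since $s\le\omega_{\widetilde r_m}-1 = t$... wait, $s=\alpha_{\widetilde r_n}=\alpha_{\widetilde r_{m+1}}=\alpha_{\widetilde r_m}+1$, so $P$ involves $b$-generators of index $>t=\omega_{\widetilde r_m}$, which do not appear in $\widetilde r_m$ over $\mathcal E_{-\infty,t}$; here I would instead invoke the analogue of Lemma~\ref{lememb12}(ii), embedding $P\times\widetilde C_{\widetilde r}$ into $(M_{\alpha_{\widetilde r_m},\infty}\times C)/\langle\!\langle(\widetilde r_m,c)\rangle\!\rangle$ and then into the left factor via the naturality argument with a commuting triangle, exactly as in Lemma~\ref{lemamalgPr}. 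Once both embeddings are in hand, the isomorphism (1) follows by comparing presentations.

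**Induction step and statement (2).** For the step $(n-1)\to n$ I would set $G := (M_{-\infty,\omega_{\widetilde r_{n-1}}}\times C)/\langle\!\langle(\widetilde r_i,c)\mid m\le i\le n-1\rangle\!\rangle$, use the induction hypothesis to realize $(M_{\alpha_{\widetilde r_{n-1}},\infty}\times C)/\langle\!\langle(\widetilde r_{n-1},c)\rangle\!\rangle$ as a retract/subgroup of the previously decomposed quotient, and then push the embedding of $P\times\widetilde C_{\widetilde r}$ through the same commuting-diagram argument as in Lemma~\ref{lemamalgPr}; the embedding into the new right factor is again Corollary~\ref{corNCtilde} applied to $\widetilde r_n$. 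Finally, statement (2) is a direct corollary of (1): $M_{s+1,\infty}\times\widetilde C_{\widetilde r}$ sits inside the right-hand factor $(M_{s,\infty}\times C)/\langle\!\langle(\widetilde r_n,c)\rangle\!\rangle$ by Corollary~\ref{corNCtilde} (since $s+1 = \alpha_{\widetilde r_n}+1$), and a factor of an amalgamated product embeds into the whole product, hence into $K/\langle\!\langle(\widetilde r_i,c)\mid m\le i\le n\rangle\!\rangle$. I expect the main obstacle to be bookkeeping: verifying carefully that the relators with indices $m,\dots,n-1$ genuinely lie in $M_{-\infty,t}\times C$ and the relator with index $n$ in $M_{s,\infty}\times C$, i.e. that $\omega_{\widetilde r_{n-1}} = t$ and $\alpha_{\widetilde r_n} = s$ are the correct cut points and that the $\alpha$-$\omega$-length hypothesis makes $s\le t+1$ so that the overlap region $G_{s,t}$ and the $k$ free $b$-generators are consistently placed; all of this is forced by Lemma~\ref{lemalphaomega} and Definition~\ref{defalpha}, but it must be stated cleanly to invoke Lemma~\ref{lemmachi} in each factor.
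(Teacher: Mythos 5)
Your proposal follows the paper's proof essentially step for step: statement (1) is proved by induction (base case $n=m+1$, then a commuting-diagram argument for the inductive step), the two embeddings of $P\times\widetilde{C}_{\widetilde{r}}$ are obtained from Lemma~\ref{lemmachi} using that $\widetilde{r}_{n}$ is cyclically reduced together with Lemma~\ref{lemcontainsindex}, and statement (2) is deduced from (1) via Corollary~\ref{corNCtilde} applied to the right-hand factor. Two points should be corrected. First, your fallback justification for the right-hand embedding --- that $P\times\widetilde{C}_{\widetilde{r}}$ sits inside $M_{\alpha_{\widetilde{r}_{n}}+1,\infty}\times\widetilde{C}_{\widetilde{r}}$ --- is false: $P$ contains $G_{s}=G_{\alpha_{\widetilde{r}_{n}}}$, which meets $M_{s+1,\infty}$ trivially (by Remark~\ref{remMiinf} the latter is the free product of $F_{k}$ with the $G_{j}$ for $j\geqslant s+1$ only, and $G_{s}$ is a different free factor of $M$), so the conditions you propose to check ($s+1=\alpha_{\widetilde{r}_{n}}+1$ and $t\geqslant\alpha_{\widetilde{r}_{n}}$) do not give $P\subseteq M_{s+1,\infty}$. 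This does not sink the proof, because your primary route (Lemma~\ref{lemmachi}) is the one the paper actually uses. Second, before invoking Lemma~\ref{lemmachi} for $M_{s,\infty}=P\ast G_{t+1,\infty}$ you assert that $\widetilde{r}_{n}$ is conjugate into neither factor; this can fail, since $\widetilde{r}_{n}$ may use no piece of $P$ at all and lie entirely in $G_{t+1,\infty}$ (this forces $|\widetilde{r}_{n}|_{\alpha,\omega}=1$, which is allowed here), in which case Lemma~\ref{lemmachi} is not applicable. The paper treats this sub-case separately, writing the quotient directly as $(P\times\widetilde{C}_{\widetilde{r}})\ast_{\widetilde{C}_{\widetilde{r}}}\big((G_{t+1,\infty}\times C)/\langle\!\langle(\widetilde{r}_{n},c)\rangle\!\rangle\big)$, which still yields the required embedding. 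With these two repairs your argument coincides with the paper's.
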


\begin{proof}
First, we show that Statement~(1) implies Statement~(2) for some fixed $m$, $n$. By Corollary~\ref{corNCtilde}, $M_{s+1,\infty} \times \widetilde{C}_{\widetilde{r}}$ embeds canonically into the factor $(M_{s,\infty} \times C) / \langle \! \langle (\widetilde{r}_{n},c) \rangle \! \rangle$ of the amalgamated product in Statement (1). Thus, $M_{s+1,\infty} \times \widetilde{C}_{\widetilde{r}}$ also embeds into $K / \langle \! \langle (\widetilde{r}_{i},c) \mid m \leqslant i \leqslant n \rangle \! \rangle$.

Now, we fix an arbitrary element $m \in \mathbb{Z}$ and prove Statement~(1) by induction on $n$. Because of our preliminary consideration we can use not only Statement~(1), but also Statement~(2) as the induction hypothesis.

\noindent\textbf{Base of induction {\boldmath($m+1=n$)}.} Our aim is to show
\begin{eqnarray} \label{eqIndA}
K / \langle \! \langle (\widetilde{r}_{m},c), (\widetilde{r}_{n},c) \rangle \! \rangle \cong \big( (M_{-\infty,t} \times C)/  \langle \! \langle (\widetilde{r}_{m},c) \rangle \! \rangle \big)  \underset{P \times \widetilde{C}_{\widetilde{r}}}{\ast} \big( (M_{s,\infty} \times C) /  \langle \! \langle (\widetilde{r}_{n},c) \rangle \! \rangle \big),
\end{eqnarray}
where $P \cong \langle w_{t-k+1},w_{t-k+2},\dots , w_{t} \mid \rangle \ast G_{s,t} \ \cong \ \langle b_{t+1},b_{t+2},\dots,b_{t+k} \mid \rangle \ast G_{s,t}$. To show this canonical isomorphism it is sufficient to prove the following claims:
\begin{itemize}
\item[(a)] The group $P \times \widetilde{C}_{\widetilde{r}}$ embeds into $(M_{-\infty,t} \times C)/  \langle \! \langle (\widetilde{r}_{m},c) \rangle \! \rangle$  and $(M_{s,\infty} \times C) /  \langle \! \langle (\widetilde{r}_{n},c) \rangle \! \rangle$. (This justifies the notation as an amalgamated product in \eqref{eqIndA}.)
\item[(b)] The amalgamated product on the right side of \eqref{eqIndA} is isomorphic to the left side $K / \langle \! \langle (\widetilde{r}_{m},c), (\widetilde{r}_{n},c) \rangle \! \rangle$.
\end{itemize}

\emph{Proof of $(a)$.} We only prove the embedding of $P \times \widetilde{C}_{\widetilde{r}}$ into $(M_{s,\infty} \times C) / \langle \! \langle ( \widetilde{r}_{n},c) \rangle \! \rangle$. The other embedding can be proved analogously. Note that:
\begin{eqnarray*}
P&=& \langle b_{t+1},b_{t+2},\dots,b_{t+k} \mid \rangle  \ast  G_{s,t} = \langle b_{t}u_{t}^{-1},b_{t+1},\dots,b_{t+k-1} \mid \rangle  \ast  G_{s,t} \\
&=&  \langle b_{t},b_{t+1},\dots,b_{t+k-1} \mid \rangle  \ast  G_{s,t} = \ \dots \ = \langle b_{s},b_{s+1},\dots,b_{s+k-1} \mid \rangle  \ast  G_{s,t}
\end{eqnarray*}
In particular, $\mathcal{P} := \mathcal{E}_{s, \infty} \backslash \underset{i \geqslant t+1}{\bigcup} G_{i}$ is a generating set of $P$. Using Remark~\ref{remMiinf} we write:
\begin{eqnarray} \label{eqIndAnfDar}
 M_{s,\infty} &=&  \langle b_{s},b_{t+2},\dots,b_{s+k-1} \mid \rangle  \ast  G_{s,t} \ast G_{t+1,\infty} \ = \ P \ast G_{t+1,\infty} 
\end{eqnarray}
Because of Lemma~\ref{lemcontainsindex} and $s=\alpha_{\widetilde{r}_{n}}$, $\widetilde{r}_{n}$ written with generators from the generating set $\mathcal{E}_{s,\infty}$ of $M_{s, \infty}$ contains at least one generator from the generating set $\mathcal{Q} := \underset{i \geqslant t+1}{\bigcup} G_{i}$ of $G_{t+1,\infty}$. Since $\mathcal{E}_{s, \infty}$ is the disjoint union of $\mathcal{P}$ and $\mathcal{Q}$, $\widetilde{r}_{n}$ written as an element of $P \ast G_{t+1,\infty}$ contains at least one piece from $G_{t+1,\infty}$. If $\widetilde{r}_{n}$ uses no piece from $P$, we may write
\begin{eqnarray*}
(M_{s,\infty} \times C) / \langle \! \langle (\widetilde{r}_{n},c) \rangle \! \rangle \cong (P \times \widetilde{C}_{\widetilde{r}}) \underset{\widetilde{C}_{\widetilde{r}}}{\ast} \big( (G_{t+1,\infty} \times C) / \langle \! \langle (\widetilde{r}_{n},c) \rangle \! \rangle \big)
\end{eqnarray*}
because of the definition of $\widetilde{C}_{\widetilde{r}}$ (see Notation~\ref{notNrCtilde}) and \eqref{eqIndAnfDar}. Thus, $P \times \widetilde{C}_{\widetilde{r}}$ embeds into $(M_{s, \infty} \times C) / \langle \! \langle ( \widetilde{r}_{n},c) \rangle \! \rangle$. So we can assume that $\widetilde{r}_{n}$ written as an element of $P \ast G_{t+1,\infty}$ contains at least one piece from each of the two factors. Since $\widetilde{r}_{n}$ is cyclically reduced (in the sense of Definition~\ref{defcycred}), $(\widetilde{r}_{n},c)$ can neither be conjugate to an element of $P \times C$ nor to an element of $G_{t+1,\infty} \times C$. Note that $P$ and $G_{t+1,\infty}$ are free products of locally indicable groups and, therefore, locally indicable themselves. Applying Lemma~\ref{lemmachi}, we get the embedding of $P \times \widetilde{C}_{\widetilde{r}}$ into $(M_{s, \infty} \times C) / \langle \! \langle (\widetilde{r}_{n},c) \rangle \! \rangle$.

Claim~(b) can easily be shown by finding a common presentation for the two sides of  \eqref{eqIndA}. This finishes the base of induction.

\noindent\textbf{Inductive step {\boldmath($(m,n) \rightarrow (m,n+1)$)}.}\\
Recall $t=\omega_{\widetilde{r}_{n}}$ and $s= \alpha_{\widetilde{r}_{n}}$. Our aim is to show the presentation
\begin{eqnarray*}
& & K / \langle \! \langle (\widetilde{r}_{i},c) \mid m \leqslant i \leqslant n+1 \rangle \! \rangle \\
&\cong& \big( (M_{-\infty,t+1} \times C) / \langle \! \langle (\widetilde{r}_{i},c) \mid m \leqslant i \leqslant n \rangle \! \rangle \big) \underset{P \times \widetilde{C}_{\widetilde{r}}}{\ast} \big( (M_{s+1,\infty} \times C) /  \langle \! \langle (\widetilde{r}_{n+1},c) \rangle \! \rangle \big),
\end{eqnarray*}
where $P\cong\langle w_{t-k+2},w_{t-k+3},\dots,w_{t+1} \mid \rangle \ast G_{s+1,t+1} = \langle b_{t+2},b_{t+3},\dots, b_{t+k+1} \mid \rangle \ast G_{s+1,t+1}$. It is sufficient to prove that this amalgamated product is well-defined. Then the two sides can easily be shown to be isomorphic by finding a common presentation. Thus, we want to prove the embeddings of $P \times \widetilde{C}_{\widetilde{r}}$ into $L:=(M_{-\infty,t+1} \times C) / \langle \! \langle (\widetilde{r}_{i},c) \mid m \leqslant i \leqslant n \rangle \! \rangle$ and $(M_{s+1,\infty} \times C) /  \langle \! \langle (\widetilde{r}_{n+1},c) \rangle \! \rangle$. The embedding of $P \times \widetilde{C}_{\widetilde{r}}$ into $\big( (M_{s+1,\infty} \times C) /  \langle \! \langle (\widetilde{r}_{n+1},c) \rangle \! \rangle \big)$ can be proved in the same way as in the base of induction. To prove the embedding of $P \times \widetilde{C}_{\widetilde{r}}$ into $L$ we consider the following commutative diagram where all non-labeled embeddings are canonical embeddings as subgroups.
\begin{displaymath}
	\xymatrix@C=2cm@R=2cm{
        P \times \widetilde{C}_{\widetilde{r}} \ \ar@{^{(}->}[r] \ \ar@{^{(}->}[d] \ar@{-->}[rrd]^{\text{\large $\varphi$}} & M_{s+1,\infty} \times \widetilde{C}_{\widetilde{r}} \ \ar@{^{(}->}[r]^{\hspace{-1.1cm} by \ (2)} &  \ K / \langle \! \langle (\widetilde{r}_{i},c) \mid m \leqslant i \leqslant n \rangle \! \rangle\\
        M_{-\infty,t+1} \times \widetilde{C}_{\widetilde{r}} \ \ar[rr]_{}  && \ \ \ \ \  L \ \ \ \ \ar@{^{(}->}[u] }
\vspace{0.3cm}
\end{displaymath}
The marked homomorphism $\varphi$ shall be the composition of the embedding of $P \times \widetilde{C}_{\widetilde{r}}$ as a subgroup in $M_{-\infty,t+1} \times \widetilde{C}_{\widetilde{r}}$ and the homomorphism of the group $M_{-\infty,t+1} \times \widetilde{C}_{\widetilde{r}}$ to its quotient group $L$. We want to show that $\varphi$ is an embedding. For this purpose we consider $P$ as a subgroup of $M_{s+1,\infty}$ and apply Statement (2) to embed $M_{s+1,\infty} \times \widetilde{C}_{\widetilde{r}}$ into $K/ \langle \! \langle (\widetilde{r}_{i},c) \mid m \leqslant i \leqslant n \rangle \! \rangle$. Overall we get the embedding of $P \times \widetilde{C}_{\widetilde{r}}$ into $K/ \langle \! \langle (\widetilde{r}_{i},c) \mid m \leqslant i \leqslant n \rangle \! \rangle$. We find this group in the upper right corner of the diagram. To the contrary, assume that $\varphi$ is not an embedding into $L$, then the combination of $\varphi$ with the embedding of $L$ as a subgroup into $K/ \langle \! \langle (\widetilde{r}_{i},c) \mid m \leqslant i \leqslant n \rangle \! \rangle$ is also not an embedding. This is a contradiction.
\end{proof}

Finally, we need the following ``reflected'' version of Lemma~\ref{lemcentral}

\begin{lemma} \label{lemcentralrefl}
Let $(\widetilde{r},c)$ be an element of $K$ with $|\widetilde{r}|_{\alpha,\omega} \geqslant 1$ where $\widetilde{r}$ is a suitable conjugate. Further let $m$, $n \in \mathbb{Z}$ with $m < n$. We define $s:=\alpha_{\widetilde{r}_{m}}+1$, $t:=\omega_{\widetilde{r}_{m}}$ and $w_{i}:=b_{i}u_{i}^{-1}$ for all $i \in \mathbb{Z}$. Then we have:
\begin{itemize}
\item[\emph{ (1)}] $\ \ \ \ \, K / \langle \! \langle (\widetilde{r}_{i},c) \mid m \leqslant i \leqslant n \rangle \! \rangle$ \\
$\cong \ \big( (M_{-\infty,t} \times C) / \langle \! \langle (\widetilde{r}_{m},c) \rangle \! \rangle \big) \underset{P \times \widetilde{C}_{\widetilde{r}}}{\ast} \big( (M_{s,\infty} \times C) /  \langle \! \langle (\widetilde{r}_{i},c) \mid m+1 \leqslant i \leqslant n \rangle \! \rangle \big)$, \\
where $P\cong\langle w_{s-k},w_{s-k+1},\dots,w_{s-1} \mid \rangle \ast G_{s,t} = \langle b_{s},b_{s+1},\dots, b_{s+k-1} \mid \rangle \ast G_{s,t}$
\item[\emph{ (2)}] The group $M_{-\infty,t-1} \times \widetilde{C}_{\widetilde{r}}$ embeds canonically into $K / \langle \! \langle (\widetilde{r}_{i},c) \mid m \leqslant i \leqslant n \rangle \! \rangle$.
\end{itemize}
\end{lemma}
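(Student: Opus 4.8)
The plan is to deduce Lemma~\ref{lemcentralrefl} directly from Lemma~\ref{lemcentral} by passing to the dual structure of $M$ set up in Subsection~\ref{subsecdual}; as is remarked there, shortening precisely this kind of argument is one of the reasons for introducing that dual structure. First I would record the translation dictionary between the two structures. Conjugation by $x$ sends $b_i\mapsto b_{i+1}$, $g_i\mapsto g_{i+1}$ but, in terms of the dual generators $b'_i=b_{-i}u_{-i}^{-1}$ and $g'_i=g_{-i}$, it sends $b'_i\mapsto b'_{i-1}$, $g'_i\mapsto g'_{i-1}$; hence $\widetilde{r}_i=x^{-i}\widetilde{r}x^i$ is, read off in dual coordinates, the element obtained from $\widetilde{r}$ by lowering every dual index by $i$. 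On subgroups the dictionary is $M'_{s,\infty}\leftrightarrow M_{-\infty,-s}$ and $M'_{-\infty,t}\leftrightarrow M_{-t,\infty}$ (recorded in Subsection~\ref{subsecdual}), and on limits it is $\omega_w=-\alpha'_w$, $\omega'_w=-\alpha_w$, $|w|_{\alpha,\omega}=|w|_{\alpha',\omega'}$ (Lemma~\ref{lemalphaomega}).

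Next I would invoke Lemma~\ref{lemcentral} with every object replaced by its dual counterpart, applied to the family $\{\widetilde{r}_i\mid m\leqslant i\leqslant n\}$, whose dual indices run over $\{-n,\dots,-m\}$ so that its dual-top element is $\widetilde{r}_m$. The hypotheses transfer: $\widetilde{r}$ is a suitable conjugate, so by definition its reduced presentations with respect to the $\mathcal{E}'(i)$ are cyclically reduced; $|\widetilde{r}|_{\alpha',\omega'}=|\widetilde{r}|_{\alpha,\omega}\geqslant 1$; and $\widetilde{C}_{\widetilde{r}}=C/(\langle\!\langle(\widetilde{r},c)\rangle\!\rangle_K\cap C)$ is intrinsic to $K$, hence unchanged by the switch of generators. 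The dual form of Lemma~\ref{lemcentral}(1) then peels off $\widetilde{r}_m$, giving an amalgamated decomposition of $K/\langle\!\langle(\widetilde{r}_i,c)\mid m\leqslant i\leqslant n\rangle\!\rangle$ over (the dual of) $P\times\widetilde{C}_{\widetilde{r}}$, with one factor $(M'_{\alpha'_{\widetilde{r}_m},\infty}\times C)/\langle\!\langle(\widetilde{r}_m,c)\rangle\!\rangle$ and the other $(M'_{-\infty,\,\omega'_{\widetilde{r}_{m+1}}}\times C)/\langle\!\langle(\widetilde{r}_i,c)\mid m+1\leqslant i\leqslant n\rangle\!\rangle$, together with the corresponding dual of part~(2) (embedding of $M'_{\alpha'_{\widetilde{r}_m}+1,\infty}\times\widetilde{C}_{\widetilde{r}}$).

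Finally I would translate the result back through the dictionary: using $\omega_{\widetilde{r}_m}=-\alpha'_{\widetilde{r}_m}$ and Lemma~\ref{lemalphaomega}(b) one gets $M'_{\alpha'_{\widetilde{r}_m},\infty}=M_{-\infty,\omega_{\widetilde{r}_m}}=M_{-\infty,t}$ and $M'_{-\infty,\omega'_{\widetilde{r}_{m+1}}}=M_{\alpha_{\widetilde{r}_{m+1}},\infty}=M_{\alpha_{\widetilde{r}_m}+1,\infty}=M_{s,\infty}$, which produces the two factors of statement~(1); unwinding the dual $P$ (using $b'_j=b_{-j}u_{-j}^{-1}$ and $w_i=b_iu_i^{-1}=b_{i+k}$) turns it into $\langle w_{s-k},\dots,w_{s-1}\mid\rangle\ast G_{s,t}=\langle b_s,\dots,b_{s+k-1}\mid\rangle\ast G_{s,t}$; and statement~(2) comes from $M'_{\alpha'_{\widetilde{r}_m}+1,\infty}=M_{-\infty,\omega_{\widetilde{r}_m}-1}=M_{-\infty,t-1}$. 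The only real work, and the one place where care is needed, is this index bookkeeping — keeping the sign conventions straight between $\alpha,\omega,\alpha',\omega'$ and the shift $i\mapsto-i$, and verifying that the dual $P$ rewrites exactly to the stated generating set — but no new geometric or group-theoretic input is required, since Subsection~\ref{subsecdual} already guarantees that the dual set-up is a faithful mirror image of the original, so the amalgamated-product and Kurosh/\,Reidemeister--Schreier arguments underlying Lemma~\ref{lemcentral} apply verbatim to it.
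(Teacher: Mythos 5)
Your proposal is correct and follows essentially the same route as the paper: the author likewise deduces Lemma~\ref{lemcentralrefl} from Lemma~\ref{lemcentral} by rewriting everything in the dual generators of Subsection~\ref{subsecdual} (setting $m'=-n$, $n'=-m$, $s'=-t$, $t'=-s$ and using $\omega_w=-\alpha'_w$, $\omega'_w=-\alpha_w$ to check the statements take the exact form of Lemma~\ref{lemcentral}), then noting that all ingredients of that proof are available for the dual objects. Your translation dictionary and index bookkeeping match the paper's.
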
.

\begin{proof}
We deduce this lemma with the help of the dual structure of $K$ from Lemma~\ref{lemcentral}. For this purpose let $\widetilde{r}'_{i}$ ($i \in \mathbb{Z}$) be a presentation of $\widetilde{r}_{-i}$ in dual generators. Using the notation of Section~\ref{subsecdual} we write
\begin{eqnarray*}
w_{i} \ \ = \ \ b_{i}u_{i}^{-1} \ \ = \ \ b'_{-i} \ \ \ \ \ \text{and} \ \ \ \ \ b_{i} \ \ = \ \ b_{i}u_{i}^{-1}u_{i} \ \ = \ \ b_{-i}' u_{-i}'^{-1} \ \ =: \ \ w'_{-i}.
\end{eqnarray*}
Further we define $m':=-n$, $n':=-m$, $s':=-t$ and $t':=-s$. Rewriting the Statements (1) and (2) using the dual objects we get:
\begin{itemize}
\item[\textrm{ (1)}] $\ \ \ \ \, K / \langle \! \langle (\tilde{r}'_{i},c) \mid m' \leqslant i \leqslant n' \rangle \! \rangle$ \\
$\cong \ \big( (M'_{-\infty,t'} \times C) / \langle \! \langle (\tilde{r}'_{i},c) \mid m' \leqslant i \leqslant n'-1 \rangle \! \rangle \big) \underset{P' \times \widetilde{C}_{\widetilde{r}}}{\ast} \big( (M'_{s',\infty} \times C) /  \langle \! \langle (\tilde{r}'_{n'},c) \rangle \! \rangle \big)$, \\
where $P' \cong \langle b'_{t'+1},b'_{t'+2},\dots,b'_{t'+k} \mid \rangle \ast G_{s',t'} \cong \langle w'_{t'-k+1},w'_{t'-k+2},\dots,w'_{t'} \mid \rangle \ast G_{s',t'}$
\item[\textrm{ (2)}] The group $M_{s'+1,\infty} \times \widetilde{C}_{\widetilde{r}}$ embeds canonically into $K / \langle \! \langle (\tilde{r}'_{i},c) \mid m' \leqslant i \leqslant n' \rangle \! \rangle$.
\end{itemize}
Because of Lemma~\ref{lemalphaomega} we have $s'=-t=-\omega_{\widetilde{r}_{m}}=-\omega_{\widetilde{r}'_{-m}} = \alpha'_{\widetilde{r}'_{n'}}$ and $t'=-s=-\alpha_{\widetilde{r}_{m}}-1=\omega'_{\widetilde{r}'_{-m}}-1=\omega'_{\widetilde{r}'_{n'}}-1$. Therefore both statements have the same form as the statements of Lemma~\ref{lemcentral}. However we are now working with the dual objects. Since all results used in the proof of Lemma~\ref{lemcentral} are also given for the dual objects, Lemma~\ref{lemcentralrefl} can be proved analogously to Lemma~\ref{lemcentral}.
\end{proof}

\subsubsection{Conclusion of the proof for {\boldmath $\widetilde{r}$ with positive $\alpha$-$\omega$-length}}

In Proposition~\ref{propmain} there are two elements $(r,c)$, $(s,d) \in \widetilde{H}$ with the same normal closure and $r_{x}=0$. Thus, as described in Section~\ref{subsecKleftright} the normal closures of $\{(r_{i},c) \mid i \in \mathbb{Z} \}$ and $\{ (s_{i},d) \mid i \in \mathbb{Z} \}$ are equal in $K$. We already noticed in Remark~\ref{remconjtosconj} that we can w.\,l.\,o.\,g. consider $\{(\widetilde{r}_{i},c) \mid i \in \mathbb{Z} \}$ and $\{ (\widetilde{s}_{i},d) \mid i \in \mathbb{Z} \}$
instead of $\{(r_{i},c) \mid i \in \mathbb{Z} \}$ and $\{ (s_{i},d) \mid i \in \mathbb{Z} \}$, where $\widetilde{r}_{i}$ and $\widetilde{s}_{i}$ ($i \in \mathbb{Z}$) are suitable conjugates. Note that $(\widetilde{s}_{0},d)$ is trivial in $K / \langle \! \langle (\widetilde{r}_{i},c) \mid i \in \mathbb{Z} \rangle \! \rangle$. So there exist $m$, $n \in \mathbb{Z}$ with $m \leqslant n$ such that $(\widetilde{s},d)$ is trivial in $K / \langle \! \langle (\widetilde{r}_{i},c) \mid m \leqslant i \leqslant n \rangle \! \rangle$. We choose a pair $(m,n)$ with this property such that $n-m$ is minimal. Our aim is to show $m=n$. For this purpose we first prove the following lemma.

\begin{lemma} \label{leminequabc}
Let $|\widetilde{r}|_{\alpha,\omega} \geqslant 1$. Then we have
\begin{eqnarray*}
\text{\emph{(a)}} \ \ \omega_{\widetilde{s}_{0}} \geqslant \omega_{\widetilde{r}_{n}}\geqslant \omega_{\widetilde{r}_{m}}, \ \ \ \ \ \text{\emph{(b)}} \ \ \alpha_{\widetilde{s}_{0}}\leqslant \alpha_{\widetilde{r}_{m}}\leqslant \alpha_{\widetilde{r}_{n}} \ \ \ \ \text{and} \ \ \ \ \text{\emph{(c)}} \ \ |\widetilde{s}|_{\alpha,\omega} \geqslant |\widetilde{r}|_{\alpha,\omega}.
\end{eqnarray*}
\end{lemma}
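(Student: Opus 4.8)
The plan is to reduce the lemma to the two ``outer'' estimates $\omega_{\widetilde{s}_0}\geqslant\omega_{\widetilde{r}_n}$ and $\alpha_{\widetilde{s}_0}\leqslant\alpha_{\widetilde{r}_m}$, proved by contradiction against the minimality of $n-m$, and to obtain everything else by arithmetic. First I would record, from Lemma~\ref{lemalphaomega}(b) and $n\geqslant m$, the ``inner'' estimates $\omega_{\widetilde{r}_n}=\omega_{\widetilde{r}_m}+(n-m)\geqslant\omega_{\widetilde{r}_m}$ and $\alpha_{\widetilde{r}_m}=\alpha_{\widetilde{r}_n}-(n-m)\leqslant\alpha_{\widetilde{r}_n}$; this already gives the inner parts of (a) and (b). For (c) I would then combine the four estimates with $\widetilde{s}_0=\widetilde{s}$ and Lemma~\ref{lemalphaomega}(b) to get $|\widetilde{s}|_{\alpha,\omega}=\omega_{\widetilde{s}_0}-\alpha_{\widetilde{s}_0}+1\geqslant\omega_{\widetilde{r}_n}-\alpha_{\widetilde{r}_m}+1=|\widetilde{r}|_{\alpha,\omega}+(n-m)\geqslant|\widetilde{r}|_{\alpha,\omega}$.

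For the outer estimate in (a) I would assume $\omega_{\widetilde{s}_0}<\omega_{\widetilde{r}_n}$, i.e.\ $\widetilde{s}=\widetilde{s}_0\in M_{-\infty,\,\omega_{\widetilde{r}_n}-1}$, and split on $m<n$ versus $m=n$. If $m<n$, I would feed the pair $(m,n)$ into Lemma~\ref{lemcentral}(1), whose left amalgamated factor is $(M_{-\infty,t}\times C)/\langle \! \langle (\widetilde{r}_i,c)\mid m\leqslant i\leqslant n-1\rangle \! \rangle$ with $t=\omega_{\widetilde{r}_{n-1}}=\omega_{\widetilde{r}_n}-1$ (Lemma~\ref{lemalphaomega}(b) again); since $\widetilde{s}\in M_{-\infty,t}$, the element $(\widetilde{s},d)$ lies in that factor, and because it is trivial in the amalgam and the factors of an amalgamated product embed, it is trivial in the left factor, hence trivial in $K/\langle \! \langle (\widetilde{r}_i,c)\mid m\leqslant i\leqslant n-1\rangle \! \rangle$, contradicting minimality of $n-m$. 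If $m=n$, I would apply Corollary~\ref{corNCtilde} to the suitable conjugate $(\widetilde{r}_m,c)$ (valid since $|\widetilde{r}_m|_{\alpha,\omega}=|\widetilde{r}|_{\alpha,\omega}\geqslant 1$): as $M_{-\infty,\,\omega_{\widetilde{r}_m}-1}\times\widetilde{C}_{\widetilde{r}}$ embeds canonically into $K/\langle \! \langle (\widetilde{r}_m,c)\rangle \! \rangle$ and $(\widetilde{s},d)\in M_{-\infty,\,\omega_{\widetilde{r}_m}-1}\times C$ is trivial there, its image in that direct product is trivial, forcing $\widetilde{s}=1$ in $M_{-\infty,\,\omega_{\widetilde{r}_m}-1}$ and contradicting that $\widetilde{s}$ is a (hence non-trivial) suitable conjugate.

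Part (b) I would treat as the mirror image: assuming $\alpha_{\widetilde{s}_0}>\alpha_{\widetilde{r}_m}$ puts $\widetilde{s}$ into $M_{\alpha_{\widetilde{r}_m}+1,\infty}$, and the same argument runs with Lemma~\ref{lemcentralrefl}(1) in place of Lemma~\ref{lemcentral}(1) (its right factor $(M_{s,\infty}\times C)/\langle \! \langle (\widetilde{r}_i,c)\mid m+1\leqslant i\leqslant n\rangle \! \rangle$, $s=\alpha_{\widetilde{r}_m}+1$, receives $(\widetilde{s},d)$ and yields the pair $(m+1,n)$ of smaller difference) when $m<n$, and with the second embedding $M_{\alpha_{\widetilde{r}_m}+1,\infty}\times\widetilde{C}_{\widetilde{r}}\hookrightarrow K/\langle \! \langle (\widetilde{r}_m,c)\rangle \! \rangle$ of Corollary~\ref{corNCtilde} when $m=n$. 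The step I expect to be the main obstacle is the bookkeeping needed to pass from ``trivial in the amalgam'' to ``trivial in a quotient of $K$ with one fewer relator'': one must check, again via Lemma~\ref{lemalphaomega}(b), that $\widetilde{r}_i\in M_{-\infty,t}$ for $i\leqslant n-1$ (resp.\ $\widetilde{r}_i\in M_{s,\infty}$ for $i\geqslant m+1$), so that the comparison homomorphism from the amalgamated factor to $K/\langle \! \langle (\widetilde{r}_i,c)\mid m\leqslant i\leqslant n-1\rangle \! \rangle$ (resp.\ to $K/\langle \! \langle (\widetilde{r}_i,c)\mid m+1\leqslant i\leqslant n\rangle \! \rangle$) is well-defined and identifies the image of $(\widetilde{s},d)$.
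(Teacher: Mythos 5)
Your proposal is correct and follows essentially the same route as the paper: the inner inequalities and part (c) by the shift formula of Lemma~\ref{lemalphaomega}(b), and the outer inequalities by contradiction with the minimality of $n-m$, using Lemma~\ref{lemcentral}(1) (resp.\ Lemma~\ref{lemcentralrefl}(1)) when $m<n$ and the embeddings behind Corollary~\ref{corNCtilde} when $m=n$. The bookkeeping point you flag (that $\widetilde{r}_i\in M_{-\infty,t}$ for $i\leqslant n-1$, so triviality in the left amalgamated factor passes to triviality in $K/\langle \! \langle (\widetilde{r}_i,c)\mid m\leqslant i\leqslant n-1\rangle \! \rangle$) is exactly the right check and is resolved as you describe.
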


\begin{proof}
To prove (a) we assume to the contrary that $\omega_{\widetilde{s}_{0}} < \omega_{\widetilde{r}_{n}}$. Then $\widetilde{s}_{0}$ is an element of $M_{-\infty,\omega_{\widetilde{r}_{n}-1}}$. By the choice of $m$, $n$ the element $(\widetilde{s}_{0},d)$ is trivial in $K /  \langle \! \langle (\widetilde{r}_{i},c) \mid m \leqslant i \leqslant n \rangle \! \rangle$. In the case $m<n$ we have by Lemma~\ref{lemcentral}~(1) the presentation
\begin{eqnarray} \label{eqPrFK}
\big( (M_{-\infty,\tau} \times C) / \langle \! \langle (\widetilde{r}_{i},c) \mid m \leqslant i \leqslant n-1 \rangle \! \rangle \big) \underset{P \times \widetilde{C}_{\widetilde{r}}}{\ast} \big( (M_{\sigma,\infty} \times C) /  \langle \! \langle (\widetilde{r}_{n},c) \rangle \! \rangle \big)
\end{eqnarray}
of $K /  \langle \! \langle (\widetilde{r}_{i},c) \mid m \leqslant i \leqslant n \rangle \! \rangle$, where $\sigma = \alpha_{\widetilde{r}_{n}}$ and $\tau= \omega_{\widetilde{r}_{n}}-1$. Thus, $(\widetilde{s}_{0},d)$ is for $m<n$ already trivial in the left factor of the amalgamated product \eqref{eqPrFK}. This contradicts the minimality of $n-m$. In the case $m=n$, $(\widetilde{s}_{0},d)$ would already be trivial in $M_{-\infty,\tau} \times \widetilde{C}_{\widetilde{r}}$ because of
\begin{eqnarray*}
K / \langle \! \langle (\widetilde{r}_{i},c) \mid m \leqslant i \leqslant n \rangle \! \rangle \ \ \cong \ \ (M_{-\infty,\tau} \times \widetilde{C}_{\widetilde{r}}) \underset{P \times \widetilde{C}_{\widetilde{r}}}{\ast} \big( (M_{\sigma,\infty} \times C) /  \langle \! \langle (\widetilde{r}_{n},c) \rangle \! \rangle \big).
\end{eqnarray*}
This contradicts the fact that $r$ and therefore also $s$ is not trivial in $K$ by the assumptions of Proposition~\ref{propmain}. The inequality $\omega_{\widetilde{r}_{n}} \geqslant \omega_{\widetilde{r}_{m}}$ follows from Lemma~\ref{lemalphaomega} because of $n \geqslant m$. Inequality~(b) can be proved analogously to (a) with the help of Lemma~\ref{lemcentralrefl}.\\
Using Lemma~\ref{lemalphaomega} along with (a) and (b) we get
\begin{eqnarray*}
|\widetilde{s}|_{\alpha,\omega} = |\widetilde{s}_{0}|_{\alpha,\omega} = \omega_{\widetilde{s}_{0}}-\alpha_{\widetilde{s}_{0}}+1 \geqslant \omega_{\widetilde{r}_{m}}-\alpha_{\widetilde{r}_{m}}+1 = |\widetilde{r}_{m}|_{\alpha,\omega} = |\widetilde{r}|_{\alpha,\omega}.
\end{eqnarray*}
\end{proof}

The inequality $|\widetilde{s}|_{\alpha,\omega} \leqslant |\widetilde{r}|_{\alpha,\omega}$ follows by symmetry because of Lemma~\ref{leminequabc} and $|\widetilde{r}|_{\alpha,\omega} \geqslant 1$. Altogether we have $|\widetilde{s}|_{\alpha,\omega} = |\widetilde{r}|_{\alpha,\omega}$.

\begin{remark} \label{rembigcases}
We have just shown that $\widetilde{r}$ possesses a positive $\alpha$-$\omega$-length if and only if $\widetilde{s}$ possesses a positive $\alpha$-$\omega$-length.
\end{remark}

It follows with Lemma~\ref{lemalphaomega}:
\begin{eqnarray} \label{eq3}
|\widetilde{r}_{m}|_{\alpha,\omega} = |\widetilde{s}_{0}|_{\alpha,\omega} \ \ \ \Leftrightarrow \ \ \ \omega_{\widetilde{r}_{m}}-\alpha_{\widetilde{r}_{m}} = \omega_{\widetilde{s}_{0}}-\alpha_{\widetilde{s}_{0}}  \ \ \ \Leftrightarrow \ \ \ \omega_{\widetilde{r}_{m}}- \omega_{\widetilde{s}_{0}} =\alpha_{\widetilde{r}_{m}}-\alpha_{\widetilde{s}_{0}} 
\end{eqnarray}

Using \eqref{eq3} along with Lemma~\ref{leminequabc}~(a) and (b) we get
\begin{eqnarray*}
0 \geqslant \omega_{\widetilde{r}_{m}}- \omega_{\widetilde{s}_{0}} = \alpha_{\widetilde{r}_{m}}-\alpha_{\widetilde{s}_{0}} \geqslant 0.
\end{eqnarray*}
Therefore we have $\omega_{\widetilde{r}_{m}} = \omega_{\widetilde{s}_{0}}$. Because of Lemma~\ref{leminequabc}~(a) and Lemma~\ref{lemalphaomega} it follows $m=n$. This means that $(\widetilde{s}_{0},d)$ is trivial in $K/ \langle \! \langle (\widetilde{r}_{m},c) \rangle \! \rangle$ where the index $m$ is uniquely determined by $\omega_{\widetilde{r}_{m}} = \omega_{\widetilde{s}_{0}}$. In the same way one can show that $(\widetilde{r}_{m},c)$ is trivial in $K/ \langle \! \langle (\widetilde{s}_{0},d) \rangle \! \rangle$. So the normal closures of $(\widetilde{r}_{m},c)$ and $(\widetilde{s}_{0},d)$ are equal in $K$ and, by Lemma~\ref{lemKMP}, $(\widetilde{r}_{m},c)$ is conjugate in $K$ to $(\widetilde{s}_{0},d)$ or $(\widetilde{s}_{0},d)^{-1}$. Since $K$ is a subgroup of $\widetilde{H}$ and we have $\widetilde{r}_{m}=x^{-m} \widetilde{r} x^{m}$ as well as $\widetilde{s}_{0}=\widetilde{s}$ (see Notation ~\ref{notgi}), $(\widetilde{r},c)$ is conjugate to $(\widetilde{s},d)$ or $(\widetilde{s},d)^{-1}$ in $\widetilde{H}$. This finishes the proof of Proposition~\ref{propmain} in the case $|\widetilde{r}|_{\alpha, \omega} \geqslant 1$.

\subsection[Proof of Proposition~\ref{propmain} for $\widetilde{r}$ with non-positive $\alpha$-$\omega$-length]{Proof of Proposition~\ref{propmain} for {\boldmath $\widetilde{r}$ with non-positive $\alpha$-$\omega$-length}}

This subsection completes the proof of Proposition~\ref{propmain} for the remaining case that $\widetilde{r}$ possesses non-positive $\alpha$-$\omega$-length (i.\,e. $|\widetilde{r}|_{\alpha,\omega} <1$). In the first part of this subsection we prove some lemmata that will be useful in this case and recall two results by A. Karras, W. Magnus and D. Solitar concerning torsion elements in one-relator groups. With the help of these results we finish the proof of Proposition~\ref{propmain} in the second part of this subsection.

\subsubsection{Preparations and auxiliary results} 

\begin{lemma} \label{lemexclb}
Let $r$ be an element in $M$ with non-positive $\alpha$-$\omega$-length. Then every reduced presentation of $r$ with respect to the generating set $\mathcal{E}_{\alpha_{r},\infty}$ of $M_{\alpha_{r},\infty}$ uses exclusively $b$-generators.
\end{lemma}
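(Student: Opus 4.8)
The plan is to reduce the lemma to the assertion that $r$ lies in the free subgroup $F_{0}:=\langle b_{\alpha_{r}},b_{\alpha_{r}+1},\dots,b_{\alpha_{r}+k-1}\mid\rangle$ of $M$. By Remark~\ref{remMiinf} (with $i=\alpha_{r}$) the subgroup $M_{\alpha_{r},\infty}$ equals the free product $F_{0}\ast\bigast_{j\geqslant\alpha_{r}}G_{j}$, and the reduced presentation of $r$ with respect to $\mathcal{E}_{\alpha_{r},\infty}$ is by definition the unique normal form of $r$ in this free product. Hence, if $r\in F_{0}$, that normal form consists of a single freely reduced word in $b_{\alpha_{r}},\dots,b_{\alpha_{r}+k-1}$ and therefore uses $b$-generators exclusively, which is precisely the claim. (For $r=1$ there is nothing to prove.)

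To obtain $r\in F_{0}$, I would first translate the hypothesis. Since $|r|_{\alpha,\omega}=\omega_{r}-\alpha_{r}+1\leqslant 0$, we have $\omega_{r}\leqslant\alpha_{r}-1$, so that $r\in M_{-\infty,\alpha_{r}-1}$ by the definition of $\omega_{r}$; moreover $r\in M_{\alpha_{r},\infty}$ by the definition of $\alpha_{r}$. Next I would fix the free product decomposition $M=F_{0}\ast\bigast_{j\in\mathbb{Z}}G_{j}$ from Proposition~\ref{propEi} (again with $i=\alpha_{r}$), in which $M_{\alpha_{r},\infty}$ is the free factor $F_{0}\ast\bigast_{j\geqslant\alpha_{r}}G_{j}$, and I would show that $M_{-\infty,\alpha_{r}-1}$ is contained in the free factor $F_{0}\ast\bigast_{j\leqslant\alpha_{r}-1}G_{j}$. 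For the latter it suffices, by Remark~\ref{remMiinf}, to check that the generators $b_{\alpha_{r}-k},\dots,b_{\alpha_{r}-1}$ together with all $G_{j}$ with $j\leqslant\alpha_{r}-1$ lie in $F_{0}\ast\bigast_{j\leqslant\alpha_{r}-1}G_{j}$: the $G_{j}$ are among its free factors, and for $\alpha_{r}-k\leqslant j\leqslant\alpha_{r}-1$ the relation $b_{j}=b_{j+k}u_{j}$ expresses $b_{j}$ as a product of $b_{j+k}\in F_{0}$ and $u_{j}\in G_{j}$.

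It then remains to deduce $r\in F_{0}$ from the fact that $r$ lies in both free factors $F_{0}\ast\bigast_{j\geqslant\alpha_{r}}G_{j}$ and $F_{0}\ast\bigast_{j\leqslant\alpha_{r}-1}G_{j}$ of $M$. This is the standard normal form argument: membership in the first factor says the normal form of $r$ in $M=F_{0}\ast\bigast_{j\in\mathbb{Z}}G_{j}$ has no syllable from any $G_{j}$ with $j\leqslant\alpha_{r}-1$, membership in the second says it has no syllable from any $G_{j}$ with $j\geqslant\alpha_{r}$, hence its normal form has no $G_{j}$-syllable at all and $r\in F_{0}$. The one step that requires genuine care — and which I would write out in full — is the inclusion $M_{-\infty,\alpha_{r}-1}\subseteq F_{0}\ast\bigast_{j\leqslant\alpha_{r}-1}G_{j}$, because the $b$-generators appearing in the standard presentation of $M_{-\infty,\alpha_{r}-1}$ are index-shifted relative to those of $F_{0}$ and must be rewritten via the relations $b_{i+k}=b_{i}u_{i}^{-1}$; all the remaining steps are routine bookkeeping with free-product normal forms.
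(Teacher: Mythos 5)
Your argument is correct, and it proves exactly what the paper proves, but it is packaged differently. The paper's proof takes the reduced presentation $r(\omega)$ of $r$ with respect to $\mathcal{E}_{-\infty,\omega_{r}}$ and explicitly rewrites each generator $b_{i}$ ($\omega_{r}-k+1\leqslant i\leqslant\omega_{r}$) as $b_{j}u_{j-k}u_{j-2k}\cdots u_{i}$ with $j\in\{\alpha_{r},\dots,\alpha_{r}+k-1\}$, then observes that all $G$-syllables so introduced (and all those already present) carry indices in $(-\infty,\alpha_{r}-1]$, while the reduced presentation with respect to $\mathcal{E}_{\alpha_{r},\infty}$ can only contain $G$-syllables with indices $\geqslant\alpha_{r}$; uniqueness of the normal form then forces the absence of any $G$-syllable. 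You instead phrase this as an intersection of free factors: $r$ lies in $F_{0}\ast\bigast_{j\geqslant\alpha_{r}}G_{j}$ by definition of $\alpha_{r}$ and in $F_{0}\ast\bigast_{j\leqslant\alpha_{r}-1}G_{j}$ because $M_{-\infty,\alpha_{r}-1}$ is contained in the latter (your one-step rewriting $b_{j}=b_{j+k}u_{j}$ of the generators $b_{\alpha_{r}-k},\dots,b_{\alpha_{r}-1}$ is the correct and complete verification, matching the paper's relation $b_{i+k}=b_{i}u_{i}^{-1}$), and the intersection of these two free factors of $M=F_{0}\ast\bigast_{j\in\mathbb{Z}}G_{j}$ is $F_{0}$. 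The underlying mechanism is identical in both proofs; what your packaging buys is that you only need to rewrite the finitely many generators $b_{\alpha_{r}-k},\dots,b_{\alpha_{r}-1}$ once each, rather than tracking the full chains $u_{j-k}u_{j-2k}\cdots u_{i}$, and the conclusion $r\in F_{0}$ is stated as a clean structural fact about free factors rather than as a property of one particular reduced presentation. Both are valid; yours is arguably slightly tidier, while the paper's computation makes the index bookkeeping fully explicit.
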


\begin{proof}
Because of $|r|_{\alpha,\omega} < 1$ we have $\omega_{r} < \alpha_{r}$. By Definition~\ref{defalpha}, $r$ is an element of $M_{-\infty,\omega_{r}}$ as well as an element of $M_{\alpha_{r},\infty}$. For the purpose of this proof we introduce the following notation:
\begin{itemize}
\item Let $r(a)$ be the reduced presentation of $r$ with respect to the generating set\\
$\mathcal{E}_{\alpha_{r},\infty} = \{b_{\alpha_{r}},b_{\alpha_{r}+1},\dots,b_{\alpha_{r}+k-1} \} \cup \underset{j \geqslant \alpha_{r}}{\bigcup} G_{j}$ of $M_{\alpha_{r},\infty}$.
\item Let $r(\omega)$ be the reduced presentation of $r$ with respect to the generating set\\
$\mathcal{E}_{-\infty,\omega_{r}} = \{b_{\omega_{r}-k+1}, b_{\omega_{r}-k+2},\dots,b_{\omega_{r}}\} \cup \underset{j \leqslant \omega_{r}}{\bigcup} G_{j}$ of $M_{-\infty,\omega_{r}}$. 
\end{itemize}

For every index $i \in \{\omega_{r}-k+1,\omega_{r}-k+2, \dots, \omega_{r} \}$ there exists a unique index $j \in \{\alpha_{r},\alpha_{r}+1, \dots, \alpha_{r}+k-1\}$ with $j \equiv i \mod k$. Thus, we get the presentation $r(\alpha)$ from $r(\omega)$ by replacing every generator $b_{i}$ in $r(\omega)$ with $b_{j}u_{j-k}u_{j-2k}\dots u_{i}$ and reducing afterwards. The indices of the generators $g_{\ell} \in \bigcup_{p \in \mathbb{Z}} G_{p}$ which were added during the replacements are in the interval $(-\infty,\alpha_{r}-1]$. The indices of the generators $g_{\ell} \in \bigcup_{p\in \mathbb{Z}} G_{p}$ originally contained in $r(\omega)$ are in the interval $(-\infty,\omega_{r}] \subset (-\infty, \alpha_{r}-1]$. Thus, the indices of all generators of $r(\alpha)$ which are no $b$-generators are in the interval $(-\infty,\alpha_{r}-1]$. Because of the definition of $r(\alpha)$ it follows that $r(\alpha)$ exclusively uses $b$-generators.
\end{proof}

The following two statements prepare a case-by-case analysis in the next part of this subsection.

\begin{corollary} \label{cor0pos}
Let $r \in G \underset{u=[x,b]}{\ast} F(x,b)$ be the element from the assumptions of Proposition~\ref{propmain}. If $r$ has an exponential sum of $0$ with respect to $b$, $r$ considered as an element of $M$ has a positive $\alpha$-$\omega$-length.
\end{corollary}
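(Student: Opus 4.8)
The plan is to argue by contradiction, exploiting that the corollary addresses the case $k=1$ of Proposition~\ref{propmain}, so that by Proposition~\ref{propEi} and Remark~\ref{remMiinf} every generating set of the form $\mathcal{E}_{s,\infty}$ contains exactly one $b$-generator, namely $b_s$. So I would assume that $r$ has non-positive $\alpha$-$\omega$-length, i.e.\ $|r|_{\alpha,\omega}<1$. Since $r$ is non-trivial by the hypotheses of Proposition~\ref{propmain}, the $\alpha$-limit $\alpha_r$ is well defined (cf.\ Lemma~\ref{lemalgvalid}) and $r\in M_{\alpha_r,\infty}$ by Definition~\ref{defalpha}.

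First I would invoke Lemma~\ref{lemexclb}: every reduced presentation of $r$ with respect to $\mathcal{E}_{\alpha_r,\infty}$ uses exclusively $b$-generators. For $k=1$ one has $\mathcal{E}_{\alpha_r,\infty}=\{b_{\alpha_r}\}\cup\bigcup_{j\geqslant\alpha_r}G_j$, so $b_{\alpha_r}$ is the only available $b$-generator, and by Remark~\ref{remMiinf} the subgroup $\langle b_{\alpha_r}\mid\rangle$ is a free factor of $M_{\alpha_r,\infty}$. Hence the reduced presentation of $r$ can consist of only a single piece, a power $b_{\alpha_r}^{m}$; since $r\neq 1$ the empty presentation is excluded, so $m\in\mathbb{Z}\setminus\{0\}$.

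Then I would finish with a plain exponential-sum count. The $b$-exponential-sum homomorphism $\varepsilon_b\colon H\to\mathbb{Z}$ is well defined by Remark~\ref{remexpsum}, since the relator $[x,b]u^{-1}$ has $b$-exponential-sum $0$; and since $b_{\alpha_r}=x^{-\alpha_r}bx^{\alpha_r}$ by Notation~\ref{notgi}, we get $\varepsilon_b(b_{\alpha_r})=1$. Therefore $r_b=\varepsilon_b(r)=\varepsilon_b(b_{\alpha_r}^{m})=m\neq 0$, which contradicts the hypothesis $r_b=0$. This contradiction shows $|r|_{\alpha,\omega}\geqslant 1$, i.e.\ $r$ has positive $\alpha$-$\omega$-length.

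The whole argument is mechanical once the specialization to $k=1$ is in place; the only point requiring a moment of care is recognising that Lemma~\ref{lemexclb}, combined with the single-$b$-generator structure of $M_{\alpha_r,\infty}$ from Proposition~\ref{propEi} and Remark~\ref{remMiinf}, pins $r$ down to a power of $b_{\alpha_r}$, after which the exponential-sum contradiction is immediate. I do not expect any genuine obstacle beyond that.
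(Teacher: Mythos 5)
Your proposal is correct and follows essentially the same route as the paper's own proof: assume $|r|_{\alpha,\omega}<1$, apply Lemma~\ref{lemexclb} to conclude that $r$ is a power of the single $b$-generator $b_{\alpha_r}$ (since $k=1$), and derive a contradiction from the vanishing $b$-exponential sum. The paper phrases the final step as forcing $r=1$ rather than forcing $r_b\neq 0$, but this is the same computation.
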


\begin{proof}
Because of the assumptions of Proposition~\ref{propmain} we have $k=1$. W.\,l.\,o.\,g.~let $r \in M$ be a suitable conjugate. For the purpose of a contradiction we assume $|r|_{\alpha,\beta} \leqslant 0$. By Lemma~\ref{lemexclb} the reduced presentation of $r$ with respect to $\mathcal{E}_{\alpha_{r},\infty} = \{b_{\alpha_{r}}\} \cup \bigcup_{j \geqslant \alpha_{r}} G_{j}$ exclusively uses $b$-generators. Thus, we get the presentation $r = x^{-\alpha_{r}} b^{j} x^{\alpha_{r}}$ in $G \ast_{u = [x,b]} F(x,b)$ for a number $j \in \mathbb{Z}$. Since the exponential sum of $r$ with respect to $b$ is $0$ by assumption, we have $r=1$. This contradicts the assumptions of Proposition~\ref{propmain}.
\end{proof}
 
\begin{lemma} \label{lemmunu}
Let $B$, $C$ be groups and let $Z=\langle z \rangle$ be an infinite cyclic subgroup of $B$. Further let $(r,c)$ be an element of $B \times C$. Then there exist unique numbers $\mu \in \mathbb{N}_{0}$, $\nu \in \mathbb{Z}$ and a normal subgroup $D$ of $C$ with
\begin{eqnarray} \label{eqXY}
\langle \! \langle (r,c) \rangle \! \rangle_{B \times C} \ \cap \ (Z \times C) \  = \ \langle (z^{\mu},c^{\nu}) \rangle_{Z \times C} \ \text{\emph{\textbullet}} \ (\{1\} \times D),
\end{eqnarray}
where $\text{\emph{\textbullet}}$ denotes the product of subgroups.
\end{lemma}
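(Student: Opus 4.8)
The goal is to understand the intersection $\langle\!\langle(r,c)\rangle\!\rangle_{B\times C}\cap(Z\times C)$ where $Z=\langle z\rangle$ is infinite cyclic. My plan is to pass to the abelian group $(Z\times C)/(\{1\}\times C')$ style reductions only where needed, but more directly to exploit the two projections $p_B\colon B\times C\to B$ and $p_C\colon B\times C\to C$. First I would observe that an element $(w,e)\in\langle\!\langle(r,c)\rangle\!\rangle_{B\times C}$ is precisely a product $\prod_{i=1}^n (b_i,c_i)^{-1}(r,c)^{\varepsilon_i}(b_i,c_i)$ with $\varepsilon_i\in\mathbb Z$, so that $w=\prod b_i^{-1}r^{\varepsilon_i}b_i\in\langle\!\langle r\rangle\!\rangle_B$ and $e=c^{\Sigma\varepsilon_i}\cdot(\text{commutator-type junk in }C)$; more precisely $e$ lies in the coset $c^{\sigma}\cdot[C,C]\cap(\text{stuff})$ where $\sigma=\sum\varepsilon_i$. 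The key structural point is that if additionally $w\in Z$, then $w=z^{m}$ for some $m$, and $z^m\in\langle\!\langle r\rangle\!\rangle_B$; the set of such $m$ forms a subgroup $\mu\mathbb Z$ of $\mathbb Z$ (since $\langle\!\langle r\rangle\!\rangle_B\cap Z$ is a subgroup of the infinite cyclic $Z$), which defines $\mu\in\mathbb N_0$ uniquely.

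Next I would pin down $\nu$. For the specific exponent $\mu$, I want to know the possible $C$-components $e$ of elements $(z^\mu,e)\in\langle\!\langle(r,c)\rangle\!\rangle_{B\times C}$. Fix one witness: a product realizing $(z^\mu,e_0)$ with total $B$-exponent-pattern giving $z^\mu$; its $B$-side uses exponents $\varepsilon_i$ summing to some value, and I'd track how $\sigma:=\sum\varepsilon_i$ is constrained. Here is the crucial subtlety: $\mu$ (the $z$-exponent realized) and $\sigma$ (the total exponent-sum of the conjugating word, which controls the $c$-power appearing) need not coincide — and $\nu$ should be the value of $\sigma$ forced once we demand $w=z^\mu$ exactly. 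I would argue that for any two witnesses of $z^\mu$ in $\langle\!\langle r\rangle\!\rangle_B$, the difference in their exponent-sums $\sigma$ is constrained so that the achievable $c$-powers form a single coset $c^\nu\cdot D$ for a well-defined $\nu\in\mathbb Z$ and a subgroup $D\le C$. Then I set $D:=\{e\in C: (1,e)\in\langle\!\langle(r,c)\rangle\!\rangle_{B\times C}\}$, which is visibly a normal subgroup of $C$ (it is the intersection with $\{1\}\times C$ of a normal subgroup, and is normalized by $C$ since $\{1\}\times C$ centralizes... actually is acted on by conjugation inside $B\times C$, and the $C$-conjugation is visible). Finally I claim $\langle\!\langle(r,c)\rangle\!\rangle_{B\times C}\cap(Z\times C)=\langle(z^\mu,c^\nu)\rangle\cdot(\{1\}\times D)$: the $\supseteq$ direction is immediate once $(z^\mu,c^\nu)$ and all of $\{1\}\times D$ are shown to lie in the normal closure; the $\subseteq$ direction uses that any $(z^m,e)$ in the intersection has $m\in\mu\mathbb Z$, say $m=\mu t$, and then $(z^m,e)\cdot(z^\mu,c^\nu)^{-t}=(1,e c^{-\nu t})\in\{1\}\times D$.

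For uniqueness: $\mu$ is the nonnegative generator of $\langle\!\langle r\rangle\!\rangle_B\cap Z\cong\mu\mathbb Z$, hence unique. The subgroup $D$ is characterized intrinsically as $(\{1\}\times C)\cap\langle\!\langle(r,c)\rangle\!\rangle_{B\times C}$, hence unique. Given $\mu$ and $D$, the element $\nu$ is determined modulo $\{n:c^n\in D\}$; to get honest uniqueness as an integer one takes the representative as specified (if $c$ has infinite order mod $D$, $\nu$ is literally unique; if $c$ has order $d$ mod $D$, one takes $0\le\nu<d$, and the statement should be read with that convention — I would state this carefully). The main obstacle, and the step I expect to need the most care, is the second paragraph: cleanly separating the role of the "$z$-exponent" $\mu$ from the "total conjugator exponent-sum" $\nu$ and proving that the $c$-powers realizable alongside $z^\mu$ form a single $D$-coset rather than a union of several. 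This is where one must use that $Z$ is \emph{infinite} cyclic (so that the $B$-component genuinely records an integer) together with a careful bookkeeping of exponent sums in the defining product — essentially the same exponent-sum homomorphism argument used in Lemma~\ref{lemfirstorder} and in the proof of Theorem~\ref{thmdirect}, now applied to the first factor to control what the second factor can do.
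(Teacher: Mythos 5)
Your proposal is correct and follows essentially the same route as the paper: define $D$ by $\{1\}\times D=\langle\!\langle(r,c)\rangle\!\rangle_{B\times C}\cap(\{1\}\times C)$, obtain $\mu$ from the subgroup $\langle\!\langle r\rangle\!\rangle_{B}\cap Z$ of $Z\cong\mathbb{Z}$, realize $(z^{\mu},c^{\nu})$ in the normal closure by lifting a $B$-witness $z^{\mu}=\prod v_j^{-1}r^{\varepsilon_j}v_j$ with conjugators of trivial $C$-component (so $\nu=\sum\varepsilon_j$), and finish with the coset argument using that the intersection is a subgroup. Your caveat that $\nu$ is only unique modulo the order of $c$ in $C/D$ is a fair reading of the statement; the paper instead normalizes $\nu$ by minimizing $|\sum\varepsilon_j|$ over such presentations.
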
 
 
\begin{proof}
Let $X$ resp. $Y$ be the left resp. right side of equality \eqref{eqXY}. Our proof will be by constructing $\mu$, $\nu$ and $D$. First, we define the normal subgroup $D$ of $C$ by
\begin{eqnarray*}
\{1\} \times D \ \ \ = \ \ \ \langle \! \langle (r,c) \rangle \! \rangle_{B \times C} \ \cap \ (\{1\} \times C) \ \ \ \subseteq \ \ \ \langle \! \langle (r,c) \rangle \! \rangle_{B \times C} \ \cap \ (Z \times C) \ \ \ = \ \ \ X.
\end{eqnarray*}
Therefore, we have ensured that all elements of the form $(1,f)$ with $f \in C$ which are contained in $X$ are also in $Y$. If $X$ contains no element $(z^{i},f)$ with $i \neq 0$ and $f \in C$, we set $\mu = \nu = 0$. In this case there is nothing more to show. Elsewise we define $\mu$ as the smallest number $i \in \mathbb{N}$ such that $X$ contains an element $(z^{i},f)$ with $f \in C$. Then there is a presentation
\begin{eqnarray*}
z^{\mu} = \overset{n}{\underset{j=1}{\Pi}} v_{j}^{-1} r^{\varepsilon_{j}} v_{j} \ \ \text{in} \ \ B , \ \ \text{where} \ \ v_{j} \in B, \ \ \varepsilon_{j} \in \mathbb{Z} \ \ \text{and} \ \ n \in \mathbb{N}.
\end{eqnarray*}
We choose a presentation of that kind such that the absolute value of $\overset{n}{\underset{j=1}{\Sigma}} \varepsilon_{j}$ is minimal and denote this absolute value by $\nu$. Then
\begin{eqnarray*}
\langle (z^{\mu},c^{\nu}) \rangle_{Z \times C} = \{(z^{\mu},c^{\nu})^{i} \mid i \in \mathbb{Z} \} = \{(z^{\mu i},c^{\nu i}) \mid i \in \mathbb{Z} \}
\end{eqnarray*}
is a subset of $X$. Because of the choice of $\mu$ we have conversely for every  element of the form $(z^{\ell},f) \in X$ ($f \in C$) that $\ell$ is a multiple of $\mu$. If there exist an element $(z^{\mu i},f)$ in $X$ with $f \neq c^{\nu i}$, then $c^{-\nu i} f$ is in $D$ by definition. It follows that $(z^{\mu i},f) = (z^{\mu i},c^{\nu i}) (1,c^{-\nu i}f)$ is also an element of $Y$. Thus, we have $X \subseteq Y$. Since both factors of the product $Y$ are subsets of $X$ by construction, we also have $Y \subseteq X$ and therefore $X=Y$.
\end{proof}

Finally, we recall two results by A. Karras, W. Magnus and D. Solitar concerning torsion elements of one-relator groups.

\begin{theorem} \textbf{\emph{\cite[cf. Theorem 1]{ArtMaKaSo}}} \label{thmMKS1}
Let $G$ be a group with generators $a$, $b$, $c, \dots$ and a single defining relation $R(a,b,c,\dots)=1$. If $G$ has an element of finite order, then $R$ must be a proper power, i.\,e. $R=W^{k}$, $k >1$, $W \neq 1$. 
\end{theorem}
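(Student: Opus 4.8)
\emph{Overall approach.} I would prove the contrapositive: if $R$ is not a proper power then $G=\langle X\mid R\rangle$ is torsion-free. Replacing $R$ by its cyclic reduction changes neither $G$ nor the property of being a proper power, and the generators of $X$ not occurring in $R$ split off as a free factor and contribute no torsion; so I may assume $R$ is cyclically reduced and that every generator of $X$ occurs in $R$. The plan is then to induct on the length $|R|$ of the word $R$, following Magnus's method. The base case $|R|\leqslant 1$ is immediate: $R$ is then a single generator $x$, and $G\cong\mathbb{Z}\ast F$ is torsion-free (with $R=x$ indeed not a proper power).

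\emph{The favourable case.} Suppose some generator $t$ occurring in $R$ has exponent sum $\sigma_t(R)=0$. Let $\nu\colon G\twoheadrightarrow\mathbb{Z}$ send $t\mapsto 1$ and every other generator to $0$; this is well-defined since $\sigma_t(R)=0$. The Reidemeister--Schreier process with transversal $\{t^i\}_{i\in\mathbb{Z}}$ presents $K:=\ker\nu$ on the generators $x_j:=t^{-j}xt^j$ ($x\in X\setminus\{t\}$) with relations the rewrites $R_j$ of $t^{-j}Rt^j$; because $\sigma_t(R)=0$, each $R_j$ involves only the $x_i$ with $i$ in a window $W_j$ of fixed width, the rewrite $R_0$ of $R$ has length $|R|$ minus the (positive, even) number of occurrences of $t^{\pm1}$ in $R$, hence $|R_0|<|R|$, and $R_0$ is still not a proper power (the rewrite extends to an embedding of free groups). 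Consequently $K$ is an infinite amalgamated product of the one-relator groups $H_j:=\langle x_i : i\in W_j\mid R_j\rangle$ along free edge subgroups (the overlaps of consecutive windows). Each $H_j$ is torsion-free by the inductive hypothesis, and torsion-freeness is inherited by the fundamental group of a graph of torsion-free groups, since a finite-order element acts elliptically on the Bass--Serre tree. Finally $\mathbb{Z}$ is free, so $G\cong K\rtimes\mathbb{Z}$, and a semidirect product of a torsion-free group with $\mathbb{Z}$ is torsion-free.

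\emph{The remaining case.} Suppose now every generator occurring in $R$ has nonzero exponent sum; since the one-generator situation is the base case, at least two generators occur. Pick $a,t$ with $\sigma_a(R)=\beta\neq0$ and $\sigma_t(R)=\tau\neq0$, adjoin a fresh letter $y$, and let $\theta$ be the automorphism of $F(X\cup\{y\})$ fixing $y$ and every generator except $a\mapsto y^{\tau}a$, $t\mapsto y^{-\beta}t$. Then $\widehat{R}:=\theta(R)$ has $\sigma_y(\widehat{R})=\tau\beta-\beta\tau=0$, the letter $y$ occurs in $\widehat{R}$, and $\widehat{R}$ is not a proper power (it is the image of a non-proper-power under an automorphism). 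Undoing $\theta$ on the generating set identifies $\widehat{G}:=\langle X\cup\{y\}\mid\widehat{R}\rangle$ with $G\ast\mathbb{Z}$, so $G$ is torsion-free if and only if $\widehat{G}$ is; and $\widehat{G}$ now has a generator ($y$) of exponent sum $0$, so it is amenable to the argument of the previous paragraph.

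\emph{Where the difficulty lies.} The induction does not close automatically in the last case: $|\widehat{R}|$ may exceed $|R|$, so one cannot simply re-enter the induction at $\widehat{G}$, and running $\widehat{G}$ through the cyclic-cover machine produces one-relator vertex groups whose relator (the rewrite of $\widehat{R}$ along $y$) need not be shorter than $|R|$. Making the reduction genuinely decrease an induction parameter — by a judicious choice of the substitution $\theta$ together with a careful analysis of lengths after rewriting, or by inducting on a finer invariant than word length — is the delicate bookkeeping at the heart of Karrass--Magnus--Solitar's proof, and it is the step I would expect to be the main obstacle. A conceptually cleaner alternative, bypassing the induction altogether, is the cohomological route: when $R$ is not a proper power, Lyndon's Identity Theorem provides a free $\mathbb{Z}G$-resolution of $\mathbb{Z}$ of length $2$ (a stronger statement than the combinatorial asphericity of Proposition~\ref{propasphone}, and valid precisely in the non-proper-power case), so $\operatorname{cd}_{\mathbb{Z}}G\leqslant 2$; since cohomological dimension is monotone under passage to subgroups and $\operatorname{cd}_{\mathbb{Z}}(\mathbb{Z}/p)=\infty$, the group $G$ contains no element of prime order, hence is torsion-free.
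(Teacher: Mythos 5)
The paper does not actually prove this statement: it is quoted verbatim as Theorem~1 of Karrass--Magnus--Solitar and used as a black box (in Case~3 of the argument for $\widetilde{r}$ with non-positive $\alpha$-$\omega$-length), so there is no in-paper proof to compare against. Judged on its own terms, your main line of attack --- Magnus induction on $|R|$ after passing to the kernel of an exponent-sum homomorphism --- is the historically correct strategy, and your ``favourable case'' is essentially right, with one ingredient left implicit: to know that $\ker\nu$ really is a graph of the one-relator groups $H_j$ with \emph{free} edge groups (and that the vertex groups embed in the union), you need the Freiheitssatz for the shorter relators $R_j$, i.e.\ that the overlap generators generate a free subgroup of each $H_j$; this is a parallel induction you must carry along, not a formality. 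The genuine gap, however, is the one you name yourself: in the case where every generator occurring in $R$ has nonzero exponent sum, the substitution $a\mapsto y^{\tau}a$, $t\mapsto y^{-\beta}t$ can increase the relator length, so the induction on $|R|$ does not close, and flagging this as ``the step I would expect to be the main obstacle'' is an accurate diagnosis but not a proof. As written, the inductive argument is incomplete precisely at the point where Karrass--Magnus--Solitar's actual work lies.

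Your closing alternative does, on the other hand, constitute a complete proof modulo a citation. If $R$ is not a proper power, Lyndon's Identity Theorem gives that the relation module is free of rank one over $\mathbb{Z}G$, so $0\to\mathbb{Z}G\to\bigoplus\mathbb{Z}G\to\mathbb{Z}G\to\mathbb{Z}\to 0$ is a free resolution, $\operatorname{cd}_{\mathbb{Z}}G\leqslant 2$, and since cohomological dimension is monotone under passage to subgroups while a nontrivial finite cyclic group has infinite cohomological dimension, $G$ is torsion-free. This is correct, but be aware that it relocates rather than removes the difficulty: Lyndon's Identity Theorem is itself established by essentially the same Magnus-style induction whose bookkeeping you left open. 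So your proposal is acceptable as a citation-based argument, which is exactly the level at which the paper itself treats the statement by referring to \cite{ArtMaKaSo}.
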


\begin{theorem}\textbf{\emph{ \cite[cf. Theorem 3]{ArtMaKaSo}}} \label{thmMKS2}
Let $G$ be a group with generators $a$, $b$, $c, \dots$ and a single defining relator $V^{k}(a,b,c,\dots)$, $k > 1$, where $V(a,b,c,\dots)$ is not itself a proper power. Then $V$ has order $k$ and the elements of finite order in $G$ are just the powers of $V$ and their conjugates.
\end{theorem}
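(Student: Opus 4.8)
This is the Karrass--Magnus--Solitar theorem on torsion in one-relator groups, and the plan is to reprove it by the Magnus induction that also yields Theorem~\ref{thmMKS1}, refined so as to track the torsion explicitly. I would first replace $R=V^{k}$ by a cyclically reduced conjugate, which changes neither the isomorphism type of $G$ nor the conjugacy classes of its torsion elements, and then induct on a complexity of the relator word $V$ (its length, refined lexicographically by the number of generators occurring in it). For the base case $V$ is a word in a single generator; since $V$ is not a proper power, $V=a^{\pm1}$ for some generator $a$, so $R=a^{\pm k}$ and $G\cong(\mathbb{Z}/k\mathbb{Z})\ast F'$ with $F'$ free on the remaining generators and the finite factor generated by the image of $V$. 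By the normal-form theory of free products (or the Kurosh subgroup theorem, as used in Section~\ref{secdir}) every torsion element of a free product is conjugate into a free factor; since $F'$ is torsion-free, the torsion elements of $G$ are precisely the conjugates of powers of $V$, and $V$ has order exactly $k$.

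For the inductive step, assume first that some generator $t$ occurring in $V$ has exponent sum $0$ in $V$; let $M=\ker(G\to\mathbb{Z})$ for the map sending $t\mapsto1$ and all other generators to $0$. Reidemeister--Schreier with transversal $\{t^{i}:i\in\mathbb{Z}\}$ presents $M$ on generators $x_{i}:=t^{-i}xt^{i}$ ($x$ a non-$t$ generator) with relators $(t^{-i}Vt^{i})^{k}$; deleting via Tietze transformations the generators outside the finite window of indices used by $V$ exhibits the subgroup $G_{1}\le M$ generated by the windowed generators as a one-relator group with relator $V_{0}^{k}$, where $V_{0}$ is the rewriting of $V$ itself (so $V_{0}=V$ as an element of $M$), is cyclically reduced, is strictly shorter than $V$ — its $t^{\pm1}$'s having been absorbed into subscripts — and is again not a proper power, the last point following from the classical Freiheitssatz (the free-group case of Theorem~\ref{HowFS}) together with $V$ not being a proper power. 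Moreover $G$ is an HNN extension of $G_{1}$ with stable letter $t$, so $G_{1}$ embeds in $G$ and, by Britton's lemma, every torsion element of $G$ is conjugate into $G_{1}$. The induction hypothesis for $G_{1}$ then gives that $V_{0}=V$ has order $k$ in $G_{1}$, hence in $G$, and that every torsion element of $G$ is a conjugate of a power of $V$.

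Finally, if no generator occurring in $V$ has exponent sum $0$, there are at least two generators $a,b$ in $V$, with exponent sums $p\ne0$ and $q\ne0$. I would pass from $G$ to $G\ast\mathbb{Z}$ (harmless for the statement, since $\mathbb{Z}$ is torsion-free and two elements of a free factor are conjugate in a free product iff already conjugate in the factor), and there perform the standard Tietze substitution introducing the extra generator as a ``root correction'' so that in the new one-relator presentation some generator has exponent sum $0$ in the relator, reducing to the previous case; the new relator is still not a proper power because the substitution extends to an automorphism of the ambient free group. The main obstacle is exactly the combinatorial bookkeeping in these two reductions: checking that the rewritten relator is cyclically reduced, is not a proper power, and strictly decreases the chosen lexicographic complexity, and that the base group of the HNN extension is correctly identified — this is the technical core of the Magnus method, and it is where the Freiheitssatz is genuinely used. (Alternatively, B.\,B.\ Newman's spelling theorem for one-relator groups with torsion yields the statement more quickly as a corollary, but the Magnus induction sketched here is self-contained modulo the Freiheitssatz.)
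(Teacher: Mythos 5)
This statement is not proved in the paper at all: it is quoted verbatim from Karrass--Magnus--Solitar \cite{ArtMaKaSo} and used as a black box in the non-positive $\alpha$-$\omega$-length case, so there is no in-paper argument to compare against. Your sketch follows the correct classical route (Magnus induction on the relator, Reidemeister--Schreier, HNN decomposition, torsion conjugate into the base group), and the base case and the exponent-sum-zero case are sound in outline.

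There is, however, a genuine gap in your third case, and it is precisely the step where your version deviates from the classical one. You pass to $G\ast\mathbb{Z}$ and apply an automorphism of the ambient free group $F\ast\langle x\rangle$ (necessarily of the shape $a\mapsto \widetilde{a}x^{-q}$, $b\mapsto\widetilde{b}x^{p}$, since no single-generator substitution can make an exponent sum vanish when $p,q\neq 0$). After this, every original letter of $V$ survives as a letter $\widetilde{a}^{\pm1},\widetilde{b}^{\pm1},c^{\pm1},\dots$ of the new relator, and the Reidemeister--Schreier rewriting with respect to $x$ converts each such letter into exactly one subscripted generator. Hence the rewritten relator has length exactly $|V|$, and the number of distinct generators occurring in it may even grow; your lexicographic complexity does not decrease and the induction need not terminate. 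The classical fix is not $G\ast\mathbb{Z}$ but root adjunction: replace $b$ by $x^{q}$ entirely, i.e.\ pass to $G\ast_{b=x^{q}}\langle x\rangle$ (equivalently, use the injection $a\mapsto yx^{-q}$, $b\mapsto x^{p}$ of free groups of the \emph{same} rank). Then the $b$-letters of $V$ become $x$-letters, which are absorbed into subscripts by the rewriting, so the new relator is strictly shorter than $V$. This costs you two things your $G\ast\mathbb{Z}$ route was designed to avoid: you must know that $G$ embeds into the amalgam (this is where the Freiheitssatz, Theorem~\ref{HowFS} in its free-group form, is genuinely used), and you must transfer conjugacy of elements of $G$ back from the amalgam to $G$, which is exactly the content of Lemma~\ref{lemassum12}. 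This is also how the paper itself handles the analogous situation in Case~2 of the proofs of Proposition~\ref{propedjgen2} (see \eqref{eqrefGSchlangeA''}) and of Theorem~\ref{thmonedirection}. The remaining imprecisions (the base group of the HNN extension is a subgroup of the kernel, not a Tietze reduction of it; ``not a proper power'' for the rewritten relator follows simply because it is the same element of the ambient free group, not from the Freiheitssatz) are standard bookkeeping and acceptable at sketch level.
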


\subsubsection{Conclusion of the proof for {\boldmath $\widetilde{r}$ with non-positive $\alpha$-$\omega$-length}}

In Proposition~\ref{propmain} there are two elements $(r,s)$, $(s,d) \in \widetilde{H}$ with the same normal closure and $r_{x}=0$. As in the case for positive $\alpha$-$\omega$-length we consider $\{(\widetilde{r}_{i},c) \mid i \in \mathbb{Z} \}$ and $\{(\widetilde{s}_{i},d) \mid i \in \mathbb{Z} \}$, where $\widetilde{r}_{i}$ and $\widetilde{s}_{i}$ ($i \in \mathbb{Z}$) are suitable conjugates. Since we assume $| \widetilde{r}|_{\alpha,\omega} <1$, we also have $| \widetilde{s}|_{\alpha,\omega} <1$ by Remark~\ref{rembigcases}. Because of Lemma~\ref{lemexclb} we can present the elements $\widetilde{s}_{0}$ and $\widetilde{r}_{0}$ of $K$ only using $b$-generators. Thus, the elements $\widetilde{r}$ and $\widetilde{s}$ of
\begin{eqnarray*}
G \ \underset{u=[x^{k},b]}{\ast} \ \langle x,b \mid \rangle 
\end{eqnarray*}
can be presented exclusively using the generators $x$ and $b$. In the following, we no longer want to consider $K$, but work with the elements $(\widetilde{r},c)$, $(\widetilde{s},d)$ in the group
\begin{eqnarray*}
\widetilde{H}=(G \underset{u=[x^{k},b]}{\ast} \langle x,b \mid \rangle ) \times C = (G \times C) \underset{Z \times C}{\ast} (\langle x,b \mid \rangle \times C),
\end{eqnarray*}
where $Z$ is the infinite cyclic subgroup generated by $u$ in $G$ and by $[x^{k},b]$ in $\langle x,b \mid \rangle$. Because of the preliminary consideration we know that $(\widetilde{r},c)$ and $(\widetilde{s},d)$ are elements of the right factor $\langle x,b \mid \rangle \times C$ of the amalgamated product.

Our next aim is to show that $(\langle x,b \mid \rangle \times C) / \langle \! \langle (\widetilde{r},c) \rangle \! \rangle$ embeds into $\widetilde{H} / \langle \! \langle (\widetilde{r},c) \rangle \! \rangle$. For this purpose we consider three cases for the number $\mu \in \mathbb{N}_{0}$ given uniquely by Lemma~\ref{lemmunu} with $B=\langle x,b \mid \rangle$ and $Z = \langle [x^{k},b] \rangle$.

\noindent\textbf{Case 1.} Let $\mu =0$.\\
Then we have $\langle \! \langle (\widetilde{r},c) \rangle \! \rangle_{\langle x,b \mid \rangle \times C} \ \cap \ (Z \times C) \  = \ \{1\} \times D$ and therefore
\begin{eqnarray*}
\widetilde{H} / \langle \! \langle (\widetilde{r},c) \rangle \! \rangle = (G \times (C/D) ) \underset{Z \times (C/D)}{\ast} \big( (\langle x,b \mid \rangle \times C)/ \langle \! \langle (\widetilde{r},c) \rangle \! \rangle \big).
\end{eqnarray*}
So we have the embedding of $(\langle x,b \mid \rangle \times C) / \langle \! \langle (\widetilde{r},c) \rangle \! \rangle$ into $\widetilde{H} / \langle \! \langle (\widetilde{r},c) \rangle \! \rangle$.

\noindent\textbf{Case 2.} Let $\mu =1$.\\
If $([x^{k},b],1)$ is an element of $\langle \! \langle (\widetilde{r},c) \rangle \! \rangle_{\langle x,b \mid \rangle \times C} \ \cap \ (Z \times C)$, we have by Lemma~\ref{lemmunu}
\begin{eqnarray*}
X:=\langle \! \langle (\widetilde{r},c) \rangle \! \rangle_{\langle x,b \mid \rangle \times C} \ \cap \ (Z \times C) \  = Z \times D,
\end{eqnarray*}
where $D$ is a normal subgroup of $C$. So we can write
\begin{eqnarray*}
\widetilde{H} / \langle \! \langle (\widetilde{r},c) \rangle \! \rangle = \big((G/\langle \! \langle u \rangle \! \rangle) \times (C/D) \big) \underset{\{1\} \times (C/D)}{\ast} \big( (\langle x,b \mid \rangle \times C)/ \langle \! \langle (\widetilde{r},c) \rangle \! \rangle \big).
\end{eqnarray*}
This gives the desired embedding and we may assume in the following that $([x^{k},b],1)$ is no element of $X$, but that there exists some $c \in C$ such that $([x^{k},b],c)$ is an element of $X$. In particular $[x^{k},b]$ is in the normal closure of $\widetilde{r}$ in $\langle x,b \mid \rangle$ and we get a presentation
\begin{eqnarray} \label{eqExpS}
[x^{k},b] = \overset{n}{\underset{j=1}{\Pi}} v_{j}^{-1} \widetilde{r}^{\varepsilon_{j}} v_{j} \ \ \text{in} \ \ \langle x,b \mid \rangle , \ \ \text{where} \ \ v_{j} \in \langle x,b \mid \rangle, \ \ \varepsilon_{j} \in \mathbb{Z} \ \ \text{and} \ \ n \in \mathbb{N}.
\end{eqnarray}
For $\overset{n}{\underset{j=1}{\Sigma}} \varepsilon_{j}=0$ we have
\begin{eqnarray*}
([x^{k},b],1) = \overset{n}{\underset{j=1}{\Pi}} (v_{j},1)^{-1} (\widetilde{r},c)^{\varepsilon_{j}} (v_{j},1) \ \ \text{in} \ \ \langle x,b \mid \rangle \times C.
\end{eqnarray*}
It follows that $([x^{k},b],1)$ is an element of $X$. This contradicts the assumption. Thus, we have $\overset{n}{\underset{j=1}{\Sigma}} \varepsilon_{j} \neq 0$. Since the exponential sums with respect to $x$ and $b$ in the free group $\langle x, b \mid \rangle$ are well defined, $\widetilde{r}$ possesses because of \eqref{eqExpS} an exponential sum of $0$ with respect to $x$ as well as with respect to $b$. By Corollary~\ref{cor0pos}, that contradicts the assumption of this subsection that $r$ has non-positive $\alpha$-$\omega$-length.

\noindent\textbf{Case 3.} Let $\mu > 1$.\\
From Lemma~\ref{lemmunu} (with $B=\langle x,b \mid \rangle$, $Z= \langle [x^{k},b] \rangle )$ it follows that $[x^{k},b]^{\mu}$ is in the normal closure of $\widetilde{r}$ in $\langle x,b \mid \rangle$, but $[x^{k},b]$ is no element of this normal closure. Thus, $[x^{k},b]$ is a torsion element of the group $\langle x,b \mid \widetilde{r} \rangle$. By Theorem~\ref{thmMKS1}, $\widetilde{r}$ is a proper power. We define $\widetilde{r}=\rho^{n}$, where $\rho$ is no proper power and $n \in \mathbb{N} \backslash \{1\}$. Further we have by Theorem~\ref{thmMKS2}
\begin{eqnarray*}
[x^{k},b]=w^{-1} \rho^{m} w  \ \ \text{in} \ \ \langle x,b \mid \rho^{n} \rangle
\end{eqnarray*}
for some $m$ with $0< m <n$ and some $w \in \langle x,b \mid \rho^{n} \rangle$. Therefore, we have
\begin{eqnarray} \label{eqcase3}
[x^{k},b]=\widetilde{w}^{-1} \rho^{m} \widetilde{w} \ \mathcal{N}(\rho^{n}) \ \ \text{in} \ \ \langle x,b \mid \rangle
\end{eqnarray}
for some $\widetilde{w} \in \langle x,b \mid \rangle$ and an element $\mathcal{N}(\rho^{n})$ in the normal closure of $\rho^n$ in $\langle x,b \mid \rangle$. Let $\beta$ be the exponential sum of $\rho$ with respect to $b$ in $\langle x,b \mid \rangle$. Then it follows from \eqref{eqcase3} that $m \beta$ is a multiple of $n \beta$. Because of $0<m<n$ we have $\beta =0$. This means that the exponential sum of $\rho$ with respect to $b$ and therefore also the exponential sum of $\widetilde{r} = \rho^{n}$ with respect to $b$ equals $0$. By Corollary~\ref{cor0pos}, this contradicts the assumption of this subsection that $r$ has non-positive $\alpha$-$\omega$-length.\\

Altogether, we may use in this subsection that $(\langle x,b \mid \rangle \times C) / \langle \! \langle (\widetilde{r},c) \rangle \! \rangle$ embeds into $\widetilde{H} / \langle \! \langle (\widetilde{r},c) \rangle \! \rangle$. With the help of this embedding we deduce that $(\widetilde{s},d)$ is an element of the normal closure of $(\widetilde{r},c)$ in $\langle x,b \mid \rangle \times C$ since $(\widetilde{s},d)$ is an element of $\langle x,b \mid \rangle \times C$ as well as an element of the normal closure of $(\widetilde{r},c)$ in $\widetilde{H}$. By symmetry, we further deduce that $(\widetilde{r},c)$ is in the normal closure of $(\widetilde{s},d)$ in $\langle x,b \mid \rangle \times C$. Thus, $(\widetilde{r},c)$ and $(\widetilde{s},d)$ have the same normal closure in $F_{2} \times C$, where $F_{2}$ is the free group $\langle x,b \mid \rangle$. By assumption, $G$ is indicable for $C \neq \{1\}$ and $G \times C$ possesses the Magnus property. Because of Lemma~\ref{lemindC}, $\mathbb{Z} \times C$ also possesses the Magnus property. Thus, $F_{2} \times C$ has the Magnus property by Theorem~\ref{thmdirect}. Finally, $(\widetilde{r},c)$  is conjugate to $(\widetilde{s},d)^{\pm 1}$ in the subgroup $\langle x,b \mid \rangle \times C$ of $\widetilde{H}$. \qed

\section{Two applications of the main theorem~} \label{secapp}

In this section, we have a look at two applications of Main Theorem~\ref{main} on fundamental groups. The first application is a generalization of \cite{ArtMyMAP} (which is a generalization of \cite{ArtOneRel}) that uses Main Theorem~\ref{main} for $C = \{1\}$ while the second application uses Main Theorem~\ref{main} for cyclic $C$.

In \cite{ArtMyMAP}, we proved the Magnus property for the fundamental group of the graph of groups illustrated in Figure~\ref{AbbArt}, i.\,e. we proved the following theorem:

\begin{theorem} \textbf{\emph{\cite[Main Theorem]{ArtMyMAP}}} \label{thmMymap}
Let $G = \langle \{a,b\} \cup \mathcal{Y} \mid [a,b]u \rangle$, where $\mathcal{Y}$ is a nonempty set disjoint from the 2-element set $\{a,b\}$ and $u$ is a non-trivial, reduced word in the alphabet $\mathcal{Y}$. Then $G$ possesses the Magnus property.
\end{theorem}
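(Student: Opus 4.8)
The plan is to recognize $G$ as a special instance of the group appearing in Main Theorem~\ref{main} with trivial direct factor. Writing $F(\mathcal Y)$ for the free group with basis $\mathcal Y$, the defining relation $[a,b]u=1$ expresses $[a,b]$ as the element $u^{-1}\in F(\mathcal Y)$, so
\begin{eqnarray*}
G \ \cong \ F(\mathcal Y) \underset{u^{-1} = [a,b]}{\ast} F(a,b).
\end{eqnarray*}
This amalgamated free product is legitimate: both $F(\mathcal Y)$ and $F(a,b)$ are torsion-free, $u^{-1}\neq 1$ in $F(\mathcal Y)$ since $u$ is a non-trivial reduced word in $\mathcal Y$, and $[a,b]\neq 1$ in $F(a,b)$, so we are amalgamating along infinite cyclic subgroups. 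Renaming $u^{-1}$ as the non-trivial element playing the role of ``$u$'' in Main Theorem~\ref{main}, the group $G$ is exactly of the form $\big(G_0 \ast_{u_0=[a,b]} F(a,b)\big)\times C$ with $G_0 = F(\mathcal Y)$, $u_0 = u^{-1}$ and $C=\{1\}$.

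Next I would verify the hypotheses of Main Theorem~\ref{main}. The group $F(\mathcal Y)$ is locally indicable, since every non-trivial subgroup of a free group is free and hence indicable. Since $C=\{1\}$, the additional assumption that $G_0$ be indicable may be dropped (it holds anyway for $\mathcal Y\neq\emptyset$). The element $u_0=u^{-1}$ is non-trivial in $F(\mathcal Y)$ precisely because $u$ is a non-trivial reduced word in the alphabet $\mathcal Y$. So Main Theorem~\ref{main} applies and reduces the assertion to the statement that $F(\mathcal Y)\times\{1\}\cong F(\mathcal Y)$ possesses the Magnus property.

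Finally I would dispatch this last point. When $\mathcal Y$ is finite, $F(\mathcal Y)$ has the Magnus property by Magnus's original theorem (see \cite{ArtMagnus}). For an arbitrary generating set $\mathcal Y$, one instead writes $F(\mathcal Y) \cong \Asterisk_{y\in\mathcal Y}\langle y\rangle$ as a free product of infinite cyclic groups, each of which is locally indicable and trivially possesses the Magnus property, and invokes Corollary~\ref{corEdj}. Either way $F(\mathcal Y)$ has the Magnus property, and therefore so does $G$.

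Since the whole argument is a direct specialization of Main Theorem~\ref{main}, there is essentially no obstacle: the only points requiring a moment's care are the bookkeeping of the inverse (the relator gives $[a,b]=u^{-1}$, whereas the Main Theorem is phrased with $u=[a,b]$) and the handling of a possibly infinite generating set $\mathcal Y$, both resolved as above.
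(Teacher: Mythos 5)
Your proof is correct and matches the paper's own route: the paper recovers Theorem~\ref{thmMymap} as the one-relation case of Corollary~\ref{corappl}, whose inductive step is exactly your argument --- write the group as $F(\mathcal Y)\ast_{u^{-1}=[a,b]}F(a,b)$, apply Main Theorem~\ref{main} with $C=\{1\}$ and the locally indicable group $F(\mathcal Y)$, and reduce to the Magnus property of $F(\mathcal Y)$ via \cite{ArtMagnus} resp.\ Corollary~\ref{corEdj}. Your care with the inverse in the relator and with infinite $\mathcal Y$ is exactly the bookkeeping the paper's argument also requires.
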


\begin{figure}[H]
\centering
\includegraphics[scale=0.15]{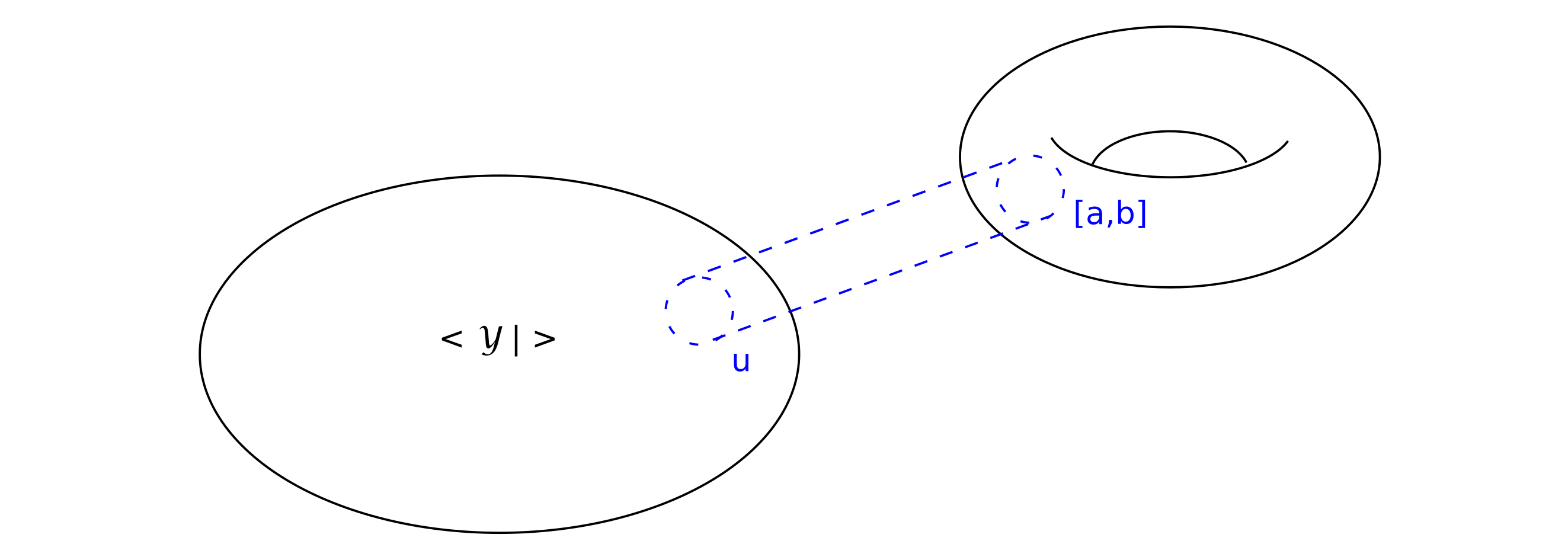}
\caption{Illustration of the group $G$ from Theorem~\ref{thmMymap}}
\label{AbbArt}
\end{figure}

With the next corollary we prove the Magnus property for the fundamental group of the graph of groups illustrated in Figure~\ref{AbbArtAnw}. Theorem~\ref{thmMymap} is a result about torsion-free 1-relator groups, where the relation identifies a commutator of two generators with an element not containing any of the two generators. The following corollary allows for group presentations with up to countably many relations of that kind, where the commutators use generators that are exclusively used in the particular commutator, but the elements identified with the commutators may use common generators.

\begin{corollary} \label{corappl}
Let $\mathcal{I}$, $\mathcal{J} \subseteq \mathbb{N}$ be index sets. Then the group
\begin{eqnarray*}
G \ = \ \langle \{a_{i} \mid i \in \mathcal{I} \} \cup \{b_{i} \mid i \in \mathcal{I} \} \cup \{ f_{j} \mid j \in \mathcal{J} \} \mid \{[a_{i},b_{i}]=u_{i} \mid i \in \mathcal{I}\} \rangle,
\end{eqnarray*}
where the $u_{i}$ are non-trivial elements from $\langle f_{j} \ (j \in \mathcal{J} ) \mid \rangle$, possesses the Magnus property. (Moreover, the group $G$ is locally indicable.)
\end{corollary}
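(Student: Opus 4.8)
The plan is to realise $G$ as the result of adjoining the generators $a_i,b_i$ and the relations $[a_i,b_i]=u_i$ one at a time, each adjunction being an instance of Main Theorem~\ref{main} with trivial direct factor $C=\{1\}$ (for which the indicability hypothesis on the base group is not needed). Write $F:=\langle f_j\mid j\in\mathcal{J}\rangle$. If $\mathcal{I}=\varnothing$ then $G=F$ is free and there is nothing to do, so assume $\mathcal{I}\neq\varnothing$; then $\mathcal{J}\neq\varnothing$ as otherwise no non-trivial $u_i$ exists. For a finite subset $\mathcal{I}'=\{i_1,\dots,i_n\}\subseteq\mathcal{I}$ set $G_0:=F$ and inductively
\[
G_k:=G_{k-1}\underset{u_{i_k}=[a_{i_k},b_{i_k}]}{\ast} F(a_{i_k},b_{i_k}).
\]
Because the canonical maps in Main Theorem~\ref{main} are embeddings, $F=G_0$ embeds into every $G_k$, so each $u_{i_k}\in F$ is a well-defined non-trivial element of $G_{k-1}$ (which is torsion-free, being locally indicable by the step below), and the amalgam is defined. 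Comparing presentations shows $G_n$ is precisely the group of the Corollary attached to the index set $\mathcal{I}'$.

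The induction then needs two parallel claims at each stage: $(a)$ $G_{k-1}$ is locally indicable, and $(b)$ $G_{k-1}$ has the Magnus property. Given $(a)$ and $(b)$, Main Theorem~\ref{main} with $C=\{1\}$ gives at once that $G_k$ has the Magnus property, so $(b)$ propagates from the base $G_0=F$, which is free and hence has the Magnus property (Magnus~\cite{ArtMagnus}; or Corollary~\ref{corEdj}). For $(a)$: $G_0=F$ is locally indicable, and
\[
G_k=\big(G_{k-1}\ast F(a_{i_k},b_{i_k})\big)\big/\big\langle\!\big\langle\, u_{i_k}^{-1}[a_{i_k},b_{i_k}]\,\big\rangle\!\big\rangle
\]
is a one-relator quotient of a free product of locally indicable groups in which the relator $u_{i_k}^{-1}[a_{i_k},b_{i_k}]$ is cyclically reduced of syllable length $2$, hence visibly not a proper power; so Theorem~\ref{Howlokind} yields that $G_k$ is locally indicable. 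This proves the Corollary, including the local indicability of $G$, whenever $\mathcal{I}$ is finite. The one genuinely non-formal point here is precisely $(a)$: Main Theorem~\ref{main} preserves the Magnus property but is silent about local indicability, so this has to be tracked as a separate induction resting on Theorem~\ref{Howlokind} and on the elementary observation about the relator; everything else is bookkeeping about presentations.

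For arbitrary $\mathcal{I}$, write $\mathcal{I}=\bigcup_n\mathcal{I}_n$ as an increasing union of finite subsets and let $G_{(n)}$ be the group of the Corollary attached to $\mathcal{I}_n$. For $\mathcal{I}_m\subseteq\mathcal{I}_n$ the canonical map $G_{(m)}\to G_{(n)}$ is injective, being an iterate of the embeddings supplied by Main Theorem~\ref{main}; thus the $G_{(n)}$ form an ascending chain of subgroups of $G$ with $G=\bigcup_n G_{(n)}$. Local indicability of $G$ follows because every finitely generated subgroup of $G$ already lies in some $G_{(n)}$, which is locally indicable by the finite case. For the Magnus property: if $u,v\in G$ have the same normal closure in $G$, then $u$, $v$ and finitely many conjugators witnessing $v\in\langle\!\langle u\rangle\!\rangle_G$ and $u\in\langle\!\langle v\rangle\!\rangle_G$ all lie in a single $G_{(n)}$, whence $\langle\!\langle u\rangle\!\rangle_{G_{(n)}}=\langle\!\langle v\rangle\!\rangle_{G_{(n)}}$; since $G_{(n)}$ has the Magnus property, $u$ is conjugate to $v^{\pm1}$ in $G_{(n)}$, hence in $G$. (This is the direct-limit argument already used for the infinite case of Theorem~\ref{thmedjvetgen}, except that here one enlarges $n$ instead of using a retraction, since $G$ need not retract onto $G_{(n)}$.)
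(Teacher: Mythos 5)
Your proposal is correct and follows essentially the same route as the paper: an induction on the number of relations $[a_i,b_i]=u_i$, using Theorem~\ref{Howlokind} to propagate local indicability and Main Theorem~\ref{main} with $C=\{1\}$ to propagate the Magnus property, followed by the standard direct-limit reduction for infinite $\mathcal{I}$. Your write-up is if anything slightly more explicit than the paper's (e.g.\ in verifying that the relator is cyclically reduced of length $2$ and not a proper power), but the argument is the same.
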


\begin{figure}[H]
\centering
\includegraphics[scale=0.135]{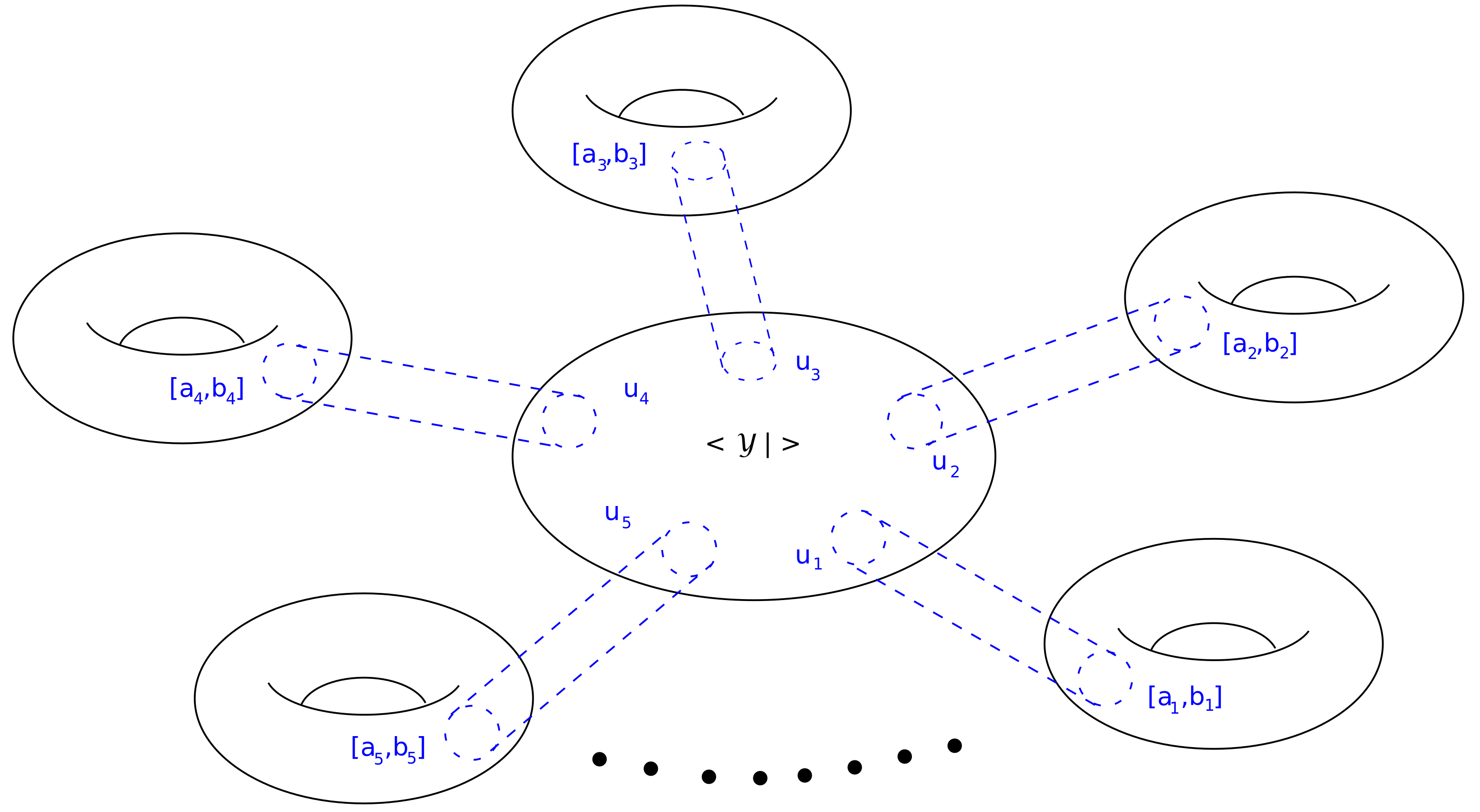}
\caption{Illustration of the group $G$ from Corollary~\ref{corappl}, where \mbox{$\mathcal{Y}:=\{f_{j} \mid j \in \mathcal{J} \}$}}
\label{AbbArtAnw}
\end{figure}

\begin{proof}
We may assume w.\,l.\,o.\,g. $\mathcal{I}=\{1,2,\dots,m\}$ for some $m \in \mathbb{N}_{0}$ or $\mathcal{I}=\mathbb{N}$. For $n \in \mathbb{N}_{0}$ we define
\begin{eqnarray*}
G_{n} := \langle \{a_{i} \mid 1 \leqslant i \leqslant n\} \cup \{b_{i} \mid 1 \leqslant i \leqslant n\} \cup  \{f_{j} \mid j \in \mathcal{J}\} \ \mid \ \{[a_{i},b_{i}]=u_{i} \mid 1 \leqslant i \leqslant n \} \rangle.
\end{eqnarray*}
Since every counterexample for the Magnus property of $G$ is contained in a group $G_{n}$ for some $n \in \mathbb{N}_{0}$, it is sufficient to prove the Magnus property of $G_{n}$ for all $n \in \mathbb{N}_{0}$.

Our proof will be by induction over $n \in \mathbb{N}_{0}$. The group $G_{0}$ is free and possesses the Magnus property by \cite{ArtMagnus}. In particular $G_{0}$ is locally indicable. For the induction step $(n \rightarrow n+1)$ we assume that the locally indicable group 
\begin{eqnarray*}
G_{n} = \langle \{a_{i} \mid 1 \leqslant i \leqslant n \} \cup \{b_{i} \mid 1 \leqslant i \leqslant n \} \cup \{f_{j} \mid j \in \mathcal{J}\} \ \mid \ \{[a_{i},b_{i}]=u_{i} \mid 1 \leqslant i \leqslant n \} \rangle
\end{eqnarray*}
possesses the Magnus property. Our aim is to show that
\begin{eqnarray*}
G_{n+1} &=& \langle \{a_{i} \mid 1 \leqslant i \leqslant n+1\} \cup \{b_{i} \mid 1 \leqslant i \leqslant n+1\} \cup \{ f_{j} \mid j \in \mathcal{J} \} \ \mid\\
&& \{[a_{i},b_{i}]=u_{i} \mid 1 \leqslant i \leqslant n+1 \} \rangle
\end{eqnarray*}
possesses the Magnus property and is locally indicable. For this purpose we write
\begin{eqnarray*}
G_{n+1}= G_{n} \underset{u_{n+1}=[a_{n+1},b_{n+1}]}{\ast}  \langle a_{n+1},b_{n+1} \mid \rangle.
\end{eqnarray*}
The left factor is locally indicable by the induction hypothesis and the right factor is free. Thus, it follows from Theorem~\ref{Howlokind} that $G_{n+1}$ is locally indicable. Because of the induction hypothesis we can apply Main Theorem~\ref{main} with $C = \{1\}$ and deduce the Magnus property of $G_{n+1}$.
\end{proof}

For the second application we consider the fundamental group $\widetilde{G}$ of the orientable Seifert manifold with invariants $r=0$, $b=0$, $g \in \mathbb{N}_{0}$ and $\varepsilon_{i}=1$ for $1 \leqslant i \leqslant g$ (see \cite[p. 91]{BuchOrlik}). Recall that by \cite{BuchOrlik} this fundamental group $\widetilde{G}$ is given by 
\begin{eqnarray*}
K &:=& \langle a_{1},b_{1},\dots,a_{g},b_{g} \mid [a_{1},b_{1}][a_{2},b_{2}] \cdots [a_{g},b_{g}]=1 \ \ (1 \leqslant i \leqslant g) \rangle \\
&=& \langle a_{2},b_{2},\dots,a_{g},b_{g} \mid \rangle \ \ \underset{([a_{2},b_{2}][a_{3},b_{3}] \cdots [a_{g},b_{g}])^{-1}=[a_{1},b_{1}]}{\ast} \ \ \langle a_{1},b_{1} \mid \rangle \ \ \ \ \ \ \ \ \ \ \ \ \ \ \text{and}\\
\widetilde{G} &:=& K \times \langle h \mid \rangle.
\end{eqnarray*}
The Magnus property of $\widetilde{G}$ follows directly by applying Main Theorem~\ref{main} with
\begin{eqnarray*}
G=\langle a_{2},b_{2},\dots,a_{g},b_{g} \mid \rangle, \ \ \ u=([a_{2},b_{2}][a_{3},b_{3}] \cdots [a_{g},b_{g}])^{-1}, \ \ \ a=a_{1}, \ \ \ b=b_{1}
\end{eqnarray*}
and $C= \langle h \mid \rangle$ since $G$ is a free group and $G \times C$ possesses the Magnus property by Theorem~\ref{thmFxF}.\\

\textbf{Acknowledgements.} This work is based on Chapter 2 and 3 of my dissertation, Heinrich-Heine-Universit\"at D\"usseldorf, 2020. I would like to express my special thanks to my supervisor Professor Oleg Bogopolski for the subject proposal and his support. Further, I wish to thank the anonymous referee for helpful remarks and suggestions.

\addcontentsline{toc}{section}{References} 
\bibliography{Literatur}
    \bibliographystyle{alpha}

\end{document}